\newtheorem{theorem}{Theorem}[section]
\newtheorem{proposition}[theorem]{Proposition}
\newtheorem{coro}[theorem]{Corollary}
\newtheorem{lemma}[theorem]{Lemma}
\newtheorem{rem}[theorem]{Remark}
\renewcommand{\epsilon}{\varepsilon}
\newcommand\C{\mathbb{C}}
\newcommand\N{\mathbb{N}}
\newcommand\R{\mathbb{R}}
\newcommand\lp{L^p(M,\mu)}
\newcommand\lqq{L^q(M,\mu)}
\newcommand\restr[2]{{
  \left.\kern-\nulldelimiterspace 
  #1 
  \vphantom{\big|} 
  \right|_{#2} 
  }}
\numberwithin{theorem}{subsection}
\numberwithin{equation}{section}
\title[A new approach to heat kernel upper bounds]{A new approach to pointwise  heat kernel upper bounds on doubling metric measure spaces}
\author{S. Boutayeb}
\address{Salahaddine Boutayeb, Facult\'e polydisciplinaire de Safi, Universit\'e 
Cadi Ayyad,  46000 Safi,
Maroc}
\email{sboutaye@gmail.com}
\author{T. Coulhon}
\address{Thierry Coulhon, Mathematical Sciences Institute, The Australian National University, Canberra ACT 0200, Australia}
\email{thierry.coulhon@anu.edu.au}
\author{A. Sikora}
\address{Adam Sikora, Department of Mathematics, Macquarie
University, NSW 2109, Australia}
\email{adam.sikora@mq.edu.au}
\thanks{TC's and AS's research was  supported by an 
Australian Research Council (ARC) grant DP  130101302}
\date{\today}
\begin{document}
\begin{abstract} On doubling metric measure spaces  endowed with a strongly local regular Dirichlet form, we show some characterisations of pointwise upper bounds
of the heat kernel in terms of   global scale-invariant inequalities that
correspond respectively to the Nash inequality and to a
Gagliardo-Nirenberg type inequality when the volume growth is polynomial. This yields a new  proof and a generalisation of the well-known equivalence between classical heat kernel upper bounds and  relative Faber-Krahn inequalities or localized Sobolev or Nash inequalities. We are  able to treat more general pointwise estimates, where the heat kernel rate of decay is not necessarily governed by the volume growth. A crucial role is played by the finite  propagation speed property for the associated wave equation, and
our main result holds for an abstract semigroup of operators satisfying the Davies-Gaffney estimates.
\end{abstract}

\maketitle

\tableofcontents
\section{Introduction}

\subsection{Background and motivation}

Let $M$ be a complete non-compact  connected Riemannian manifold,  $d$ is the geodesic distance and $\mu$ the Riemannian measure on $M$.  Denote by
$V(x,r):=\mu(B(x,r))$  the volume  of the ball $B(x,r)$ of center $x\in M$ and radius $r>0$ with respect to $d$.

Let $\Delta$ be
the non-negative Laplace-Beltrami operator and  $p_{t}$ be the heat kernel of $M$, that is by definition the smallest
positive fundamental solution of the heat equation:
\begin{equation*}
\frac{\partial u}{\partial t}+\Delta u=0,
\end{equation*}
or the kernel of the heat semigroup $e^{-t\Delta}$, i.e.
$$e^{-t\Delta}f(x)=\int_{M}p_{t}(x,y)f(y)d\mu(y),~~ f\in L^{2}(M,\mu),~\mu-\mbox{a.e. }~x\in
M.$$
It is well-known that in this situation, contrary to more general ones, $p_t(x,y)$ is smooth in $t>0$, $x,y\in M$ and everywhere positive (see for instance \cite{G4}).

\bigskip
In the Euclidean space $ \mathbb{R}^{n}$,  $p_{t}$ is given by the
classical Gauss-Weierstrass kernel:
\begin{equation*}
p_{t}(x,y)=\frac{1}{(4\pi
t)^{n/2}}\exp\left(-\frac{|x-y|^{2}}{4t}\right), \, t>0,\,x,y\in \mathbb{R}^{n}.
\end{equation*}
On general  manifolds, where of course no such formula is available, the subject of upper estimates of the heat kernel has led to an intense activity in the last three decades (see for instance  \cite{Cou5, G3, G4, SA, SF} for references and background).

One says that $M$ satisfies the volume doubling property if there exists $C>0$ such that:
    \begin{equation*}\tag{$V\!D$}
     V(x,2r)\le C V(x,r),\quad \forall~x \in M,~r > 0.
    \end{equation*}

    For such manifolds, a typical upper estimate for $p_t(x,x)$ (so-called diagonal upper estimate)
     is
  \begin{equation}\tag{$DU\!E$}
p_{t}(x,x)\leq
\frac{C}{V(x,\sqrt{t})},\forall~t>0,\,x\in
 M.\label{due}
\end{equation}
This estimate holds in particular on manifolds with non-negative Ricci curvature, see \cite{LY}.

Under  $(V\!D)$,  $(DU\!E)$ is equivalent to the  apparently stronger Gaussian upper estimate
\begin{equation}\tag{$U\!E$}
p_{t}(x,y)\leq
\frac{C}{V(x,\sqrt{t})}\exp
\left(-\frac{d^{2}(x,y)}{Ct}\right),\forall~t>0,\,x,y\in
 M,\label{UE}
\end{equation}
 see \cite[Theorem 1.1]{Gr1},  also \cite[Corollary 4.6]{CS} and the references therein.

A fundamental characterisation of $(U\!E)$ or $(DU\!E)$ was found by Grigor'yan. One says that $M$ admits the
relative Faber-Krahn inequality if there exists $c>0$ such that, for any ball $B(x,r)$ in $M$
and any open set $\Omega\subset B(x,r)$:
\begin{equation*}\label{FKalpha}
\tag{$FK$} \lambda_1(\Omega)\geq
\frac{c}{r^{2}}\left(\frac{V(x,r)}{\mu(\Omega)}\right)^{\alpha},
\end{equation*}
where $c$ and $\alpha$ are some positive constants and $\lambda_1(\Omega)$ is the smallest Dirichlet eigenvalue of
$\Delta$ in $\Omega$. It was  proved in \cite{G2} that $(FK)$ is equivalent to the
upper bound (\ref{due}) together with $(V\!D)$. The proof that  $(FK)$ implies (\ref{due})  is difficult. It goes through a mean value inequality for solutions of the heat equation which is proved via a non-trivial Moser type iteration. One then deduces  (\ref{due}) from this mean value inequality by using either   the integrated maximum principle (see \cite[chapter 15]{G4}) or   the Davies-Gaffney estimates which will play an important role in the present article (see  \cite[Section 5]{CG3} and \cite[Theorem 4.4]{AB}).


It turns out that the relative Faber-Krahn inequality is equivalent to the following family of localised Sobolev inequalities introduced by  Saloff-Coste (see \cite{S} and also \cite[section 2.3]{SF}): there exists $C>0$ and $q>2$ such that,
for every ball $B=B(x,r)$ in $M$ and for every $f\in \mathcal{C}_0^{\infty}(B)$,
\begin{equation}\label{ISo}\tag{$LS_q$}\left(\int_B|f|^{q}\,d\mu\right)^\frac{2}{q}\le \frac{Cr^2}{V^{1-\frac{2}{q}}(x,r)}\int_B\left(|\nabla f|^2+r^{-2}|f|^2\right)\,d\mu.
\end{equation}
This family of inequalities  implies in turn by H\"older's inequality a  family of localised Nash inequalities
\begin{equation}\label{INa}\tag{$LN_\alpha$}\left(\int_B|f|^2\,d\mu\right)^{1+\alpha}\le \frac{Cr^2}{V^{\alpha}(x,r)}\left(\int_B|f|\,d\mu\right)^{2\alpha}\int_B\left(|\nabla f|^2+r^{-2}|f|^2\right)\,d\mu,
\end{equation}
where $\alpha=1-\frac{2}{q}>0$, and in fact the methods of \cite{BCLS} show that they are equivalent.
One can  prove directly  that  (\ref{ISo}) implies (\ref{due}) (see \cite[Section 5.2]{SA} and the references therein; this has been extended in \cite{ST} to a  more general Dirichlet form setting), but the proof  again relies on the Moser iteration process.    A direct proof of the implication from   (\ref{due}) to (\ref{ISo}) is implicit in \cite[Theorem 10.3]{SU} and can be found also in 
\cite[Theorem 2.6, proof of Lemma 4.4]{HSC}, but it is not straightforward either.
\bigskip

In the case where there exists $C,n>0$ such that
$$C^{-1}r^n\le V(x,r)\leq Cr^{n},$$ one says that the volume growth is polynomial with exponent $n$.
This is a much more restrictive and less natural condition than $(V\!D)$, but  in that situation the characterisation of heat kernels upper bounds turns out to be much easier.
Indeed,  the upper bound \eqref{due} then reads
\begin{equation}\label{UB1}
p_{t}(x,x)\leq Ct^{-n/2},~~\forall~t>0,~x\in
 M,
\end{equation}
and the fact that this estimate is uniform (meaning that the RHS is independent of $x\in M$) allows one to make use of purely functional analytic methods, which yield  many characterisations of \eqref{UB1} in terms of Sobolev type inequalities.
First, a necessary and sufficient condition
for this upper bound is the Sobolev inequality:
\begin{equation*}
\|f\|_{\frac{np}{n-\alpha p}}\leq C\|\Delta^{\alpha/2}f\|_{p},\quad
\forall f\in \mathcal{C}^{\infty}_{0}(M),
\end{equation*}
for  $p>1$ and $0<\alpha p<n$, see \cite{Var} and \cite{Cou1}.
Note especially, when $n>2$, the particular case
$\alpha=1$, $p=2$ of this inequality:
\begin{equation}\label{sobo}
\|f\|_{\frac{2n}{n-2}}^2\leq C\mathcal{E}(f),\quad
\forall f\in \mathcal{C}^{\infty}_{0}(M),
\end{equation}
where $$\mathcal{E}(f):=\|\Delta^{1/2}f\|_{2}^2=<\Delta f,f>=\||\nabla f|\|_{2}^2$$
is the Dirichlet form associated with the Laplace-Beltrami operator.
Also
 equivalent to (\ref{UB1}) is the
Nash inequality:
\begin{equation}\label{N1}
\|f\|_{2}^{2+\frac{4}{n}}\leq C\|f\|_{1}^{4/n}\mathcal{E}(f),\quad
\forall f\in \mathcal{C}^{\infty}_{0}(M)
\end{equation}
(for this equivalence, see \cite{CKS}, and for generalisations see \cite{C-N}). Yet another characterisation of
(\ref{UB1}) is given by the Gagliardo-Nirenberg type inequalities:
\begin{equation}\label{GN1}
\|f \|_{q}^2\leq
C\|f\|_{2}^{2-\frac{q-2}{q}n}\mathcal{E}(f)^{\frac{q-2}{2q}n},\quad
\forall f\in \mathcal{C}^{\infty}_{0}(M),
\end{equation}
for   $q>2$ such that $\frac{q-2}{q}n<2$, see \cite{Cou2} for such
inequalities and other extensions.
For instance,  if one takes $q=2+\frac{4}{n}$ (in which case  the above conditions on $q$ are satisfied), then
\eqref{GN1} is the well-known Moser inequality
$$
\|f \|_{2+\frac{4}{n}}^2\leq
C\|f\|_{2}^{\frac{4}{n+2}}\mathcal{E}(f)^{\frac{n}{n+2}}.
$$
Note also that in \eqref{GN1} the limit case $\frac{q-2}{q}n=2$, that is $q=\frac{2n}{n-2}$, is nothing but \eqref{sobo}.
Let us insist on the fact that the proofs of the above equivalences  work in the general setting of a  measure space endowed with a Dirichlet form.
For more on this, see \cite{Cou5}, and for a summary  in book form, see \cite[Section 6.1]{O}.

\bigskip

The equivalences between \eqref{UB1} on the one hand,  \eqref{sobo}, \eqref{N1}, and \eqref{GN1} on the other hand do not use the fact that the majorizing function $Ct^{-n/2}$ is tied to the volume growth via $V(x,r)\simeq r^{n}$. This raises the question 
of characterising  estimates of the type
\begin{equation*}
p_{t}(x,x)\leq m(x,t),\forall~t>0,\,x\in
 M,
\end{equation*}
where $m$ is not necessarily linked to the volume function.

\bigskip

The aim of the present paper is, assuming the volume doubling property $(V\!D)$ instead of the more restrictive polynomial volume growth property, to establish new
characterisations of the upper bound  $(DU\!E)$ in terms of two types of
one-parameter weighted inequalities, which coincide respectively with the Nash inequality (\ref{N1})
and with the Gagliardo-Nirenberg type inequalities (\ref{GN1}) when the volume growth happens to be polynomial of exponent $n$. We will provide a proof of these characterisations that does not rely on Grigor'yan's  theorem on the equivalence between relative Faber-Krahn inequalities $(FK)$ and the heat kernel upper bound $(DU\!E)$.  As a matter of fact,  we will obtain as a by-product a new proof of this equivalence, also of the one with families of localised Sobolev inequalities. All this will rely (as in the uniform case) on functional analytic methods as opposed to PDE methods such as the Moser iteration process. 

Further interesting features of our approach are the following: instead of considering
a family of inequalities indexed by all balls, we deal with  global inequalities (with a scale parameter though); the fact that $(DU\!E)$ implies such inequalities is rather straightforward; the converse relies on a new argument with respect to  the preceding proofs, namely the finite  propagation speed of the wave equation (note that the latter, in its equivalent form of Davies-Gaffney estimate, is also an underlying principle of the equivalence between $(DU\!E)$ and $(U\!E)$, see \cite{CS}); more importantly, we shall consider a more general form $(DU\!E^v)$ of $(DU\!E)$, where the volume function $V$ is replaced by a more general doubling function $v$, and we shall prove  the equivalence between  $(DU\!E^v)$ in the one hand,  and matching versions $(N^v)$ and $(GN_q^v)$ of Nash and Gagliardo-Nirenberg  inequalities on the other hand.  We shall also show that the latter inequalities are equivalent to their  localised Sobolev and Nash counterparts and also to a more general version of the relative Faber-Krahn inequality. Finally, we shall work in a much more general setting than the Riemannian one.

\bigskip


\subsection{Framework and main results}\label{FMR}

Let 
$(M,\mu)$ be a $\sigma$-finite measure space.  Denote by $L$ a
non-negative self-adjoint operator on $L^2(M,\mu)$ with  dense domain $\mathcal{D}$.
The  quadratic form $\mathcal{E}$ associated with $L$ is defined, for $f,g\in\mathcal{D}$, by
$$\mathcal{E}(f,g):=<Lf,g>,$$ where $<\cdot,\cdot>$ is the scalar product in
$L^{2}(M,\mu)$.
We shall abbreviate $$\mathcal{E}(f):=\mathcal{E}(f,f)=\|L^{1/2}f\|_2^2.$$ Let $\mathcal{F}$ the domain of $\mathcal{E}$, which is usually larger than $\mathcal{D}$.  The form $\mathcal{E}$ is closed, symmetric, non-negative.
By spectral theory, the operator $-L$
generates an analytic contraction semigroup $(e^{-tL})_{t>0}$ on
$L^{2}(M,\mu)$. For $1\leq p\leq +\infty $ we denote the norm of a function $f$
in $L^{p}(M,\mu)$ by $\|f\|_{p}$  and, if $T$ is a  bounded linear operator from
$L^{p}(M,\mu)$ to $L^{q}(M,\mu)$, $1\le p\le q\le +\infty$, we denote by
$\|T\|_{p\to q}$ the operator norm of $T$. If $A$ is an unbounded operator acting on $L^p(M,\mu)$, 
$\mathcal{D}_p(A)$ will denote its domain.

Let $v(x,r)$ be a function of $x\in M$ and $r>0$, measurable in $x$, a.e. finite and positive,  and non-decreasing in $r$ for a.e. fixed $x$.
These will be standing assumptions that we will call $(A)$.

We shall often have to assume also that  $v$ is doubling, in the sense that  there exists $C>0$ such that
\begin{equation*}\tag{$D_v$}
 v(x,2r)\le C v(x,r), \forall\,r>0, \,\mbox{ for  }\mu-\mbox{a.e. }\, x\in M.
\end{equation*}
As a consequence of $(D_v)$,  there exist
positive constants $C$ and $\kappa_v$ such that
\begin{equation}\label{dv}\tag{$D_v^{\kappa_v}$}\
     v(x,r)\le C\left(\frac{r}{s}\right)^{\kappa_v} v(x,s),\quad \forall~
r \ge s>0,\,\mbox{ for  }\mu-\mbox{a.e. }\,  x \in M.
  \end{equation}

From Section \ref{LG0} on, we shall consider the case where $M$ is endowed with a metric $d$ (and $\mu$ is Borel). 
In that situation, we may need  to assume  in addition to $(D_v)$:
\begin{equation*}\label{D2}
\tag{$D'_v$}\  v(y,r)\le C v(x,r), \forall\,x,y\in M,\,r>0,\,d(x,y)\le r.
\end{equation*}

One should compare the above definitions with the notion of doubling gauge in  \cite{kig}. Note that we do not need to assume $\inf_{x\in M}v(x,r)>0$ for some $r>0$.

Let again $B(x,r):=\{y\in M,\,d(x,y)<r\}$ be the open ball in $M$ for the distance $d$, of center $x\in M$ and radius $r>0$.
Assume that $V(x,r):=\mu(B(x,r))$ is finite and positive for all $x\in M, r>0$. Exactly as in the case of Riemannian manifolds, define property $(V\!D)$, which may or may not be satisfied by $(M,d,\mu)$:
  \begin{equation*}
     V(x,2r)\le C V(x,r),\quad \forall~x \in M,~r > 0.
    \end{equation*}
    and, if it is the case, let $\kappa>0$ be such that :
     \begin{equation*}\label{d}\tag{$V\!D_\kappa$}
      V(x,r)\le C\left(\frac{r}{s}\right)^{\kappa} V(x,s),\quad \forall~
r \ge s>0,~ x \in M.
    \end{equation*}
In other terms, $(V\!D)$ is nothing but $(D_V)$, $\kappa=\kappa_V$, and in that case, it is easy to check that $(D'_V)$ always holds.
We shall sometimes say in short that  $(M,d,\mu)$ is a doubling metric measure space  meaning that it satisfies $(V\!D)$.

When $(V\!D)$ is satisfied, a typical example of  function $v$ satisfying $(D_v)$ is 
 $v(x,r):=V^\alpha(x,r^\beta)$, $\alpha,\beta>0$; if $\beta=1$ then
 \eqref{D2} is satisfied in addition.
  Another interesting example is $v(x,r):=V(x,\min (r,r_0))$, which satisfies  $(D_v)$  and  \eqref{D2}  as soon as $(M,d,\mu)$ satisfies $(V\!D_{loc})$, that is $(V\!D)$ for $0<r\le r_0$.
 As a consequence,  the family of general pointwise heat kernel upper estimates $(DU\!E^v)$ defined below encompasses $(DU\!E_{loc})$, that is  $(DU\!E)$  for $0<t\le t_0$.
 
 Note by the way that, except in Section \ref{KLR}, we  treat finite as well as infinite measure spaces, and compact as well as non-compact metric measure spaces (recall that according to an observation by Martell,  under $(V\!D)$ compactness is equivalent to finiteness of the measure if  balls in $M$ are precompact, see \cite[Corollary 5.3]{GH}).
 
    Let us finally record a consequence of $(V\!D)$, which we will use several times in the sequel, even in the presence of a function $v$ instead of $V$, namely the bounded covering principle $(BC\!P)$:
  for every $r>0$,   there exists a sequence $x_i \in M$ such that
$d(x_i,x_j)> r$ for $i\neq j$ and $\sup_{x\in M}\inf_i d(x,x_i)
\le r$. The balls $B(x_i,r/2)$ are pairwise disjoint and, for all $\theta> 1/2$, there exists $K_0$  only depending on $\theta$ and on the constant in $(V\!D)$ such that
 $$K(x):=\#\{i\in I,\,x\in B(x_i,\theta r)\}\le K_0,\ \forall\,x\in M.$$

Let us turn now towards the heat kernel pointwise upper estimates. Even in the case $v=V$, the definition of the heat kernel upper bound $(DU\!E^v)$ requires some adaptation from the Riemannian setting to the metric measure space setting.
First, in our general setting the existence of a measurable heat kernel is not granted, and it will be part of the definition of
$(DU\!E^v)$ that
$(e^{-tL})_{t>0}$  has a measurable kernel
$p_{t}$, that is
$$e^{-tL}f(x)=\int_{M}p_{t}(x,y)f(y)d\mu(y),~t>0,~ f\in L^{2}(M,\mu),~\mbox{ for  }\mu-\mbox{a.e. }~x\in
M.$$ 
This being said, even if $p_t$ exists as a measurable function of $(x,y)\in M\times M$,
the expression $p_t(x,x)$ is not properly defined in general,  since the diagonal is a set of measure zero in $M\times M$.
The following definition overcomes this difficulty:
we shall say that $(M,\mu,L,v)$ satisfies $(DU\!E^v)$ if $(e^{-tL})_{t>0}$ has a measurable kernel $p_t$ and there exist $C,c>0$ such that
\begin{equation*}
|p_{t}(x,y)|\leq
\frac{C}{\sqrt{v(x,c\sqrt{t})v(y,c\sqrt{t})}},\mbox{ for all }t>0,~\mbox{ for  }\mu-\mbox{a.e. }\,x,y \in
M.
\end{equation*}
If $v$ satisfies $(D_v)$, one can obviously take $c=1$ in the above estimate.

If $p_t$ happens to be continuous in $x,y$, the above inequality holds for all $x,y$.  Taking $x=y$ and observing that $p_t(x,x)\ge 0$
yields the original form of the estimate
\begin{equation*}
p_{t}(x,x)\leq
\frac{C}{v(x,c\sqrt{t})},\mbox{ for all }t>0,\,x \in
M,
\end{equation*}
with $c=1$ if $v$ satisfies $(D_v)$.
Conversely, using the symmetry of the kernel  $p_t(x,y)$ and the semigroup
property, it is easy to prove that $$|p_t(x,y)|\leq \sqrt{p_t(x,x)p_t(y,y)},$$
for all $t>0$, $x,y\in M$, hence the two forms of $(DU\!E^v)$ are equivalent as soon as they both have a meaning.

The reader may wonder why, since we make almost no assumptions on $v$, we do not write the upper estimate under consideration in a more compact form like
$$p_t(x,x)\le m(x,t)$$
when $p_t$ is continuous, or
$$|p_{t}(x,y)|\leq \sqrt{m(x,t)m(y,t)}$$
in general. The advantage of our choice is that it makes  the comparison with the classical case $v=V$ easier.
Our notation is adapted to a classical time-space scaling $t=r^2$.  One can of course easily change this notation in  order to treat the sub-Gaussian case, but this raises other questions, to which we will come back in a future work.
Finally,  we keep the apparently useless constant $C$ in the definition of $(DU\!E^v)$ in order to stress the fact that the equivalences
we will show between $(DU\!E^v)$ and the  functional inequalities we are going to consider are up to a multiplicative constant.

Note that there are a posteriori some limitations on $v$: our results in Section \ref{KLR} will rely on the assumption that $v$ is bounded from below by $V$. In the opposite direction, in many situations, $v$ cannot be substantially larger than $V$.
This follows from the fact which we are about to explain that $(DU\!E^v)$ implies a Gaussian upper bound $(U\!E^v)$; then, if $v$ is too large, $\int_Mp_t(x,y)\,d\mu(y)$ cannot be uniformly bounded from below, therefore $(e^{-tL})_{t>0}$ cannot preserve the constant function 
$1$ (the so-called conservativeness property). More precisely, it follows from \cite[Theorem 6.1, see also beginning of Section 7]{CG1} that, at least in a Riemannian situation, $(DU\!E^v)$ can only hold if $v(x,r)\le CV(x,r\log r)$, $r\ge 2$; but this relies strongly on the conservativeness property (also called stochastic completeness), and there are many interesting situations where this property does not hold, for instance Dirichlet boundary conditions. In any case, there is no reason to tie a priori $v$ to $V$.

If $(M,\mu)$ is endowed with a metric $d$, the full Gaussian estimate $(U\!E^v)$ can be formulated essentially in the same way as in the Riemannian setting:
\begin{equation}\tag{$U\!E^v$}
\left|p_{t}(x,y)\right|\leq
\frac{C}{\sqrt{v(x,\sqrt{t})v(y,\sqrt{t})}}\exp
\left(-\frac{d^{2}(x,y)}{Ct}\right),\label{UEv}
\end{equation}
$\forall~t>0$,  for $\mu-\mbox{a.e. }\,x,y\in
 M$.
 If $v$ satisfies $(D_v)$ and \eqref{D2}, \eqref{UEv} can be rewritten in the simpler form
 \begin{equation*}
\left|p_{t}(x,y)\right|\leq
\frac{C}{v(x,\sqrt{t})}\exp
\left(-\frac{d^{2}(x,y)}{Ct}\right),\forall~t>0,~\mbox{ for  }\mu-\mbox{a.e. }\,x,y\in
 M.
\end{equation*}

\bigskip

Let us now briefly introduce a major but very general assumption, namely the Davies-Gaffney estimate.  Let $(M,d,\mu)$  be a metric measure space and $L$  a non-negative self-adjoint operator on $L^2(M,\mu)$ with dense domain. For $U_1, U_ 2$ open subsets of $M$, let
$d(U_1,U_2)=\inf\limits_{x\in U_1, y\in U_2}{d}(x,y)$. One  says that $(M,d,\mu,L)$, or in short $L$, satisfies
 the
Davies-Gaffney estimate if
\begin{equation}  \label{DG2}\tag{$DG$}
| \langle e^{-tL}f_1,f_2\rangle | \le
\exp\left(-\frac{r^2}{4t}\right) \|f_1\|_{2} \|f_2\|_{2},
\end{equation}
for all $t >0$, $U_i \subset M$, \ $f_i\in L^2(U_i,\mu)$, $i=1,2$ and $%
r=d(U_1,U_2)$. Davies-Gaffney
estimate holds for essentially all self-adjoint, elliptic or subelliptic
second-order differential operators including Laplace-Beltrami operators on
complete Riemannian manifolds, Schr\"odinger operators with real-valued
potentials and electromagnetic fields; as we already said, it is equivalent to the finite propagation speed of the wave equation. For more information, see for instance \cite{CS} and the beginning of Section \ref{DGE}. 
Davies-Gaffney estimate also holds for measure spaces endowed
 with  a strongly local regular Dirichlet form and the associated intrinsic metric, see \cite{ST} and also \cite{HR}, \cite{AH} for the same estimate with an optimal metric. Note however that this intrinsic metric can degenerate, or be discontinuous,
and in such instances our approach cannot  be used.
For example in the case of fractals, such metrics degenerate. Indeed
in this case $d(x,y)=0$ for all $x,y$ from the ambient space and of course
it is impossible to use this  intrinsic metric in a meaningful way. Recall finally that $(DU\!E^v)$ and $(U\!E^v)$ are equivalent under $(DG)$ and $(D_v)$ (see  \cite[Section 4.2]{CS}).

 \bigskip

Let us finally introduce the functional inequalities that are going to generalise \eqref{N1} and \eqref{GN1}. Denote  
$$
v_r(x):=v(x,r), \,r>0,\,x\in M.
$$
For $v$ as above (not necessarily satisfying $(D_v)$ or \eqref{D2}), consider the inequality 
\begin{equation*}\label{NB}
\tag{$N^v$}\|f \|_{2}^2\leq
C(\|fv_{r}^{-1/2}\|_{1}^2+r^2\mathcal{E}(f)), \quad
\forall\, r>0, \,f\in \mathcal{F}.
\end{equation*}
Of course, our understanding is that if  the RHS is infinite (here because  $\|fv_{r}^{-1/2}\|_{1}$ is infinite) then \eqref{NB} holds. The same applies to the inequalities we will consider in the sequel.
When $v(x,r)\simeq r^n$ for some $n>0$,  for instance when $M$ is endowed with a metric $d$, $v=V$, and $(M,d,\mu)$ has  polynomial volume growth of exponent $n$, \eqref{NB} yields
\begin{equation*}
 \|f \|_{2}^2\leq
C'(r^{-n}\|f\|_{1}^2+r^2\mathcal{E}(f)),\quad
\forall\, r>0, \, f\in \mathcal{F},
\end{equation*}
which, as one sees by minimising in $r$, has exactly the same form as \eqref{N1}. This why we shall call \eqref{NB} a $v$-Nash inequality, or in short a Nash inequality.

The following  inequality  was introduced in \cite{kig}:
\begin{equation*}\label{KNB}
\tag{$K\!N^v$}\|f \|_{2}^2\leq
C\left(\frac{\|f\|_{1}^2}{\inf\limits_{z\in \text{supp}(f)}v_r(z)}+r^2\mathcal{E}(f)\right), \quad
\forall\, r>0, \,f\in \mathcal{F}.
\end{equation*}
Obviously, $(N^v)$ implies $(K\!N^v)$. Kigami shows that  $(DU\!E^v)$ implies $(K\!N^v)$ and that, under an exit time estimate,  $(K\!N^v)$ implies $(DU\!E^v)$. We shall see that  if one replaces the exit time assumption by a Davies-Gaffney estimate which holds  in great generality,  $(K\!N^v)$, $(N^v)$, and $(DU\!E^v)$ are in fact equivalent.  Kigami also considered a  version of $(K\!N^v)$ that is adapted to the case of so-called sub-Gaussian estimates. We will not pursue this direction in the present article (see however the remarks at the very end). The unpublished note \cite{CK}  contains further generalisations of  $(K\!N^v)$ type inequalities. 


In the case where $(M,d,\mu)$ is a metric space, we can also introduce the family of localised $v$-Nash inequalities: there exist $\alpha,C>0$  such that for every ball $B=B(x,r)$, for every $f\in \mathcal{F}_c(B)$,
\begin{equation*}\label{LNa}
\tag{$LN^v_\alpha$}
\|f \|_{2}^{2\left(1+\alpha\right)}\le \frac{C}{v_r^\alpha(x)}\|f \|_{1}^{2\alpha}\left(\|f\|^2_2+r^2\mathcal{E}(f)\right).
\end{equation*}
Here $\mathcal{F}_c(\Omega)$ is the set of functions in $ \mathcal{F}$ that are compactly supported in the open set $\Omega$. The positive parameter $\alpha$ plays a minor role here.
For instance, if $v=V$ one can check easily that  $(LN^v_\alpha)\Rightarrow (LN^v_{\alpha'})$ for all $0<\alpha'<\alpha$. We will often drop $\alpha$ in $(LN_\alpha^v)$, and then $(LN^v)$ will mean: $(LN_\alpha^v)$ for some $\alpha>0$ . 
We shall see in Proposition \ref{KL} below that $(K\!N^v)\Rightarrow(LN^v)$   if $v$ satisfies $(D_v)$ and \eqref{D2}, and
in Proposition \ref{LF}  that $(LN^v)$ is equivalent with some form of relative Faber-Krahn inequality, which coincides with $(FK)$ if $v=V$ and $(M,d,\mu)$ satisfies the doubling and reverse doubling volume conditions.
\bigskip

Introduce now, for $2<q\le+\infty$,  the  inequality:
\begin{equation*}\label{GNB}
 \tag{$GN_{q}^v$}\|fv_{r}^{\frac{1}{2}-\frac{1}{q}} \|_{q}^2\leq
C(\|f\|_{2}^2+r^2\mathcal{E}(f)), \quad \forall\, r>0,  \,f\in \mathcal{F}.
\end{equation*}
When $v(x,r)\simeq r^n$ for some $n>0$,  for instance when $M$ is endowed with a metric space, $v=V$, and $(M,d,\mu)$ has  polynomial volume growth of exponent $n$, \eqref{GNB} yields
\begin{equation*}
 \|f \|_{q}^2\leq
Cr^{-\frac{q-2}{q}n}(\|f\|_{2}^2+r^2\mathcal{E}(f)),\quad
\forall\, r>0,\, f\in \mathcal{F},
\end{equation*}
which, as one sees again by minimising in $r$, is equivalent to \eqref{GN1}  if $\frac{q-2}{q}n<2$ and to \eqref{sobo} if $\frac{q-2}{q}n=2$.
This is why we shall call \eqref{GNB} a $v$-Gagliardo-Nirenberg inequality or in short a Gagliardo-Nirenberg inequality.
Note at once that \eqref{GNB} is nothing but a resolvent estimate:
 $$ \sup_{r>0}\|{v_{r}^{\frac{1}{2}-\frac{1}{q}}}(I+r^2L)^{-1/2}\|_{2\to q}<+\infty.$$

Similarly as for $(N^v)$,  one can  formally weaken $(GN_q^v)$ in the spirit of \cite{kig}, by introducing a $v$-Kigami-Gagliardo-Nirenberg inequality 
\begin{equation*}\label{KGNB}
 \tag{$K\!G\!N_{q}^v$}\left(\inf\limits_{z\in\text{supp}(f)}v_{r}^{1-\frac{2}{q}}(z)\right)\|f \|_{q}^2\leq
C(\|f\|_{2}^2+r^2\mathcal{E}(f)), \quad \forall\, r>0,  \,f\in \mathcal{F}.
\end{equation*}

In the case where $(M,d,\mu)$ is a metric space and  if $v$ satisfies  \eqref{D2}, by restricting oneself to functions supported in  $B(x,r)$, one sees that \eqref{KGNB} implies the following version of the family of localised Sobolev inequalities \eqref{ISo}:
there exists $C>0$  such that for every ball $B=B(x,r)$, for every $f\in\mathcal{F}_c(B)$,
\begin{equation*}\label{Sq}
\tag{$LS_q^v$}
\|f \|_{q}^2\le \frac{C}{v_r^{1-\frac{2}{q}}(x)}\left(\|f\|^2_2+r^2\mathcal{E}(f)\right).
\end{equation*}

Note that in
\cite{S},  \cite{SA}, and \cite{SF} such inequalities are considered (in the case $v=V$) and called localised or scale-invariant Sobolev inequalities. For the sake of coherence with the notation $(N^v)$, $(K\!N^v)$, $(LN^v)$ on the one hand, $(GN_q^v)$, $(KGN_q^v)$ on the other hand, we could also have denoted this inequality by $(LGN_q^v)$, but we rather chose the name $(LS_q^v)$ to emphasise the connection with
Sobolev type inequalities.

It is an easy exercise to check that $(G\!N_q^v)$, $(K\!G\!N_q^v)$, $(LS_q^v)$ respectively imply
$(G\!N_{q_1}^v)$, $(K\!G\!N_{q_1}^v)$, $(LS_{q_1}^v)$ for $2<q_1<q$. We leave this to the reader.  Inequalities $(G\!N_2^v)$, $(K\!G\!N_2^v)$, $(LS_2^v)$  are all trivial.

It will be understood that if $M$ is endowed with a metric and $v=V$ we omit the superscript $v$ in all the inequalities considered above.

\bigskip
There are several reasons why we consider two families of inequalities, namely  $(N^v)$ and its variants on the one hand, and  $(GN_q^v)$ and its variants on the other hand, instead of one, even though the $(GN_q^v)$ family is easily seen to imply the $(N^v)$ one (Proposition \ref{gn} below)  and a more involved converse happens to hold under additional assumptions (Proposition \ref{ngn}). First, we want to bridge as much as possible the polynomial theory and the existing doubling theory,
and they both involve analogues of these two families. Second, and more importantly, the two families display different features.  We shall see that $(N^v)$ admits a variant which is adapted to the case where $v$ is not doubling,
whereas  $(GN_q^v)$, being in essence a resolvent estimate, is more directly related to  the matching pointwise heat kernel upper estimate $(DU\!E^v)$ to be defined below. As a matter of fact, we shall have to go through $(GN_q^v)$ in order to show, under additional assumptions, that $(N^v)$ implies $(DU\!E^v)$.
\bigskip

We are now ready to state our main result.

\begin{theorem}\label{mainDG} Let $(M,d,\mu)$ be a doubling metric measure space, $v:M\times \R_+\to \R_+$  satisfy $(A)$, $(D_v)$, and \eqref{D2},  and $L$ a  non-negative self-adjoint operator on $L^2(M,\mu)$.  Assume that 
 $(M,d,\mu,L)$ satisfies 
the Davies-Gaffney
estimate \eqref{DG2}  
and   that the semigroup $(e^{-tL})_{t>0}$ is  uniformly bounded on $L^{1}(M,\mu)$. Then the upper bound
$(DU\!E^v)$
is equivalent to
$(GN_{q}^v)$, for any $q$ such that $2<q\le +\infty$ and $\frac{q-2}{q}\kappa_v<2$, where $\kappa_v$ is as in \eqref{dv}. If in addition $(e^{-tL})_{t>0}$ is  positivity preserving, $(DU\!E^v)$ is also equivalent to  each of the following conditions: $(N^v)$, $(K\!N^v)$,  $(LN^v)$, as well as $(K\!GN_{q}^v)$, $(LS_q^v)$, under the same condition on $q$.
\end{theorem}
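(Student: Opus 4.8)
The plan is to establish the equivalences by organising them as a cycle of implications, with the pointwise bound $(DU\!E^v)$ as the hub. The essential new input is that $(DU\!E^v)$ implies the \emph{global} scale-invariant inequalities $(GN_q^v)$ and $(N^v)$ in a quasi-trivial way: writing $\|v_r^{1/2-1/q}(I+r^2L)^{-1/2}\|_{2\to q}$ as a resolvent estimate and recalling that $(GN_q^v)$ is exactly such an estimate, one expands $(I+r^2L)^{-1/2}$ via the subordination formula as an average of $e^{-sr^2L}$ against a probability density in $s$, and bounds each $e^{-sL}$ from $L^2$ to $L^q$ by interpolating between the $L^2\to L^\infty$ bound $\|e^{-sL}\|_{2\to\infty}\le \sup_x v(x,c\sqrt s)^{-1/2}$ (read off from $(DU\!E^v)$ and the semigroup property) and the $L^2$-contractivity; the doubling property $(D_v)$ together with \eqref{D2} is what lets one convert the $s$-dependent and $x$-dependent volume factors into the clean weight $v_r^{1-2/q}$, using $(D_v^{\kappa_v})$ to absorb the integral in $s$ precisely when $\frac{q-2}{q}\kappa_v<2$. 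The implication $(GN_q^v)\Rightarrow(N^v)$ is H\"older's inequality (Proposition~\ref{gn}), $(N^v)\Rightarrow(K\!N^v)$ is immediate, and $(K\!N^v)\Rightarrow(LN^v)$ and $(K\!GN_q^v)\Rightarrow(LS_q^v)$ are the localisation statements already recorded (Proposition~\ref{KL} and the remark after $(LS_q^v)$).

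The substantive half is the return trip, from the various functional inequalities back to $(DU\!E^v)$, and here the positivity-preserving hypothesis and the Davies--Gaffney estimate enter. I would first show that $(GN_q^v)$, being a uniform bound on $\|v_r^{1/2-1/q}(I+r^2L)^{-1/2}\|_{2\to q}$, self-improves: iterating the resolvent and using $(D_v)$ one upgrades it to a bound on $\|v_r^{1/2-1/q}e^{-r^2L}\|_{2\to q}$, hence (letting $q\to\infty$, or bootstrapping through an intermediate exponent) to $\|v_{\sqrt t}^{1/2}e^{-tL}v_{\sqrt t}^{1/2}\|_{1\to\infty}\le C$, which, since $(e^{-tL})$ is bounded on $L^1$ (and positivity preserving, to guarantee that boundedness $L^1\to L^\infty$ produces a genuine kernel bound $|p_t(x,y)|\le\|v_{\sqrt t}(x)^{-1/2}v_{\sqrt t}(y)^{-1/2}\|$ in the pointwise a.e.\ sense), is exactly $(DU\!E^v)$. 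For the Nash-type inequalities the standard route is: $(N^v)$ (or $(K\!N^v)$, or $(LN^v)$ via the Faber--Krahn reformulation of Proposition~\ref{LF}) gives, by the usual Nash--Moser ODE argument applied to $t\mapsto\|e^{-tL}f\|_2^2$ with $f$ well-localised, a decay $\|e^{-tL}\|_{1\to 2}^2\lesssim$ a local volume factor; but to turn a \emph{local} $L^1\to L^2$ bound into the \emph{pointwise} diagonal bound one needs to re-patch the local pieces, and that is where \eqref{DG2} does the work, exactly as in \cite[Section 4]{CS}: Davies--Gaffney controls the off-diagonal leakage between balls of radius $\sqrt t$ well enough that a Gaussian (hence in particular a pointwise diagonal) bound follows from the on-diagonal local $L^1\to L^2$ estimate. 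Since $(GN_q^v)$ is the strongest of the family (Proposition~\ref{gn}), it suffices to close the cycle by showing $(N^v)$, or one of its consequences, implies $(GN_q^v)$ — and this is the content of Proposition~\ref{ngn}, which one invokes under the standing hypotheses $(D_v)$ and \eqref{D2}.

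Concretely I would arrange the proof as: (i) $(DU\!E^v)\Rightarrow(GN_q^v)$ by subordination + interpolation + $(D_v^{\kappa_v})$; (ii) $(GN_q^v)\Rightarrow(N^v)\Rightarrow(K\!N^v)$ by H\"older and triviality; (iii) $(K\!N^v)\Rightarrow(LN^v)$ and $(K\!GN_q^v)\Rightarrow(LS_q^v)$ by the localisation propositions, plus $(GN_q^v)\Rightarrow(K\!GN_q^v)$ trivially; (iv) $(LN^v)\Leftrightarrow$ (generalised relative Faber--Krahn) by Proposition~\ref{LF}; (v) the Nash/Faber--Krahn inequality $\Rightarrow$ a local $L^1\to L^2$ decay by the Nash iteration, then $\Rightarrow(DU\!E^v)$ using \eqref{DG2} as in \cite{CS}; and finally (vi) $(GN_q^v)\Rightarrow(DU\!E^v)$ directly by the resolvent self-improvement described above, which also completes the equivalence for the first, purely $L^1$-boundedness part of the statement (no positivity needed). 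One checks along the way that every implication producing a \emph{pointwise} bound either already has positivity available or does not need it: the first assertion of the theorem, $(DU\!E^v)\Leftrightarrow(GN_q^v)$, goes through steps (i) and (vi) only.

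The main obstacle I expect is step (v) — more precisely, transferring a \emph{localised} consequence of a \emph{global} Nash inequality into a \emph{genuinely pointwise} heat kernel bound without the Moser iteration and without Grigor'yan's Faber--Krahn theorem. The global inequality $(N^v)$ has no built-in control of $e^{-tL}f$ for $f$ spread out over all of $M$, so one must first restrict to compactly supported $f$ and run the Nash argument locally, obtaining only $\|P_t^{B}\|_{1\to 2}$-type bounds for Dirichlet-type or truncated operators; patching these back together on a cover of $M$ by $\sqrt t$-balls, and controlling the interaction terms, is exactly where finite propagation speed / Davies--Gaffney is indispensable, and getting the constants in $(BC\!P)$ and $(D_v)$ to line up so that the sum over the cover converges is the delicate bookkeeping. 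A secondary difficulty is ensuring, in the infinite-measure and possibly non-conservative settings the paper insists on handling, that the passage from operator norm bounds to honest a.e.\ kernel bounds (existence and measurability of $p_t$) is legitimate; positivity preservation is what rescues this, via the Dunford--Pettis-type criterion, and one must be careful to invoke it precisely where claimed and not elsewhere.
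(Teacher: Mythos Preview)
Your step (vi) has a genuine gap. You claim $(GN_q^v)\Rightarrow(DU\!E^v)$ follows by ``resolvent self-improvement \ldots\ letting $q\to\infty$, or bootstrapping through an intermediate exponent'', but $q$ is fixed by the constraint $\frac{q-2}{q}\kappa_v<2$, so one cannot send $q\to\infty$, and a naive bootstrap does not work either: from $(GN_q^v)$ one gets $(vE_{2,q})$, hence by duality $(Ev_{q',2})$, but to iterate towards $L^1$ one must move the weight $v_{\sqrt t}^{\gamma}$ from one side of $e^{-tL}$ to the other, and when $v$ genuinely depends on $x$ there is no a~priori reason this commutation should hold. The paper's central technical point (Proposition~\ref{cieplo1}, built on Proposition~\ref{fala1} and Lemma~\ref{nos}) is precisely that under \eqref{DG2} all the weighted estimates $(vEv_{p,2,\gamma})$, $\gamma\in\R$, are equivalent, because finite propagation speed lets one commute $v_r^{\gamma}$ past any $\Phi(r\sqrt L)$ with $\mathrm{supp}\,\widehat\Phi\subset[-1,1]$. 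This weight commutation is what drives the extrapolation from $(Ev_{q',2})$ down to $(Ev_{1,2})$ (Proposition~\ref{extrapolation}), and it is needed already for the first assertion of the theorem --- which explicitly does \emph{not} assume positivity. Your proposal places the use of Davies--Gaffney only in step (v), but without it step (vi) does not close.

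You also misidentify the role of positivity. Dunford--Pettis gives a measurable kernel from any $L^1\to L^\infty$ bound, no positivity required; that is how the paper reads $(DU\!E^v)$ off from $(vEv)$ in Proposition~\ref{implication0}. Positivity is used instead in Proposition~\ref{posi}: it ensures, by domination, that the Dirichlet restrictions $e^{-tL_{B(x,r)}}$ are uniformly bounded on $L^1$, which is the hypothesis needed in Proposition~\ref{ngn} to run the classical Nash argument ball-by-ball. The paper's route for the Nash-type conditions is then $(LN^v)\Rightarrow(GN_q^v)$ (Proposition~\ref{ngn}, using Lemma~\ref{wediri} to identify the local and global wave propagators and Lemma~\ref{fala2} to glue) followed by the already-established first assertion --- not a direct ``local $L^1\to L^2$ decay plus \cite{CS}-style patching'' as you sketch.
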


Note that $\frac{q-2}{q}\kappa<2$ together with  $2<q\le +\infty$ means that either $\kappa< 2$ and $q\in(2,+\infty]$,  or $\kappa\ge 2$ and $q\in (2,\frac{2\kappa}{\kappa-2})$. In the latter case, for $q>\frac{2\kappa}{\kappa-2}$, $(GN_q^v)$ may happen to be trivially false, as the polynomial case shows, even though
 $(DU\!E^v)$ is true.

We obtain also a characterization of  $(DU\!E^v)$ in terms of Faber-Krahn inequalities, but in that case there are additional subtleties, so that we refer the reader directly to Section \ref{R}.

Recall that, already 15 years ago, Carron showed in \cite{Ca} that   $(DU\!E^v)$, with $v$ not necessarily doubling,  implies a non-uniform Sobolev-Orlicz inequality involving $\mathcal{E}(f)$; interestingly enough, he also claimed that a converse should rely on the finite propagation speed of the wave equation (that is, on the Davies-Gaffney estimate). The present paper proves that he was right. It would be interesting to check directly that, under our assumptions, $(N^v)$ and $(GN_q^v)$ imply this Sobolev-Orlicz inequality, and  to investigate whether
 they are equivalent or not.

\bigskip

Here is the plan of this article:

\medskip

In Section \ref{FA}, we will use purely functional analytic techniques. We will work with a quadruple $(M,\mu,L,v)$, with $(M,\mu)$ a measure space, $L$ a non-negative self-adjoint operator on $L^2(M,\mu)$, and $v$ a function on $M\times \R_+$ satisfying $(A)$.  Sometimes, but not always, we will also assume that $v$ satisfies some doubling properties.  In Section \ref{prel}, we introduce the  weighted $L^p-L^q$ estimates that will be our main  technical tool,   in Section \ref{HN}  we prove that  $(DU\!E^v)$ implies $(N^v)$,   in Section \ref{HGN} that $(DU\!E^v)$ implies $(GN_q^v)$ for  $q>2$ small enough, in Section \ref{uni} that  $(GN_q^v)$ implies $(DU\!E^v)$ if one assumes that $v$ does not depend on $x\in M$; the fact that $(N^v)$ implies $(DU\!E^v)$ is already known in that case, see \cite{C-N}, but we elaborate on that result. 

\medskip

In Section \ref{LG0}, we introduce  a metric $d$ on $M$,  and from  Section \ref{LG} on we assume that the quadratic form $\mathcal{E}$ associated with $L$ is a strongly local and regular Dirichlet form. In Section \ref{GNN} we observe that $(GN_q^v)$ implies $(N^v)$,     that  $(K\!G\!N_q^v)$ implies $(K\!N^v)$, and that  $(LS_q^v)$ implies $(LN^v)$, in Section \ref{LG} that  $(GN_q^v)$,     $(K\!G\!N_q^v)$ and $(LS_q^v)$ (resp. $(N^v)$, $(K\!N^v)$ and $(LN^v)$) are equivalent, in
Section \ref{R} we  establish the connection with Faber-Krahn inequalities, in Section \ref{KLR} we study the effect of the so-called reverse doubling condition on Faber-Krahn  inequalities and on localised Nash inequalities.

\medskip

In Section \ref{DGE},  we assume that $(M,d,\mu)$ is a doubling metric measure space and that $L$ satisfies   the Davies-Gaffney estimate. In Section \ref{DG} we prove that  $(GN_q^v)$ implies $(DU\!E^v)$ and   in Section \ref{NG}  that $(N^v)$ implies $(GN_q^v)$ under an $L^1$ assumption on $(e^{-tL})_{t>0}$.   This section finishes the  proof of Theorem    \ref{mainDG}.

\bigskip

As we have just seen, the setting in which we work may vary from section to section. All our results are however valid in the setting of  a doubling metric measure space endowed with a strongly local regular Dirichlet form compatible with the distance (see Section \ref{LG} for details). Therefore they  not only cover the Laplace-Beltrami operators on Riemannian manifolds,  but a significantly larger class of self-adjoint differential operators acting on more general spaces. Such Dirichlet forms include restrictions of the Laplace operator to  open subsets with Dirichlet or Neumann boundary conditions, see for example  \cite{GS}. This setting also includes degenerate elliptic operators, a class which was studied for instance in \cite{ERSZ1, FKS, FL, Tru2} or subelliptic operators as in \cite{FP}. In some instances we will also consider Schr\"odinger type operators with positive or  negative potentials.

Note that Theorem \ref{mainDG} itself holds in a even more general setting, that is, in principle, beyond differential operators or even Dirichlet forms: the only assumption for the first assertion is Davies-Gaffney and uniform boundedness on $L^1$ (in particular, one could treat semigroups acting on vector bundles). In the second assertion, some positivity is required.

\bigskip


Let us finally point out that a  very preliminary version of parts of  the present work, with the same authors, appeared in \cite{Bou}.

\bigskip

{\bf Remark on notation:} In the sequel, letters $c$, $C$ and $C'$ will denote positive constants, whose
value may change at each occurrence.
\bigskip

\section{Functional analysis}\label{FA}
\subsection{Weighted $L^p-L^q$ estimates}\label{prel}

In this section,  $(M,\mu)$ will be a $\sigma$-finite measure space, $L$ a non-negative self-adjoint operator on $L^2(M,\mu)$, and  $v$ a function from $M\times \R_+$ to $\R_+$ satisfying $(A)$.

For a function $W:M\rightarrow \mathbb{R}$,  let $M_{W}$ the
operator of multiplication by $W$, that is
$$(M_{W}f)(x)=W(x)f(x).$$ In the sequel, we shall identify the function $W$ and the operator $M_W$. That is, if $T$ is a linear operator, we shall denote by $W_1TW_2$ the operator  $M_{W_1}TM_{W_2}$.
In other words $$W_1TW_2f(x):=W_1(x)T(W_2f)(x).$$

Let $1\le p\le q\le +\infty$. Let $\gamma$, $\delta$ be  real numbers such that $\gamma+\delta=\frac{1}{p}-\frac{1}{q}$.
We shall denote
 \begin{equation*}\label{vEvapq}
 \tag{$vEv_{p,q,\gamma}$}
\sup_{t>0}  \| {v_{\sqrt{t}}^\gamma}\,  e^{-tL}\, {v_{\sqrt{t}}^\delta} \|_{p \to q} < +\infty.
\end{equation*}
Of course, this condition may or may not hold, and requires in the first place that the operator ${v_{\sqrt{t}}^\gamma}\,  e^{-tL}\, {v_{\sqrt{t}}^\delta} $ is bounded from $L^p$ to $L^q$ for all $t>0$,
which is certainly not always true.
When $\gamma=\frac{1}{p}-\frac{1}{q}$ (that is $\delta=0$), we shall abbreviate $(vEv_{p,q, \gamma})$ by
\begin{equation*}\label{vEpq}
\tag{$vE_{p,q}$}\sup_{t>0}\|{v_{\sqrt{t}}^{\frac{1}{p}-\frac{1}{q}}}e^{-tL}\|_{p\rightarrow
q}<+\infty,
\end{equation*}
and when $\gamma=0$,
 by
\begin{equation*}\label{Evpq}
\tag{$Ev_{p,q}$}
\sup_{t>0}\|e^{-tL}\,{v_{\sqrt{t}}^{\frac{1}{p}-\frac{1}{q}}}\|_{p\rightarrow
q}< +\infty.
\end{equation*}
Finally, we shall abbreviate  $(vEv_{1,\infty, 1/2})$, that is
\begin{equation*}\label{ultracontractivity}
\sup_{t>0}\|{v_{\sqrt{t}}^{1/2}}\,e^{-tL}\,{v_{\sqrt{t}}^{1/2}}\|_{1\to
\infty}< +\infty,
\end{equation*}
by $(vEv)$. Another noteworthy particular case is $(vEv_{p,p,0})$, which is nothing but the uniform boundedness of $(e^{-tL})_{t>0}$ on $L^p(M,\mu)$.

In the case where we take $v=V$, we shall of course use the notation $(V\!EV_{p,q,\gamma})$, $(V\!E_{p,q})$, 
$(EV_{p,q})$, $(V\!EV)$.

Note that the above estimates are on-diagonal versions of the generalised Gaussian estimates introduced by Blunck and Kunstmann (see for instance \cite{BK}).

\bigskip

Observe that, by duality, $(vEv_{p,q,\gamma})$ is equivalent to
$(vEv_{q^{\prime},p^{\prime}, \delta})$, where $p^{\prime},q^{\prime}$ are the conjugate exponents to $p,q$ and $\gamma+\delta=\frac{1}{p}-\frac{1}{q}$. In particular, $(vE_{p,q})$ and $(Ev_{q',p'})$ are equivalent.
The following statement does not use any doubling assumption on $v$. To this purpose, we introduce slightly modified versions of  $(Ev_{1,2})$ and $(vE_{2,\infty})$, which  under $(D_v)$ are equivalent to their counterparts.

\begin{proposition}\label{implication0}  The estimates  $(DU\!E^v)$,
$(vEv)$,
\begin{equation*}\label{tilde}
\tag{$\tilde{E}v_{1,2}$}\sup_{t>0}\|e^{-(t/2)L}\,{v_{\sqrt{t}}^{1/2}}\|_{1\to 2}<+\infty
\end{equation*}
and
\begin{equation*}\label{tilde1}
\tag{$v\tilde{E}_{2,\infty}$} \sup_{t>0}\|{v_{\sqrt{t}}^{1/2}}e^{-(t/2)L}\|_{2\to
 \infty}<+\infty
\end{equation*}
 are equivalent.
\end{proposition}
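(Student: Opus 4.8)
The strategy is to establish a cycle of implications among the four estimates, using only elementary operator-theoretic facts: self-adjointness of $e^{-tL}$, the semigroup property, duality, and the monotonicity of $v$ in $r$ encoded in $(A)$ (no doubling needed). First I would record the two obvious duality relations. Since $L$ is self-adjoint, $e^{-(t/2)L}$ is self-adjoint, so the operator ${v_{\sqrt t}^{1/2}}e^{-(t/2)L}$ is the adjoint of $e^{-(t/2)L}{v_{\sqrt t}^{1/2}}$ (note $v_{\sqrt t}$ is real). Hence $\|{v_{\sqrt t}^{1/2}}e^{-(t/2)L}\|_{2\to\infty}=\|e^{-(t/2)L}{v_{\sqrt t}^{1/2}}\|_{1\to 2}$ for each fixed $t>0$, and taking the supremum over $t$ gives the equivalence of $(\tilde E v_{1,2})$ and $(v\tilde E_{2,\infty})$ immediately.

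Next I would show $(\tilde E v_{1,2})\Leftrightarrow(vEv)$ by factoring the heat semigroup at time $t$ through time $t/2$. Write $e^{-tL}=e^{-(t/2)L}e^{-(t/2)L}$, so
\begin{equation*}
{v_{\sqrt t}^{1/2}}\,e^{-tL}\,{v_{\sqrt t}^{1/2}}=\bigl({v_{\sqrt t}^{1/2}}e^{-(t/2)L}\bigr)\bigl(e^{-(t/2)L}{v_{\sqrt t}^{1/2}}\bigr),
\end{equation*}
and estimate the $1\to\infty$ norm of the left side by the product of the $2\to\infty$ norm of the first factor and the $1\to 2$ norm of the second. This gives $(\tilde E v_{1,2})\Rightarrow(vEv)$ at once (using the already-established equivalence of the two tilde estimates). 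For the converse, I would use that a bounded operator $T$ from $L^1$ to $L^\infty$ which factors as $T=S^*S$ with $S\colon L^1\to L^2$ (the relevant situation after inserting weights, since ${v_{\sqrt t}^{1/2}}e^{-tL}{v_{\sqrt t}^{1/2}}=\bigl(e^{-(t/2)L}{v_{\sqrt t}^{1/2}}\bigr)^*\bigl(e^{-(t/2)L}{v_{\sqrt t}^{1/2}}\bigr)$) satisfies $\|S\|_{1\to 2}^2=\|S^*S\|_{1\to\infty}$: indeed $\|S\|_{1\to 2}^2=\|S\|_{1\to 2}\|S^*\|_{2\to\infty}\geq\|S^*S\|_{1\to\infty}$, and conversely $\|Sf\|_2^2=\langle S^*Sf,f\rangle\le\|S^*Sf\|_\infty\|f\|_1\le\|S^*S\|_{1\to\infty}\|f\|_1^2$ by H\"older. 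Applying this with $S=e^{-(t/2)L}{v_{\sqrt t}^{1/2}}$ gives $\|e^{-(t/2)L}{v_{\sqrt t}^{1/2}}\|_{1\to 2}^2=\|{v_{\sqrt t}^{1/2}}e^{-tL}{v_{\sqrt t}^{1/2}}\|_{1\to\infty}$, and taking suprema yields $(vEv)\Rightarrow(\tilde E v_{1,2})$.

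It remains to connect $(DU\!E^v)$ with $(vEv)$. The direction $(DU\!E^v)\Rightarrow(vEv)$ is essentially a restatement: if $p_t(x,y)$ exists and $|p_t(x,y)|\le C\bigl(v(x,c\sqrt t)v(y,c\sqrt t)\bigr)^{-1/2}$, then the kernel of ${v_{\sqrt t}^{1/2}}e^{-tL}{v_{\sqrt t}^{1/2}}$ is $v(x,\sqrt t)^{1/2}p_t(x,y)v(y,\sqrt t)^{1/2}$, whose absolute value is bounded by $C\bigl(v(x,\sqrt t)/v(x,c\sqrt t)\bigr)^{1/2}\bigl(v(y,\sqrt t)/v(y,c\sqrt t)\bigr)^{1/2}$; here one uses monotonicity of $v$ in $r$ (and, if $c>1$ is unavoidable in the non-doubling case, one simply works with $c\sqrt t$ throughout, or invokes the remark that $c=1$ can be taken under $(D_v)$ — since Proposition \ref{implication0} is stated without a doubling hypothesis, I would first reduce to $c=1$ by the paragraph following the definition of $(DU\!E^v)$, or state the uniform bound directly as $\sup_{t>0}\text{ess sup}_{x,y}v(x,c\sqrt t)^{1/2}|p_t(x,y)|v(y,c\sqrt t)^{1/2}<\infty$ which is exactly $(vEv)$ up to the harmless rescaling $t\mapsto t/c^2$). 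A kernel bounded in $L^\infty(M\times M)$ gives a bounded operator $L^1\to L^\infty$ with the same norm, so $(vEv)$ follows. Conversely, $(vEv)$ says ${v_{\sqrt t}^{1/2}}e^{-tL}{v_{\sqrt t}^{1/2}}$ is bounded $L^1\to L^\infty$; by the classical Dunford–Pettis / Schwartz kernel theorem, any bounded operator from $L^1(M,\mu)$ to $L^\infty(M,\mu)$ on a $\sigma$-finite measure space is given by integration against a kernel in $L^\infty(M\times M,\mu\otimes\mu)$ with norm equal to the operator norm. Dividing out the (a.e. finite, positive) weights recovers a measurable kernel $p_t$ for $e^{-tL}$ with $|p_t(x,y)|\le C\bigl(v(x,\sqrt t)v(y,\sqrt t)\bigr)^{-1/2}$ a.e., which is $(DU\!E^v)$. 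One should check consistency of the kernels $p_t$ across different $t$ and that they genuinely represent the semigroup, but this is routine given that $e^{-tL}$ is already defined as an $L^2$ operator and the weights are a.e. finite and positive.

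The main obstacle is the $(vEv)\Rightarrow(DU\!E^v)$ step, specifically the clean invocation of the kernel theorem for $L^1\to L^\infty$ operators on a general $\sigma$-finite space and the bookkeeping needed to strip the weights off (the weight $v_{\sqrt t}^{1/2}$ could in principle be unbounded, but since it is a.e. finite and positive, multiplication by $v_{\sqrt t}^{-1/2}$ is a well-defined — possibly unbounded — measurable operation, and the inequality $|p_t(x,y)|\le C\,v(x,\sqrt t)^{-1/2}v(y,\sqrt t)^{-1/2}$ is a pointwise a.e. statement about the kernel, not an operator-norm statement, so no boundedness of the inverse weight is required). Everything else is a short exercise in the factorization $e^{-tL}=(e^{-(t/2)L})^*e^{-(t/2)L}$ combined with self-adjointness and duality.
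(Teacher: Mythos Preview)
Your argument is correct and follows essentially the same route as the paper: duality for the equivalence of the two tilde conditions, the $T^*T$ identity $\|T^*T\|_{1\to\infty}=\|T\|_{1\to 2}^2$ applied to $T=e^{-(t/2)L}v_{\sqrt t}^{1/2}$ for the link with $(vEv)$, and the Dunford--Pettis characterisation of bounded $L^1\to L^\infty$ operators by bounded kernels for the equivalence $(vEv)\Leftrightarrow(DU\!E^v)$. The only cosmetic difference is that the paper quotes the $T^*T$ identity as a known fact, whereas you supply the two-line H\"older argument; your additional remarks on stripping the weights and on the constant $c$ are sound bookkeeping that the paper leaves implicit.
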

\begin{proof} 
According to Dunford-Pettis theorem (for  a nice account see  \cite[Proposition 3.1]{GH}), $(vEv)$ implies that the operator ${v_{\sqrt{t}}^{1/2}}\,e^{-tL}\,{v_{\sqrt{t}}^{1/2}}$ has a bounded measurable kernel. It follows that 
$(e^{-tL})_{t>0}$ also has a measurable kernel $p_t(x,y)$ and that
\begin{equation*}
\mbox{\rm ess sup}_{x,y\in
M}{v_{\sqrt{t}}^{1/2}(x)|p_{t}(x,y)|v_{\sqrt{t}}^{1/2}(y)}=\|{v_{\sqrt{t}}^{1/2}}\,e^{-tL}\,{v_{\sqrt{t}}^{1/2}}\|_{1\to \infty}<+\infty,
\end{equation*}
hence 
$(DU\!E^v)$ holds. The converse from $(DU\!E^v)$ to $(vEv)$ also follows   from the above equality.

Similarly to what we already observed, \eqref{tilde} and \eqref{tilde1} are equivalent by duality. Furthermore, it is well-known that, for an operator $T$ mapping $L^1$ to $L^2$,  $$\|T^{*}T\|_{1\to \infty}=\|T^{*}\|_{2\to
\infty}^{2}=\|T\|_{1\to 2}^{2}.$$ By taking
	$T=e^{-(t/2)L}{v_{\sqrt{t}}^{1/2}}$, we obtain
\begin{equation}\label{link}
 \|{v_{\sqrt{t}}^{1/2}}\,e^{-tL}\,{v_{\sqrt{t}}^{1/2}}\|_{1\to \infty}=\|{v_{\sqrt{t}}^{1/2}}e^{-(t/2)L}\|_{2\to
 \infty}^{2}=\|e^{-(t/2)L}\,{v_{\sqrt{t}}^{1/2}}\|_{1\to 2}^{2},
\end{equation}
which shows the equivalence of $(DU\!E^v)$ with the two other conditions.
\end{proof}

The consequence   if one does assume doubling is now obvious.
\begin{coro}\label{implication} Assume that $v$ satisfies $(D_v)$. The estimates  $(DU\!E^v)$,
$(vEv)$,
$(Ev_{1,2})$,
and $(vE_{2,\infty})$ are equivalent.
\end{coro}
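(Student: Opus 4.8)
The plan is to deduce Corollary~\ref{implication} directly from Proposition~\ref{implication0} by observing that, under the doubling hypothesis $(D_v)$, the four auxiliary estimates appearing there collapse onto the four named ones. Concretely, I would show that $(D_v)$ makes $(\tilde{E}v_{1,2})$ equivalent to $(Ev_{1,2})$, and $(v\tilde{E}_{2,\infty})$ equivalent to $(vE_{2,\infty})$; combined with the already-established equivalences $(DU\!E^v)\Leftrightarrow(vEv)\Leftrightarrow(\tilde{E}v_{1,2})\Leftrightarrow(v\tilde{E}_{2,\infty})$ from Proposition~\ref{implication0}, this yields the claim. Nothing else is needed, since the equivalence $(Ev_{1,2})\Leftrightarrow(vE_{2,\infty})$ is a pure duality statement that, as noted in the excerpt, holds without any doubling assumption.

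The key step is therefore the comparison of $(\tilde{E}v_{1,2})$ with $(Ev_{1,2})$. Recall $(Ev_{1,2})$ is $\sup_{t>0}\|e^{-tL}v_{\sqrt{t}}^{1/2}\|_{1\to 2}<+\infty$, while $(\tilde{E}v_{1,2})$ has $e^{-(t/2)L}$ in place of $e^{-tL}$ and the same weight $v_{\sqrt{t}}^{1/2}$. After the substitution $t\mapsto 2t$, $(\tilde{E}v_{1,2})$ reads $\sup_{t>0}\|e^{-tL}v_{\sqrt{2t}}^{1/2}\|_{1\to 2}<+\infty$; the two conditions differ only in that one carries the weight $v_{\sqrt{t}}^{1/2}$ and the other $v_{\sqrt{2t}}^{1/2}$. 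By $(D_v)$, for a.e.\ $x$ we have $v(x,\sqrt{2t})\le C\,v(x,\sqrt{t})$ and also $v(x,\sqrt{t})\le v(x,\sqrt{2t})$ by monotonicity in $r$, so $v_{\sqrt{t}}^{1/2}$ and $v_{\sqrt{2t}}^{1/2}$ are pointwise comparable up to the multiplicative constant $C^{1/2}$. Since multiplying by a bounded function changes the $L^1\to L^2$ norm by at most that factor, $(\tilde{E}v_{1,2})\Leftrightarrow(Ev_{1,2})$. The argument for $(v\tilde{E}_{2,\infty})\Leftrightarrow(vE_{2,\infty})$ is identical: rescale $t\mapsto 2t$ and replace the weight $v_{\sqrt{2t}}^{1/2}$ by the comparable $v_{\sqrt{t}}^{1/2}$ using $(D_v)$ and monotonicity.

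I do not expect any real obstacle here; this is the routine observation that under doubling one may freely rescale the time parameter inside the weight. The only point requiring a modicum of care is that $(D_v)$ is stated for a.e.\ $x$, so the pointwise weight comparison — and hence the operator-norm comparison — holds up to null sets, which is harmless. Assembling the chain
$$
(DU\!E^v)\ \Longleftrightarrow\ (vEv)\ \Longleftrightarrow\ (\tilde{E}v_{1,2})\ \Longleftrightarrow\ (Ev_{1,2})\ \Longleftrightarrow\ (vE_{2,\infty})\ \Longleftrightarrow\ (v\tilde{E}_{2,\infty}),
$$
where the first three links come from Proposition~\ref{implication0}, the middle one from $(D_v)$ plus monotonicity, the next from duality, and the last again from $(D_v)$ plus monotonicity (or from Proposition~\ref{implication0}), completes the proof.
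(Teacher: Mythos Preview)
Your proposal is correct and follows exactly the approach the paper intends: the paper itself remarks, just before stating Proposition~\ref{implication0}, that the tilde versions ``under $(D_v)$ are equivalent to their counterparts,'' and then simply declares the corollary ``obvious.'' You have spelled out precisely the rescaling-and-doubling argument that the paper leaves implicit.
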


\begin{rem} The above so-called $T^*T$-argument yields in the same way
$$(Ev_{p,2})\Leftrightarrow(vE_{2,p'})\Leftrightarrow(vEv_{p,p',\gamma})$$
for all $1\le p\le 2$, $\gamma=\frac{1}{p}-\frac{1}{2}$.
\end{rem}

\begin{rem} The equivalence between $(vEv)$ and
$(Ev_{1,2})$ means that  an equivalent definition for  $(DU\!E^v)$ is the following:
\begin{equation}
\|p_{t}(x,.)\|_2^2\leq
\frac{C'}{v(x,\sqrt{t})},\mbox{ for all }t>0, \mbox{ for }~\mu-\mbox{a.e. }\,x \in
M\label{duesgg}
\end{equation} 
(this also holds with a slight modification if $v$ is not doubling).  Also, it is worth emphasising the difference between $(Ev_{1,2})$ and $(vE_{1,2})$:
$(Ev_{1,2})$ (or $(vE_{2,\infty})$)  is equivalent to
$$\mbox{\rm ess sup}_{x\in
M,\,t>0}\,v_{\sqrt{t}}(x)\int_M p_t^2(x,y)\,d\mu(y)<+\infty.
$$ 
whereas  $(vE_{1,2})$ (or $(Ev_{2,\infty})$) is equivalent to
$$\mbox{\rm ess sup}_{x\in
M,\,t>0}\int_M p_t^2(x,y)v_{\sqrt{t}}(y)\,d\mu(y)<+\infty.
$$ 
The cornerstone of  our main results, Proposition  $\ref{cieplo1}$ below, yields in particular that, under the so-called Davies-Gaffney estimate and additional assumptions on $(M,\mu)$ and $v$, $(Ev_{1,2})$ and $(vE_{1,2})$ are actually equivalent.

\end{rem}


By applying complex interpolation to the family of operators $$T_z:=v_{\sqrt{t}}^{\gamma_1z+(1-z)\gamma_2}\,  e^{-tL}\, v_{\sqrt{t}}^{\delta_1z+(1-z)\delta_2},\ 0\le\mbox{Re}\,z\le 1,$$
one obtains easily:
\begin{proposition}[Interpolation]\label{interpolation} Let $1\leq p_1\le q_1\leq +\infty$,   $1\leq p_2\le q_2\leq +\infty$, $\gamma_1,\gamma_2\in\R$. Then  $(vEv_{p_1,q_1,\gamma_1})$ and
 $(vEv_{p_2,q_2,\gamma_2})$ imply $(vEv_{p,q,\gamma})$, where $\frac{1}{p}=\frac{\theta}{p_1}+\frac{1-\theta}{p_2}$, $\frac{1}{q}=\frac{\theta}{q_1}+\frac{1-\theta}{q_2}$, $\gamma=\theta\gamma_1+(1-\theta)\gamma_2$.
\end{proposition}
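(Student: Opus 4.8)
The statement to prove is Proposition~\ref{interpolation}, the interpolation property for the weighted estimates $(vEv_{p,q,\gamma})$.

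\medskip

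\textbf{Approach.} The plan is to apply the Stein complex interpolation theorem to the analytic family of operators
\[
T_z := v_{\sqrt{t}}^{\,\gamma_1 z + (1-z)\gamma_2}\, e^{-tL}\, v_{\sqrt{t}}^{\,\delta_1 z + (1-z)\delta_2}, \qquad 0 \le \mathrm{Re}\, z \le 1,
\]
for each fixed $t>0$, where $\delta_i = \frac{1}{p_i}-\frac{1}{q_i}-\gamma_i$ so that the exponents match the defining balance condition of $(vEv_{p_i,q_i,\gamma_i})$. The key point is that the hypotheses give, for every fixed $t>0$, the two endpoint bounds
\[
\|T_{1}\|_{p_1 \to q_1} \le A_1, \qquad \|T_{0}\|_{p_2 \to q_2} \le A_2,
\]
where $A_1, A_2$ are the suprema over $t>0$ appearing in $(vEv_{p_1,q_1,\gamma_1})$ and $(vEv_{p_2,q_2,\gamma_2})$ respectively; crucially these bounds are \emph{uniform in $t$}. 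Stein interpolation then yields, for $\theta \in (0,1)$, the bound $\|T_\theta\|_{p\to q} \le A_1^{\theta} A_2^{1-\theta}$ with $\frac1p = \frac{\theta}{p_1}+\frac{1-\theta}{p_2}$, $\frac1q = \frac{\theta}{q_1}+\frac{1-\theta}{q_2}$, again uniform in $t$. One checks that $T_\theta = v_{\sqrt{t}}^{\gamma} e^{-tL} v_{\sqrt{t}}^{\delta}$ with $\gamma = \theta\gamma_1 + (1-\theta)\gamma_2$ and $\delta = \theta\delta_1+(1-\theta)\delta_2 = \frac1p - \frac1q - \gamma$, which is exactly $(vEv_{p,q,\gamma})$ after taking the supremum over $t$.

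\medskip

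\textbf{Steps, in order.} First, fix $t>0$ and reduce to bounded functions with bounded support (a dense subclass) so that all multiplication operators and their compositions make sense and depend analytically on $z$; since $v_{\sqrt t}$ is measurable, positive and finite a.e.\ by $(A)$, the map $z \mapsto v_{\sqrt t}^{\,az+b}(x) = \exp((az+b)\log v_{\sqrt t}(x))$ is entire for a.e.\ $x$, and one verifies admissibility of the family (the requisite logarithmic growth of $\|T_z f\|$ in $\mathrm{Im}\,z$ on the strip) using that $e^{-tL}$ is a fixed bounded operator on $L^2$ and a truncation argument on the weights. Second, invoke Stein interpolation on the strip to pass from the two vertical-line endpoint bounds to the bound at $\mathrm{Re}\,z = \theta$. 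Third, observe that the constant obtained does not depend on $t$ because the endpoint constants do not; take the supremum over $t>0$ to conclude $(vEv_{p,q,\gamma})$. Finally, record the bookkeeping identity $\gamma + \delta = \frac1p - \frac1q$ for the interpolated exponents, which confirms the claimed form of the conclusion.

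\medskip

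\textbf{Main obstacle.} The only genuinely delicate point is the admissibility/analyticity of the family $\{T_z\}$ in the sense required by Stein's theorem: one must control $\sup_{0\le \mathrm{Re}\,z\le 1}\|T_z f\|_{q_j}$ (or at least a subexponential bound in $|\mathrm{Im}\,z|$) for $f$ in the dense subclass, and this requires handling the possibly unbounded weight $v_{\sqrt t}^{\,s}$ for $s$ ranging over a bounded real interval. This is resolved by first restricting to $f$ supported where $\varepsilon \le v_{\sqrt t} \le \varepsilon^{-1}$, where everything is manifestly bounded and entire in $z$, applying interpolation there with a constant independent of $\varepsilon$, and then letting $\varepsilon \to 0$ by monotone convergence. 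Everything else — the matching of exponents, the uniformity in $t$, the identification of $T_\theta$ — is routine bookkeeping, which is why the authors call it "easy."
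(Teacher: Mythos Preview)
Your proposal is correct and follows exactly the approach sketched in the paper: the authors simply write down the analytic family $T_z = v_{\sqrt{t}}^{\gamma_1 z + (1-z)\gamma_2}\, e^{-tL}\, v_{\sqrt{t}}^{\delta_1 z + (1-z)\delta_2}$ and state that complex interpolation yields the result, without spelling out the admissibility verification or the truncation argument that you (rightly) identify as the only delicate point. Your write-up is thus a faithful and more detailed version of the paper's one-line proof.
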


Of particular interest will be the case $p_2=q_2$, $\gamma_2=0$, which amounts to the uniform boundedness of $(e^{-tL})_{t>0}$ on $L^{p_2}(M,\mu)$.
Note that the main technical point of the present paper will be an extrapolation counterpart  to this consequence of Proposition \ref{interpolation}, namely   Proposition \ref{extrapolation} below. 

\bigskip

Next
    for any pair $(p,q)$ such that $1 \le p \le q \le \infty$ we define exponents $$0\le \gamma_-(p,q)\le \gamma_+(p,q)$$ by the formulae 
$$
\gamma_- \left( p,q\right)  =\max\left\{ 
\frac{1}{2p}-\frac{1}{q},0\right\} 
$$
and
$$
\gamma_+\left(p,q\right)  =\min\left\{\frac{1}{p}-\frac{1}{q},
\frac{1}{2}-\frac{1}{2q}\right\}.
$$
Using this notation, we can  state a  consequence of Corollary  \ref{implication} and Proposition \ref{interpolation} that only relies on duality and interpolation.

\begin{coro} \label{coro} Assume that $v$ satisfies $(D_v)$. The pointwise heat kernel upper bound $(DU\!E^v)$ implies $(Ev_{p,2})$ for all $p$ such that $1\leq p\leq 2$,  $(vE_{2,q})$
for all $q$ such that $2\leq q\leq \infty$, and $(vEv_{p,q,\gamma})$ for all $p,q$ such that $1\leq p\leq 2\le q\leq +\infty$ and    $\gamma=\frac{1}{2}-\frac{1}{q}$. 
If in addition $(e^{-tL})_{t>0}$ is uniformly bounded on $L^1(M,\mu)$,
one obtains also  $(vEv_{p,q,\gamma})$ for all $p,q$ such that $1\leq p\leq  q\leq +\infty$ and  all $  \gamma_-(p,q) \le  \gamma    \le \gamma_+(p,q)$.
\end{coro}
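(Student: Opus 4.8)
The plan is to deduce every assertion from Corollary~\ref{implication}, the duality relation $(vEv_{p,q,\gamma})\Leftrightarrow(vEv_{q',p',\delta})$ with $\gamma+\delta=\frac1p-\frac1q$, and repeated use of the interpolation Proposition~\ref{interpolation}; I do not expect to need any new ingredient. The organising device is to encode each estimate $(vEv_{p,q,\gamma})$ by the point $(1/p,1/q,\gamma)\in[0,1]^2\times\R$, and to use the fact that Proposition~\ref{interpolation}, applied two estimates at a time and iterated, yields every estimate whose point lies in the convex hull of a given finite family of points (the condition $p\le q$ being preserved at each step, since $\frac1p$ and $\frac1q$ are interpolated with the same weights). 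So the whole proof amounts to identifying the relevant convex hulls.

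First I would treat the assertions not involving the $L^1$ bound. By Corollary~\ref{implication}, $(DU\!E^v)$ gives for free the three estimates $(vEv)=(vEv_{1,\infty,1/2})$, $(Ev_{1,2})=(vEv_{1,2,0})$ and $(vE_{2,\infty})=(vEv_{2,\infty,1/2})$, while $(vEv_{2,2,0})$ holds trivially since $(e^{-tL})_{t>0}$ is an $L^2$-contraction semigroup. The four corresponding points $(1,0,\frac12)$, $(1,\frac12,0)$, $(\frac12,0,\frac12)$, $(\frac12,\frac12,0)$ all lie in the affine plane $\{\gamma+1/q=1/2\}$, and I would check that their convex hull is exactly the square $\{1\le p\le 2\le q\le+\infty,\ \gamma=\frac12-\frac1q\}$. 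Concretely this is produced by a short chain of interpolations: $(vEv_{1,2,0})$ with $(vEv_{2,2,0})$ gives $(Ev_{p,2})$ for $1\le p\le 2$; $(vEv_{2,2,0})$ with $(vEv_{2,\infty,1/2})$ gives $(vE_{2,q})$ for $2\le q\le+\infty$; $(vEv_{1,\infty,1/2})$ with $(vEv_{2,\infty,1/2})$ gives $(vEv_{p,\infty,1/2})$ for $1\le p\le 2$; and a last interpolation between these last two one-parameter families (with weight $2/q$ on the $L^2\to L^2$ side) gives $(vEv_{p,q,\gamma})$ for all $1\le p\le2\le q\le+\infty$ with $\gamma=\frac12-\frac1q$. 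This already contains the first three claims of the corollary.

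Next I would bring in the $L^1$ hypothesis. It says precisely that $(vEv_{1,1,0})$ holds, and, $e^{-tL}$ being self-adjoint, duality immediately gives $(vEv_{\infty,\infty,0})$ as well. Together with the estimates above I then have six estimates, with parameter points $(1,1,0)$, $(0,0,0)$, $(1,\frac12,0)$, $(\frac12,0,\frac12)$, $(1,0,\frac12)$ (plus the redundant midpoint $(\frac12,\frac12,0)$). The remaining point is to verify that the convex hull of these six points is exactly $\{(1/p,1/q,\gamma):1\le p\le q\le+\infty,\ \gamma_-(p,q)\le\gamma\le\gamma_+(p,q)\}$. Writing $a=1/p$, $b=1/q$ and letting $s$ denote the combined weight carried by the two points of ``height'' $\gamma=\frac12$ (so that the total third coordinate is $s/2$, i.e.\ $s=2\gamma$), one sees that a convex combination of the six points satisfies $a\ge b+\gamma$, $a\le\min(2b+2\gamma,1)$ and $0\le b\le 1-2\gamma$, and conversely every triple obeying these three inequalities is realised as such a combination; since these inequalities are exactly $\gamma_-(p,q)\le\gamma\le\gamma_+(p,q)$ together with $1\le p\le q\le+\infty$, iterating Proposition~\ref{interpolation} over this hull produces all the estimates $(vEv_{p,q,\gamma})$ asserted.

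I do not expect any conceptual obstacle, since everything reduces to duality and interpolation. The one place requiring some care is the final bookkeeping, namely checking that the convex hull of the explicit list of parameter points coincides with the region cut out by $\gamma_-$ and $\gamma_+$; this is a finite-dimensional convexity computation, slightly fiddly because of the cases hidden in the ``$\min$'' and ``$\max$'', but routine. For the corollary only the inclusion ``$\gamma_-(p,q)\le\gamma\le\gamma_+(p,q)\Rightarrow$ the point lies in the hull'' is needed, which just amounts to exhibiting the appropriate convex combination.
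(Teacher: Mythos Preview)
Your proposal is correct and uses exactly the same ingredients as the paper (Corollary~\ref{implication}, duality, and repeated complex interpolation). For the first part your chain of interpolations is verbatim the paper's. For the second part the paper organises the argument slightly differently: instead of a single convex-hull computation, it first obtains the lower face $(vEv_{p,q,\gamma_-(p,q)})$ by interpolating among $(vEv_{1,1,0})$, $(vEv_{\infty,\infty,0})$, $(vEv_{1,2,0})$, $(vEv_{1,\infty,1/2})$, then observes that duality sends $\gamma_-(p,q)$ to $\gamma_+(q',p')$ and so yields the upper face $(vEv_{p,q,\gamma_+(p,q)})$ for free, and finally interpolates between the two faces. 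Your direct convex-hull verification arrives at the same region (one small slip: your list of inequalities characterising the hull should include $\gamma\ge 0$, but as you note only the inclusion direction is needed, and that direction is fine).
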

\begin{proof} Corollary  \ref{implication} says that, under $(D_v)$, $(DU\!E^v)$ implies $(vEv_{1,\infty,1/2})$, $(vEv_{1,2,0})$ and $(vEv_{2,\infty,1/2})$.
Since the semigroup $(e^{-tL})_{t>0}$ is uniformly bounded on $L^2(M,\mu)$, we have in addition $(vEv_{2,2,0})$.  Proposition \ref{interpolation} applied to $(vEv_{1,2,0})$ and   $(vEv_{2,2,0})$ yields $(vEv_{p,2,0})$, that is $(Ev_{p,2})$,  for all $1\le p\le2$, and
applied to $(vEv_{1,\infty,1/2})$  and  $(vEv_{2,\infty,1/2})$ it yields $(vEv_{p,\infty,1/2})$ for all $1\le p\le 2$. Now interpolating again between $(vEv_{p,2,0})$ and $(vEv_{p,\infty,1/2})$ yields the first part of the statement.
Next if  $(e^{-tL})_{t>0}$ is uniformly bounded on $L^1(M,\mu)$ then by duality and 
 interpolation $(vEv_{p,p,0})$ holds for all $ 1 \le p \le \infty$. One checks easily that interpolation between $(vEv_{1,1,0})$,
$(vEv_{\infty,\infty,0})$, and 
$(vEv_{1,2,0})$ yields $(vEv_{p,q,0})$ for all $1 \le p \le q \le \infty$ such that $1/p\leq 2/q$, that is $(vEv_{p,q,\gamma_-(p,q)})$ for this range of $p,q$. 
To obtain $(vEv_{p,q,\gamma_-(p,q)})$  for  $1/p>  2/q$,  one interpolates between $(vEv_{\infty,\infty,0})$,
$(vEv_{1,2,0})$, and $(vEv_{1,\infty,}\frac{1}{2})$.
One then obtains  $(vEv_{p,q,\gamma_+(p,q)})$ by duality and interpolating between $(vEv_{p,q,\gamma_-(p,q)})$
and $(vEv_{p,q,\gamma_+(p,q)})$ yields $(vEv_{p,q,\gamma})$ for all $  \gamma_-(p,q) \le  \gamma    \le \gamma_+(p,q)$.
\end{proof}

Note that one can drop assumption $(D_v)$ in Corollary \ref{coro}, at the expense of modifying  conditions $(vEv_{p,q,\gamma})$ in the spirit of Proposition \ref{implication0}. If one is prepared to assume the full Gaussian upper estimate $(U\!E^v)$ instead of $(DU\!E^v)$,
 one can obtain 
the same conclusion as in Corollary \ref{coro} in a more straightforward way and without any restriction on the exponent $\gamma$. 


 \begin{coro}\label{lem} Let  $(M,d,\mu)$  be a metric measure space, let $v:M\times \R_+\to \R_+$ satisfy $(A)$, $(D_v)$, and \eqref{D2}, and let $L$ be a  non-negative self-adjoint operator on $L^2(M,\mu)$.
 Assume that $(e^{-tL})_{t>0}$ is uniformly bounded on $L^{p_0}(M,\mu)$ for some $p_0\in [1,2)$.
 The full Gaussian upper bound
$(U\!E^v)$
 implies  $(vEv_{p,q,\gamma})$, for all $p,q$ such that $p_0\leq p< q\leq p'_0$ and   all $ \gamma\in \R$.
 \end{coro}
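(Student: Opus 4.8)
The plan is to reduce the full Gaussian estimate $(U\!E^v)$ to an $L^1$--$L^\infty$ bound with an exponential weight, and then run a weighted Schur/Young-type argument together with the doubling properties $(D_v)$ and $\eqref{D2}$. First I would rewrite $(U\!E^v)$ as a pointwise bound on the weighted kernel
$$
K_t(x,y):= v_{\sqrt t}^\gamma(x)\, p_t(x,y)\, v_{\sqrt t}^\delta(y),
$$
where $\gamma+\delta=\frac1p-\frac1q$. The key observation is that, using $\eqref{D2}$ (to compare $v_{\sqrt t}(x)$ and $v_{\sqrt t}(y)$ when $d(x,y)\le\sqrt t$) and $(D_v^{\kappa_v})$ (to absorb the polynomial blow-up of the volume ratio when $d(x,y)\ge\sqrt t$ into the Gaussian factor), one gets for every $\gamma,\delta$ with $\gamma+\delta=\frac1p-\frac1q$
$$
|K_t(x,y)|\le \frac{C}{v(x,\sqrt t)^{1-\frac1p+\frac1q}\,}\cdot\frac{1}{v(y,\sqrt t)^{\frac1p-\frac1q}}\,\exp\!\left(-\frac{d^2(x,y)}{C't}\right),
$$
or some variant thereof; the point is that the exponential term dominates any power of $(1+d(x,y)/\sqrt t)$, so the precise split of $\frac1p-\frac1q$ between $\gamma$ and $\delta$ is irrelevant — this is exactly why no restriction on $\gamma$ appears.

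Next I would note that on $L^{p_0}$ and, by duality (since $(e^{-tL})_{t>0}$ is self-adjoint), on $L^{p_0'}$, the semigroup is uniformly bounded; hence by interpolation it is uniformly bounded on $L^p$ for all $p\in[p_0,p_0']$. Combining this with the weighted kernel bound above via a standard interpolation-with-change-of-weights argument — or more directly, by establishing the two endpoint estimates $(vEv_{p_0,p_0,0})$ (trivial) and a weighted $(vEv_{p_0,p_0',\gamma})$ obtained by integrating the Gaussian kernel against $L^{p_0}$ functions and using that $\int_M v(y,\sqrt t)^{-s}\exp(-d^2(x,y)/C't)\,d\mu(y)$ is controlled by $v(x,\sqrt t)^{-s+\theta}$ for suitable $\theta$ thanks to doubling — and then interpolating, one lands on $(vEv_{p,q,\gamma})$ for all $p_0\le p<q\le p_0'$. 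The cleanest route is probably: first prove the weighted $L^{p_0}\to L^{p_0'}$ bound directly from the Gaussian estimate by Schur's test (the weights make the two Schur integrals symmetric and both reduce, via $(D_v)$ and $\eqref{D2}$, to a convergent Gaussian integral), then interpolate against uniform boundedness on $L^{p_0}$ and $L^{p_0'}$, and finally redistribute the weight exponent between left and right using the freedom in $\gamma+\delta$ exactly as in Corollary \ref{coro}.

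The main obstacle I expect is the bookkeeping in the Schur-test step: one must check that
$$
\sup_{x}\ v_{\sqrt t}^{p_0\gamma}(x)\int_M |p_t(x,y)|^{\,?}\,v_{\sqrt t}^{\,?}(y)\,d\mu(y)<\infty
$$
with the right exponents, and this requires carefully tracking how the exponential factor $\exp(-d^2(x,y)/C't)$ interacts with the weight $v(y,\sqrt t)^{-s}$ and the measure $d\mu(y)$. The standard tool is to decompose $M$ into annuli $\{2^{k}\sqrt t\le d(x,y)<2^{k+1}\sqrt t\}$, use $(V\!D)$ to bound the measure of each annulus by $C 2^{k\kappa}V(x,\sqrt t)$, use $(D_v^{\kappa_v})$ and $\eqref{D2}$ to compare $v(y,\sqrt t)$ with $v(x,\sqrt t)$ up to a factor $C 2^{k\kappa_v}$, and then sum the geometric-times-Gaussian series $\sum_k 2^{k(\kappa+\kappa_v)}e^{-4^k/C'}<\infty$. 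Once this convergence is in place the rest is routine duality and interpolation, so I would expect the write-up to be short: one lemma-style computation followed by an appeal to Proposition \ref{interpolation} and the $T^*T$/duality remarks already recorded.
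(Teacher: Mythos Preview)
Your opening observation is exactly right and is in fact the whole story: using $(D_v)$ and \eqref{D2} one has $v(x,\sqrt t)/v(y,\sqrt t)\le C(1+d(x,y)/\sqrt t)^{\kappa_v}$, so for any $\gamma,\delta$ with $\gamma+\delta=1$ the Gaussian absorbs the weight ratio and one obtains a uniform bound on $v_{\sqrt t}^\gamma(x)|p_t(x,y)|v_{\sqrt t}^\delta(y)$. But this pointwise bound \emph{is} precisely $(vEv_{1,\infty,\gamma})$ for every $\gamma\in\R$, since the $L^1\to L^\infty$ norm of an integral operator equals the essential supremum of its kernel. No integration, no Schur test is needed at this stage.

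The paper then finishes in one line: by the hypothesis, duality and interpolation, $(e^{-tL})_{t>0}$ is uniformly bounded on $L^r(M,\mu)$, i.e.\ $(vEv_{r,r,0})$ holds, for every $r\in[p_0,p_0']$; complex interpolation (Proposition~\ref{interpolation}) between $(vEv_{1,\infty,\gamma_1})$ and $(vEv_{r,r,0})$ with $r$ chosen on the segment $[p,q]$ and $\gamma_1$ arbitrary yields $(vEv_{p,q,\gamma})$ for any $\gamma$. Your Schur/annulus route, by contrast, has a genuine gap: the integral $\int_M v(y,\sqrt t)^{-s}\exp(-d^2(x,y)/C't)\,d\mu(y)$ cannot be controlled by $(D_v)$ and \eqref{D2} alone --- your decomposition explicitly invokes $(V\!D)$ to bound $\mu$ of annuli by $C2^{k\kappa}V(x,\sqrt t)$, but neither $(V\!D)$ nor any comparison between $v$ and $V$ is assumed in the corollary. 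So drop the Schur step entirely; you already had the endpoint you need.
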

  \begin{proof} It is easy to see using $(D_v)$ and \eqref{D2}  that $(U\!E^v)$ implies
  $(vEv_{1,\infty, \gamma})$ for all $ \gamma\in \R$. On the other hand, $(e^{-tL})_{t>0}$ is uniformly bounded on $L^r(M,\mu)$ for all $p_0\le r\le p'_0$ by duality and interpolation.
Applying Proposition \ref{interpolation} with $p_1=1$, $q_1=\infty$,  every $\gamma_1\in\R$, $p_2=q_2=r$, for all $r$ such that $p\le r\le q$,  and $\gamma_2=0$    yields the claim.
\end{proof}

Note  that if $(M,d,\mu)$ is a doubling metric measure space and if $v\ge \varepsilon V$ for some $\varepsilon>0$, one need not assume the uniform boundedness of $(e^{-tL})_{t>0}$ on $L^{p_0}(M,\mu)$ in Corollary \ref{lem}, since it then   follows for $p_0=1$ from $(U\!E^v)$. 

Now recall that  $(U\!E^v)$ and $(DU\!E^v)$ coincide if the Davies-Gaffney estimate holds (see section \cite[Section 4.2]{CS}). We can therefore state the following.

\begin{theorem}\label{lemdg} Let  $(M,d,\mu)$  be a metric measure space,  $v:M\times \R_+\to \R_+$ satisfy $(A)$, $(D_v)$, and \eqref{D2}, and  $L$ be a  non-negative self-adjoint operator on $L^2(M,\mu)$.
 Assume that 
 $(M,d,\mu,L)$ satisfies 
 \eqref{DG2} and that $(e^{-tL})_{t>0}$ is uniformly bounded on $L^{p_0}(M,\mu)$ for some $p_0\in [1,2)$.
 Then the pointwise heat kernel upper bound
$(DU\!E^v)$
 implies  $(vEv_{p,q,\gamma})$, for all $p,q$ such that $p_0\leq p< q\leq p'_0$ and   all $ \gamma\in \R$.
 \end{theorem}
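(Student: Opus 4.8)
The plan is to deduce the statement from Corollary~\ref{lem} together with the self‑improvement of the on‑diagonal bound into a full Gaussian bound under the Davies–Gaffney estimate. Concretely, the only point that is not already available is that, under the standing hypotheses $(DG)$ and $(D_v)$, the estimate $(DU\!E^v)$ implies $(U\!E^v)$; once that is in hand, Corollary~\ref{lem} applies verbatim, with the same $v$, the same $p_0$, and the same range $p_0\le p<q\le p_0'$, and produces $(vEv_{p,q,\gamma})$ for every $\gamma\in\R$.

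For the implication $(DU\!E^v)\Rightarrow(U\!E^v)$ I would invoke \cite[Section~4.2]{CS}, exactly as in the paragraph preceding the statement. The argument there starts from the equivalent form $(vE_{2,\infty})$ (equivalently $(Ev_{1,2})$) of $(DU\!E^v)$ furnished by the $T^*T$‑argument of Proposition~\ref{implication0} and Corollary~\ref{implication}, and then feeds this $L^2\to L^\infty$ bound into the Davies–Gaffney estimate through a finite‑propagation‑speed (or Davies‑perturbation) device to recover the off‑diagonal Gaussian factor $\exp(-d^2(x,y)/(Ct))$; the doubling property $(D_v)$ is what makes the passage from the $d(x,y)\le\sqrt t$ form of the bound to the general form harmless. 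I would not reprove this, since it is precisely the content of the cited reference and is used here only as a black box.

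It then remains to chain the two inputs: by Corollary~\ref{lem}, the Gaussian bound $(U\!E^v)$ together with the uniform boundedness of $(e^{-tL})_{t>0}$ on $L^{p_0}(M,\mu)$ yields $(vEv_{p,q,\gamma})$ for all $p_0\le p<q\le p_0'$ and all $\gamma\in\R$, which completes the proof. I do not expect a genuine obstacle here: the theorem is essentially a repackaging of Corollary~\ref{lem} for the situation in which the Gaussian bound is not assumed but derived, and the substantive ingredients — the $T^*T$‑argument, the interpolation bookkeeping inside Corollary~\ref{lem}, and the Davies–Gaffney self‑improvement — have all been recorded already. The only care needed is to check that the hypotheses of Corollary~\ref{lem} (namely $(A)$, $(D_v)$, \eqref{D2}, self‑adjointness of $L$, and $L^{p_0}$‑boundedness for some $p_0\in[1,2)$) are exactly those assumed in the theorem, which they are.
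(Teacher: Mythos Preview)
Your proposal is correct and is exactly the paper's own argument: the paragraph immediately preceding Theorem~\ref{lemdg} says that $(DU\!E^v)$ and $(U\!E^v)$ coincide under \eqref{DG2} by \cite[Section~4.2]{CS}, and then Theorem~\ref{lemdg} is just Corollary~\ref{lem} restated with $(DU\!E^v)$ in place of $(U\!E^v)$.
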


Theorem \ref{lemdg} indicates that a sensible step towards    $(DU\!E^v)$, at least if  the Davies-Gaffney estimate holds,  is to show that all estimates 
$(vEv_{p,q,\gamma})$ are equivalent. Indeed, this will be proved for $q=2$   in Proposition \ref{cieplo1} below.


\bigskip

\subsection{The heat kernel upper bound implies Nash}\label{HN}

Our main result here  is the following.

 \begin{proposition}\label{Nash22} 
Let $(M,\mu)$ be a measure space,  $L$ a  non-negative self-adjoint operator on $L^2(M,\mu)$, and  $v$ a function from $M\times \R_+$ to $\R_+$ satisfying $(A)$.  The heat kernel upper bound $(DU\!E^v)$
implies the inequality  $(N^v)$.
\end{proposition}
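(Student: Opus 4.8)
The plan is to deduce $(N^v)$ directly from the equivalent form $(\tilde{E}v_{1,2})$ of $(DU\!E^v)$ provided by Proposition \ref{implication0}, using the spectral calculus for the self-adjoint operator $L$. Fix $f\in\mathcal{F}$ and $r>0$, and write, for $t=r^2$ to be chosen at the end equal to $r^2$,
\begin{equation*}
f = (I-e^{-tL})f + e^{-tL}f,
\end{equation*}
so that $\|f\|_2^2 \le 2\|(I-e^{-tL})f\|_2^2 + 2\|e^{-tL}f\|_2^2$ (or one may split without the factor $2$ by using $\langle f,f\rangle = \langle(I-e^{-tL})f,f\rangle + \langle e^{-tL}f,f\rangle$, which is cleaner). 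The first term is controlled by the quadratic form: by the spectral theorem, $\langle (I-e^{-tL})f,f\rangle = \int_0^\infty (1-e^{-t\lambda})\,dE_\lambda(f,f) \le t\int_0^\infty \lambda\,dE_\lambda(f,f) = t\,\mathcal{E}(f)$, since $1-e^{-s}\le s$ for $s\ge 0$. With $t=r^2$ this gives exactly the $r^2\mathcal{E}(f)$ term on the RHS of $(N^v)$.

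The second term is where $(DU\!E^v)$ enters. Write $e^{-tL}f = e^{-(t/2)L}(e^{-(t/2)L}f)$; more efficiently, use $\langle e^{-tL}f,f\rangle = \|e^{-(t/2)L}f\|_2^2$ and then estimate $\|e^{-(t/2)L}f\|_2$ by factoring through $L^1$. Precisely, $e^{-(t/2)L}f = \bigl(e^{-(t/2)L}v_{\sqrt{t}}^{1/2}\bigr)\bigl(v_{\sqrt{t}}^{-1/2}f\bigr)$, so by $(\tilde{E}v_{1,2})$,
\begin{equation*}
\|e^{-(t/2)L}f\|_2 = \bigl\|\,e^{-(t/2)L}v_{\sqrt{t}}^{1/2}\cdot v_{\sqrt{t}}^{-1/2}f\,\bigr\|_2 \le \bigl\|e^{-(t/2)L}v_{\sqrt{t}}^{1/2}\bigr\|_{1\to 2}\,\bigl\|v_{\sqrt{t}}^{-1/2}f\bigr\|_1 \le C\,\|fv_{\sqrt{t}}^{-1/2}\|_1.
\end{equation*}
Squaring, $\|e^{-(t/2)L}f\|_2^2 \le C\|fv_{\sqrt{t}}^{-1/2}\|_1^2$, and with $t=r^2$ this is exactly the $\|fv_r^{-1/2}\|_1^2$ term. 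Combining the two estimates yields $\|f\|_2^2 \le C(\|fv_r^{-1/2}\|_1^2 + r^2\mathcal{E}(f))$, which is $(N^v)$. If $\|fv_r^{-1/2}\|_1 = \infty$ there is nothing to prove, consistent with the stated convention.

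I do not expect a serious obstacle here: the argument is the standard Nash-via-semigroup trick of \cite{CKS}, with the only novelty being that the $L^1\to L^2$ smoothing bound now carries the weight $v_{\sqrt{t}}^{1/2}$, which is precisely what Proposition \ref{implication0} supplies. One small point to be careful about is the domain issue — $f\in\mathcal{F}$ rather than $\mathcal{D}$ — but $\langle(I-e^{-tL})f,f\rangle\le t\mathcal{E}(f)$ holds for all $f\in\mathcal{F}$ by spectral theory (both sides being well-defined, possibly $+\infty$), and $e^{-(t/2)L}f\in L^2$ needs nothing beyond $f\in L^2$. Another is that $(DU\!E^v)$ as defined gives the weighted bound with a constant $c$ in $v(x,c\sqrt{t})$, but under $(A)$ alone one still gets $(\tilde{E}v_{1,2})$ from $(vEv)$ via the $T^*T$ identity \eqref{link}; no doubling of $v$ is needed, matching the hypotheses of the Proposition.
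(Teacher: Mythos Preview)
Your proof is correct, and it takes a slightly different (and in some ways cleaner) route from the paper's own argument. The paper proceeds via the integral identity
\[
f = e^{-tL}f + \int_0^t Le^{-sL}f\,ds,
\]
and then estimates the integral term using the analyticity bound $\|L^{1/2}e^{-sL}\|_{2\to 2}\le Cs^{-1/2}$, giving $\|f\|_2 \le C(\|fv_{\sqrt{t}}^{-1/2}\|_1 + \sqrt{t}\,\|L^{1/2}f\|_2)$. You instead decompose the inner product $\|f\|_2^2 = \langle(I-e^{-tL})f,f\rangle + \|e^{-(t/2)L}f\|_2^2$ and use the pointwise spectral bound $1-e^{-s}\le s$ directly. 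Your version is more elementary for the $L^2$ setting since it avoids the Duhamel formula and analyticity altogether; the paper's version, on the other hand, has the virtue of generalizing immediately to $L^p$--$L^q$ Nash inequalities (their Remark on $(N^v_{p,q})$), where the inner-product trick is not available. Your handling of the domain issue and of the constant $c$ via Proposition~\ref{implication0} is accurate.
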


In the case where $v$ satisfies $(D_v)$, this result will also follow from  Propositions \ref{dg} and \ref{gn} below, but even in that  case, it is nice   to have the following simple and direct proof. According to Corollary  \ref{implication}, it is enough to prove that  \eqref{tilde} implies $(N^v)$. For the sake of simplicity and for future record, we prefer to state the implication from \eqref{EV12} to $(N^v)$, but the proof is similar.

\begin{proposition}\label{Nash1}  The estimate:
\begin{equation*}\label{EV12}
\tag{$Ev_{1,2}$} \sup_{t>0}\|e^{-tL}\,{v_{\sqrt{t}}^{1/2}}\|_{1\rightarrow
2}<+\infty
\end{equation*}
implies the   inequality $(N^v)$.
\end{proposition}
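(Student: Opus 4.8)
The goal is to derive the Nash-type inequality $(N^v)$ from the estimate $(Ev_{1,2})$: $\sup_{t>0}\|e^{-tL} v_{\sqrt t}^{1/2}\|_{1\to 2} =: A < +\infty$. The standard template for producing Nash inequalities out of ultracontractivity-type bounds is the following: write $\|f\|_2^2 = \langle f, f\rangle$ and split $f = e^{-tL} f + (I - e^{-tL}) f$. One part is controlled by the smoothing estimate, the other by the energy $\mathcal E(f)$ times $t$.

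\medskip

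Here is the plan in detail. Fix $t = r^2 > 0$ and $f \in \mathcal F$. I would write
\[
\|f\|_2^2 = \langle f, e^{-tL} f\rangle + \langle f, (I - e^{-tL})f\rangle.
\]
For the first term, I estimate $|\langle f, e^{-tL} f\rangle| \le \|f v_{\sqrt t}^{-1/2}\|_1 \cdot \|v_{\sqrt t}^{1/2} e^{-tL} f\|_\infty$ — wait, that needs an $L^\infty$ bound, which is not quite $(Ev_{1,2})$. Better: use the self-adjointness and semigroup property to write $\langle f, e^{-tL} f\rangle = \|e^{-tL/2} f\|_2^2$. Hmm, but then I need to connect $e^{-tL/2}f$ to the weight. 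The cleanest route: $\langle f, e^{-tL}f \rangle = \langle v_{\sqrt t}^{-1/2} f, v_{\sqrt t}^{1/2} e^{-tL} f\rangle$ and then... Actually the designed inequality has $\|f v_r^{-1/2}\|_1^2$, so I should aim to get $\|f v_r^{-1/2}\|_1$ appearing squared. The trick (as in the classical Coulhon–Nash argument): by duality $\|e^{-tL} v_{\sqrt t}^{1/2}\|_{1\to 2} = \|v_{\sqrt t}^{1/2} e^{-tL}\|_{2\to\infty}$, but what I really want is the $T^*T$ relation giving $\|v_{\sqrt t}^{1/2} e^{-tL} v_{\sqrt t}^{1/2}\|_{1\to\infty} = \|e^{-tL/2} v_{\sqrt t}^{1/2}\|_{1\to 2}^2$. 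However the proposition only assumes $(Ev_{1,2})$ at time $t$, i.e. with $e^{-tL}$ not $e^{-tL/2}$; since the weight is $v_{\sqrt t}$ and $v$ is non-decreasing, one has $v_{\sqrt{t/2}} \le v_{\sqrt t}$ pointwise, so $\|e^{-(t/2)L} v_{\sqrt t}^{1/2}\|_{1\to 2}$ need not be bounded by $A$ directly — but actually I only need $\|e^{-sL} v_{\sqrt t}^{1/2}\|_{1\to 2}$ with the wrong scaling, and monotonicity of $v$ goes the right way: $v_{\sqrt s} \le v_{\sqrt t}$ for $s \le t$ does NOT help bound $\|e^{-sL}v_{\sqrt t}^{1/2}\|$. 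I think the honest approach is: for the first term write $\langle f, e^{-tL} f\rangle \le \|f\|_2 \|e^{-tL} f\|_2$ is useless. Instead:
\[
\langle f, e^{-tL} f\rangle = \langle v_{\sqrt t}^{-1/2} f,\ v_{\sqrt t}^{1/2} e^{-tL} f\rangle \le \|v_{\sqrt t}^{-1/2} f\|_1 \cdot \|v_{\sqrt t}^{1/2} e^{-tL} f\|_\infty,
\]
and $\|v_{\sqrt t}^{1/2} e^{-tL} f\|_\infty = \|v_{\sqrt t}^{1/2} e^{-tL/2}\, e^{-tL/2} f\|_\infty \le \|v_{\sqrt t}^{1/2} e^{-tL/2}\|_{2\to\infty} \|e^{-tL/2}f\|_2$. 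By duality $\|v_{\sqrt t}^{1/2} e^{-tL/2}\|_{2\to\infty} = \|e^{-tL/2} v_{\sqrt t}^{1/2}\|_{1\to 2}$. Since this involves $e^{-tL/2}$ and weight $v_{\sqrt t}$, I need $(Ev_{1,2})$ in the "modified" form $(\tilde E v_{1,2})$ — which is exactly why the authors introduced $(\tilde E v_{1,2})$! For the stated Proposition \ref{Nash1} they use the genuine $(Ev_{1,2})$, so the scaling must be handled: replace $t$ by $2t$, use $v_{\sqrt{2t}}^{1/2} \ge$ ... no. Let me just accept that the clean argument uses $\|e^{-(t/2)L} v_{\sqrt t}^{1/2}\|_{1\to 2}$ and that under $(Ev_{1,2})$ this is controlled after a change of variable $t \mapsto 2t$ combined with $(D_v)$ — but wait, Proposition \ref{Nash1} does not assume $(D_v)$.

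Hmm, let me reconsider. Actually I bet the argument avoids this: estimate $\langle f, e^{-tL}f\rangle \le \|f\|_1$-type quantities only when one has an $L^1 \to L^\infty$ bound on the full semigroup, which $(Ev_{1,2})$ plus its dual gives via $T^*T$ at a single time. Let me reconsider: set $T = e^{-tL} v_{\sqrt t}^{1/2}$. Then $\|T\|_{1\to 2} \le A$, so $\|T^* T\|_{1 \to \infty} = \|T\|_{1\to 2}^2 \le A^2$, where $T^* T = v_{\sqrt t}^{1/2} e^{-2tL} v_{\sqrt t}^{1/2}$. So $\|v_{\sqrt t}^{1/2} e^{-2tL} v_{\sqrt t}^{1/2}\|_{1\to\infty} \le A^2$. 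This is a genuine $L^1\to L^\infty$ bound, at time $2t$ but with weight $v_{\sqrt t}$. Good — now proceed. Since $v$ is non-decreasing, $v_{\sqrt t} \le v_{\sqrt{2t}}$... again wrong direction for what I want (I'd want $v_{\sqrt{2t}} \le C v_{\sqrt t}$, which needs doubling). So I should just work at time scale: rename, set $s = 2t$, then I have $\|v_{\sqrt{s/2}}^{1/2} e^{-sL} v_{\sqrt{s/2}}^{1/2}\|_{1\to\infty} \le A^2$ for all $s>0$. Since $\sqrt{s/2} \le \sqrt s$ and $v$ non-decreasing gives $v_{\sqrt{s/2}} \le v_{\sqrt s}$ — so $v_{\sqrt{s/2}}^{1/2} \le v_{\sqrt s}^{1/2}$, hence this does NOT immediately give the bound with $v_{\sqrt s}$. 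OK so genuinely the statement is set up so that the Nash inequality to be proved, $(N^v)$ with scale $r$, will be obtained at scale $r$ matching $v_r$, and the semigroup smoothing will be invoked at a comparable but not identical time — and since $(N^v)$ as stated has a free constant $C$ and quantifies over all $r>0$, a bounded change of time scale is harmless even without doubling, PROVIDED $v$ comparison isn't needed. The resolution: I'll apply the $L^1\to L^\infty$ bound with weight $v_r$ where $r$ is the Nash parameter, at time $\sim r^2$: from $(Ev_{1,2})$, $\|v_{\sqrt t}^{1/2} e^{-2tL} v_{\sqrt t}^{1/2}\|_{1\to\infty}\le A^2$; take $t = r^2$, giving $\|v_r^{1/2} e^{-2r^2 L} v_r^{1/2}\|_{1\to\infty} \le A^2$, equivalently $\|e^{-r^2 L} v_r^{1/2}\|_{1\to 2}^2 \le A^2$ (this is just $(Ev_{1,2})$ at $t=r^2$ — tautological). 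So actually the right move: directly, $\|v_r^{1/2} e^{-2r^2 L} v_r^{1/2}\|_{1\to \infty} = \|e^{-r^2 L} v_r^{1/2}\|_{1\to 2}^2 \le A^2$ is exactly the hypothesis (modulo the $T^*T$ identity and noting the time doubling). Good, so I have an honest $L^1 \to L^\infty$ weighted bound at time $2r^2$.

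\medskip

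Now the main estimate. Fix $r>0$, $f \in \mathcal F$. I'll use $g := e^{-r^2 L} f$ as an intermediary — actually cleaner to not introduce extra notation and split as follows:
\[
\|f\|_2^2 = \langle f - e^{-2r^2L} f,\ f\rangle + \langle e^{-2r^2 L} f,\ f\rangle.
\]
Second term: $\langle e^{-2r^2 L}f, f\rangle = \langle v_r^{1/2} e^{-2r^2L} v_r^{1/2}\,(v_r^{-1/2} f),\ v_r^{-1/2} f\rangle \le \|v_r^{1/2} e^{-2r^2 L} v_r^{1/2}\|_{1\to\infty}\, \|v_r^{-1/2} f\|_1^2 \le A^2 \|f v_r^{-1/2}\|_1^2$. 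First term: by the spectral theorem, $\langle (I - e^{-2r^2 L}) f, f\rangle = \int_0^\infty (1 - e^{-2r^2\lambda})\, d\langle E_\lambda f, f\rangle$ and using $1 - e^{-u} \le u$ for $u \ge 0$, this is $\le 2r^2 \int_0^\infty \lambda\, d\langle E_\lambda f, f\rangle = 2r^2 \langle Lf, f\rangle = 2r^2 \mathcal E(f)$. Combining, $\|f\|_2^2 \le A^2 \|f v_r^{-1/2}\|_1^2 + 2 r^2 \mathcal E(f) \le C(\|f v_r^{-1/2}\|_1^2 + r^2 \mathcal E(f))$ with $C = \max(A^2, 2)$, which is $(N^v)$.

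\medskip

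The only real subtlety — the "main obstacle" such as it is — is the bookkeeping around the $T^*T$ identity and the time-doubling ($t$ versus $2t$) so that one lands on $\|v_r^{1/2} e^{-2r^2 L} v_r^{1/2}\|_{1\to\infty} \lesssim 1$ starting only from $(Ev_{1,2})$ (at time $r^2$, i.e. $\|e^{-r^2 L} v_r^{1/2}\|_{1\to 2}\le A$), without invoking doubling of $v$. As shown above this works cleanly because the $T^*T$ identity $\|T\|_{1\to 2}^2 = \|T^*T\|_{1\to\infty}$ applied to $T = e^{-r^2L}v_r^{1/2}$ gives precisely $\|v_r^{1/2} e^{-2r^2 L} v_r^{1/2}\|_{1\to\infty} = \|e^{-r^2 L}v_r^{1/2}\|_{1\to 2}^2 \le A^2$, with the weight and time landing exactly where needed. (The analogous implication $(\tilde E v_{1,2}) \Rightarrow (N^v)$ claimed in the text is proved the same way, replacing $e^{-r^2 L}$ by $e^{-(r^2/2)L}$ so that the doubled time is $r^2$ and the weight $v_r$ matches; the structure is identical.) Everything else is the standard spectral-calculus estimate $1 - e^{-u}\le u$ and Hölder with the multiplication operator $v_r^{-1/2}$.
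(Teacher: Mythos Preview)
Your final argument is correct, but it differs from the paper's. The paper uses the integral identity $f = e^{-tL}f + \int_0^t Le^{-sL}f\,ds$, estimates $\|e^{-tL}f\|_2 \le \|e^{-tL}v_{\sqrt t}^{1/2}\|_{1\to 2}\,\|fv_{\sqrt t}^{-1/2}\|_1$ directly via $(Ev_{1,2})$, and bounds the integral by $\int_0^t \|L^{1/2}e^{-sL}\|_{2\to 2}\|L^{1/2}f\|_2\,ds \le C\sqrt t\,\|L^{1/2}f\|_2$ using the analyticity estimate $\|L^{1/2}e^{-sL}\|_{2\to 2}\le Cs^{-1/2}$. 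No $T^*T$ step is needed, since the hypothesis is applied as a genuine $L^1\to L^2$ bound on $e^{-tL}f$. You instead split $\|f\|_2^2 = \langle (I - e^{-2r^2L})f, f\rangle + \langle e^{-2r^2L}f, f\rangle$, control the first piece by the spectral inequality $1 - e^{-u}\le u$, and upgrade $(Ev_{1,2})$ to the weighted $L^1\to L^\infty$ bound $\|v_r^{1/2}e^{-2r^2L}v_r^{1/2}\|_{1\to\infty}=\|e^{-r^2L}v_r^{1/2}\|_{1\to 2}^2$ via $T^*T$ for the second. Your route is arguably more elementary and gives the explicit constant $C=\max(A^2,2)$; the paper's route, however, is the one that extends to the $L^p$--$L^q$ inequalities $(N_{p,q,\beta}^v)$ of Remark~\ref{generalised Nash}, since for $q\neq 2$ one cannot invoke the spectral theorem directly and must instead use bounded analyticity on $L^q$.
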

\begin{proof}
One can write,  for $f\in\mathcal{D}$ and $t>0$,
\begin{equation}
f=e^{-tL}f+\int_{0}^{t}L e^{-sL}f\,ds.\label{int}
\end{equation}
This formula is also valid in $L^2(M,\mu)$ because $(e^{-sL})_{s>0}$ is analytic on $L^2(M,\mu)$.
Thus, for $f\in\mathcal{F}=\mathcal{D}_2(L^{1/2})$,
\begin{eqnarray*}
   \|f\|_{2} &\leq & \|e^{-tL}f\|_{2}+\int_{0}^{t}\|L
e^{-sL}f\|_{2}\,ds\\
  &\leq & \|e^{-tL}v_{\sqrt{t}}^{1/2}v_{\sqrt{t}}^{-1/2}f\|_{2}+\int_{0}^{t}\|L^{1/2}
e^{-sL}L^{1/2}f\|_{2}\,ds\\
  &\leq & \|e^{-tL}{v_{\sqrt{t}}^{1/2}}\|_{1\to
  2}\|fv_{\sqrt{t}}^{-1/2}\|_{1}+C\int_{0}^{t}s^{-1/2}\|L^{1/2}f\|_{2}\,ds.
\end{eqnarray*}
In the last inequality, we have again used the analyticity   of $(e^{-sL})_{s>0}$ on $L^2(M,\mu)$, which yields
$$\|L^{1/2}e^{-sL}f\|_{2}\le C s^{-1/2}\|f\|_{2},\ \forall \,f\in L^2(M,\mu), \,s>0.$$
Hence
$$ \|f\|_{2} \le \|e^{-tL}{v_{\sqrt{t}}^{1/2}}\|_{1\to
  2}\|fv_{\sqrt{t}}^{-1/2}\|_{1}+C\sqrt{2t}\|L^{1/2}f\|_{2}, \quad
\forall\,f\in\mathcal{F},\, t>0,$$
therefore   using \eqref{EV12},
\begin{equation*}
 \|f \|_{2}\leq
C'(\|fv_{\sqrt{t}}^{-1/2}\|_{1}+\sqrt{t}\|L^{1/2}f\|_{2}), \quad
\forall\,f\in\mathcal{F},\, t>0,
\end{equation*}
that is, setting $r=\sqrt{t}$,  $(N^v)$. 
\end{proof}

Note that one could also adapt the proof of Kigami in \cite[pp.528-529]{kig} to get a proof of Proposition $\ref{Nash22}$, at least if one assumes a priori the existence of measurable $p_t$. We leave this to the interested reader. 
Our more functional analytic approach enables one to treat other $L^p$ spaces than $L^2$.
\begin{rem} \label{generalised Nash} One can prove in a similar way that if
\eqref{Evpq}
holds for some $p,q$  such that $1\le p< q<+\infty$ and, if  $(e^{-tL})_{t>0}$ is  bounded analytic on $L^q(M,\mu)$,
\begin{equation*}
\tag{$N_{p,q}^v$}\label{Npq} \|f \|_{q}\leq
C(\|fv_{r}^{\frac{1}{q}-\frac{1}{p}}\|_{p}+r\|L^{1/2}f\|_{q}),
\quad \forall\, r>0,\,f\in\mathcal{D}_q(L^{1/2}),
\end{equation*}
follows. More generally, for $\beta>0$,
\begin{equation*}
\tag{$N_{p,q,\beta}^v$}\label{Npqb}  \|f \|_{q}\leq
C_\beta(\|fv_{r}^{\frac{1}{q}-\frac{1}{p}}\|_{p}+r^\beta\|L^{\beta/2}f\|_{q}),
\quad \forall\, r>0,\,f\in\mathcal{D}_q(L^{\beta/2}).
\end{equation*}
For $\beta\ge 2$, one uses a higher order Taylor formula instead of \eqref{int}.
\end{rem}
Theorem $\ref{lemdg}$ and Remark \ref{generalised Nash}  yield the following.

 \begin{theorem}\label{Nash23} 
Let $(M,d,\mu)$ be a metric measure space,  $L$ a  non-negative self-adjoint operator on $L^2(M,\mu)$, and  $v$ a function from $M\times \R_+$ to $\R_+$ satisfying $(A)$, $(D_v)$ and \eqref{D2}. Assume that 
 $(M,d,\mu,L)$ satisfies 
 \eqref{DG2} and that  $(e^{-tL})_{t>0}$  is uniformly bounded  on $L^{{p_0}}(M,\mu)$ for some $p_0\in[1,2)$.  Then the heat kernel upper bound $(DU\!E^v)$
implies   \eqref{Npqb} for all $p_0\le p< q< p'_0$ and $\beta>0$.
\end{theorem}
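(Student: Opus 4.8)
The plan is to combine the two ingredients that are already in place: Theorem~\ref{lemdg}, which under the stated hypotheses ($(V\!D)$, $(D_v)$, \eqref{D2}, Davies-Gaffney, and uniform boundedness on $L^{p_0}$) promotes $(DU\!E^v)$ to the full family of weighted estimates $(vEv_{p,q,\gamma})$ for $p_0\le p<q\le p'_0$ and all $\gamma\in\R$; and Remark~\ref{generalised Nash}, which turns any one such estimate into a weighted Nash-type inequality $(N_{p,q,\beta}^v)$, provided $(e^{-tL})_{t>0}$ is bounded analytic on $L^q(M,\mu)$. So the argument has essentially three steps.

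First I would fix $p,q$ with $p_0\le p<q<p'_0$ and apply Theorem~\ref{lemdg} with $\gamma=\tfrac1q-\tfrac1p$ (that is, $\delta=0$), which is permitted since that theorem allows all real $\gamma$; this yields $(Ev_{p,q})$, i.e. $\sup_{t>0}\|e^{-tL}v_{\sqrt t}^{1/q-1/p}\|_{p\to q}<+\infty$. Second, I would verify the analyticity hypothesis needed by Remark~\ref{generalised Nash}: since $(e^{-tL})_{t>0}$ is uniformly bounded on $L^{p_0}$ and (being self-adjoint) contractive and analytic on $L^2$, duality gives uniform boundedness on $L^{p'_0}$, and then interpolation (the Stein interpolation theorem for the analytic semigroup, or simply the standard fact that a semigroup uniformly bounded on $L^{p_0}$ and $L^{p'_0}$ and analytic on $L^2$ is bounded analytic on every $L^q$ in between) gives that $(e^{-tL})_{t>0}$ is bounded analytic on $L^q(M,\mu)$ for every $q\in(p_0,p'_0)$. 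Third, I would invoke Remark~\ref{generalised Nash}: from $(Ev_{p,q})$ and the bounded analyticity on $L^q$ one gets $(N_{p,q}^v)$, and more generally, using a higher-order Taylor expansion of $f=e^{-tL}f+\dots$ in place of \eqref{int} for $\beta\ge 2$ (and the usual first-order expansion for $0<\beta<2$), together with the analytic estimate $\|L^{\beta/2}e^{-sL}\|_{q\to q}\le Cs^{-\beta/2}$, one obtains $(N_{p,q,\beta}^v)$ for every $\beta>0$. Setting $r=\sqrt t$ as in the proof of Proposition~\ref{Nash1} completes the statement.

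The only genuine point requiring care — and the place I would expect to spend the most words — is the bounded analyticity of $(e^{-tL})_{t>0}$ on $L^q$ for $q\in(p_0,p'_0)$, since this is what licenses the resolvent/Taylor manipulations in Remark~\ref{generalised Nash}: one needs not merely uniform boundedness on $L^q$ but analyticity of the semigroup in a sector, which does follow from interpolating the $L^2$-analyticity with the $L^{p_0}$- and $L^{p'_0}$-boundedness, but the interpolation of analyticity (as opposed to mere boundedness) is the slightly delicate ingredient. Everything else is bookkeeping: the passage from $(vEv_{p,q,\gamma})$ to the particular $(Ev_{p,q})$ is just the special case $\delta=0$, and the derivation of $(N_{p,q,\beta}^v)$ is exactly the computation sketched in Remark~\ref{generalised Nash}, which in turn mirrors the proof of Proposition~\ref{Nash1}.
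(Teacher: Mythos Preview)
Your argument is correct and matches the paper's own proof exactly: invoke Theorem~\ref{lemdg} to obtain $(Ev_{p,q})$, use interpolation between $L^2$-analyticity and $L^{p_0}$/$L^{p'_0}$-boundedness to get bounded analyticity on $L^q$ for $q\in(p_0,p'_0)$, and then apply Remark~\ref{generalised Nash}. One small notational slip: $(Ev_{p,q})$ corresponds to $\gamma=0$ (not $\delta=0$) and reads $\sup_{t>0}\|e^{-tL}v_{\sqrt t}^{1/p-1/q}\|_{p\to q}<\infty$, with the positive exponent $1/p-1/q$; since Theorem~\ref{lemdg} delivers all $\gamma\in\R$ this does not affect the validity of your proof.
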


In the proof of the above statement, we make use of the fact that, since $(e^{-tL})_{t>0}$ is  bounded analytic on $L^2(M,\mu)$, if in addition  it is uniformly bounded  on $L^{p}(M,\mu)$, for some $p\in [1,+\infty]$, then by interpolation it is   bounded analytic on $L^q(M,\mu)$, for all $q$ strictly between $2$ and $p$.

Although it will come under some additional assumptions as a by-product  of our results in Section \ref{DGE}, we do not know how to prove in a direct way, that is,  without going through $(GN_q^v)$,
that conversely $(N^v)$ implies \eqref{EV12} under these assumptions.   More generally, one may wonder whether  \eqref{Npq} and \eqref{Evpq}   coincide or not
(we will be able to answer positively a similar question for $(GN_q^v)$ and its variants, see Proposition \ref{resolvent2}). The following observation together with Proposition \ref{interpolation}  shows that, if it were to be the case, all $(N_{p,q}^v)$ (and  \eqref{Evpq})
would  be equivalent for fixed $q$ and all $p\in [1,q)$ as long as $(e^{-tL})_{t>0}$ is uniformly bounded on $L^1(M,\mu)$. This is unlikely to hold without further assumptions.

\begin{proposition} \label{pl} $(N_{p,q}^v)$ implies $(N_{{p_0},q}^v)$ for  $1\le {p_0}<p<q<+\infty$. In particular,
$(N_{p,2}^v)$ for  $1<p<2$ implies $(N^v)$.
\end{proposition}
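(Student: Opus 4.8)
The plan is to deduce $(N_{p_0,q}^v)$ from $(N_{p,q}^v)$ by interpolating the weighted $L^1$-type term, since the $r^\beta\|L^{1/2}f\|_q$ term in both inequalities is identical. Fix $r>0$ and $f\in\mathcal{D}_q(L^{1/2})$. The inequality $(N_{p,q}^v)$ reads
\begin{equation*}
\|f\|_q\le C\bigl(\|fv_r^{\frac1q-\frac1p}\|_p+r\|L^{1/2}f\|_q\bigr),
\end{equation*}
and I want to replace $\|fv_r^{\frac1q-\frac1p}\|_p$ by $\|fv_r^{\frac1q-\frac1{p_0}}\|_{p_0}$ at the cost of a larger constant and possibly a different (but still admissible) power of $r$. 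The natural tool is the three-line / H\"older interpolation between the weighted $L^{p_0}$ norm and the $L^q$ norm itself: write $g=fv_r^{\frac1q-\frac1p}$ and observe that, with $\theta\in(0,1)$ defined by $\frac1p=\frac{\theta}{p_0}+\frac{1-\theta}{q}$, one has
\begin{equation*}
\|fv_r^{\frac1q-\frac1p}\|_p\le \|fv_r^{\frac1q-\frac1{p_0}}\|_{p_0}^{\theta}\,\|f\|_q^{1-\theta},
\end{equation*}
which follows from H\"older's inequality once one checks that the exponents of $f$ and of the weight $v_r$ match up correctly on both sides. Indeed $v_r^{(\frac1q-\frac1{p_0})\theta}=v_r^{\frac1q-\frac1p}\cdot v_r^{-(\frac1q)(1-\theta)+\text{(bookkeeping)}}$; the point is that the weight power $\frac1q-\frac1p$ is exactly the convex combination of $\frac1q-\frac1{p_0}$ (with weight $\theta$) and $0$ (with weight $1-\theta$, coming from the unweighted $L^q$ factor), while the $L^p$ exponent is the corresponding harmonic interpolation of $p_0$ and $q$. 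This is a routine application of H\"older and I would not belabour it.

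Plugging this into $(N_{p,q}^v)$ gives
\begin{equation*}
\|f\|_q\le C\bigl(\|fv_r^{\frac1q-\frac1{p_0}}\|_{p_0}^{\theta}\|f\|_q^{1-\theta}+r\|L^{1/2}f\|_q\bigr).
\end{equation*}
Now I would split into two cases depending on which term on the right dominates. If $r\|L^{1/2}f\|_q\ge \|fv_r^{\frac1q-\frac1{p_0}}\|_{p_0}^{\theta}\|f\|_q^{1-\theta}$, then $\|f\|_q\le 2Cr\|L^{1/2}f\|_q$ and we are done with the $L^{p_0}$ term simply dropped. In the other case, $\|f\|_q\le 2C\|fv_r^{\frac1q-\frac1{p_0}}\|_{p_0}^{\theta}\|f\|_q^{1-\theta}$; assuming $\|f\|_q<\infty$ (otherwise, as the convention in the paper makes clear, there is nothing to prove, since then the right-hand side of $(N_{p_0,q}^v)$ is also infinite — one should remark on this, perhaps approximating $f$ or noting that the weight argument forces finiteness), I can divide by $\|f\|_q^{1-\theta}$ to obtain $\|f\|_q^{\theta}\le 2C\|fv_r^{\frac1q-\frac1{p_0}}\|_{p_0}^{\theta}$, i.e. $\|f\|_q\le (2C)^{1/\theta}\|fv_r^{\frac1q-\frac1{p_0}}\|_{p_0}$. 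Combining the two cases yields
\begin{equation*}
\|f\|_q\le C'\bigl(\|fv_r^{\frac1q-\frac1{p_0}}\|_{p_0}+r\|L^{1/2}f\|_q\bigr)
\end{equation*}
with $C'=\max\{2C,(2C)^{1/\theta}\}$, which is precisely $(N_{p_0,q}^v)$. The last sentence of the statement is then just the special case $q=2$, $p_0$ renamed, and $\frac1q-\frac1{p_0}=\frac12-\frac1{p_0}$; taking $p_0$ there to be $1$ and recalling $v_r^{-1/2}$, one recovers $(N^v)$ (after replacing $r$ by $\sqrt{t}$, the form $(N^v)$ uses — but here since $\beta=1$ the scaling is already matched, so this is immediate).

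The main obstacle, such as it is, is bookkeeping rather than conceptual: one must be careful that the weight exponent $\frac1q-\frac1{p_0}$ and the integrability exponent $p_0$ pair up correctly in the H\"older step — the two "degrees of freedom" (weight power and $L^p$ index) are tied together along the interpolation scale, and it is easy to write down the wrong convex combination. A secondary point worth addressing explicitly is the case $\|f\|_q=\infty$ (or the case where the $L^{p_0}$ weighted norm is infinite): one should state that in these degenerate cases $(N_{p_0,q}^v)$ holds vacuously, exactly as flagged in the paper's standing convention, so no genuine argument is needed there. Beyond that, the proof is a one-paragraph interpolation-plus-dichotomy argument, and the only reason it is stated as a proposition is that it combines with Proposition \ref{interpolation} to control the whole family $(N_{p,q}^v)$ for fixed $q$.
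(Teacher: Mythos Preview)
Your proof is correct and follows essentially the same approach as the paper: interpolate the weighted $L^p$ term via H\"older between the weighted $L^{p_0}$ norm and the unweighted $L^q$ norm, then absorb the resulting $\|f\|_q$ factor into the left-hand side. The only cosmetic differences are that the paper uses the opposite convention for $\theta$ (writing $\frac{1}{p}=\frac{\theta}{q}+\frac{1-\theta}{p_0}$) and absorbs via Young's inequality with an $\epsilon$ rather than your dichotomy argument; both devices are equivalent and standard.
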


\begin{proof}
Assume $(N_{p,q}^v)$ and write
$$\|fv_{r}^{\frac{1}{q}-\frac{1}{p}}\|_{p}\le \|f\|_{q}^{\theta}\|fv_{r}^{\frac{1}{q}-\frac{1}{{p_0}}}\|_{{p_0}}^{1-\theta},$$ where $\theta$ is such that $\frac{1}{p}=\frac{\theta}{q}+\frac{1-\theta}{{p_0}}$.
It follows that
\begin{eqnarray*}
 \|f \|_{q}&\leq&
C(\|f\|_{q}^{\theta}\|fv_{r}^{\frac{1}{q}-\frac{1}{{p_0}}}\|_{{p_0}}^{1-\theta}+r\|L^{1/2}f\|_{q})\\
&\le& C(\epsilon\|f\|_{q}+\epsilon^{-\frac{\theta}{1-\theta}}\|fv_{r}^{\frac{1}{q}-\frac{1}{{p_0}}}\|_{{p_0}}+r\|L^{1/2}f\|_{q}),
\end{eqnarray*}
for all $r,\epsilon>0$, $f\in\mathcal{D}_q(L^{1/2})$.
One obtains  $(N_{{p_0},q}^v)$  by choosing $\epsilon=\frac{1}{2C}$.
\end{proof}

It is known, already in the case where $v$ does not depend on $x$ but decays more quickly than a negative power (in particular $v$ is not doubling), that Proposition \ref{Nash1} is not optimal: in that case,  under mild conditions on $v$, $(DU\!E^v)$ is equivalent to a so-called generalised Nash inequality, which is strictly stronger than $(N^v)$ (see \cite[Theorem II.5]{C-N}). Using the technique introduced in \cite{C-N}, one can indeed obtain in a simple way a stronger version of Proposition \ref{Nash1}.
\begin{proposition}\label{Nash2}  The estimate
\eqref{tilde}
implies the   inequality 
\begin{equation*}\label{NnD}
\tag{$\widetilde{N}^v$}\|f \|_{2}^2\log\frac{c\|f \|_{2}^2}{\|fv_{r}^{-1/2}\|_{1}^2}\leq
r^2\mathcal{E}(f), \quad
\forall\, r>0, \,f\in \mathcal{F},
\end{equation*}
where $c$ is the inverse of the supremum in  \eqref{tilde} squared.
\end{proposition}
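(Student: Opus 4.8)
The plan is to follow the strategy of \cite{C-N}, replacing the term-by-term triangle-inequality estimate of Proposition \ref{Nash1} by an integral argument that tracks the decay of $\|e^{-sL}f\|_2$ more carefully. Fix $f\in\mathcal{F}$ and $r>0$, and set $t=r^2$. As in \eqref{int}, write $f=e^{-tL}f+\int_0^t Le^{-sL}f\,ds$ (valid in $L^2$ by analyticity). The point is to integrate up to an optimally chosen time rather than the fixed time $r^2$. Thus for any $\tau>0$ one has
\begin{equation*}
\|f\|_2\le\|e^{-\tau L}v_{\sqrt\tau}^{1/2}\|_{1\to2}\,\|fv_{\sqrt\tau}^{-1/2}\|_1+\int_0^\tau\|Le^{-sL}f\|_2\,ds.
\end{equation*}
Here is where the $\sqrt\tau$ subscript matters: since $v$ need not be doubling, $v_{\sqrt\tau}$ genuinely depends on $\tau$, and the estimate \eqref{tilde} (i.e.\ $\sup_{\tau>0}\|e^{-(\tau/2)L}v_{\sqrt\tau}^{1/2}\|_{1\to2}^2=c^{-1}$ in the normalization of the statement, after absorbing the harmless factor $2$) gives a $\tau$-uniform bound on the first operator norm. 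For the integral term one uses $\|Le^{-sL}f\|_2\le\frac{d}{ds}(-\tfrac12\|e^{-sL}f\|_2^2)/\|e^{-sL}f\|_2$ together with the spectral identity, or more simply integrates $\frac{d}{ds}\|e^{-sL}f\|_2^2=-2\mathcal{E}(e^{-sL}f)\ge-2\mathcal{E}(f)$ is not quite enough; instead one should use $\|Le^{-sL}f\|_2\le s^{-1}\|L^{1/2}(sL)^{1/2}e^{-sL}f\|$-type bounds. Cleaner is the route of \cite{C-N}: estimate $\int_0^\tau\|Le^{-sL}f\|_2\,ds$ via Cauchy–Schwarz against $s^{-1/2}$ but keep the square of the relevant quantity; more precisely, $\big(\int_0^\tau\|Le^{-sL}f\|_2ds\big)^2\le\big(\int_0^\tau s^{-1}ds\big)\cdot$(divergent), so that does not work either — one must instead bound $\int_0^\tau\|Le^{-sL}f\|_2\,ds\le\int_0^\tau s^{-1/2}\,ds\cdot\sup_s s^{1/2}\|Le^{-sL}f\|_2$, and use $s^{1/2}\|Le^{-sL}f\|_2\le Cs^{-1/2}\cdot s^{1/2}\|L^{1/2}f\|_2=C\|L^{1/2}f\|_2$ only after noticing $\|Le^{-sL}f\|_2=\|L^{1/2}e^{-sL}L^{1/2}f\|_2\le Cs^{-1/2}\|L^{1/2}f\|_2$, giving $\int_0^\tau\|Le^{-sL}f\|_2ds\le 2C\sqrt\tau\,\mathcal{E}(f)^{1/2}$, exactly as in Proposition \ref{Nash1}.

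So far this just reproduces $\|f\|_2\le c^{-1/2}\|fv_{\sqrt\tau}^{-1/2}\|_1+C\sqrt\tau\,\mathcal{E}(f)^{1/2}$ for every $\tau>0$, which by minimizing over $\tau$ would only recover something of the shape of $(N^v)$. The genuinely stronger logarithmic inequality comes from not optimizing the sum but instead choosing $\tau$ so that the first term equals, say, half of $\|f\|_2$, and then reading off a lower bound on $\mathcal{E}(f)$. Concretely: the function $\tau\mapsto\|fv_{\sqrt\tau}^{-1/2}\|_1$ is nonincreasing in $\tau$ (since $v$ is nondecreasing in $r$), while at $\tau\to0$ it is $\ge\|f\|_1\cdot(\text{something})$ — one must be careful, but the key monotonicity is what lets one pick $\tau_0$ with $c^{-1/2}\|fv_{\sqrt{\tau_0}}^{-1/2}\|_1=\frac12\|f\|_2$, or handle the boundary cases separately. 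Then from the displayed inequality at $\tau=\tau_0$ one gets $\frac12\|f\|_2\le C\sqrt{\tau_0}\,\mathcal{E}(f)^{1/2}$, i.e.\ $\tau_0\ge c'\|f\|_2^2/\mathcal{E}(f)$. On the other hand, the defining relation for $\tau_0$ says $\|fv_{\sqrt{\tau_0}}^{-1/2}\|_1^2=\frac{c}{4}\|f\|_2^2$, and for the prescribed $r$ (with $t=r^2$) one compares $\|fv_r^{-1/2}\|_1$ to $\|fv_{\sqrt{\tau_0}}^{-1/2}\|_1$ using monotonicity again: if $\tau_0\ge r^2$ then $\|fv_r^{-1/2}\|_1\ge\|fv_{\sqrt{\tau_0}}^{-1/2}\|_1$, forcing $\|f\|_2^2\le\frac{4}{c}\|fv_r^{-1/2}\|_1^2$ and making the log on the left of $(\widetilde N^v)$ $\le\log 4$, which is then absorbed; if $\tau_0<r^2$ the chain $r^2>\tau_0\ge c'\|f\|_2^2/\mathcal{E}(f)$ already gives $r^2\mathcal{E}(f)\ge c'\|f\|_2^2$, again controlling a bounded log. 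The delicate case is the intermediate regime, which is precisely where the logarithm is genuinely produced: one iterates/interpolates the estimate $\|f\|_2\le c^{-1/2}\|fv_{\sqrt\tau}^{-1/2}\|_1+C\sqrt\tau\mathcal E(f)^{1/2}$ across dyadic scales $\tau=4^k r^2$, summing a geometric-type series, to extract $\|f\|_2^2\log\frac{c\|f\|_2^2}{\|fv_r^{-1/2}\|_1^2}\le Cr^2\mathcal{E}(f)$.

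The cleanest execution, and the one I would actually write, avoids case analysis: define $g(s)=\|e^{-sL}f\|_2^2$, note $g$ is nonincreasing with $-g'(s)=2\mathcal{E}(e^{-sL/1},\cdot)$, hmm — rather, use $(\text{id}-e^{-sL})f$ and the elementary bound $\|(\mathrm{id}-e^{-sL})f\|_2^2\le s\,\mathcal E(f)$ coming from $1-e^{-s\lambda}\le s\lambda$ in the spectral calculus. Then $\|f\|_2\le\|e^{-sL}f\|_2+\sqrt{s\,\mathcal E(f)}$, and $\|e^{-sL}f\|_2=\|e^{-(s/2)L}e^{-(s/2)L}f\|_2\le c^{-1/2}\|e^{-(s/2)L}f\,v_{\sqrt s}^{-1/2}\cdot v_{\sqrt s}^{1/2}\|$ — no: apply \eqref{tilde} directly to $\|e^{-(s/2)L}v_{\sqrt s}^{1/2}\|_{2\to\infty}$ won't see an $L^1$ norm. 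The right move: $\|e^{-sL}f\|_2\le\|e^{-(s/2)L}v_{\sqrt s}^{1/2}\|_{1\to2}\,\|v_{\sqrt s}^{-1/2}e^{-(s/2)L}f\|_1$ is not legitimate either since the multiplication is on the wrong side; so I would stick with the semigroup decomposition $f=e^{-sL}f+\int_0^s Le^{-uL}f\,du$ as above, yielding the master inequality, and then obtain $(\widetilde N^v)$ by the substitution/optimization: set $a=\|f\|_2^2$, $b(\tau)=\|fv_{\sqrt\tau}^{-1/2}\|_1^2$, $E=\mathcal E(f)$; from $\sqrt a\le c^{-1/2}\sqrt{b(\tau)}+C\sqrt{\tau E}$ valid for all $\tau$, use that $b$ is nonincreasing, hence $b(\tau)\le b(r^2)$ for $\tau\ge r^2$, to get for every $\lambda\ge1$ (writing $\tau=\lambda r^2$): $\sqrt a\le c^{-1/2}\sqrt{b(r^2)}+C\sqrt{\lambda}\,r\sqrt E$. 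This is a single inequality, not a family to sum. If $b(r^2)\ge (c/4)a$ then $\log(ca/b(r^2))\le\log4\le$ absorbable, done. Otherwise $c^{-1/2}\sqrt{b(r^2)}<\frac12\sqrt a$, so $\frac12\sqrt a\le C\sqrt\lambda\, r\sqrt E$ already at $\lambda=1$, giving $a\le 4C^2r^2E\le 4C^2 r^2E\cdot\frac{\log(ca/b(r^2))}{\log(ca/b(r^2))}$ and one needs $\log(ca/b(r^2))$ bounded, which it need not be.

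\smallskip
\noindent\emph{Resolution of the above gap.} The logarithm truly requires the \emph{family} of inequalities in $\tau$, optimized correctly. Redo it: for all $\tau>0$, $\sqrt a\le c^{-1/2}\sqrt{b(\tau)}+C\sqrt{\tau E}$. Since $b$ nonincreasing and $b(\tau)\to$ some limit as $\tau\to\infty$ while $b(\tau)\le b(r^2)$ for $\tau\ge r^2$ and we also have trivially $b(\tau)\le \mub$-type control is unavailable, but in any case for $\tau\ge r^2$ we get $\sqrt a\le c^{-1/2}\sqrt{b(r^2)}+C\sqrt{\tau E}$. That goes the wrong way (larger $\tau$ is worse). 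The correct inequality to iterate is rather obtained by applying the master bound with $f$ replaced by $e^{-uL}f$ for suitable $u$, i.e.\ by the differential inequality method of \cite{C-N}: writing $\varphi(s)=\log\|e^{-sL}f\|_2^2$, one shows $-\varphi'(s)\ge \psi$ where $\psi$ is determined by inverting the ``Nash profile'' coming from \eqref{tilde}, then integrates $\varphi$ from $0$ to $r^2$. I will therefore, in the final write-up, adopt exactly that route, and the step I expect to be the main obstacle is the justification of the differential inequality $-\frac{d}{ds}\|e^{-sL}f\|_2^2\ge \|e^{-sL}f\|_2^2\,\Psi\!\big(\|e^{-sL}f\|_2^2/\|e^{-sL}fv_{\sqrt s}^{-1/2}\|_1^2\big)$ for a.e.\ $s$, with the $\tau=s$ coupling in the weight $v_{\sqrt s}$ being the subtle point (it is what makes this work without doubling), followed by integrating it against $ds$ over $(0,r^2)$ and using the monotonicity of $s\mapsto\|e^{-sL}fv_{\sqrt s}^{-1/2}\|_1$ and the contractivity/analyticity of the semigroup to convert the integrated bound into $(\widetilde N^v)$ with the stated constant $c$. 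I expect no serious difficulty beyond this; the remaining manipulations (spectral-calculus estimates $\|Le^{-sL}\|_{2\to2}\le C/s$, the elementary inequality $x\log(x/y)\le x\log x-\ldots$, and absorbing bounded logarithms) are routine.
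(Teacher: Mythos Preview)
Your proposal does not establish $(\widetilde{N}^v)$. You cycle through several strategies --- the Duhamel decomposition of Proposition~\ref{Nash1}, an optimal-time argument, case analysis on whether $\tau_0\gtrless r^2$, and finally a differential inequality for $s\mapsto\|e^{-sL}f\|_2^2$ --- and at each turn you correctly spot a gap and move on, but you never close one. The last paragraph announces what you ``would actually write'' and lists expected obstacles (the $s$-coupling in $v_{\sqrt s}$, monotonicity of $s\mapsto\|e^{-sL}f\,v_{\sqrt s}^{-1/2}\|_1$), but none of these steps is carried out, and in fact that coupled monotonicity is not obvious and is not what the paper uses.

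The missing idea is much simpler than anything you tried: Jensen's inequality applied to the spectral measure of $f$ gives, for every $t>0$,
\[
\exp\!\left(-\,t\,\frac{\mathcal{E}(f)}{\|f\|_2^2}\right)
\;\le\;
\int_0^\infty e^{-t\lambda}\,\frac{d\|E_\lambda f\|_2^2}{\|f\|_2^2}
\;=\;
\frac{\|e^{-(t/2)L}f\|_2^2}{\|f\|_2^2}.
\]
This is the content of \cite[Proposition~II.2]{C-N}. Now bound the right-hand side directly by \eqref{tilde}:
\[
\|e^{-(t/2)L}f\|_2
\;\le\;
\|e^{-(t/2)L}v_{\sqrt t}^{1/2}\|_{1\to2}\,\|f\,v_{\sqrt t}^{-1/2}\|_1
\;\le\;
c^{-1/2}\,\|f\,v_{\sqrt t}^{-1/2}\|_1,
\]
take logarithms, and set $r=\sqrt t$. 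That is the entire proof; no optimization, no iteration, no differential inequality, and no monotonicity in $s$ beyond what is already built into \eqref{tilde}. Your decomposition $f=e^{-\tau L}f+\int_0^\tau Le^{-sL}f\,ds$ followed by the triangle inequality loses exactly the convexity that Jensen captures, which is why you could only recover $(N^v)$ and not the logarithmic improvement.
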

\begin{proof}
Start with the inequality from \cite[Proposition II.2]{C-N}, 
$$\exp\left(-\frac{\mathcal{E}(f)}{ \|f\|^2_2}t\right)\leq\frac{\|e^{-(t/2)L}f\|_2^2}{ \|f\|^2_2},$$
which follows from Jensen's inequality applied to the spectral resolution of $L$.

Then \eqref{tilde} with constant $C$ yields
\begin{equation*}
\exp\left(-\frac{\mathcal{E}(f)}{ \|f\|^2_2}t\right)\leq \frac{\|e^{-(t/2)L}\,{v_{\sqrt{t}}^{1/2}}\|_{1\to
  2}^2\|fv_{\sqrt{t}}^{-1/2}\|_{1}^2}{ \|f\|^2_2}\leq  \frac{C^2\|fv_{\sqrt{t}}^{-1/2}\|_{1}^2}{\|f\|^2_2},\end{equation*}
which is obviously equivalent to \eqref{NnD} by changing $t$ to $r^2$ and taking logarithms.
\end{proof}

Propositions \ref{Nash2} and  \ref{implication0}  yield the following corollary, where one still does not assume $v$ to be doubling.  Note that \cite[Theorem 3.9]{BBGM} 
corresponds to the particular case where $v(x,r)$ is a product of a function of $x$ and a function of $r$.

\begin{coro}\label{Nash3}  The heat kernel upper bound
$(DU\!E^v)$
implies the   inequality 
\begin{equation*}
\tag{$\widetilde{N}^v$}\label{bacra}\|f \|_{2}^2\log\frac{c\|f \|_{2}^2}{\|fv_{r}^{-1/2}\|_{1}^2}\leq
r^2\mathcal{E}(f), \quad
\forall\, r>0, \,f\in \mathcal{F},
\end{equation*}
for some $c>0$.
\end{coro}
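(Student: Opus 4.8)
The plan is to chain together two results already established in the excerpt. First, by Proposition~\ref{implication0}, the heat kernel upper bound $(DU\!E^v)$ is equivalent to the estimate \eqref{tilde}, namely $\sup_{t>0}\|e^{-(t/2)L}\,{v_{\sqrt{t}}^{1/2}}\|_{1\to 2}<+\infty$; note that this equivalence requires no doubling hypothesis on $v$, which matches the hypotheses of the corollary. So $(DU\!E^v)$ gives us \eqref{tilde} with some finite constant $C$.

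Second, I would invoke Proposition~\ref{Nash2}, which says precisely that \eqref{tilde} implies $(\widetilde{N}^v)$ with the constant $c$ equal to the inverse of the square of the supremum in \eqref{tilde}. Combining the two implications immediately yields $(\widetilde{N}^v)$ for this value of $c$, and in particular ``for some $c>0$'' as stated. Thus the proof is genuinely a two-line deduction.

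There is essentially no obstacle here: the real work has already been done in Propositions~\ref{implication0} and~\ref{Nash2}. The only thing to be mildly careful about is bookkeeping the constant $c$ through the chain (it is the reciprocal of the squared supremum appearing in \eqref{tilde}, which in turn is controlled by the constant $C$ in $(DU\!E^v)$), but since the statement only asks for existence of some $c>0$, even that is immaterial.

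\begin{proof}
By Proposition~\ref{implication0}, $(DU\!E^v)$ is equivalent to \eqref{tilde}. By Proposition~\ref{Nash2}, \eqref{tilde} implies $(\widetilde{N}^v)$ with $c$ equal to the inverse of the supremum in \eqref{tilde} squared. Composing these two implications gives $(\widetilde{N}^v)$ for some $c>0$.
\end{proof}
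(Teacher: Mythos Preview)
Your proof is correct and takes essentially the same approach as the paper: the paper simply states that Propositions~\ref{Nash2} and~\ref{implication0} together yield the corollary, without assuming $v$ to be doubling. Your remark about tracking the constant $c$ is also in line with the paper's statement of Proposition~\ref{Nash2}.
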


Rewriting \eqref{NB}  as
$$0<\frac{1}{C}\leq
\frac{\|fv_{r}^{-1/2}\|_{1}^2}{\|f \|_{2}^2}+\frac{r^2\mathcal{E}(f)}{\|f \|_{2}^2}$$
for all $f\in \mathcal{F}\setminus \{0\}$ and all $r>0$
and using the elementary fact that
$$A,B,c>0, \,\log\frac{c}{A}\le B\Rightarrow A+B\ge \min\left(\frac{c}{2},\log 2\right),$$
one sees that \eqref{NnD}  implies  \eqref{NB} with $C=\frac{1}{\min\left(\frac{c}{2},\log 2\right)}$, and as we  already said the converse is false even in the case where $v$ does not depend on  $x$ (see \cite{C-N}).

A posteriori, if $v$ is doubling and if $(M,d,\mu,L)$ satisfies the additional assumptions of Proposition \ref{Nana} below, one can see that \eqref{NB} does imply \eqref{NnD}; indeed,  Proposition \ref{Nana} states that in that situation  \eqref{NB}  implies $(DU\!E^v)$, which implies  back \eqref{NnD}
by Corollary \ref{Nash3}. One can see this directly.

\begin{proposition} If $v$   satisfies $(D_v)$, \eqref{NB} and   \eqref{NnD} are equivalent.
\end{proposition}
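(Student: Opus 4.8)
The backward implication $(\widetilde{N}^v)\Rightarrow(N^v)$ requires no doubling and was already obtained just above from the elementary lemma, so the only thing to prove is that $(N^v)$ together with $(D_v)$ implies $(\widetilde{N}^v)$. The plan is to extract from $(N^v)$, by \emph{one} application at a well-chosen scale and then the polynomial control $(D_v^{\kappa_v})$, a lower bound of the form $\|fv_r^{-1/2}\|_1^2\ge c\,e^{-r^2\mathcal{E}(f)}\|f\|_2^2$ valid for all $r>0$; taking logarithms in this bound is exactly $(\widetilde{N}^v)$.

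First I would normalise $\|f\|_2=1$ and dispose of the trivial cases $f=0$ and $\mathcal{E}(f)=0$ (in the latter $(N^v)$ directly gives $\|fv_r^{-1/2}\|_1^2\ge 1/C$ for every $r$). Using that $v$ is non-decreasing in $r$ and satisfies $(D_v^{\kappa_v})$, one checks that $\|fv_r^{-1/2}\|_1$ is either finite for all $r>0$ or infinite for all $r>0$, the second case making $(\widetilde{N}^v)$ trivially true; so assume it finite throughout. Let $C$ be the constant in $(N^v)$ and set $\rho:=(2C\mathcal{E}(f))^{-1/2}$, so that $C\rho^2\mathcal{E}(f)=\tfrac12$. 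For $r\le\rho$, $(N^v)$ reads $1\le C\|fv_r^{-1/2}\|_1^2+Cr^2\mathcal{E}(f)\le C\|fv_r^{-1/2}\|_1^2+\tfrac12$, hence $\|fv_r^{-1/2}\|_1^2\ge\tfrac1{2C}$; in particular this holds at $r=\rho$. For $r>\rho$ I would transport this bound upward in scale: since $v$ is non-decreasing and satisfies $(D_v^{\kappa_v})$, pointwise $v(x,r)\le C_1(r/\rho)^{\kappa_v}v(x,\rho)$ with $C_1$ the constant of $(D_v^{\kappa_v})$, whence $|f|\,v_r^{-1/2}\ge C_1^{-1/2}(\rho/r)^{\kappa_v/2}|f|\,v_\rho^{-1/2}$, and integrating and squaring gives $\|fv_r^{-1/2}\|_1^2\ge C_1^{-1}(\rho/r)^{\kappa_v}\|fv_\rho^{-1/2}\|_1^2\ge\tfrac1{2CC_1}(\rho/r)^{\kappa_v}$. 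Combining the two ranges, $\|fv_r^{-1/2}\|_1^2\ge\tfrac1{2CC_1}\min\{1,(\rho/r)^{\kappa_v}\}$ for all $r>0$, i.e. $\ge\tfrac1{2CC_1}\min\{1,(2C\mathcal{E}(f)r^2)^{-\kappa_v/2}\}$.

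The final ingredient is the elementary fact that, for fixed $a>0$ and $\beta\ge0$, there is $c_0=c_0(a,\beta)\in(0,1]$ with $\min\{1,(ax)^{-\beta}\}\ge c_0\,e^{-x}$ for all $x>0$ (true because $e^{-x}$ decays faster than any negative power; one may take $c_0$ to be the minimum of $1$ and $\inf_{x\ge 1/a}e^x(ax)^{-\beta}>0$). Applying it with $a=2C$, $\beta=\kappa_v/2$, $x=\mathcal{E}(f)r^2$ yields $\|fv_r^{-1/2}\|_1^2\ge c\,e^{-\mathcal{E}(f)r^2}$ with $c:=c_0/(2CC_1)$; restoring the normalisation and taking logarithms gives $r^2\mathcal{E}(f)\ge\log\big(c\|f\|_2^2/\|fv_r^{-1/2}\|_1^2\big)$, which is $(\widetilde{N}^v)$ with constant $c$. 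The argument is short; the one point worth noting is conceptual rather than technical, namely that no Moser/Nash iteration over many scales is needed here — a single use of $(N^v)$ at the critical scale $\rho$, propagated by the polynomial doubling bound, already beats $e^{-r^2\mathcal{E}(f)}$. The only place to be a little careful is the monotonicity-plus-doubling bookkeeping that carries the lower bound from scale $\rho$ to larger $r$.
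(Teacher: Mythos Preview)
Your proof is correct and follows the same overall architecture as the paper's: both split into a small-$r$ regime where $(N^v)$ directly yields a uniform lower bound on $\|fv_r^{-1/2}\|_1^2/\|f\|_2^2$, and a large-$r$ regime where this bound is propagated by the polynomial control $(D_v^{\kappa_v})$, with the conclusion following from the elementary observation that a negative power dominates $e^{-r^2\mathcal{E}(f)/\|f\|_2^2}$ as $r\to\infty$.

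The one genuine difference is the choice of critical scale. The paper defines it implicitly as $r_0:=\inf\{r>0:\ r^2\mathcal{E}(f)/\|f\|_2^2\ge A(r,f)\}$ with $A(r,f)=\|fv_r^{-1/2}\|_1^2/\|f\|_2^2$, and then must argue (via doubling applied to $A$ near $r_0$) that both $A(r_0,f)$ and $r_0^2\mathcal{E}(f)/\|f\|_2^2$ are bounded below. Your choice $\rho=(2C\mathcal{E}(f))^{-1/2}$ depends only on $\mathcal{E}(f)$, not on $A$, and makes the small-$r$ bound immediate from $(N^v)$ alone; this is a modest but real simplification that avoids the paper's sandwich argument around $r_0$. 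Both routes cost exactly one invocation of doubling to extend to $r>\rho$ (resp.\ $r>r_0$), so neither has an advantage there.
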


\begin{proof} We have already seen  that \eqref{NnD} always  implies  \eqref{NB}. Now for the converse.
For $f\in \mathcal{F}\setminus \{0\}$ and $r>0$, denote
$$A(r,f):=\frac{\|fv_{r}^{-1/2}\|_{1}^2}{\|f \|_{2}^2}$$
and 
$$B(f):=\frac{\mathcal{E}(f)}{\|f \|_{2}^2}.$$
Since  $v$ is assumed to be non-decreasing in $r$, the function $r\to A(r,f)$ is non-increasing, and since $v$ satisfies $(D_v)$,
\begin{equation}
\label{blabla}
\frac{A(s,f)}{A(r,f)}\le C\left(\frac{r}{s}\right)^{\kappa_v}, \mbox{ for } r\ge s>0,
\end{equation}
where  $C>0$ being the doubling constant is  independent of $f$.

The validity of  \eqref{NB}
means that
\begin{equation}
\label{vali}\inf_{r>0,f\in \mathcal{F}\setminus \{0\}}A(r,f)+r^2B(f)= c>0.
\end{equation}

Assume first that $B(f) \neq 0$. 

Define $$r_0=r_0(f):=\inf\{r>0; r^2B(f)\ge A(r,f)\}.$$
Note that $r_0=0$ would not be compatible with \eqref{vali}, hence $r_0>0$.
Then $(r_0/2)^2B(f)< A(r_0/2,f)$ since $r_0/2<r_0$.
Also, there exists $\varepsilon>0$ arbitrarily small
such that 
$(r_0+\varepsilon)^2B(f)\ge A(r_0+\varepsilon,f)$, hence, for 
$\varepsilon\le r_0$,
$(2r_0)^2B(f)\ge A(2r_0,f)$ .
But by doubling $A(2r_0,f)\ge C^{-1}A(r_0,f)$ and $A(r_0/2,f)\le CA(r_0,f)$,
hence $$\frac{1}{4C}A(r_0,f)\le r_0^2B(f)\ge 4CA(r_0,f).$$
It follows that 
$$\min\left\{A(r_0,f),r_0^2B(f)\right\}\ge \frac{c}{4C+1}.$$
If $r\le r_0$ then 
$$A(r,f)e^{r^2B(f)}\ge A(r,f)\ge A(r_0,f)\ge \frac{c}{4C+1}:=c'>0.$$
Now for $r\ge r_0$.
Using \eqref{blabla},
\begin{eqnarray*}
A(r,f)e^{r^2B(f)}&\ge& C^{-1}A(r_0,f)\left(\frac{r_0}{r}\right)^{\kappa_v}e^{r^2B(f)}\\
&=&C^{-1} A(r_0,f)\left(\frac{r_0}{r}\right)^{\kappa_v}e^{(r/r_0)^2r_0^2B(f)}\\
&\ge&C^{-1}c'\left(\frac{r_0}{r}\right)^{\kappa_v}e^{c'(r/r_0)^2}
\end{eqnarray*}
Set $ b=\inf_{x\ge 1} x^{-{\kappa_v}}e^{c'x^2}$. Note that $b>0$   only depends   on ${\kappa_v},c'$.  Finally
$$A(r,f)e^{r^2B(f)}\ge C^{-1}c'b>0,$$
which is nothing but \eqref{NnD}.

The argument for the case $B(f)=0$ is straightforward so we skip it.

\end{proof}

\begin{rem}\label{cau} Similarly as Carron in \cite{Ca}, one can observe that the best upper bound for $p_t$ is... $p_t$ itself,
and  obtain a universal Nash inequality $(\widetilde{N})$ by taking $v(x,r):=\frac{1}{p_{r^2}(x,x)}$, or more generally $v(x,r):=\frac{1}{\|p_{r^2}(x,.)\|_2^2}$ if $p_t$ is not assumed or known to be continuous.
\end{rem}

\begin{rem} One may conjecture that  \cite[Theorem II.5]{C-N}, see also Section $\ref{uni}$ below, generalises to the case where $v$ does depend on $x$, that is \eqref{bacra} implies back $(DU\!E^v)$. The difficulty is related to the fact that
we do not know so far how to prove directly, even when $v$ is doubling, that $(N^v)$ implies $(DU\!E^v)$. We have to go through $(GN_q^v)$, hence the next section. The article \cite{BBGM} does contain a partial converse to Corollary $\ref{Nash3}$, in the case where 
$v(x,r)$ is a product of a function of $x$ and a function of $r$, and the function of $x$ satisfies a Lyapunov type condition.
\end{rem}


\bigskip

\subsection{The heat kernel upper bound  implies Gagliardo-Nirenberg}\label{HGN}

In this section,   we will prove:

\begin{proposition}\label{dg} Let  $(M,\mu)$  be a measure space, $L$ a  non-negative self-adjoint operator on $L^2(M,\mu)$ and let $v:M\times \R_+\to \R_+$ satisfy $(A)$ and  $(D_v)$.  Then the heat kernel upper bound
$(DU\!E^v)$ implies the inequality $(GN_q^v)$
for all $q$ such that $2<q\le +\infty$ and
$\frac{q-2}{q}\kappa_v<2$, where $\kappa_v$ is as in \eqref{dv}.
\end{proposition}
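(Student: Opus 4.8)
The plan is to derive $(GN_q^v)$ from the semigroup bound $(vE_{2,q})$ via a first-order Taylor decomposition of $f$ in terms of the heat semigroup, exactly along the lines of the proof of Proposition \ref{Nash1}, but now carrying the weight $v_r$ through the argument and using the quantitative doubling bound \eqref{dv} to compare $v_r$ with $v_{\sqrt s}$ for $0<s\le r^2$. Set $\theta:=\frac12-\frac1q\in(0,\frac12]$, so that the claimed inequality reads $\|fv_r^{\theta}\|_q^2\le C(\|f\|_2^2+r^2\mathcal E(f))$ for all $r>0$ and $f\in\mathcal F$.

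First I would record the needed semigroup estimates. By Corollary \ref{coro}, $(DU\!E^v)$ together with $(D_v)$ implies $(vE_{2,q})$ for every $q\in[2,\infty]$, i.e. $C_0:=\sup_{t>0}\|v_{\sqrt t}^{\theta}e^{-tL}\|_{2\to q}<\infty$; moreover, since $\sqrt s\le 2\sqrt{s/2}$, $(D_v)$ gives $v_{\sqrt s}\le Cv_{\sqrt{s/2}}$, whence also $C_1:=\sup_{s>0}\|v_{\sqrt s}^{\theta}e^{-(s/2)L}\|_{2\to q}<\infty$. Then, fixing $r>0$ and $f\in\mathcal F$, I would write as in the proof of Proposition \ref{Nash1} that $f=e^{-r^2L}f+\int_0^{r^2}Le^{-sL}f\,ds$, the integral converging in $L^2(M,\mu)$ because $\|Le^{-sL}f\|_2=\|L^{1/2}e^{-sL}(L^{1/2}f)\|_2\le Cs^{-1/2}\mathcal E(f)^{1/2}$ by analyticity of $(e^{-sL})_{s>0}$ on $L^2$. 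Multiplying by $v_r^{\theta}$ and applying Minkowski's integral inequality,
$$\|v_r^{\theta}f\|_q\le\|v_r^{\theta}e^{-r^2L}f\|_q+\int_0^{r^2}\|v_r^{\theta}Le^{-sL}f\|_q\,ds\le C_0\|f\|_2+\int_0^{r^2}\|v_r^{\theta}Le^{-sL}f\|_q\,ds,$$
the first term being estimated directly from $(vE_{2,q})$ with $t=r^2$.

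For the integral term, let $0<s\le r^2$. Since $r\ge\sqrt s$, \eqref{dv} yields $v_r\le C(r/\sqrt s)^{\kappa_v}v_{\sqrt s}$, hence $v_r^{\theta}\le C'(r^2/s)^{\theta\kappa_v/2}v_{\sqrt s}^{\theta}$. Writing $Le^{-sL}=e^{-(s/2)L}Le^{-(s/2)L}$ and combining the estimate for $C_1$ above with $\|Le^{-(s/2)L}f\|_2\le Cs^{-1/2}\mathcal E(f)^{1/2}$, one gets
$$\|v_r^{\theta}Le^{-sL}f\|_q\le C'(r^2/s)^{\theta\kappa_v/2}\,C_1\,\|Le^{-(s/2)L}f\|_2\le C_2\,r^{\theta\kappa_v}\,s^{-(\theta\kappa_v+1)/2}\,\mathcal E(f)^{1/2}.$$
The time integral $\int_0^{r^2}s^{-(\theta\kappa_v+1)/2}\,ds$ converges at $s=0$ precisely when $\theta\kappa_v<1$, that is when $\frac{q-2}{q}\kappa_v<2$, and it then equals $\frac{2}{1-\theta\kappa_v}r^{1-\theta\kappa_v}$, so the integral term is bounded by $C_3\,r\,\mathcal E(f)^{1/2}$. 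Combining with the previous display, $\|v_r^{\theta}f\|_q\le C_0\|f\|_2+C_3r\mathcal E(f)^{1/2}$, and squaring (using $(a+b)^2\le 2(a^2+b^2)$) gives $(GN_q^v)$. The endpoint $q=\infty$ (then $\theta=\frac12$ and the hypothesis reads $\kappa_v<2$) is covered by the same computation, using $(vE_{2,\infty})$.

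The only delicate point, and the reason for the hypothesis on $q$, is the convergence at $s=0$ of the time integral: it is governed exactly by $\frac{q-2}{q}\kappa_v<2$, and to obtain the sharp power $s^{-(\theta\kappa_v+1)/2}$ one must invoke the quantitative doubling bound \eqref{dv} rather than plain $(D_v)$, in order to dominate $v_r/v_{\sqrt s}$ by $(r^2/s)^{\theta\kappa_v/2}$. Everything else — the decomposition, the use of analyticity on $L^2$, and the passage from $\mathcal D$ to $\mathcal F$ — is routine and parallels the proof of Proposition \ref{Nash1}.
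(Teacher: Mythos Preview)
Your proof is correct and follows essentially the same approach as the paper: reduce to $(vE_{2,q})$ via Corollary~\ref{coro}, apply the first-order Taylor decomposition $f=e^{-tL}f+\int_0^t Le^{-sL}f\,ds$, control the integrand by combining the doubling bound \eqref{dv} (to pass the weight from scale $\sqrt t$ down to scale $\sqrt s$) with the $L^2$-analyticity estimate $\|L^{1/2}e^{-sL}\|_{2\to2}\le Cs^{-1/2}$, and observe that the resulting time integral converges precisely when $\frac{q-2}{q}\kappa_v<2$. The only cosmetic differences are your use of $v_{\sqrt s}$ in place of the paper's $v_{\sqrt{s/2}}$ (handled, as you note, by one more application of doubling) and your choice $t=r^2$ from the outset.
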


The above statement does not cover the limit case $\frac{q-2}{q}\kappa_v=2$; we suspect the latter might be obtained by using the self-improvement of $(DU\!E^v)$ into $(U\!E^v)$. 

Recall that the constraint on $q$ can be reformulated in the following way: either $\kappa_v< 2$ and
$q\in(2,+\infty]$,  or $\kappa_v\ge 2$ and $q\in (2,\frac{2\kappa_v}{\kappa_v-2})$. For specific considerations on the  case $q=+\infty$, see Corollary \ref{55} below.

A remark similar to Remark \ref{cau} is in order, except that the universal  inequality $(GN_q)$ one obtains in this way only holds under the condition
that $$p_t(x,x)\le Cp_{2t}(x,x),\,\forall\,t>0,\,x\in M$$ (similar formulation if needed with $\|p_{t}(x,.)\|_2^2$).

According to Corollary \ref{coro}, it is enough to prove that  $(vE_{2,q})$ implies $(GN_q^v)$.
In fact, these two conditions happen to be equivalent. For the converse, we shall use the fact that
$(GN_q^v)$ can be reformulated as a resolvent estimate:
$$ \sup_{t>0}\|{v_{\sqrt{t}}^{\frac{1}{2}-\frac{1}{q}}}(I+tL)^{-1/2}\|_{2\to q}<+\infty.$$
    We shall develop further this point of view in Proposition \ref{resolvent2} below, and it will also be instrumental in Section \ref{DG}. Let us start by adopting a point of view more similar to the one in Proposition \ref{Nash1}.

\begin{proposition} \label{equivalence} Assume $v$ satisfies $(D_v)$. Let $q$ be such that $2<q\le +\infty$ and $\frac{q-2}{q}\kappa_v<2$, where $\kappa_v$ is as in \eqref{dv}.
Then the estimate:
\begin{equation*}\label{VE2q}
\tag{$vE_{2,q}$}\sup_{t>0}\|{v_{\sqrt{t}}^{\frac{1}{2}-\frac{1}{q}}}e^{-tL}\|_{2\rightarrow
q}<+\infty
\end{equation*}
implies
$(GN_{q}^v)$. Conversely, $(GN_{q}^v)$ implies $(vE_{2,q})$ for all $q$ such that $2<q\le +\infty$. The latter  assertion does not require $v$ to be doubling.
\end{proposition}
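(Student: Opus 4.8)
The strategy is to argue both implications through the resolvent formulation, exploiting the analyticity of $(e^{-tL})_{t>0}$ on $L^2(M,\mu)$ together with the subordination-type identities linking $e^{-tL}$, $L^{1/2}e^{-tL}$, and $(I+tL)^{-1/2}$. First I would establish that $(vE_{2,q})$ implies $(GN_q^v)$. Proceeding as in the proof of Proposition \ref{Nash1}, write for $f\in\mathcal{D}$ and $t>0$
$$
f=e^{-tL}f+\int_0^t Le^{-sL}f\,ds,
$$
which is valid in $L^2(M,\mu)$ by analyticity. Multiply by $v_{\sqrt t}^{\frac12-\frac1q}$ and take the $L^q$ norm. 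The first term is controlled directly by $(vE_{2,q})$: $\|v_{\sqrt t}^{\frac12-\frac1q}e^{-tL}f\|_q\le C\|f\|_2$. For the second term I would split $Le^{-sL}=L^{1/2}e^{-sL}\,L^{1/2}$, insert $v_{\sqrt t}^{\frac12-\frac1q}=v_{\sqrt t}^{\frac12-\frac1q}v_{\sqrt s}^{-(\frac12-\frac1q)}v_{\sqrt s}^{\frac12-\frac1q}$, use the doubling property $(D_v^{\kappa_v})$ to bound $v_{\sqrt t}^{\frac12-\frac1q}v_{\sqrt s}^{-(\frac12-\frac1q)}\le C(t/s)^{\frac{\kappa_v}{2}(\frac12-\frac1q)}$ pointwise (here $s\le t$, and note $\frac12-\frac1q>0$), then apply $(vE_{2,q})$ to $v_{\sqrt s}^{\frac12-\frac1q}e^{-sL/2}$ (composed with $e^{-sL/2}$, absorbing a harmless analyticity factor) and the analyticity bound $\|L^{1/2}e^{-sL/2}g\|_2\le Cs^{-1/2}\|g\|_2$ applied to $g=L^{1/2}f$. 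This produces
$$
\|v_{\sqrt t}^{\frac12-\frac1q}f\|_q\le C\|f\|_2+C\|L^{1/2}f\|_2\int_0^t \Big(\frac ts\Big)^{\frac{\kappa_v}{2}(\frac12-\frac1q)} s^{-1/2}\,ds .
$$
The integral converges precisely when $\frac{\kappa_v}{2}\big(\frac12-\frac1q\big)+\frac12<1$, i.e. $\frac{q-2}{q}\kappa_v<2$, and equals a constant times $\sqrt t$. Setting $r=\sqrt t$ and squaring gives $(GN_q^v)$ after the elementary $(a+b)^2\le 2a^2+2b^2$; a density argument extends from $\mathcal{D}$ to all of $\mathcal{F}$.

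\textbf{The converse.} For $(GN_q^v)\Rightarrow(vE_{2,q})$, which the statement asserts holds without doubling and for all $q\in(2,+\infty]$, I would use the resolvent reformulation noted in the excerpt: $(GN_q^v)$ says exactly that $\sup_{r>0}\|v_r^{\frac12-\frac1q}(I+r^2L)^{-1/2}\|_{2\to q}<+\infty$. Indeed, given $f\in\mathcal{F}$, apply $(GN_q^v)$ to $(I+r^2L)^{-1/2}f$ (which lies in $\mathcal{F}$); the right-hand side becomes $\|(I+r^2L)^{-1/2}f\|_2^2+r^2\mathcal{E}((I+r^2L)^{-1/2}f)=\langle(I+r^2L)^{-1}f,f\rangle\le\|f\|_2^2$ by spectral calculus, giving the resolvent bound; conversely the resolvent bound yields $(GN_q^v)$ by the same computation. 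Then I would pass from the resolvent to the semigroup via the spectral-calculus/subordination inequality: there is an absolute constant such that, for every $\lambda\ge 0$ and $t>0$,
$$
e^{-t\lambda}\le C\,(1+t\lambda)^{-1/2}\cdot(1+t\lambda)^{1/2}e^{-t\lambda}\le C\,(1+t\lambda)^{-1/2},
$$
so that $e^{-tL}=\varphi(L)(I+tL)^{-1/2}$ with $\varphi(\lambda)=(1+t\lambda)^{1/2}e^{-t\lambda}$ bounded on $L^2$ uniformly in $t$ by spectral theory. Hence $v_{\sqrt t}^{\frac12-\frac1q}e^{-tL}=v_{\sqrt t}^{\frac12-\frac1q}(I+tL)^{-1/2}\,\varphi(L)$ factors as a ($2\to q$)-bounded operator (from the resolvent estimate with $r=\sqrt t$) composed with a uniformly ($2\to2$)-bounded operator, which gives $(vE_{2,q})$.

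\textbf{Main obstacle.} The routine parts are the density/approximation arguments and the verification that $(e^{-sL})_{s>0}$ analyticity furnishes the bounds $\|L^{1/2}e^{-sL}\|_{2\to2}\lesssim s^{-1/2}$; these I would dispatch quickly. The step requiring genuine care is the first implication's handling of the time integral $\int_0^t Le^{-sL}f\,ds$ with the $x$-dependent weight $v_{\sqrt t}^{\frac12-\frac1q}$: one must move the weight from scale $\sqrt t$ to scale $\sqrt s$ inside the integral, and this is exactly where doubling enters and where the constraint $\frac{q-2}{q}\kappa_v<2$ becomes the precise integrability threshold. I would be careful that the comparison $v_{\sqrt t}^{\frac12-\frac1q}(x)\le C(t/s)^{\frac{\kappa_v}{2}(\frac12-\frac1q)}v_{\sqrt s}^{\frac12-\frac1q}(x)$ holds for a.e.\ $x$ with a constant independent of $x$ (which it does, since $(D_v^{\kappa_v})$ is a pointwise-in-$x$ statement and $\frac12-\frac1q\ge0$), and that the endpoint $q=+\infty$ is covered by the same computation with $\frac12-\frac1q=\frac12$. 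The converse direction is comparatively soft, being pure spectral calculus, and in particular needs no doubling, matching the claim in the statement.
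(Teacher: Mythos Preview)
Your proof is correct and follows essentially the same route as the paper: the forward implication uses the Duhamel decomposition $f=e^{-tL}f+\int_0^t Le^{-sL}f\,ds$, doubling to pass from scale $\sqrt t$ to scale $\sqrt s$ on the weight, and analyticity to produce the $s^{-1/2}$ factor whose integrability gives exactly the threshold $\frac{q-2}{q}\kappa_v<2$; the converse is the paper's substitution $f\mapsto e^{-tL}f$ together with the spectral bound $\sup_{\lambda\ge0}(1+t\lambda)^{1/2}e^{-t\lambda}<\infty$, which you have simply rephrased through the resolvent formulation. One harmless slip: when you apply $(GN_q^v)$ to $(I+r^2L)^{-1/2}f$ the right-hand side equals $\|f\|_2^2$ exactly (not $\langle(I+r^2L)^{-1}f,f\rangle$), but your conclusion is unaffected.
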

\begin{proof} Assume $(vE_{2,q})$  and  $\frac{q-2}{q}\kappa_v<2$. Set $\alpha=\frac{1}{2}-\frac{1}{q}$. Again, for  $f\in L^2(M,\mu)$,  write
\begin{equation*}f=e^{-tL}f+\int_{0}^{t}L e^{-sL}f\,ds,\quad \forall\, t>0,\end{equation*}
hence
$${v_{\sqrt{t}}^{\alpha}}f={v_{\sqrt{t}}^{\alpha}}e^{-tL}f+\int_{0}^{t}{v_{\sqrt{t}}^{\alpha}}e^{-(s/2)L}L e^{-(s/2)L}f\,ds.$$
Then
\begin{eqnarray*}
   &&\|{v_{\sqrt{t}}^{\alpha}}f\|_{q} \leq 
  \|{v_{\sqrt{t}}^{\alpha}}e^{-tL}f\|_{q}+\int_{0}^{t}\|{v_{\sqrt{t}}^{\alpha}}e^{-(s/2)L}\|_{2\to q}
   \|L e^{-(s/2)L }f\|_{2}\,ds\\
  &\leq & C\|{v_{\sqrt{t}}^{\alpha}}e^{-tL}\|_{2\to q}\|f\|_{2}+\int_{0}^{t} \|v_{\sqrt{t}}/v_{\sqrt{s/2}}\|_{\infty}^{\alpha}\|{v_{\sqrt{s/2}}^{\alpha}}e^{-(s/2)L}\|_{2\to q}\|L
   e^{-(s/2)L}f\|_{2}\,ds.
   \end{eqnarray*}
Using   $(vE_{2,q})$ and (\ref{dv}), we obtain, for $f\in\mathcal{F}$,
   \begin{eqnarray*}
   \|{v_{\sqrt{t}}^{\alpha}}f\|_{q} &\leq & C\|f\|_{2}+C'\int_{0}^{t} \left(\frac{t}{s}\right)^{
  \alpha\kappa_v/2}\|L^{1/2}e^{-(s/2)L} (L^{1/2}f)\|_{2}\,ds\\
  &\leq & C\|f\|_{2}+C't^{ \alpha\kappa_v/2}\left(\int_{0}^{t}s^{-\frac{\alpha\kappa_v}{2}-\frac{1}{2}}\,ds\right)\|L^{1/2}f\|_{2}\quad\\
&\leq & C(\|f\|_{2}+\sqrt{t}\|L^{1/2}f\|_{2}),
\end{eqnarray*}
that is, setting $r=\sqrt{t}$, $(GN_{q}^v)$.
In the second inequality, we have used the analyticity of $(e^{-tL})_{t>0}$ on $L^2(M,\mu)$, and in the last one the fact that $\alpha\kappa_v<1$.

\bigskip

Now for the converse. Assume that
\begin{equation*}
\tag{$GN_{q}^v$} \|fv_{\sqrt{t}}^{\frac{1}{2}-\frac{1}{q}} \|_{q}\leq
C(\|f\|_{2}+\sqrt{t}\|L^{1/2}f\|_{2}), \quad \forall\, t>0,  \ \forall\,f\in\mathcal{D}.
\end{equation*}
This can be rewritten as
\begin{eqnarray*}
\|{v_{\sqrt{t}}^{\frac{1}{2}-\frac{1}{q}}}f \|_{q}^{2}&\leq&
C'(\|f\|_{2}^{2}+t<L f,f>)\\
&=& C'<(I+tL)f,f>\\
&=& C'\|(I+tL)^{1/2}f\|_2^2,
\end{eqnarray*}
thus, replacing $f$ by $e^{-tL}f$,
\begin{eqnarray*}
\|{v_{\sqrt{t}}^{\frac{1}{2}-\frac{1}{q}}}e^{-tL}f\|_{q}&\leq& C'\|(I+tL)^{1/2}e^{-tL}f\|_{2}\\
&\leq&  C'\|(I+tL)^{1/2}e^{-tL}\|_{2\to 2}\|f\|_{2}\\
&=&  C'\left(\sup_{\lambda>0}\,(1+t\lambda)^{1/2}\,e^{-t\lambda}\right)\|f\|_{2}\\
&=& C'' \|f\|_{2}.
\end{eqnarray*}
\end{proof}

As we already said, Proposition  \ref{dg} yields Proposition \ref{Nash22} as a by-product in the case where $v$ satisfies $(D_v)$, because, according to Proposition \ref{gn} below,  its conclusion is stronger. On the other hand, the converse part of Proposition
\ref{equivalence} can be used to give a (rather indirect) proof
of  the  implication from $(GN_{q}^v)$ to $(N^v)$ which will see in a straighforward way  in Proposition \ref{gn} below.

\begin{proposition} \label{nq}
For any $q>2$, $(GN_{q}^v)$ implies $(N_{p,2}^v)$ for all $p$, $1\le p\le q'$, and in particular $(N^v)$.
\end{proposition}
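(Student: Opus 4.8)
The plan is to deduce $(N^v_{p,2})$ from $(GN^v_q)$ by a single application of Hölder's inequality on the term $\|f v_r^{-1/2}\|_1$ that appears in the Nash-type inequality, interpolating the $L^1$ norm between an $L^2$ norm and a weighted $L^q$ norm exactly so that the weight powers match up with the $v_r^{1/2-1/q}$ weight in $(GN^v_q)$. Concretely, I would first recall that $(GN^v_q)$ reads
\begin{equation*}
\|f v_r^{\frac12-\frac1q}\|_q^2 \le C(\|f\|_2^2 + r^2\mathcal{E}(f)), \quad \forall\, r>0,\ f\in\mathcal{F}.
\end{equation*}
Write $f v_r^{-1/2} = (f v_r^{\frac12-\frac1q})^{\theta} \cdot (f v_r^{s})^{1-\theta}$ for appropriate $\theta\in(0,1)$ and exponent $s$, and use Hölder with exponents matched to $q$ on the first factor and to $2$ on the second, so that the product lands in $L^1$ and the powers of $v_r$ recombine to $v_r^{-1/2}$. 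Solving the two linear constraints (one for the Lebesgue exponents: $\theta/q + (1-\theta)/2 = 1$ after adjusting, one for the weight exponent: $\theta(\frac12-\frac1q) + (1-\theta)s = -\frac12$) pins down $\theta$ and $s$; in fact taking $s=0$, i.e. the second factor just $\|f\|_2$, forces $\theta(\frac12-\frac1q) = -\frac12$, which is impossible for $\theta>0$, so one should instead keep a genuine $L^p$ factor and aim directly for the mixed exponent $p$ with $1\le p\le q'$. This is why the statement is phrased with the general exponent $p\le q'$: for such $p$ one can write $v_r^{\frac1p-\frac1q}$ as a convex combination of $v_r^{\frac12-\frac1q}$ (with weight $\theta$) and $v_r^{-1/2}$ (with weight $1-\theta$) where $\theta$ is determined by $\frac1p = \theta(1-\frac1q)\cdot(\text{something})$; the range $p\le q'$ is precisely what makes $\theta\in[0,1]$.

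More carefully, the cleanest route is: start from $(GN^v_q)$, and show it implies, via Hölder, the intermediate-exponent inequality
\begin{equation*}
\|f v_r^{\frac1p - \frac1q}\|_q \le C\big(\|f v_r^{\frac1p-\frac12}\|_2\big)^{?}\cdots
\end{equation*}
— no, better to go the other way and directly bound $\|f v_r^{\frac1p-\frac12}\|_p$ (the quantity in $(N^v_{p,2})$) in terms of $\|f\|_2$ and $\|f v_r^{\frac12-\frac1q}\|_q$. Interpolating the $L^p$ norm between $L^2$ and $L^q$ (which needs $p\in[2,q]$... but we want $p\le q'\le 2$, so one interpolates $L^p$ between $L^1$ and $L^q$ instead, or rather observes the claim is about $p\le q'$ and proceeds by a direct Hölder splitting of the integrand $|f|^p v_r^{p(\frac1p-\frac12)}$). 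In any case the algebra is elementary and the upshot is an inequality of the form $\|f v_r^{\frac1p-\frac12}\|_p \le C\|f\|_2^{1-\theta}\|f v_r^{\frac12-\frac1q}\|_q^{\theta}$ with $\theta$ chosen so the weights work; feeding in $(GN^v_q)$ to bound the last factor and applying Young's inequality $ab\le \epsilon a^{1/\theta'} + C_\epsilon b^{1/\theta}$ to absorb, then rearranging, yields $(N^v_{p,2})$, and the case $p=q'$, together with the last sentence of the statement asserting $(N^v)$ follows, comes from $q'\le 2$ so that by Proposition \ref{pl} ($(N^v_{p,2})$ for $1<p<2$ implies $(N^v)$) we are done — with the borderline $p=1$ handled directly.

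I do not expect any serious obstacle here; this is a ``routine calculation'' of the kind the paper elsewhere leaves to the reader. The only point requiring a little care is bookkeeping the exponents so that (i) the Lebesgue exponents in Hölder are conjugate, (ii) the powers of $v_r$ recombine correctly, and (iii) the resulting interpolation exponent $\theta$ lies in $[0,1]$, which is exactly equivalent to the hypothesis $p\le q'$. A secondary, purely cosmetic point is that $(GN^v_q)$ is stated for $f\in\mathcal{F}$ while $(N^v_{p,2})$ involves $\mathcal{D}_q(L^{1/2})$ or $\mathcal{F}$; since $\mathcal{E}(f)=\|L^{1/2}f\|_2^2$ on $\mathcal{F}$ this matches, and one should just be careful that all norms appearing are finite or else the inequality holds vacuously, per the convention stated after $(N^v)$.
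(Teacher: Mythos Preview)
Your intended approach, a direct H\"older interpolation followed by absorption, differs from the paper's route and, once carried out correctly, is more elementary. The paper argues indirectly: it converts $(GN_q^v)$ to the semigroup estimate $(vE_{2,q})$ via Proposition~\ref{equivalence}, passes by duality to $(Ev_{q',2})$, invokes Remark~\ref{generalised Nash} to obtain $(N_{q',2}^v)$, and finally descends to $(N_{p,2}^v)$ for $p\le q'$ via Proposition~\ref{pl}. The direct method you are reaching for is essentially that of Proposition~\ref{gn} (there done only for $p=1$).

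However, as written your sketch contains a genuine error of direction. The interpolation you eventually settle on,
\[
\|f v_r^{\frac1p-\frac12}\|_p \le C\,\|f\|_2^{1-\theta}\,\|f v_r^{\frac12-\frac1q}\|_q^{\theta},
\]
does not hold: since $p<2<q$, the exponent $p$ does not lie between $2$ and $q$, so no such H\"older bound is available. Even if it did hold, feeding $(GN_q^v)$ into it would bound $\|f v_r^{\frac12-\frac1p}\|_p$ from above, not $\|f\|_2$, and so would not give $(N_{p,2}^v)$. The correct step goes the other way: for $1\le p<2<q$ choose $\theta\in(0,1)$ with $\tfrac12=\tfrac{\theta}{q}+\tfrac{1-\theta}{p}$; one checks that $\theta(\tfrac12-\tfrac1q)+(1-\theta)(\tfrac12-\tfrac1p)=0$, so H\"older gives
\[
\|f\|_2 \le \|f v_r^{\frac12-\frac1q}\|_q^{\theta}\,\|f v_r^{\frac12-\frac1p}\|_p^{1-\theta}.
\]
Now apply $(GN_q^v)$ to the first factor and absorb the resulting $\|f\|_2$ contribution exactly as in the proof of Proposition~\ref{gn}. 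Incidentally, your claim that $p\le q'$ is ``precisely what makes $\theta\in[0,1]$'' is not correct: the only constraint coming from the H\"older step is $p\le 2$, so the direct argument actually yields $(N_{p,2}^v)$ for every $1\le p\le 2$; the restriction $p\le q'$ in the statement is an artefact of the paper's route through $(Ev_{q',2})$.
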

\begin{proof}
By Proposition \ref{equivalence}, $(GN_q^v)$ implies
(\ref{VE2q}). By duality, (\ref{VE2q}) is
equivalent to $(Ev_{q',2})$. On the other hand,  as noticed in Remark
\ref{generalised Nash},  $(Ev_{q',2})$ implies $(N_{q',2}^v)$. Now, according to Proposition  \ref{pl},  $(N_{q',2}^v)$ implies $(N_{p,2}^v)$ for all $p$, $1\le p\le q'$. The case $p=1$ yields  $(N^v)$.
\end{proof}

\bigskip


Next we show a variation on (and generalisation of) Proposition \ref{equivalence}, which yields  characterisations of (\ref{vEpq}) in terms of some
resolvent type conditions and of some generalised forms of $(GN_q^v)$.  
\begin{proposition}\label{resolvent2}
Let $1\le p< q\le +\infty$ and $\beta>(\frac{1}{p}-\frac{1}{q})\kappa_v$,  where $\kappa_v$ is as in \eqref{dv}.  Assume that $(e^{-tL})_{t>0}$ is  bounded analytic on $L^p(M,\mu)$. Then the following
conditions are equivalent:
\begin{equation}\label{1}
  \tag{$vE_{p,q}$}  \sup_{t>0}\|{v_{\sqrt{t}}^{\frac{1}{p}-\frac{1}{q}}}e^{-tL}\|_{p\to q}<+\infty
\end{equation}
\begin{equation}\label{vRpqbeta}
  \tag{$vR_{p,q,\beta}$}  \sup_{t>0}\|{v_{\sqrt{t}}^{\frac{1}{p}-\frac{1}{q}}}(I+tL)^{-\beta/2}\|_{p\to q}<+\infty,
\end{equation}
 \begin{equation*}\label{GNpqbeta}
\tag{$GN_{p,q,\beta}^v$} \|fv_{r}^{\frac{1}{p}-\frac{1}{q}}
\|_{q}\leq C(\|f\|_{p}+r^{\beta}\|L^{\beta/2}f\|_{p}), \quad
\forall\, r>0, \,f\in\mathcal{D}_p(L^{\beta/2}).
\end{equation*}
\end{proposition}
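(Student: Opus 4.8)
The plan is to prove the three-way equivalence by establishing the cycle $(vE_{p,q})\Rightarrow(GN_{p,q,\beta}^v)\Rightarrow(vR_{p,q,\beta}^v)\Rightarrow(vE_{p,q})$, following the template of Proposition \ref{equivalence} but with the analytic semigroup on $L^p$ replacing the one on $L^2$ and with a $\beta$-th order argument replacing the first-order one. Throughout, set $\alpha=\frac1p-\frac1q$ and note that by hypothesis $\beta>\alpha\kappa_v$, which is exactly what makes the forthcoming time integral converge.

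First I would show $(vE_{p,q})\Rightarrow(GN_{p,q,\beta}^v)$. For $\beta\le 2$ one writes, for $f\in\mathcal D_p(L^{\beta/2})$ and $t>0$, the identity
\begin{equation*}
f=c_\beta\,t^{\beta/2}L^{\beta/2}(I+tL)^{-\beta/2}f + (\text{lower-order/bounded terms}),
\end{equation*}
or, more in the spirit of the existing proof, uses the subordination-type formula expressing $f-e^{-tL}f$ (or a suitable finite Taylor remainder of order $\lceil\beta\rceil$) as $\int_0^t$ of derivatives of the semigroup. Then one estimates $\|v_{\sqrt t}^{\alpha}f\|_q$ by splitting into $\|v_{\sqrt t}^{\alpha}e^{-tL}f\|_q$, controlled directly by $(vE_{p,q})$ and $\|f\|_p$, plus the integral term $\int_0^t\|v_{\sqrt t}^{\alpha}e^{-(s/2)L}\|_{p\to q}\,\|L e^{-(s/2)L}(\text{stuff})\|_p\,ds$. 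Using $(D_v^{\kappa_v})$ to write $\|v_{\sqrt t}/v_{\sqrt{s/2}}\|_\infty^\alpha\le C(t/s)^{\alpha\kappa_v/2}$, together with $(vE_{p,q})$ at scale $\sqrt{s/2}$ and the analyticity bound $\|L^{\beta/2}e^{-(s/2)L}\|_{p\to p}\le Cs^{-\beta/2}$, one gets an integral $\int_0^t s^{-\alpha\kappa_v/2 - \beta/2}\,ds\cdot t^{\alpha\kappa_v/2}\|L^{\beta/2}f\|_p$, finite precisely because $\alpha\kappa_v/2+\beta/2<\beta/2+\cdots$... the convergence at $0$ needs $\alpha\kappa_v/2+\beta/2<1$ in the first-order scheme, which is why for large $\beta$ one must use a higher-order Taylor formula (as flagged in Remark \ref{generalised Nash}): with a remainder of order $k=\lceil\beta\rceil$ one integrates $s^{-\alpha\kappa_v/2}\cdot s^{k-1-\beta/2}\,ds$ against $\|L^{\beta/2}f\|_p$, which converges near $0$ since $\beta>\alpha\kappa_v$. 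Setting $r=\sqrt t$ gives $(GN_{p,q,\beta}^v)$.

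Next, $(GN_{p,q,\beta}^v)\Rightarrow(vR_{p,q,\beta}^v)$ is the soft direction and needs no doubling. Apply $(GN_{p,q,\beta}^v)$ with $f$ replaced by $(I+tL)^{-\beta/2}g$: the right-hand side becomes $C(\|(I+tL)^{-\beta/2}g\|_p + t^{\beta/2}\|L^{\beta/2}(I+tL)^{-\beta/2}g\|_p)$, and since $\lambda\mapsto (1+t\lambda)^{-\beta/2}$ and $\lambda\mapsto (t\lambda)^{\beta/2}(1+t\lambda)^{-\beta/2}$ are uniformly bounded, the spectral theorem on $L^2$ handles $p=2$; for general $p$ one invokes that $(e^{-tL})_{t>0}$ bounded analytic on $L^p$ implies $(I+tL)^{-\beta/2}$ and $(tL)^{\beta/2}(I+tL)^{-\beta/2}$ are uniformly bounded on $L^p$ (standard functional calculus / Laplace transform representation of these resolvent powers in terms of the semigroup). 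Hence $\|v_{\sqrt t}^{\alpha}(I+tL)^{-\beta/2}g\|_q\le C\|g\|_p$, which is $(vR_{p,q,\beta}^v)$ at $r=\sqrt t$.

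Finally, $(vR_{p,q,\beta}^v)\Rightarrow(vE_{p,q})$: write $e^{-tL}=(I+tL)^{-\beta/2}\cdot (I+tL)^{\beta/2}e^{-tL}$, so $\|v_{\sqrt t}^{\alpha}e^{-tL}\|_{p\to q}\le \|v_{\sqrt t}^{\alpha}(I+tL)^{-\beta/2}\|_{p\to q}\,\|(I+tL)^{\beta/2}e^{-tL}\|_{p\to p}$; the first factor is bounded by $(vR_{p,q,\beta}^v)$, and the second is uniformly bounded because $\lambda\mapsto (1+t\lambda)^{\beta/2}e^{-t\lambda}$ is a bounded family of Fourier–Laplace multipliers and $(e^{-tL})_{t>0}$ is bounded analytic on $L^p$ (so $(I+tL)^{\beta/2}e^{-tL}$ is uniformly bounded on $L^p$ by the $H^\infty$-type calculus for analytic semigroups, or directly via $(I+tL)^{\beta/2}e^{-tL} = \sum$ of $(tL)^{j/2}e^{-tL}$ type terms each uniformly $L^p$-bounded by analyticity). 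This closes the cycle.

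The main obstacle I expect is the first implication for $\beta>2$ (equivalently $\beta$ not an integer with $\lceil\beta\rceil\ge 3$): one must set up the correct higher-order Taylor/remainder formula for $f$ in terms of $L^{k}e^{-sL}f$ with $k=\lceil\beta\rceil$, then re-express $L^{k}e^{-sL}f = L^{k-\beta/2}e^{-sL/2}\cdot L^{\beta/2}e^{-sL/2}f$ and distribute the powers so that the scale-mismatch factor $(t/s)^{\alpha\kappa_v/2}$ from $(D_v^{\kappa_v})$ combines with $s^{-(k-\beta/2)}$ to give an integrand $s^{-\alpha\kappa_v/2 + k-1-\beta/2}$ that is integrable at $0$ — this works exactly under the stated hypothesis $\beta>\alpha\kappa_v$, but bookkeeping the constants and the interplay between $v_{\sqrt t}$ and $v_{\sqrt s}$ at each step requires care. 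The bounded-analyticity hypothesis on $L^p$ is what lets all the semigroup-derivative and resolvent-power bounds go through off $L^2$, and it is used essentially everywhere.
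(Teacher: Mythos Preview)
Your cycle is correct, and two of the three links match the paper almost verbatim: your $(vR_{p,q,\beta})\Rightarrow(vE_{p,q})$ via the factorisation $e^{-tL}=(I+tL)^{-\beta/2}(I+tL)^{\beta/2}e^{-tL}$ is exactly the paper's argument, and your $(GN_{p,q,\beta}^v)\Rightarrow(vR_{p,q,\beta})$ amounts to one direction of the norm equivalence $\|(I+tL)^{\beta/2}f\|_p\simeq\|f\|_p+t^{\beta/2}\|L^{\beta/2}f\|_p$ that the paper establishes in full (by a contour-integral functional calculus, not just spectral theory on $L^2$) and then uses to get $(GN_{p,q,\beta}^v)\Leftrightarrow(vR_{p,q,\beta})$ in one stroke.

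The genuine difference is in how you close the loop from $(vE_{p,q})$. You go $(vE_{p,q})\Rightarrow(GN_{p,q,\beta}^v)$ by a Taylor/Duhamel argument, which forces you to handle the case $\beta>2$ with a higher-order remainder and to track the integrability condition $\beta>\alpha\kappa_v$ through the powers of $s$; this works (with $k\ge\beta/2$ and the analyticity bound $\|L^{k-\beta/2}e^{-(s/2)L}\|_{p\to p}\le Cs^{-(k-\beta/2)}$), but it is the most delicate part of your write-up. The paper instead goes $(vE_{p,q})\Rightarrow(vR_{p,q,\beta})$ directly via the Laplace representation
\[
(I+tL)^{-\beta/2}=\frac{1}{\Gamma(\beta/2)}\int_0^\infty e^{-s}s^{\beta/2-1}e^{-stL}\,ds,
\]
so that after inserting $v_{\sqrt t}^\alpha$, comparing $v_{\sqrt t}$ with $v_{\sqrt{st}}$ via $(D_v^{\kappa_v})$, and invoking $(vE_{p,q})$ at scale $\sqrt{st}$, one is left with $\int_0^\infty e^{-s}s^{\beta/2-1}\max(1,s^{-1})^{\alpha\kappa_v/2}\,ds$, which converges precisely when $\beta>\alpha\kappa_v$. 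This handles all $\beta$ uniformly with no case distinction and no Taylor remainders; the price is that the paper must separately prove the norm equivalence above to connect $(vR_{p,q,\beta})$ and $(GN_{p,q,\beta}^v)$. Your route trades that piece of functional calculus for semigroup Taylor expansions; both are valid, but the paper's is tidier for arbitrary $\beta$.
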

Note that the condition  $\beta > (\frac{1}{p}-\frac{1}{q})\kappa_v$, together with $p<q\leq +\infty$,  means that either $\beta> \frac{\kappa_v}{p}$ and $q\in (p, +\infty]$,  or
$\beta\le  \frac{\kappa_v}{p}$ and $q\in (p, \frac{p\kappa_v}{\kappa_v-p\beta})$.
Note also that   $(GN_{2,q,1}^v)$ is nothing but $(GN_{q}^v)$. In particular, taking $p=2$ and $\beta=1$
in the proof below yields an interesting alternative proof to the implication from $(vE_{2,q})$ to $(GN_{q}^v)$ in Proposition \ref{equivalence}.

\begin{proof}

Note first that  (\ref{vRpqbeta}) and (\ref{GNpqbeta}) can be rewritten respectively as
\begin{equation*}
    \|fv_{\sqrt{t}}^{\frac{1}{p}-\frac{1}{q}}\|_{q}\leq C
    \|(I+tL)^{\beta/2}f\|_{p},
\end{equation*}
uniformly in $t>0$ and $f\in\mathcal{D}_p(L^{\beta/2})$
and
\begin{equation*}
    \|fv_{\sqrt{t}}^{\frac{1}{p}-\frac{1}{q}}\|_{q}\leq C
    (\|f\|_{p}+t^{\beta/2}\|L^{\beta/2}f\|_{p}),
\end{equation*}
uniformly in $t>0$  and $f\in\mathcal{D}_p(L^{\beta/2})$.
The equivalence between (\ref{GNpqbeta}) and (\ref{vRpqbeta}) follows therefore from 
\begin{equation}\label{sum}
  \|(I+tL)^{\beta/2}f\|_{p} \simeq  \|(I+(tL)^{\beta/2})f\|_{p}
  \simeq  \|f\|_{p}+t^{\beta/2}\|L^{\beta/2}f\|_{p},
\end{equation}
 uniformly in $t>0$ and $f\in\mathcal{D}_p(L^{\beta/2})$.
 
 The norm equivalence \eqref{sum} is classical (see for instance \cite[Proposition 3.1]{BBR}).  Let us  sketch a proof for the sake of completeness. 
In order to prove \eqref{sum}, it is clearly enough to prove   \begin{equation}\label{ms}
    \sup_{t>0}\|(I+tL)^{\beta/2}(I+(tL)^{\beta/2})^{-1}\|_{p\to
    p}<+\infty,
\end{equation}
\begin{equation}\label{msa}
    \sup_{t>0}\|(I+(tL)^{\beta/2})(I+tL)^{-\beta/2}\|_{p\to
    p}<+\infty,
\end{equation}
\begin{equation}\label{ma}
    \sup_{t>0}\|(I+tL)^{-\beta/2}\|_{p\to
    p}<+\infty
\end{equation}
and
\begin{equation}\label{mb}
    \sup_{t>0}\|(tL)^{\beta/2}(I+tL)^{-\beta/2}\|_{p\to
    p}<+\infty.
\end{equation}
Note that \eqref{msa} obviously follows from \eqref{ma} and \eqref{mb}. An equivalent formulation of  \eqref{ms} is
$$
 \sup_{t>0}\|(I+t L)^{\beta/2}(I+(t L)^{\beta/2})^{-1}-I\|_{p \to p} <+\infty.
$$
Set $F(z)= (1+z)^{\beta/2}(1+z^{\beta/2})^{-1}-1$, which can be defined as an analytic function on $\C\setminus (-\infty,0]$.
One checks easily that
$$
|F(z)|\le  C\min(|z|^b,|z|^{-b}),
$$
where $b=\min(1,\beta/2) $.
On the other hand, by Hille-Yosida,
\begin{equation}\label{res}
\|(tL+zI)^ {-1}\|_{p \to p}\le C(-\mbox{Re} z)^{-1},
\end{equation}
for all $t>0$ and $z\in\C$ such that Re$z <0$.

Hence
$$
F(tL)=\int_\Gamma F(z) (tL+zI)^ {-1}\,dz,
$$
where the curve  $\Gamma$ consists of two half-lines $re^{i\theta_i}$,  $r>0$, and $\theta_1,\theta_2$ chosen so that
$\pi/2 < \theta_1 < \pi$ and $\pi < \theta_2 < 3\pi/2$.
Finally, using \eqref{res},
\begin{eqnarray*}
\|F(t L)\|
&\le& C\sum_{i=1,2}  \int_0^\infty \min(r^b,r^{-b}) \|(tL+
re^{i\theta_i}I)^ {-1}\|_{p\to p} \,dr\\
&\le& 2 C \int_0^\infty\frac{\min(r^b,r^{-b})}{r} dr = C'.
\end{eqnarray*}
This proves \eqref{ms}, and \eqref{ma},  \eqref{mb} can be proved in the same way.

\bigskip

Now for the equivalence between (\ref{vEpq}) and (\ref{vRpqbeta}).

\bigskip

(\ref{vEpq})$\Rightarrow$ (\ref{vRpqbeta}). Note that
\begin{equation}
   (I+tL)^{-\beta/2}=\frac{1}{\Gamma(\beta/2)}\int_{0}^{+\infty}e^{-s}s^{\beta/2-1}e^{-s(tL)}\,
    \,ds,
\end{equation}
so that
\begin{equation*}
{v_{\sqrt{t}}^{\frac{1}{p}-\frac{1}{q}}}(I+tL)^{-\beta/2}=\frac{1}{\Gamma(\beta/2)}\int_{0}^{+\infty}e^{-s}s^{\beta/2-1}{v_{\sqrt{t}}^{\frac{1}{p}-\frac{1}{q}}}e^{-stL}\,ds.
\end{equation*}
Hence
\begin{eqnarray*}
  && \|{v_{\sqrt{t}}^{\frac{1}{p}-\frac{1}{q}}}(I+tL)^{-\beta/2}\|_{p\to
q}\le
\frac{1}{\Gamma(\beta/2)}\int_{0}^{+\infty}e^{-s}s^{\beta/2-1}\|{v_{\sqrt{t}}^{\frac{1}{p}-\frac{1}{q}}}e^{-stL}\|_{p\to
q}\,\,ds\\
 &\le& \frac{1}{\Gamma(\beta/2)}\int_{0}^{+\infty}e^{-s}s^{\beta/2-1}      \|v_{\sqrt{t}}/v_{\sqrt{st}}\|_{\infty}^{\frac{1}{p}-\frac{1}{q}}     \|{v_{\sqrt{st}}^{\frac{1}{p}-\frac{1}{q}}}e^{-stL}\|_{p\to
q}\,\,ds.
 \end{eqnarray*}
Using \eqref{dv} and assumption (\ref{vEpq}), we obtain
  $$\|{v_{\sqrt{t}}^{\frac{1}{p}-\frac{1}{q}}}(I+tL)^{-\beta/2}\|_{p\to
q} \le \frac{C}{\Gamma(\beta/2)}\int_{0}^{+\infty}e^{-s}s^{\beta/2-1}\max\left(1,\frac{1}{s}\right)^{\frac{\kappa_v}{2}(\frac{1}{p}-\frac{1}{q})}\,ds,$$
which is finite since $\beta>\kappa_v(\frac{1}{p}-\frac{1}{q})$.

\bigskip

(\ref{vRpqbeta})$\Rightarrow$ (\ref{vEpq}). Observe that
$$
  \|{v_{\sqrt{t}}^{\frac{1}  {p}-\frac{1}{q}}}e^{-tL}\|_{p\to
q}\le \|{v_{\sqrt{t}}^{\frac{1}{p}-\frac{1}{q}}}(I+tL)^{-\beta/2}\|_{p\to q}  \|(I+tL)^{\beta/2}e^{-tL}\|_{p\to
p}.$$
Now, according to \eqref{sum}, 
$$\|(I+tL)^{\beta/2}e^{-tL}\|_{p\to
p}\le C\left(\|e^{-tL}\|_{p\to
p}+\|(tL)^{\beta/2}e^{-tL}\|_{p\to
p}\right),$$  and the RHS is  bounded uniformly in $t>0$ by bounded analyticity on $L^p(M,\mu)$ of $(e^{-tL})_{t>0}$. This yields the claim.
\end{proof}

Theorem  $\ref{lemdg}$ and   Proposition $\ref{resolvent2}$ yield the following.
\begin{theorem}\label{GN23} 
Let $(M,d,\mu)$ be a metric measure space,  $L$ a  non-negative self-adjoint operator on $L^2(M,\mu)$, and  $v$ a function from $M\times \R_+$ to $\R_+$ satisfying $(A)$, $(D_v)$ and \eqref{D2}. Assume that 
 $(M,d,\mu,L)$ satisfies 
 \eqref{DG2} and that $(e^{-tL})_{t>0}$  is  bounded analytic  on $L^{{p_0}}(M,\mu)$ for some $p_0\in[1,2)$. Then $(DU\!E^v)$ implies $(GN^v_{{p},q,\beta})$ for all $p,q$ such that $p_0\le p< q\le p'_0$ and $\beta$ such that $\beta>(\frac{1}{{p}}-\frac{1}{q})\kappa_v$, where $\kappa_v$ is as in \eqref{dv}.  
\end{theorem}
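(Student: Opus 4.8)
The plan is to derive Theorem \ref{GN23} by chaining together two results already established in the excerpt, namely Theorem \ref{lemdg} and Proposition \ref{resolvent2}. First I would invoke Theorem \ref{lemdg}: under the hypotheses of the present statement ($(M,d,\mu)$ a metric measure space, $v$ satisfying $(A)$, $(D_v)$, \eqref{D2}, $L$ non-negative self-adjoint, Davies-Gaffney \eqref{DG2}, and $(e^{-tL})_{t>0}$ uniformly bounded on $L^{p_0}$ for some $p_0\in[1,2)$), the pointwise heat kernel upper bound $(DU\!E^v)$ implies the family of weighted estimates $(vEv_{p,q,\gamma})$ for all $p,q$ with $p_0\le p<q\le p'_0$ and all $\gamma\in\R$. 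In particular, specialising $\gamma=\frac1p-\frac1q$, one gets $(vE_{p,q})$ for every such pair $(p,q)$. Note that the hypothesis that $(e^{-tL})_{t>0}$ is \emph{bounded analytic} on $L^{p_0}$ is stronger than (hence implies) uniform boundedness on $L^{p_0}$, so Theorem \ref{lemdg} indeed applies.

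Next I would apply Proposition \ref{resolvent2} with the same $p_0$. Since $(e^{-tL})_{t>0}$ is bounded analytic on $L^{p_0}(M,\mu)$, it is bounded analytic on $L^p(M,\mu)$ for all $p$ with $p_0\le p\le 2$ (by interpolation with the analyticity on $L^2$, as recalled in the paragraph following Theorem \ref{Nash23}), and by duality on the full range $p_0\le p\le p'_0$; in particular it is bounded analytic on $L^p$ for every $p$ with $p_0\le p<q\le p'_0$. Therefore, for each such pair $(p,q)$ and each $\beta>(\frac1p-\frac1q)\kappa_v$, Proposition \ref{resolvent2} gives the equivalence of $(vE_{p,q})$, $(vR_{p,q,\beta})$, and $(GN^v_{p,q,\beta})$. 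Combining with the conclusion of the previous paragraph, $(DU\!E^v)$ implies $(vE_{p,q})$, which in turn implies $(GN^v_{p,q,\beta})$, which is exactly the asserted conclusion.

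The only point requiring a little care — and the one I expect to be the main obstacle in writing this cleanly — is checking that the hypotheses of the two cited results match up: Theorem \ref{lemdg} only needs uniform boundedness on $L^{p_0}$, whereas Proposition \ref{resolvent2} needs bounded analyticity on $L^p$ for the relevant $p\in[p_0,q)$. Since the statement of Theorem \ref{GN23} assumes the stronger bounded-analyticity hypothesis, this is harmless: one propagates analyticity from $L^{p_0}$ to the intermediate exponents $p$ by interpolation with the automatic analyticity on $L^2$ and by duality, exactly as invoked after Theorem \ref{Nash23}. Everything else is a direct quotation, so the proof is essentially a two-line chaining argument once this compatibility is noted; I would write it as follows.

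\begin{proof}
Since $(e^{-tL})_{t>0}$ is bounded analytic on $L^{p_0}(M,\mu)$ with $p_0\in[1,2)$, it is in particular uniformly bounded on $L^{p_0}(M,\mu)$, so Theorem \ref{lemdg} applies and $(DU\!E^v)$ implies $(vEv_{p,q,\gamma})$ for all $p,q$ with $p_0\le p<q\le p'_0$ and all $\gamma\in\R$; taking $\gamma=\frac1p-\frac1q$ gives $(vE_{p,q})$ for all such $p,q$. Moreover, by interpolation between the bounded analyticity on $L^{p_0}$ and on $L^2$, and by duality, $(e^{-tL})_{t>0}$ is bounded analytic on $L^p(M,\mu)$ for every $p$ with $p_0\le p\le p'_0$. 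Fix now $p,q$ with $p_0\le p<q\le p'_0$ and $\beta>(\frac1p-\frac1q)\kappa_v$. By Proposition \ref{resolvent2} (with $v$ satisfying $(D_v)$ and $(e^{-tL})_{t>0}$ bounded analytic on $L^p$), the estimate $(vE_{p,q})$ is equivalent to $(GN^v_{p,q,\beta})$. Combining the two, $(DU\!E^v)$ implies $(GN^v_{p,q,\beta})$, as claimed.
\end{proof}
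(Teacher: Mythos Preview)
Your proof is correct and follows exactly the paper's own approach: the paper simply states that Theorem~\ref{lemdg} and Proposition~\ref{resolvent2} together yield Theorem~\ref{GN23}, and you have spelled out precisely this chaining, including the small compatibility check on analyticity at intermediate exponents.
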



Let us now emphasise a consequence of the particular case  $p=2$, $q=+\infty$, and $\beta>\kappa_v/2$ of Proposition \ref{resolvent2}, where we take advantage of the fact that, according to Corollary \ref{implication}, $(vE_{2,\infty})$ is equivalent to $(DU\!E^v)$.
This yields a direct characterisation of $(DU\!E^v)$ in terms of a Gagliardo-Nirenberg inequality (compare with \cite{Cou2} where, in the case where  $v=V$  does not depend on $x$ and is polynomial  in $r$, no extrapolation is needed  for the case $q=+\infty$). This result is much easier to obtain than Theorem \ref{mainDG}.

\begin{coro}\label{55}
Let $\beta>\frac{\kappa_v}{2}$,  where $\kappa_v$ is as in \eqref{dv}.  Then $(DU\!E^v)$ is
 equivalent to
 \begin{equation*}\label{GNpqinfty}
\tag{$GN_{2,\infty,\beta}^v$} \|f\sqrt{v_{r}}
\|_{\infty}\leq C(\|f\|_{2}+r^{\beta}\|L^{\beta/2}f\|_{2}), \quad
\forall\, r>0, \,f\in\mathcal{D}_2(L^{\beta/2}).
\end{equation*}
\end{coro}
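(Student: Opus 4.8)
\textbf{Proof plan for Corollary \ref{55}.}

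The plan is to derive Corollary \ref{55} directly from Proposition \ref{resolvent2} applied with $p=2$, $q=+\infty$, and the given $\beta>\kappa_v/2$, combined with Corollary \ref{implication}. First I observe that the hypothesis $\beta>\kappa_v/2$ is exactly the condition $\beta>(\frac{1}{p}-\frac{1}{q})\kappa_v$ of Proposition \ref{resolvent2} in the case $p=2$, $q=+\infty$ (then $\frac1p-\frac1q=\frac12$). The semigroup $(e^{-tL})_{t>0}$ is bounded analytic on $L^2(M,\mu)$ by spectral theory, so all the hypotheses of Proposition \ref{resolvent2} hold, and that proposition gives the equivalence
$$(vE_{2,\infty})\iff (vR_{2,\infty,\beta})\iff (GN_{2,\infty,\beta}^v),$$
the last of which is precisely the inequality displayed in the statement, since $\frac1p-\frac1q=\frac12$ turns $v_r^{1/p-1/q}$ into $\sqrt{v_r}$ and $L^{\beta/2}$ is as written.

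Second, I invoke Corollary \ref{implication}, which asserts that under $(D_v)$ the four conditions $(DU\!E^v)$, $(vEv)$, $(Ev_{1,2})$ and $(vE_{2,\infty})$ are all equivalent. Chaining this with the previous equivalence yields
$$(DU\!E^v)\iff (vE_{2,\infty})\iff (GN_{2,\infty,\beta}^v),$$
which is the assertion of the corollary. One small point worth spelling out: Corollary \ref{implication} requires $v$ to satisfy $(D_v)$, and although the statement of Corollary \ref{55} does not repeat this hypothesis explicitly, it is in force as the standing assumption of this subsection (stated in Proposition \ref{dg} and used throughout via $\kappa_v$ from \eqref{dv}, which is meaningless without $(D_v)$); so the appeal to Corollary \ref{implication} is legitimate.

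There is essentially no obstacle here: the corollary is a packaging of results already proved. The only mildly delicate piece is buried inside Proposition \ref{resolvent2} (the norm equivalence \eqref{sum} and the case $q=+\infty$, where one must be a little careful that $L^\infty$ is not reflexive, so the duality-and-interpolation manoeuvres are applied on the reflexive side and the $q=\infty$ endpoint is reached by the resolvent/Laplace-transform computation rather than by duality), but all of that is already handled in the proof of Proposition \ref{resolvent2}. Hence the proof of Corollary \ref{55} is simply the concatenation of Proposition \ref{resolvent2} (with $p=2$, $q=+\infty$) and Corollary \ref{implication}.
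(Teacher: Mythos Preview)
Your proof is correct and follows exactly the approach the paper takes: the text immediately preceding Corollary~\ref{55} explains that it is the particular case $p=2$, $q=+\infty$, $\beta>\kappa_v/2$ of Proposition~\ref{resolvent2}, combined with the equivalence $(DU\!E^v)\Leftrightarrow(vE_{2,\infty})$ from Corollary~\ref{implication}. Your remarks on the bounded analyticity on $L^2$ (automatic by spectral theory) and on the implicit standing assumption $(D_v)$ are exactly the points needed to make the appeal to those two results legitimate.
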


 The drawback of the above result is that it  involves a high power of $L$ in the expression $\|L^{\beta/2}f\|_{2}$, instead of the Dirichlet form $\mathcal{E}$, which is much easier to handle in applications. For instance, unless $\kappa_v<2$, in which case one can choose $\beta=1$, it is not clear how to see from the Corollary \ref{55}  that $(DU\!E^v)$ is invariant under quasi-isometry. If one insists, as one should, on taking $\beta=1$, one cannot in general take $p=2,q=+\infty$. This is why the implication 
 from $(GN_q)$  to $(DU\!E^v)$ will require an extrapolation argument.

\subsection{Converses in the uniform  case}\label{uni} Again, let $(M,\mu)$   be a measure space and $L$ a  non-negative self-adjoint operator on $L^2(M,\mu)$. In this section, we shall study the case where $v$ does not depend on $x\in M$, but only on $r>0$. We shall see that in this particular case, if in addition $(e^{-tL})_{t>0}$ is uniformly bounded on $L^{1}(M,\mu)$, one can  prove the converse of  Propositions \ref{Nash22} and \ref{dg} by using existing arguments, and conclude that $(DU\!E^v)$, $(N^v)$ and 
$(GN_q^v)$ for $q>2$ small enough are equivalent.  The general case will require new arguments and more assumptions. It will be treated in Sections \ref{DG} and \ref{NG}.  

Let us start with  Nash type inequalities. Since they are $L^1-L^2$ inequalities, one can derive $(DU\!E^v)$ from them without any interpolation argument.   Unfortunately, we do not see so far how to implement the argument of  Lemma \ref{nacou} below in a non-uniform situation.
We will consider  $(N^v)$, but also  $(\widetilde{N}^v)$ introduced in Section  \ref{HN}, which will enable us to go beyond condition $(D_v)$.  The following statement elaborates on \cite[Proposition II.1]{C-N}. Assume for simplicity that $v$ is one-to-one from $\R_+$ onto itself and $\mathcal{C}^1$. This excludes for instance the case where $v=V$ and $M$ has finite measure, which can  probably be also treated with similar methods; we leave the details to the reader.
Say that $v$ satisfies $(*_v)$ if  $U(r)=\log v(r)$ is such that 
$$U'(s)\ge \sigma U'(r), \ \forall\,r>0, \ \forall\,s\in[r,2r],$$
for some $\sigma>0$. Functions $v(r)= \exp(r^\alpha), r^\alpha$, $\alpha>0$, and many others satisfy $(*_v)$.

\begin{proposition}\label{extrapolationuna}  Assume that  $(e^{-tL})_{t>0}$ is uniformly bounded on $L^{1}(M,\mu)$ and that $v$ satisfies $(A)$ but does not depend on $x\in M$.    Then,
if $v$ satisfies $(D_v)$,  $(N^v)$ implies $(DU\!E^v)$ and if $v$ satisfies $(*_v)$,  $(\widetilde{N}^v)$ implies $(DU\!E^v)$.
\end{proposition}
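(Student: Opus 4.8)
The plan is to show that, when $v$ depends only on $r$, the Nash-type inequality at hand forces the $L^1\!\to\! L^2$ norms of the semigroup to decay exactly at the rate dictated by $v$, which is the content of $(DU\!E^v)$; the mechanism is the differential-inequality argument of Coulhon \cite{C-N} (cf.\ Lemma~\ref{nacou} below), combined with a translation step in which $(D_v)$, resp.\ $(*_v)$, is used.

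First I would reduce $(DU\!E^v)$ to a quantitative statement about $\|e^{-tL}\|_{1\to 2}$. Since $v$ is $x$-independent, $v_{\sqrt t}^{1/2}$ is the scalar $v(\sqrt t)^{1/2}$, so Proposition~\ref{implication0} (or Corollary~\ref{implication} when $v$ is doubling) shows that $(DU\!E^v)$ is equivalent to $\sup_{t>0} v(\sqrt t)^{1/2}\|e^{-(t/2)L}\|_{1\to 2}<+\infty$, and that any such ultracontractivity bound automatically produces the measurable kernel $p_t$ via Dunford--Pettis, as in the proof of Proposition~\ref{implication0}. Thus it suffices to prove that $(N^v)$, resp.\ $(\widetilde N^v)$, implies $\|e^{-tL}\|_{1\to 2}^2\le A/v(c\sqrt t)$ for some $A,c>0$; the factor $2$ in the time and the constant $c$ are irrelevant since $(DU\!E^v)$ is stated with a free constant inside $v$ (and, under $(D_v)$, one may even take $c=1$).

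Next comes the core ODE argument. Fix $f\in L^1\cap L^2$ with $\|f\|_1\le 1$ and set $u(t)=\|e^{-tL}f\|_2^2$; then $\|e^{-tL}\|_{1\to 2}^2=\sup_f u(t)$. Analyticity of $(e^{-tL})_{t>0}$ on $L^2$ makes $u$ differentiable with $u'(t)=-2\mathcal{E}(e^{-tL}f)\le 0$, and uniform boundedness on $L^1$ gives $\|e^{-tL}f\|_1\le C_0$. Feeding $e^{-tL}f$ into $(N^v)$ and optimising the free parameter $r$ --- choosing $r(t)$ so that $v(r(t))$ is proportional to $1/u(t)$, which lets the $\|\cdot\|_1$-term absorb half of $u(t)$ --- yields $-u'(t)\ge u(t)/\bigl(Cr(t)^2\bigr)$, i.e.\ $-(\log u)'(t)\ge 1/\bigl(Cr(t)^2\bigr)$. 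Since $\log u(t)=\mathrm{const}-\log v(r(t))$, this rewrites as $\frac{d}{dt}\Phi(r(t))\ge 1/C$ with $\Phi(\rho):=\int_0^\rho \sigma^2(\log v)'(\sigma)\,d\sigma$; as $\Phi$ is nonnegative (because $(\log v)'=v'/v\ge0$) and, under the standing niceness hypotheses on $v$, finite near $0$, integration from $0^+$ gives $\Phi(r(t))\ge t/C$. In the $(\widetilde N^v)$ case the same computation, with $v(r(t))$ now chosen so that the logarithm in $(\widetilde N^v)$ equals $1$ (using $\|e^{-tL}f\|_1\le C_0$ to replace the denominator), gives $\frac{d}{dt}\Phi(r(t))\ge 2$ and hence $\Phi(r(t))\ge 2t$.

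Finally I would translate the lower bound on $\Phi(r(t))$ into decay of $u$. When $v$ is doubling, $(D_v^{\kappa_v})$ (see \eqref{dv}) gives $\log v(\rho)-\log v(\rho/2)\le K'$, so a dyadic summation over the intervals $[2^{-j-1}\rho,2^{-j}\rho]$ yields $\Phi(\rho)\le K\rho^2$; combined with $\Phi(r(t))\ge t/C$ this forces $r(t)^2\gtrsim t$, hence $v(r(t))\ge v(c\sqrt t)$ and $u(t)=\mathrm{const}/v(r(t))\le A/v(c\sqrt t)$, which is $(DU\!E^v)$. When instead $v$ satisfies $(*_v)$, the analogous dyadic estimate --- now using that $(\log v)'$ cannot drop by more than a fixed factor between consecutive dyadic scales --- gives the matching bound $\Phi(\rho)\lesssim \rho^2\log v(\rho)$, which together with $\Phi(r(t))\gtrsim t$ and $v(r(t))\simeq 1/u(t)$ yields $u(t)\lesssim 1/v(c\sqrt t)$ after a short computation. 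The main obstacle is precisely this last step in the non-doubling situation: without $(D_v)$ one only has the weaker control $\Phi(\rho)\lesssim\rho^2\log v(\rho)$ coming from $(*_v)$, and closing the argument requires using $(*_v)$ carefully in tandem with the ODE; one should also stress that the whole scheme is genuinely confined to $x$-independent $v$, since only then is the heat decay a function of $t$ alone and the profile-to-decay correspondence of \cite{C-N} available.
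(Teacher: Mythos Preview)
Your treatment of the doubling case is correct and, in one respect, cleaner than the paper's. Both routes rewrite $(N^v)$ as a Nash profile inequality $\|f\|_1^2\theta_1\bigl(\|f\|_2^2/\|f\|_1^2\bigr)\le\mathcal E(f)$ and run the ODE $-u'\ge 2\theta_1(u)$ with $u(t)=\|e^{-tL}f\|_2^2$; your change of variable $\rho\mapsto v(\rho)$, together with the dyadic bound $\int_{2^{-j-1}\rho}^{2^{-j}\rho}(\log v)'\,d\sigma\le\log C$, gives $\Phi(\rho)\le K\rho^2$ directly and avoids the smoothing trick $\tilde v(r)=\frac{2}{r}\int_{r/2}^r v$ that the paper borrows from \cite{BPS}.

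The $(*_v)$ case, however, has a genuine gap. By fixing $r=r(t)$ so that the logarithm in $(\widetilde N^v)$ equals $1$, you only extract $-u'/u\ge 2/r(t)^2$, and hence the same inequality $\Phi(r(t))\gtrsim t$ as in the doubling case. But when $v$ is not doubling $\Phi(\rho)$ can be much larger than $\rho^2$: for $v(r)=e^{r^2}$ one has $\Phi(\rho)=\rho^4/2$, so your argument only yields $r(t)\gtrsim t^{1/4}$ and $u(t)\lesssim e^{-c\sqrt t}$, whereas $(DU\!E^v)$ demands $u(t)\lesssim e^{-c^2t}$. The ``short computation'' at the end cannot close this square-root loss; the bound $\Phi(\rho)\lesssim\rho^2\log v(\rho)$ you state is correct but too weak.

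What goes wrong is that the choice $r=r(t)$ with $\log(\cdots)=1$ is far from optimal when $v$ grows super-polynomially. The paper's remedy is to keep the supremum over $r$: writing $\theta_2(\tau)=\tau\sup_{r>0}\frac{\log(cv(r)\tau)}{r^2}$, condition $(*_v)$ is used to show
\[
\theta_2(\tau)\ \ge\ \tilde c\,\tau\,\frac{U'(\rho)}{\rho},\qquad \rho=v^{-1}(1/\tau),\ U=\log v,
\]
(see \cite[p.~414]{G2}, \cite[Section~3.3]{BPS}). In your variables this replaces $-u'/u\ge 2/r(t)^2$ by the much stronger $-u'/u\ge \tilde c\,U'(\rho(t))/\rho(t)$, and since $(\log u)'=-U'(\rho)\rho'$ one obtains $\rho(t)\rho'(t)\ge\tilde c$, hence $\rho(t)^2\ge 2\tilde c\,t$ and $u(t)\le C/v(c\sqrt t)$ as required. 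Concretely, $(*_v)$ enters by testing $(\widetilde N^v)$ at $r=2\rho$ and using $U(2\rho)-U(\rho)=\int_\rho^{2\rho}U'\ge\sigma\rho\,U'(\rho)$, which is exactly the factor $\rho U'(\rho)$ that your choice of $r$ loses.
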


When $v$ does not depend on $x$, the $v$-Nash inequality $(N^v)$ reads
\begin{equation*}
 \|f \|_{2}^2\leq
C\left(\frac{\|f\|_1^2}{v(r)}+r^2\mathcal{E}(f)\right),\ \forall\,f\in\mathcal{F},\ \forall\,r>0.
\end{equation*}
Choosing $\frac{1}{v(r)}=\frac{\|f\|_{2}^{2}}{2C\|f\|_{1}^2}$, that is $r=v^{-1}\left(\frac{2C\|f\|_{1}^{2}}{\|f\|_{2}^2}\right)$ yields
\begin{equation}\label{nc}
\|f\|_{1}^2\,\theta_1\left(\frac{\|f\|_{2}^{2}}{\|f\|_{1}^2}\right)\le \mathcal{E}(f), \ \forall\,f\in\mathcal{F}\setminus\{0\},
\end{equation}
 where
$\theta_1(\tau)= \frac{\tau}{2C\left[v^{-1}\left(\frac{2C}{\tau}\right)\right]^2}$. Note that it follows from our assumptions on $v$ that $\tau\to\frac{\theta_1(\tau)}{\tau}$ is non-decreasing and continuous.

Similarly, when $v$ does not depend on $x$, $(\widetilde{N}^v)$ reads
\begin{equation*}\label{NnDu}
\|f \|_{2}^2\log\frac{cv(r)\|f \|_{2}^2}{\|f\|_{1}^2}\leq
r^2\mathcal{E}(f), \quad
\forall\, r>0, \,f\in \mathcal{F}\setminus\{0\},
\end{equation*}
and can be rewritten as
\begin{equation}\label{ncti}
\|f\|_{1}^2\,\theta_2\left(\frac{\|f\|_{2}^{2}}{\|f\|_{1}^2}\right)\le \mathcal{E}(f), \ \forall\,f\in\mathcal{F}\setminus\{0\},
\end{equation}
 where
\begin{equation*}
\theta_2(\tau)=\tau\sup_{r>0}\frac{\log\left(cv(r)\tau\right)}{r^2}.
\end{equation*}
If $(\widetilde{N}^v)$ holds, this supremum has to be finite (this is certainly the case if $v$ has at most exponential growth in addition to the above assumptions),
and under our assumptions on $v$ it is always positive.
Again, note  that $\tau\to\frac{\theta_2(\tau)}{\tau}$ is non-decreasing and continuous.
To show continuity at $\tau_1>0$,   let $r_1$ be such that $ cv(r_1)\tau_1=1/2$.
Then $\frac{\theta_2(\tau_1)}{\tau_1}=\sup_{r>r_1}\frac{\log\left(cv(r)\tau_1\right)}{r^2}$ and, for $\tau_2\le 2\tau_1$, $\frac{\theta_2(\tau_2)}{\tau_2}=\sup_{r>r_1}\frac{\log\left(cv(r)\tau_2\right)}{r^2}$.
Now
 \begin{eqnarray*}
\left|\frac{\theta_2(\tau_1)}{\tau_1}-\frac{\theta_2(\tau_2)}{\tau_2}\right| &\le& \sup_{r>r_1}\left|\frac{\log\left(cv(r)\tau_1\right)}{r^2}-\frac{\log\left(cv(r)\tau_2\right)}{r^2}\right|\\
&\le&r_1^{-2}|\log(\tau_1/\tau_2)|.\end{eqnarray*}

According to \cite[p.414]{G2}, see also \cite[Section 3.3]{BPS}, if $v$ satisfies $(*_v)$, then
$$\theta_2(\tau)\ge \widetilde{\theta}_2(\tau)=\frac{\tilde{c}\,\tau^2v'\left(v^{-1}(\frac{1}{\tau})\right)}{v^{-1}(\frac{1}{\tau})},$$
where $\tilde{c}$ depends on $c$ in $(\widetilde{N}^v)$ and on $\sigma$ in $(*_v)$.

Then Nash's argument as adapted in \cite{C-N} shows that if $(e^{-tL})_{t>0}$ is uniformly bounded on $L^1(M,\mu)$, then
inequalities like \eqref{nc} or \eqref{ncti} imply a heat kernel upper bound:

\begin{lemma}\label{nacou}  Assume that $(e^{-tL})_{t>0}$ is uniformly bounded on $L^1(M,\mu)$ and that
\begin{equation}\label{nc1}
\|f\|_{1}^2\,\theta\left(\frac{\|f\|_{2}^{2}}{\|f\|_{1}^2}\right)\le \mathcal{E}(f), \ \forall\,f\in\mathcal{F}\setminus\{0\},
\end{equation}
where $\theta:\R_+\to\R_+$ is continuous and $\tau\to\frac{\theta(\tau)}{\tau}$ is non-decreasing.
Assume that $$\int^{+\infty}\frac{d\tau}{\theta(\tau)}<+\infty.$$
Then $(DU\!E^w)$ holds, for $w$ defined by
\begin{equation}\label{wm}
w(r)=\frac{1}{A^2m(r^2/2)}
\end{equation}
 and \begin{equation}\label{tetam}
\int_{m(t)}^{+\infty}\frac{d\tau}{\theta(\tau)}=2t,
\end{equation}
where $A=\sup_{t>0}\|e^{-tL}\|_{1\to 1}<+\infty$.
\end{lemma}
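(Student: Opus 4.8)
The plan is to run the classical Nash argument, exactly as in \cite{C-N}, in the weighted setting. Fix $f\in\mathcal{F}\setminus\{0\}$ with $\|f\|_1\le 1$ (by homogeneity of \eqref{nc1} under $f\mapsto\lambda f$ it suffices to treat this normalisation, and one recovers the general case by scaling), set $u(t)=e^{-tL}f$, and let $\varphi(t)=\|u(t)\|_2^2$. Since $(e^{-tL})_{t>0}$ is uniformly bounded on $L^1$ with constant $A$, we have $\|u(t)\|_1\le A$ for all $t>0$. Differentiating, $\varphi'(t)=-2\mathcal{E}(u(t))$ (valid since $u(t)\in\mathcal{D}\subset\mathcal{F}$ for $t>0$), and plugging $u(t)$ into \eqref{nc1} gives
\begin{equation*}
-\varphi'(t)=2\mathcal{E}(u(t))\ge 2\|u(t)\|_1^2\,\theta\!\left(\frac{\varphi(t)}{\|u(t)\|_1^2}\right)\ge \frac{2}{A^2}\,\varphi(t)\,\frac{\theta(\varphi(t)/A^2)}{\varphi(t)/A^2}\cdot\frac{1}{\varphi(t)}\cdot\varphi(t),
\end{equation*}
where the monotonicity of $\tau\mapsto\theta(\tau)/\tau$ together with $\|u(t)\|_1^2\le A^2$ is used to replace $\|u(t)\|_1^2$ by $A^2$ in the denominator inside $\theta$; cleanly, $\|u(t)\|_1^2\,\theta(\varphi(t)/\|u(t)\|_1^2)\ge A^2\,\theta(\varphi(t)/A^2)$. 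Hence $-\varphi'(t)\ge 2\theta(\varphi(t)/A^2)$.

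Next I would integrate this differential inequality. Writing $\psi(t)=\varphi(t)/A^2$, we get $-\psi'(t)\ge \frac{2}{A^2}\theta(A^2\psi(t))$; it is cleaner to keep $\varphi$ and observe that $-\frac{\varphi'(t)}{\theta(\varphi(t)/A^2)}\ge 2$, and with the substitution $\tau=\varphi(s)/A^2$ one obtains, after integrating from $0$ to $t$ and using that the convergence of $\int^{+\infty}d\tau/\theta(\tau)$ makes the integral finite,
\begin{equation*}
\int_{\varphi(t)/A^2}^{\varphi(0)/A^2}\frac{A^2\,d\tau}{\theta(\tau)}\ge 2t,\qquad\text{hence}\qquad \int_{\varphi(t)/A^2}^{+\infty}\frac{d\tau}{\theta(\tau)}\ge \frac{2t}{A^2}.
\end{equation*}
There is a normalisation subtlety here: comparing with \eqref{tetam}, which defines $m$ by $\int_{m(t)}^{+\infty}d\tau/\theta(\tau)=2t$, one should absorb the factor $A^2$ by rescaling time, or equivalently note that the statement's $w$ is built from $m(r^2/2)$ precisely to match $t\mapsto t/2$ and the $A^2$ bookkeeping; I would check that with $f$ normalised by $\|f\|_1=1$ (not $\le A$) one lands exactly on $\varphi(t)\le A^2 m(2t)$, so that after the time change $t\mapsto t/2$ in \eqref{tetam} the bound reads $\|e^{-tL}f\|_2^2\le A^2 m(t^2/2)^{-1}\cdot(\dots)$ — the precise constant juggling is routine but must be done carefully to land on \eqref{wm}.

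Finally, unwinding the normalisation: for general $f\in L^1\cap L^2$ we have shown $\|e^{-tL}f\|_2^2\le \frac{\|f\|_1^2}{w(\sqrt{2t})}$ (reading off $w$ from \eqref{wm}–\eqref{tetam}), i.e. $\|e^{-tL}\|_{1\to 2}^2\le w(\sqrt{2t})^{-1}$, equivalently $\|e^{-(t/2)L}v_{\sqrt t}^{0}\|_{1\to 2}$ — but here $w$ depends only on $r$, so this is exactly condition \eqref{tilde} (with $v$ replaced by $w$), or directly $(Ev_{1,2})$ for $w$. By Proposition \ref{implication0} (applied with $w$ in place of $v$), this is equivalent to $(DU\!E^w)$, which is the claim. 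The main obstacle I anticipate is none of the analysis itself — it is entirely the constant/normalisation bookkeeping needed to match the paper's specific definitions \eqref{wm} and \eqref{tetam} (the factors $A^2$, the $r^2/2$ versus $t$, and the square in $\|e^{-tL}\|_{1\to2}^2$), so I would be careful to state explicitly which normalisation of $f$ is used at each step and to verify that the self-improvement in Proposition \ref{implication0} applies verbatim to a function $w$ of $r$ alone (it does, since that proposition makes no doubling hypothesis).
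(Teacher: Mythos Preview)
Your approach is correct and essentially identical to the paper's: substitute $e^{-tL}f$ into \eqref{nc1}, use $\|e^{-tL}f\|_1\le A\|f\|_1$ together with the monotonicity of $\tau\mapsto\theta(\tau)/\tau$, integrate the resulting differential inequality, and then pass from the $L^1\to L^2$ bound to $L^1\to L^\infty$ by duality and composition (your invocation of Proposition~\ref{implication0} is exactly this $T^*T$ step done in the paper by hand).

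Your $A^2$ bookkeeping is indeed tangled, and the paper avoids this by a cleaner choice of variable: rather than normalising $\|f\|_1$ and tracking $\varphi(t)=\|e^{-tL}f\|_2^2$, set directly $u(t)=\dfrac{\|e^{-tL}f\|_2^2}{A^2\|f\|_1^2}$. Then the monotonicity step gives at once $\theta(u(t))\le -\tfrac{1}{2}u'(t)$, hence $\int_{u(t)}^{u(0)}\frac{d\tau}{\theta(\tau)}\ge 2t$, and comparison with \eqref{tetam} yields $u(t)\le m(t)$, i.e.\ $\|e^{-tL}\|_{1\to 2}\le A\sqrt{m(t)}$. Composing with the dual estimate then gives $\|e^{-tL}\|_{1\to\infty}\le A^2 m(t/2)=1/w(\sqrt t)$, which is where the $r^2/2$ in \eqref{wm} comes from. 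With this normalisation there is nothing to ``absorb by rescaling time'' and no stray $A^2$ factors of the sort that crept into your displayed integral inequality.
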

\begin{proof}
Substitute $e^{-tL}f$ to $f$ in \eqref{nc1}.
Use the fact that $$\frac{\|e^{-tL}f\|_2^2}{\|e^{-tL}f\|_1^2}\ge\frac{\|e^{-tL}f\|_2^2}{A^2\|f\|_1^2}$$
and that  the function $\tau\to\frac{\theta(\tau)}{\tau}$ is non-decreasing.
If follows that
\begin{equation}\label{hyde}
A^2\|f\|_{1}^2\,\theta\left(\frac{\|e^{-tL}f\|_{2}^{2}}{A^2\|f\|_{1}^2}\right)\le \mathcal{E}(e^{-tL}f), \ \forall\,f\in\mathcal{F}\setminus\{0\},\,t>0.
\end{equation}
Set $u(t)=\frac{\|e^{-tL}f\|_2^2}{A^2\|f\|_{1}^2}$. 
Since  $\frac{d}{dt}\|e^{-tL}f\|_2^2=-2\mathcal{E}(e^{-tL}f)$, \eqref{hyde} becomes
$$
\theta\left(u(t)\right)\le -\frac{u'(t)}{2}, \ t>0,
$$
hence
\begin{equation}\label{intineq}
\int_{u(t)}^{u(0)}\frac{d\tau}{\theta(\tau)}\ge 2t, \ t>0.
\end{equation}

Define $m(t)$ by  \begin{equation}\label{intineqm}
 \int_{m(t)}^{+\infty}\frac{d\tau}{\theta(\tau)}=2t.
 \end{equation}

It follows from \eqref{intineq} and \eqref{intineqm} that $u(t)\le m(t)$,
that is
$$\|e^{-tL}f\|_2^2\le A^2m(t)\|f\|_1^2,$$
in other words
$$\|e^{-tL}\|_{1\to 2}\le A\sqrt{m(t)}.$$
By duality $\|e^{-tL}\|_{2\to \infty}\le A\sqrt{m(t)},$
hence by writing
$$\|e^{-tL}f\|_{1\to \infty}\le \|e^{-(t/2)L}\|_{2\to \infty}\|e^{-(t/2)L}\|_{1\to 2}$$
it follows that $$\|e^{-tL}\|_{1\to \infty}\le A^2\,m(t/2)=\frac{1}{w(\sqrt{t})}.$$

\end{proof}

\begin{lemma}\label{rod} Functions $\theta_1$ and $\theta_2$ satisfy the assumptions of Lemma  $\ref{nacou}$. If $w_1$ and $w_2$ are the associated functions defined via
\eqref{wm} and \eqref{tetam}, then, if $v$ is doubling, there exists $C>0$ such that $w_1(r)\ge Cv(r), \,\forall\,r>0,$ and, if $v$ satisfies $(*_v)$, there exist $C,c>0$ such that $w_2(r)\ge Cv(cr), \,\forall\,r>0$.
In particular,  under these assumptions, $(DU\!E^{w_1})$, resp. $(DU\!E^{w_2})$, implies $(DU\!E^v)$.
\end{lemma}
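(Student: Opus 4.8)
The plan is to analyze the two functions $\theta_1,\theta_2$ introduced just before the statement and to show that each satisfies the hypotheses of Lemma~\ref{nacou} (continuity of $\theta_i$, monotonicity of $\tau\mapsto\theta_i(\tau)/\tau$, and the Dini-type integrability $\int^{+\infty}d\tau/\theta_i(\tau)<+\infty$), and then to compute, up to constants, the function $w_i$ produced by \eqref{wm}--\eqref{tetam} and compare it with $v$. The monotonicity and continuity of $\tau\mapsto\theta_i(\tau)/\tau$ have already been checked in the text preceding the statement (using that $v$ is one-to-one, $\mathcal{C}^1$, non-decreasing, and the computation with $r_1$ for $\theta_2$), so the first task is only to record the integrability. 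For $\theta_1(\tau)=\frac{\tau}{2C[v^{-1}(2C/\tau)]^2}$ one substitutes $\rho=v^{-1}(2C/\tau)$, i.e.\ $\tau=2C/v(\rho)$, and $d\tau=-2C\,v'(\rho)v(\rho)^{-2}\,d\rho$; then $\int^{+\infty}\frac{d\tau}{\theta_1(\tau)}$ transforms into a multiple of $\int_{0^+}\rho\, \frac{v'(\rho)}{v(\rho)}\,d\rho$, which converges near $\rho=0$ because $(D_v)$, in the form \eqref{dv}, forces $\rho v'(\rho)/v(\rho)\le \kappa_v$ (a bounded quantity). For $\theta_2$ one uses the pointwise lower bound $\theta_2(\tau)\ge\widetilde\theta_2(\tau)=\tilde c\,\tau^2 v'(v^{-1}(1/\tau))/v^{-1}(1/\tau)$ recalled from \cite{G2}; the same substitution $\rho=v^{-1}(1/\tau)$ turns $\int^{+\infty}d\tau/\widetilde\theta_2(\tau)$ into (a multiple of) $\int_{0^+}\rho\,\frac{v'(\rho)}{v(\rho)}\,d\rho$ again, finite now because $(*_v)$ together with $v$ being a genuine growth function gives the needed control near $0$.

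The second and main task is the comparison $w_i(r)\gtrsim v(c_i r)$. By \eqref{tetam}, $m_i(t)$ is defined by $\int_{m_i(t)}^{+\infty}d\tau/\theta_i(\tau)=2t$, and by \eqref{wm}, $w_i(r)=1/(A^2 m_i(r^2/2))$. The strategy is to run the same change of variables as above inside the defining integral so as to identify $m_i(t)$ asymptotically. For $\theta_1$: writing $\tau=2C/v(\rho)$, the equation $\int_{m_1(t)}^{+\infty}d\tau/\theta_1(\tau)=2t$ becomes $\frac1{2C}\int_0^{\rho(t)}\frac{\rho\,v'(\rho)}{v(\rho)}\,d\rho = 2t$, where $\rho(t)=v^{-1}(2C/m_1(t))$. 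Under $(D_v)$ one has $\rho v'(\rho)/v(\rho)$ bounded above by $\kappa_v$; one also needs a matching lower bound of the form $\rho v'(\rho)/v(\rho)\ge c>0$ on the relevant range, which follows from $v$ being strictly increasing and, say, the reverse-type consequence of the standing hypotheses (or one argues directly with $v$ non-decreasing and uses that $\int_0^{\rho}\frac{sv'(s)}{v(s)}ds$ is comparable to $\rho^2 \cdot (\text{average of } v'/v)$). This yields $\rho(t)^2 \asymp t$, i.e.\ $v^{-1}(2C/m_1(t))\asymp \sqrt t$, hence $m_1(t)\asymp 1/v(c\sqrt t)$ and therefore $w_1(r)=1/(A^2 m_1(r^2/2))\gtrsim v(c r)$, and then $(D_v)$ (which absorbs the dilation constant $c$) gives $w_1(r)\ge C v(r)$. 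For $\theta_2$ the same computation with $\widetilde\theta_2$ in place of $\theta_1$ and the substitution $\tau=1/v(\rho)$ gives $\int_0^{\rho(t)}\frac{\rho v'(\rho)}{v(\rho)}\,d\rho\asymp t$ again — here one does \emph{not} have $(D_v)$ so cannot remove the dilation, which is exactly why the conclusion is stated as $w_2(r)\ge C v(cr)$ rather than $w_2(r)\ge Cv(r)$ — and hence $v^{-1}(1/m_2(t))\asymp\sqrt t$ and $w_2(r)\ge C v(cr)$.

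Finally, once $w_1(r)\ge C v(r)$ (resp. $w_2(r)\ge C v(cr)$) is established, the implication $(DU\!E^{w_i})\Rightarrow(DU\!E^v)$ is immediate: $|p_t(x,y)|\le \frac{C'}{\sqrt{w_i(x,c'\sqrt t)w_i(y,c'\sqrt t)}}$ (here $w_i$ does not depend on $x$) and $w_i\ge C v(\cdot)$ pointwise — using $(D_v)$, resp. the fact that $(DU\!E^v)$ is formulated with an arbitrary constant in front of $\sqrt t$, to absorb the constants $c,c'$ — gives exactly $(DU\!E^v)$. The main obstacle I expect is the two-sided control of $\rho\mapsto \rho v'(\rho)/v(\rho)$ needed to pin down $\rho(t)^2\asymp t$: the upper bound is $(D_v)$ (or $(*_v)$), but the lower bound is more delicate and is where one must invoke precisely the assumptions of the statement ($v$ strictly increasing and $\mathcal C^1$, plus $(D_v)$ for $\theta_1$ and $(*_v)$ for $\theta_2$); getting the constants to line up so that the final inequality is clean is the bookkeeping-heavy part.
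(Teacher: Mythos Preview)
Your overall strategy is right, but there is a genuine gap in the $\theta_1$ argument and an unnecessary complication in the $\theta_2$ one.

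\textbf{The gap for $\theta_1$.} After the substitution $\tau=2C/v(\rho)$ the integral $\int_{m_1(t)}^{+\infty}d\tau/\theta_1(\tau)$ becomes a constant times $\int_0^{\rho(t)}\rho^{2}\,\dfrac{v'(\rho)}{v(\rho)}\,d\rho$ (note the exponent is $2$, not $1$, so your displayed formula is off by a power of $\rho$). You then claim that $(D_v)$ in the form \eqref{dv} ``forces $\rho v'(\rho)/v(\rho)\le\kappa_v$''. This is \emph{false}: doubling is an integrated growth condition and does not control the pointwise logarithmic derivative; $v$ can have arbitrarily steep local slopes while still satisfying $v(2r)\le Cv(r)$. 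This is exactly the difficulty the paper addresses with the trick from \cite{BPS}: replace $v$ by the running average $\tilde v(r)=\frac{2}{r}\int_{r/2}^{r}v(s)\,ds$, which is uniformly comparable to $v$ under $(D_v)$ and does satisfy the pointwise bound $\tilde v'(r)/\tilde v(r)\le C/r$. With $\tilde v$ one gets $\int_0^{R}\rho^2\,\tilde v'/\tilde v\,d\rho\le CR^2$ directly, and since the corresponding $\widetilde\theta_1$ is comparable to $\theta_1$, the upper bound on $m_1(t)$ follows. (Alternatively, one can bypass the smoothing by a dyadic argument: $\int_0^{R}\rho^2 v'/v\,d\rho=\sum_{k\ge 0}\int_{R/2^{k+1}}^{R/2^{k}}\rho^2 v'/v\,d\rho\le \sum_{k}(R/2^k)^2\log\frac{v(R/2^k)}{v(R/2^{k+1})}\le (\log C)\,R^2\sum_k 4^{-k}$, using only $v(2s)\le Cv(s)$. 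Either route fixes your argument; your stated justification does not.)

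\textbf{Unnecessary lower bound, and the simplification for $\theta_2$.} You spend effort trying to obtain a \emph{two-sided} estimate $\rho(t)^2\asymp t$, worrying about a lower bound for $\rho v'/v$. This is not needed: since $w_i(r)=1/(A^2m_i(r^2/2))$, the conclusion $w_i\gtrsim v$ requires only an \emph{upper} bound on $m_i$, hence only an upper bound on the tail integral $\int^{+\infty}d\tau/\theta_i$. Moreover, for $\theta_2$ the situation is even cleaner than you suggest: with $\widetilde\theta_2(\tau)=\tilde c\,\tau^2 v'(v^{-1}(1/\tau))/v^{-1}(1/\tau)$ and the substitution $\tau=1/v(r)$, the $v'$ factors cancel exactly and one gets the \emph{identity} $\int_{1/v(\sqrt t)}^{+\infty}d\tau/\widetilde\theta_2(\tau)=\frac{1}{\tilde c}\int_0^{\sqrt t}r\,dr=t/(2\tilde c)$, with no estimate on $v'/v$ required at all. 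Combining this with $\theta_2\ge\widetilde\theta_2$ immediately yields $m_2(t)\le 1/v(\sqrt{4\tilde c\,t})$, which is the claim. The paper uses precisely these two devices (the \cite{BPS} smoothing for $\theta_1$, the exact cancellation for $\theta_2$); once you incorporate them, the rest of your outline (including the final passage from $(DU\!E^{w_i})$ to $(DU\!E^v)$) is correct.
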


\begin{proof}
We have already observed that the functions $\tau\to\frac{\theta_i(\tau)}{\tau}$, $i=1,2$, are non-decreasing and continuous. 
The computations below will prove that $\int^{+\infty}\frac{d\tau}{\theta_i(\tau)}<+\infty$, $i=1,2$. This gives the first assertion of the lemma.

As for the second assertion, let us start with $\theta_2$ which is simpler to treat. 
Changing variables  $\tau=\frac{1}{v(r)}$ in the expression of $\widetilde{\theta}_2$ yields
\begin{equation}\label{chacha}
\int_{\frac{1}{v(\sqrt{t})}}^{+\infty}\frac{d\tau}{\widetilde{\theta}_2(\tau)}=\frac{1}{\tilde{c}}\int_0^{\sqrt{t}}r\,dr=\frac{t}{2\tilde{c}}
\end{equation}
therefore 
$$ \int_{\frac{1}{v(\sqrt{4\tilde{c}t})}}^{+\infty}\frac{d\tau}{\theta_2(\tau)}\le \int_{\frac{1}{v(\sqrt{4\tilde{c}t})}}^{+\infty}\frac{d\tau}{\widetilde{\theta}_2(\tau)}\le 2t= \int_{m_2(t)}^{+\infty}\frac{d\tau}{\theta_2(\tau)}.$$
The first inequality follows from the comparison between $\theta_2$ and $\widetilde{\theta}_2$, the second one from \eqref{chacha}, and the equality is the definition of $m_2$ (with obvious notation).
From $$ \int_{\frac{1}{v(\sqrt{4\tilde{c}t})}}^{+\infty}\frac{d\tau}{\theta_2(\tau)}\le  \int_{m_2(t)}^{+\infty}\frac{d\tau}{\theta_2(\tau)}$$
it follows that  $$m_2(t)\le \frac{1}{v(\sqrt{4\tilde{c}t})},$$
thus $$\frac{1}{w_2(\sqrt{t})}=A^2\,m_2(t/2)\le \frac{A^2}{v(\sqrt{2\tilde{c}t})}.$$

Now for $\theta_1$.
We are going to use a trick from \cite{BPS}.
Consider $\tilde{v}(r)=\frac{2}{r}\int_{r/2}^rv(s)\,ds$. Clearly, $$v(r/2)\le \tilde{v}(r)\le v(r),$$ and by $(D_v)$,
$\tilde{v}(r)$ and $v(r)$ are  within multiplicative constants. One can check by a simple calculation (see \cite[Lemma 2.1]{BPS}) that $\tilde{v}$ is also one-to-one.
Define $\widetilde{\theta}_1(\tau):=\frac{\tau}{\left[\tilde{v}^{-1}\left(\frac{1}{\tau}\right)\right]^2}$.  Again, $\widetilde{\theta}_1$ and $\theta_1$ are uniformly comparable.

 By the change of variables $\tau=\frac{1}{\tilde{v}(r)}$,
 $$\int_{\frac{1}{\tilde{v}(\sqrt{t})}}^{+\infty}\frac{d\tau}{\widetilde{\theta}_1(\tau)}=\int_0^{\sqrt{t}}\frac{\tilde{v}'(r)r^2}{\tilde{v}(r)}\,dr.$$
But again by calculations similar to the ones in \cite[Lemma 2.1]{BPS}, $$\frac{\tilde{v}'(r)}{\tilde{v}(r)}\le \frac{C}{r},$$
therefore $$\int_{\frac{1}{\tilde{v}(\sqrt{t})}}^{+\infty}\frac{d\tau}{\widetilde{\theta}_1(\tau)}\le \frac{Ct}{2},$$
hence
$$\int_{\frac{1}{\tilde{v}(\sqrt{t})}}^{+\infty}\frac{d\tau}{\theta_1(\tau)}\le C't,$$
$$\int_{\frac{1}{v(\sqrt{2t/C'})}}^{+\infty}\frac{d\tau}{\theta_1(\tau)}\le 2t= \int_{m_1(t)}^{+\infty}\frac{d\tau}{\theta_1(\tau)},$$
thus $$m_1(t)\le \frac{1}{v(\sqrt{2t/C'})},$$
and $$\frac{1}{w_1(\sqrt{t})}=A^2 m_1(t/2)\le \frac{A^2}{v(\sqrt{t/C'})}\le \frac{A'}{v(\sqrt{t})},$$
where we use $(D_v)$ in the last inequality.

\end{proof}

Lemmas  \ref{nacou} and \ref{rod} together yield Proposition \ref{extrapolationuna}.

\bigskip

Consider now $(GN_q^v)$ for some $q>2$ and assume $(D_v)$. When $v$ does not depend on $x$, 
$(GN_q^v)$ reads:
\begin{equation*}
v^{1-\frac{2}{q}}(r) \|f \|_{q}^2\leq
C\left(\|f\|_2^2+r^2\mathcal{E}(f)\right),\ \forall\,f\in \mathcal{F},\ \forall\,r>0,
\end{equation*}
that is
\begin{equation*}
\frac{\|f \|_{q}^2}{\|f \|_{2}^2}\leq
C\left(\frac{1}{v^{1-\frac{2}{q}}(r)}+\frac{r^2\mathcal{E}(f)}{v^{1-\frac{2}{q}}(r)\|f \|_{2}^2}\right),\ \forall\,f\in \mathcal{F}\setminus\{0\},\ \forall\,r>0,
\end{equation*}
or
\begin{equation*}
\frac{\|f \|_{q}^2}{\|f \|_{2}^2}\leq
K\left(\frac{\mathcal{E}(f)}{\|f \|_{2}^2}\right),\ \forall\,f\in \mathcal{F}\setminus\{0\},
\end{equation*}
where $$K(s)=C\inf_{r>0}\frac{1+r^2s}{v^{1-\frac{2}{q}}(r)}.$$
Note that if $v$ satisfies  \eqref{dv}  then $v(r)\le C\,v(1)r^{\kappa_v}$, $r\ge 1$, and if  $q$ is such that
 $\frac{q-2}{q}\kappa_v<2$, $\frac{r^2}{v^{1-\frac{2}{q}}(r)}\to+\infty$ as $r\to+\infty$, and $K(s)$ is positive and finite for every $s>0$.
 One checks easily that $K$ is one-to-one from $\R_+$ into itself.
Finally $(GN_q^v)$
 can be written in the more concise form
\begin{equation*}
\|f \|_{2}^2\,\eta\left(\frac{\|f \|_{q}^2}{\|f \|_{2}^2}\right)\leq
\mathcal{E}(f), \ \forall\,f\in \mathcal{F}\setminus\{0\},
\end{equation*}
 where
 $\eta(\tau):=K^{-1}(\tau)$.
  If $v$ is doubling, then choosing $\tau=r^2$ yields
$$\eta(\tau)\ge\frac{c}{\left(v^{-1}\left(\frac{C}{\tau^{q/(q-2)}}\right)\right)^2}.$$
This yields a more general version of the inequalities in \cite{Cou2}.

\bigskip

To go back from $(GN_q^v)$  to $(DU\!E^v)$,  we will use the equivalence between $(GN_q^v)$ and $(vE_{2,q})$, which does not require the above optimisation, 
and we will extrapolate from $(vE_{2,q})$ to $(vE_{2,\infty})$, which will require a uniform boundedness assumption on $L^1$.
Indeed, the  extrapolation lemma  \cite[Lemma 1]{Cou1} can be extended  to the situation where the decay of the semigroup is governed by a doubling function of time instead of a  power function.

\begin{proposition}[\cite{CM}, Lemma 1.3]\label{extrapol} Assume that $(e^{-tL})_{t>0}$ is uniformly bounded on $L^{1}(M,\mu)$. Let $w$ be a  non-decreasing positive  function on $(0,+\infty)$ satisfying the doubling condition $(D_w)$. If there exist  $1\le p < q \le +\infty$ such that:
\begin{equation}\label{varphi}
\|e^{-tL}\| _{p\to q}  \le \frac{1}{w (t)},\, \forall\,t>0,
\end{equation}
 then there exists a constant $C$ such that
\begin{equation*}
\|e^{-tL}\|_{1\to \infty}\le \frac{C}{w^{\alpha} (t)},\, \forall\, t>0,\quad \text{with} \,\,\alpha=1/(1/p - 1/q).
\end{equation*}
\end{proposition}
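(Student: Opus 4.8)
The plan is to run Coulhon's iteration argument, adapted so that the role of the power function $t^{-n/2}$ is played by the doubling function $w$. First I would observe that since $(e^{-tL})_{t>0}$ is uniformly bounded on $L^1$ and (by the semigroup being self-adjoint and contractive on $L^2$, hence uniformly bounded on $L^2$) uniformly bounded on $L^2$, duality and interpolation give uniform boundedness on every $L^r$, $1\le r\le\infty$; in particular $\|e^{-tL}\|_{r\to r}\le A$ for all $t>0$. Next I would reduce to the case $q=\infty$: from \eqref{varphi} with exponents $p<q$, together with uniform $L^q$-boundedness, the semigroup property $e^{-2tL}=e^{-tL}e^{-tL}$ and iteration of the step $L^p\to L^q\to L^q\to\cdots$ is not quite enough, so instead one bootstraps $L^p\to L^q$ repeatedly along a geometric chain of exponents $p=p_0<p_1<p_2<\cdots$ with $1/p_{k}-1/p_{k+1}$ constant, using at each stage a fixed fraction of the time; the doubling of $w$ is exactly what lets the product of the factors $1/w(\text{const}\cdot t)$ be controlled by a power of $1/w(t)$. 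This produces $\|e^{-tL}\|_{p\to\infty}\le C/w^{a}(t)$ for some $a$, and a dual argument (using $\|e^{-tL}\|_{1\to p'}$, which follows from \eqref{varphi} by duality applied to the adjoint, itself of the same form since $L$ is self-adjoint) combined with one more use of the semigroup property $e^{-tL}=e^{-(t/2)L}e^{-(t/2)L}$ splits $L^1\to L^\infty$ through $L^2$ and yields the stated exponent $\alpha=1/(1/p-1/q)$.

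More concretely, I would set $g(t):=1/w(t)$, which is non-increasing up to the doubling constant, and prove by induction the family of bounds
\begin{equation*}
\|e^{-tL}\|_{p\to q_k}\le C_k\,g(c_k t)^{\beta_k},
\end{equation*}
where $1/q_k=1/p-k(1/p-1/q)$ as long as this stays positive, $\beta_k$ grows linearly in $k$, and $c_k$ stays bounded below by a positive constant. The inductive step writes $e^{-tL}=e^{-(1-\theta)tL}e^{-\theta tL}$, bounds the first factor $L^{q_k}\to q_{k+1}$ by rescaling \eqref{varphi} (this is where one needs $1/q_k-1/q_{k+1}=1/p-1/q$, forcing the arithmetic-progression choice of the $q_k$), bounds the second factor $L^p\to L^{q_k}$ by the induction hypothesis, and uses $(D_w)$ to absorb the constant $c_k$ shift. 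After finitely many steps one reaches an exponent $q_N=\infty$ (or, if $1/p$ is not an exact multiple of $1/p-1/q$, one reaches some $q_N<\infty$ and finishes with one interpolation against the uniform $L^\infty$ bound). Tracking the exponents $\beta_k$ against the decrease in the H\"older exponent shows the total exponent on $g$ is exactly $1/(1/p-1/\infty)$ for the $L^p\to L^\infty$ estimate; then the $T^*T$/duality splitting through $L^2$ upgrades $L^p\to L^\infty$ and $L^1\to L^{p'}$ to $L^1\to L^\infty$ with exponent $1/(1/p-1/q)$ as claimed.

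The step I expect to be the main obstacle is the bookkeeping that shows the doubling condition $(D_w)$ is strong enough to keep the constants $c_k$ (the time-fractions accumulated in the iteration) bounded below and the prefactors $C_k$ under control, so that the finite product of factors $g(c_k t)^{\text{something}}$ really collapses to $C\,g(t)^\alpha$ rather than picking up a $t$-dependent error; this is precisely the place where a bare monotone $w$ would fail and where \eqref{dv}-type polynomial growth of $w$ from $(D_w)$ is invoked. Since the statement is quoted from \cite{CM}, I would cite that reference for the detailed verification and only indicate the structure of the iteration here.
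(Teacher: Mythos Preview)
The paper does not prove this proposition; it simply quotes it from \cite{CM}, so your closing remark (cite the reference for the detailed verification) is exactly what the authors do. Your overall plan --- iterate the semigroup property, interpolate against the uniform $L^r\to L^r$ bounds, and use $(D_w)$ to absorb the time-splitting --- is the right structure.

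There is, however, a concrete error in the iteration you write down. You take an arithmetic chain $1/q_k=1/p-k(1/p-1/q)$ and say you will bound the factor $\|e^{-sL}\|_{q_k\to q_{k+1}}$ by ``rescaling'' \eqref{varphi}. But \eqref{varphi} gives only the single pair $(p,q)$, and Riesz--Thorin interpolation with the diagonal bounds $\|e^{-tL}\|_{r\to r}\le A$ produces pairs $(a,b)$ with $1/a-1/b=\theta(1/p-1/q)$ and exponent $\theta$ on $1/w$, for $\theta\in[0,1]$. Achieving the full gap $1/p-1/q$ forces $\theta=1$, hence $(a,b)=(p,q)$; geometrically, the line $1/a-1/b=1/p-1/q$ touches the interpolation triangle with vertices $(0,0),(1,1),(1/p,1/q)$ only at the vertex $(1/p,1/q)$. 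So for $k\ge 1$ the step $q_k\to q_{k+1}$ in your chain is simply not available, and the induction stalls after one step.

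The fix is to take a \emph{finer} finite chain $1=r_0<r_1<\cdots<r_N=\infty$ with each gap $1/r_{i-1}-1/r_i$ strictly smaller than $1/p-1/q$, so that each pair lies inside the triangle and interpolation applies with some $\theta_i<1$. Splitting $t=\sum_{i=1}^N t_i$ and multiplying, the exponents telescope: $\sum_i\theta_i=\big(\sum_i(1/r_{i-1}-1/r_i)\big)/(1/p-1/q)=1/(1/p-1/q)=\alpha$, and since $N$ is fixed (depending only on $p,q$), $(D_w)$ converts $\prod_i w(t_i)^{-\theta_i}$ into $C\,w(t)^{-\alpha}$ with a finite constant. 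This is exactly the bookkeeping you anticipated as the obstacle; it goes through cleanly once the chain is chosen with sub-maximal gaps rather than the arithmetic one you proposed.
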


\begin{proposition}\label{extrapolationu}  Assume that $v:\R_+\to \R_+$ satisfies $(A)$ and $(D_v)$ but does not depend on $x\in M$ and that  $(e^{-tL})_{t>0}$ is uniformly bounded on $L^{1}(M,\mu)$. Then, for all $q$ such that $2<q \le +\infty$,
$(GN_q^v)$ implies $(DU\!E^v)$.
\end{proposition}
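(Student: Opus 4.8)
The plan is to reduce $(GN_q^v)$ to a plain operator-norm decay estimate for the semigroup and then feed that estimate into the extrapolation Proposition~\ref{extrapol}. First I would apply the converse part of Proposition~\ref{equivalence}, which holds for all $q$ with $2<q\le+\infty$ and, crucially, does \emph{not} require $v$ to be doubling: it gives that $(GN_q^v)$ implies $(vE_{2,q})$, that is $\sup_{t>0}\|v_{\sqrt t}^{\frac12-\frac1q}e^{-tL}\|_{2\to q}<+\infty$. Since $v$ does not depend on $x\in M$, the weight $v_{\sqrt t}^{\frac12-\frac1q}$ is just the scalar $v(\sqrt t)^{\frac12-\frac1q}$, so, writing $C$ for the supremum,
$$\|e^{-tL}\|_{2\to q}\le \frac{C}{v(\sqrt t)^{\frac12-\frac1q}},\qquad \forall\,t>0.$$

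Next I would set $w(t):=C^{-1}v(\sqrt t)^{\frac12-\frac1q}$ and check that $w$ is a positive, non-decreasing, doubling function on $(0,+\infty)$. Positivity and monotonicity are immediate from $(A)$ and from $\frac12-\frac1q>0$, and $(D_w)$ follows from $(D_v)$ together with the monotonicity of $v$, since $v(\sqrt{2t})\le v(2\sqrt t)\le Cv(\sqrt t)$; this is the one place where the doubling hypothesis on $v$ is actually used. The estimate above is precisely $\|e^{-tL}\|_{2\to q}\le 1/w(t)$, so Proposition~\ref{extrapol}, whose hypothesis that $(e^{-tL})_{t>0}$ be uniformly bounded on $L^1(M,\mu)$ is part of our assumptions, applies with $p=2$ and this $q$ and yields
$$\|e^{-tL}\|_{1\to \infty}\le \frac{C'}{w^\alpha(t)},\qquad \alpha=\frac{1}{\tfrac12-\tfrac1q}=\frac{2q}{q-2}.$$

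Finally I would simplify the exponent: $w^\alpha(t)=C^{-\alpha}v(\sqrt t)^{(\frac12-\frac1q)\alpha}=C^{-\alpha}v(\sqrt t)$ because $(\frac12-\frac1q)\cdot\frac{2q}{q-2}=1$, hence $\|e^{-tL}\|_{1\to\infty}\le C''/v(\sqrt t)$ for all $t>0$. As $v$ is independent of $x$, the multiplication operator $v_{\sqrt t}^{1/2}$ is scalar, so this says $\|v_{\sqrt t}^{1/2}e^{-tL}v_{\sqrt t}^{1/2}\|_{1\to\infty}=v(\sqrt t)\|e^{-tL}\|_{1\to\infty}\le C''$, i.e. $(vEv)$ holds, which under $(D_v)$ is equivalent to $(DU\!E^v)$ by Corollary~\ref{implication}. (For $q=+\infty$ one may skip the extrapolation altogether, since $(vE_{2,\infty})$ is already equivalent to $(DU\!E^v)$ by Corollary~\ref{implication}, but the argument above covers that case uniformly.)

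I do not expect any real obstacle: every step is either a direct invocation of an already-established result (Propositions~\ref{equivalence} and~\ref{extrapol}, Corollary~\ref{implication}) or an elementary computation exploiting that $v$ is a scalar weight. The only point needing a moment's care is verifying that $w$ inherits $(D_w)$ from $(D_v)$ in the time variable, which, as noted, is a one-line consequence of the monotonicity of $v$.
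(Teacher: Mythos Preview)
Your proposal is correct and follows essentially the same route as the paper: deduce $(vE_{2,q})$ from $(GN_q^v)$ via the converse part of Proposition~\ref{equivalence}, use the scalar nature of $v$ to rewrite this as a plain $L^2\to L^q$ decay estimate with doubling rate function $w(t)=c\,v(\sqrt{t})^{\frac12-\frac1q}$, and then apply Proposition~\ref{extrapol} to obtain the $L^1\to L^\infty$ bound. Your extra remarks (the explicit exponent $\alpha=\tfrac{2q}{q-2}$, the verification of $(D_w)$, and the observation that $q=+\infty$ bypasses extrapolation) are all correct and simply flesh out steps the paper leaves implicit.
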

\begin{proof}
By Proposition \ref{equivalence}, $(GN_{q}^v)$ implies $(vE_{2,q})$. Now, since $v$ does not depend on $x$, $(vE_{2,q})$ is equivalent to \eqref{varphi} with $p=2$  and $w(t)=cv^{\frac{1}{2}-\frac{1}{q}}(\sqrt{t})$, $c>0$. Note that $w$ satisfies $(D_w)$ since $v$ satisfies  $(D_v)$.  Proposition \ref{extrapol} then yields
\begin{equation*}
\|e^{-tL}\|_{1\to \infty}\le \frac{C'}{v (\sqrt{t})},\, \forall\, t>0,
\end{equation*}
which is obviously equivalent to $(DU\!E^v)$.
\end{proof}

In Proposition \ref{realconverse} below, we shall be able to drop the assumption of independence on $x$. And this will rely on an adapted  extrapolation result, namely Proposition \ref{extrapolation} below, which will require the use of new ingredients.


\section{Local and global inequalities }\label{LG0}

The section will be devoted to a closer study of the relationship between on the one hand global inequalities such as $(GN_q^v)$ and $(N^v)$ and on the other hand local inequalities like  $(K\!G\!N_q^v)$,  $(LS_q^v)$,
   $(K\!N^v)$, and   $(LN^v)$, and also various forms of relative Faber-Krahn inequalities. More precisely, in Sections \ref{GNN} and \ref{LG}, we are going to see that  conditions $(GN_q^v)$,   $(K\!G\!N_q^v)$, and $(LS_q^v)$,
(resp. $(N^v)$,   $(K\!N^v)$, and   $(LN^v)$) are equivalent and that the first group implies the second one.
In Section \ref{R}, we shall  establish   the  link with various versions of Faber-Krahn inequalities.
In Section \ref{KLR}, we shall see in a systematic way that in the case where $(M,d,\mu)$ is non-compact and connected, the so-called reverse doubling property enables one to get rid of (or not to introduce) of a certain local term in  Nash and Faber-Krahn type inequalities.

From Section \ref{LG} on, we shall work in the setting of a metric measure space endowed with a  strongly local regular Dirichlet form  together with a proper distance, as described for instance in \cite{GS} or \cite{ST}.

\bigskip

\subsection{Gagliardo-Nirenberg implies Nash and global implies local}\label{GNN}

We will start by  showing that the implications in the following  diagram hold:
 \[
\xymatrix @R-10pt @C-10pt {
&(GN_q^v) \ar@{=>}[r]   \ar@{=>}[d]   &  (K\!G\!N_q^v) \ar@{=>}[r]\ar@{=>}[d]   &  (LS_q^v)\ar@{=>}[d] &&    
\\
&(N^v)  \ar@{=>}[r] & (K\!N^v)  \ar@{=>}[r]& (LN^v)&&
}
\]
The inequalities in the two first columns can be formulated on any  measure space $(M,\mu)$
endowed with a  nonnegative self-adjoint operator $L$. The ones in the last column require in addition $M$ to be endowed with a distance $d$.
Let us first consider  the vertical implications. Note that they do not require $(D_v)$ or \eqref{D2}.

\begin{proposition} \label{gn}
Let $(M,\mu)$  be a measure space, $L$ a  non-negative self-adjoint operator on $L^2(M,\mu)$ and  let $v:M\times \R_+\to \R_+$ satisfy $(A)$. For any $q>2$, $(GN_{q}^v)$ implies $(N^v)$ and $(K\!G\!N_{q}^v)$ implies
$(K\!N^v)$.  If in addition  $M$ is endowed with a metric, $(LS_q^v)$ implies $(LN^v)$.
\end{proposition}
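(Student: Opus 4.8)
The plan is to derive all three implications from one elementary weighted Hölder interpolation inequality, followed (for the first two) by Young's inequality and absorption. Fix $q\in(2,+\infty]$ and set $\theta:=\dfrac{q-2}{2(q-1)}\in(0,\tfrac12]$, so that $1-\theta=\dfrac{q}{2(q-1)}$, the exponents $\dfrac1{2\theta}$ and $\dfrac{q}{2(1-\theta)}$ are conjugate, and $\dfrac{\theta}{1-\theta}=1-\dfrac2q$. The first step is to record the algebraic identity
\[
|f|^2=\bigl(|f|\,v_r^{-1/2}\bigr)^{2\theta}\,\bigl(|f|\,v_r^{1/2-1/q}\bigr)^{2(1-\theta)},
\]
whose only content is that the total power of $|f|$ is $2$ and that the total power of $v_r$ on the right-hand side vanishes for this value of $\theta$; indeed it is precisely this cancellation requirement that pins down $\theta$. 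Integrating over $M$ and applying Hölder's inequality with the two conjugate exponents above yields
\[
\|f\|_2^2\le\|fv_r^{-1/2}\|_1^{2\theta}\,\|fv_r^{1/2-1/q}\|_q^{2(1-\theta)},\qquad\forall\,r>0,
\]
with the usual convention when the right-hand side is infinite; for $q=+\infty$ (so $\theta=\tfrac12$) this is just $\|f\|_2^2\le\|fv_r^{-1/2}\|_1\,\|fv_r^{1/2}\|_\infty$, obtained by pulling $v_r^{1/2}|f|$ out in sup-norm.

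Next I would feed $(GN_q^v)$ into this interpolation: replacing $\|fv_r^{1/2-1/q}\|_q^2$ by $C(\|f\|_2^2+r^2\mathcal{E}(f))$ gives
\[
\|f\|_2^2\le\|fv_r^{-1/2}\|_1^{2\theta}\bigl(C(\|f\|_2^2+r^2\mathcal{E}(f))\bigr)^{1-\theta}.
\]
If $\|fv_r^{-1/2}\|_1=+\infty$ the conclusion $(N^v)$ is trivial; otherwise, since $f\in\mathcal{F}\subset L^2$ with $\mathcal{E}(f)<+\infty$ all quantities are finite, so Young's inequality $ab\le C(\varepsilon)\,a^{1/\theta}+\varepsilon\,b^{1/(1-\theta)}$ with $\varepsilon$ small (depending only on $C$ and $q$) lets one absorb the resulting term $\varepsilon C\|f\|_2^2$ into the left-hand side. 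This leaves $\|f\|_2^2\le C'\bigl(\|fv_r^{-1/2}\|_1^2+r^2\mathcal{E}(f)\bigr)$, that is, $(N^v)$.

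For $(K\!G\!N_q^v)\Rightarrow(K\!N^v)$ I would run the same argument starting from the \emph{unweighted} interpolation $\|f\|_2^2\le\|f\|_1^{2\theta}\|f\|_q^{2(1-\theta)}$ and then inserting the constant $m:=\inf_{z\in\text{supp}(f)}v_r(z)$ by means of the identity $m^{-\theta}m^{(1-2/q)(1-\theta)}=1$ (the same exponent computation), which rewrites the bound as $\|f\|_2^2\le(\|f\|_1^2/m)^{\theta}\bigl(m^{1-2/q}\|f\|_q^2\bigr)^{1-\theta}$; combining with $(K\!G\!N_q^v)$ and Young's inequality exactly as above gives $(K\!N^v)$. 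For $(LS_q^v)\Rightarrow(LN^v)$, one takes $f\in\mathcal{F}_c(B(x,r))$, substitutes $(LS_q^v)$, namely $\|f\|_q^2\le\frac{C}{v_r^{1-2/q}(x)}(\|f\|_2^2+r^2\mathcal{E}(f))$, into the unweighted interpolation, and then raises to the power $\frac1{1-\theta}$; using the relations $\frac1{1-\theta}=1+\alpha$, $\frac{\theta}{1-\theta}=\alpha$, $1-\frac2q=\alpha$ with $\alpha:=\frac{q-2}{q}>0$, one obtains exactly $(LN^v_\alpha)$, hence $(LN^v)$. Here no absorption is even needed.

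There is no genuine obstacle: the proof is elementary. The only points requiring care are the bookkeeping of the exponents — which is not free, being forced by the demand that the weight $v_r$ drops out of the interpolation — and the legitimacy of the absorption step in the first two implications, which is guaranteed precisely because $f\in\mathcal{F}$ implies $\|f\|_2<+\infty$ and $\mathcal{E}(f)<+\infty$. The value $q=+\infty$ is included throughout, with $\theta=\tfrac12$.
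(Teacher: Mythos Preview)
Your proof is correct and takes essentially the same approach as the paper: a weighted H\"older interpolation between $L^1$ and $L^q$ (your $\theta$ is the paper's $1-\theta$), followed by Young's inequality and absorption of the $\|f\|_2^2$ term for the first two implications, and direct rearrangement for the third. The paper is simply more terse about the second and third implications, which you spell out in full.
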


\begin{proof}
 Let $q>2$. Let $\theta \in
]0,1[$ be such that $\frac{1}{2}=\frac{\theta}{q}+(1-\theta).$
By H\"older's inequality,
$$\|f\|_{2}\le \|fv_{r}^{\frac{1}{2}-\frac{1}{q}}\|_{q}^{\theta}\|fv_{r}^{-1/2}\|_{1}^{1-\theta}.$$
Hence  $(GN_q^v)$ yields
\begin{eqnarray*}
\|f\|_{2}^2&\le& C\left(\|f\|_{2}^2+r^2\mathcal{E}(f)\right)^{\theta}\|fv_{r}^{-1/2}\|_{1}^{2(1-\theta)}\\
&\le& C \varepsilon\left(\|f\|_{2}^2+r^2\mathcal{E}(f)\right)+C\varepsilon^{-\frac{\theta}{1-\theta}}\|fv_{r}^{-1/2}\|_{1}^2,
\end{eqnarray*}
for all  $r,\epsilon>0$, $f\in\mathcal{F}$.
Choosing $\varepsilon=\frac{1}{2C}$  proves the first assertion of the proposition. The second one can be proved in a similar way. The last one again follows directly from H\"older's inequality.
\end{proof}

\begin{rem}\label{BS} According to \cite{BCLS}, $(LS_q^v )$ and $(LN^v)$ are actually equivalent if the quadratic form associated with $L$ is a Dirichlet form. It will follow from Propositions $\ref{Carron}$ and $\ref{Carronbis}$ that
$(GN_q^v)$ and $(N^v)$ are also equivalent, at least in the setting of metric measure spaces endowed with a strongly local regular Dirichlet form and a proper intrinsic distance. See also Section \ref{NG} for a slightly more general setting where all these inequalities happen to be equivalent.
\end{rem}

\begin{rem} The same argument as in Proposition $\ref{gn}$ shows more generally that,   for all  $1\le \tilde{p}<p< q<+\infty$ and $\beta>0$,  \eqref{GNpqbeta} (see Proposition $\ref{resolvent2}$)  implies $(N_{\tilde{p},p,\beta})$ (see Remark $\ref{generalised Nash}$).
\end{rem}


Now for the horizontal implications in the above diagram. On the top line, we have already noticed that both implications were obvious, and on the bottom line, that the first one was obvious. From the first column to the second one, we need not assume $(D_v)$ or \eqref{D2}. Again, $(M,\mu)$ need not be endowed with a metric. To formulate the localised inequalities $(LS_q^v)$ and  $(LN^v)$, one does need a metric $d$. Then the implication from $(K\!G\!N_q^v)$ to $(LS_q^v)$ is obvious if one assumes  \eqref{D2}.  To complete the above diagram, it remains to prove:

\begin{proposition} \label{KL} Let $(M,d,\mu)$  be a metric measure space, $L$ a  non-negative self-adjoint operator on $L^2(M,\mu)$ and  let $v:M\times \R_+\to \R_+$ satisfy $(A)$, $(D_v)$,
 and \eqref{D2}. Then
$(K\!N^v)$  implies $(LN^v_{2/\kappa_v})$,   where $\kappa_v$ is as in \eqref{dv}.
\end{proposition}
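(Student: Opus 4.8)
The plan is to obtain the localised inequality $(LN^v_{2/\kappa_v})$ by applying the Kigami inequality $(K\!N^v)$ to $f$ at a scale $s$ chosen in terms of the ball $B(x,r)$ and of $f$ itself, and then optimising over $s$.

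First I would fix a ball $B=B(x,r)$ and a nonzero $f\in\mathcal{F}_c(B)$; we may assume $\|f\|_1<\infty$ (otherwise the right-hand side of $(LN^v_{2/\kappa_v})$ is infinite) and $\mathcal{E}(f)<\infty$ since $f\in\mathcal{F}$. Every $z\in\text{supp}(f)\subseteq\overline{B}$ satisfies $d(x,z)\le r$, so for any $0<s\le r$, \eqref{dv} gives $v_s(z)\ge C^{-1}(s/r)^{\kappa_v}v_r(z)$ and \eqref{D2} gives $v_r(z)\ge C^{-1}v_r(x)$ (for a.e.\ $z$), whence
\[
\inf_{z\in\text{supp}(f)}v_s(z)\ \ge\ C^{-2}\left(\frac{s}{r}\right)^{\kappa_v}v_r(x).
\]
Inserting this bound into $(K\!N^v)$ applied to $f$ at scale $s$ yields, for every $0<s\le r$,
\[
\|f\|_2^2\ \le\ C\left(\left(\frac{r}{s}\right)^{\kappa_v}\frac{\|f\|_1^2}{v_r(x)}+s^2\mathcal{E}(f)\right).
\]

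Now set $\alpha=2/\kappa_v$, $A=\|f\|_1^2/v_r(x)$, $E=r^2\mathcal{E}(f)$ and $\lambda=s/r\in(0,1]$, so that the bound reads $\|f\|_2^2\le C(\lambda^{-\kappa_v}A+\lambda^2E)$. If $A\le E$, choose $\lambda=(A/E)^{1/(\kappa_v+2)}\le 1$: the two terms are then equal to $A^{2/(\kappa_v+2)}E^{\kappa_v/(\kappa_v+2)}$, so $\|f\|_2^2\le 2C\,A^{2/(\kappa_v+2)}E^{\kappa_v/(\kappa_v+2)}$. Raising this to the power $1+\alpha=(\kappa_v+2)/\kappa_v$ and using $\alpha\kappa_v=2$, the exponents collapse exactly, giving $\|f\|_2^{2(1+\alpha)}\le (2C)^{1+\alpha}A^{\alpha}E$, i.e.
\[
\|f\|_2^{2(1+\alpha)}\ \le\ (2C)^{1+\alpha}\,\frac{\|f\|_1^{2\alpha}}{v_r(x)^\alpha}\,r^2\mathcal{E}(f)\ \le\ (2C)^{1+\alpha}\,\frac{\|f\|_1^{2\alpha}}{v_r(x)^\alpha}\big(\|f\|_2^2+r^2\mathcal{E}(f)\big).
\]
If instead $A>E$ (in particular if $\mathcal{E}(f)=0$), choose $\lambda=1$: then $\|f\|_2^2\le C(A+E)\le 2CA$, and multiplying $\|f\|_2^{2\alpha}\le (2CA)^\alpha$ by $\|f\|_2^2$ gives $\|f\|_2^{2(1+\alpha)}\le (2C)^\alpha A^\alpha\|f\|_2^2=(2C)^\alpha v_r(x)^{-\alpha}\|f\|_1^{2\alpha}\|f\|_2^2$, which is again dominated by the right-hand side of $(LN^v_{2/\kappa_v})$. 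Combining the two cases proves the proposition.

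Since the argument is just an optimisation of the free scale parameter, there is no deep obstacle here; the point to be careful about is that the optimal scale may exceed $r$ — precisely when $\|f\|_1^2/v_r(x)$ dominates $r^2\mathcal{E}(f)$ — which forces the split into the two regimes above and explains why it is the combination $\|f\|_2^2+r^2\mathcal{E}(f)$, rather than $r^2\mathcal{E}(f)$ alone, that appears in $(LN^v_\alpha)$. A minor technical point, as elsewhere in this setting, is to read the infimum in $(K\!N^v)$ as an essential infimum, so that the lower bound for $v_s$ coming from \eqref{dv} (valid only for a.e.\ $z$) may legitimately be used.
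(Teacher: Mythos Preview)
Your proof is correct and follows essentially the same approach as the paper: both derive the key inequality $\|f\|_2^2\le C\big((r/s)^{\kappa_v}\|f\|_1^2/v_r(x)+s^2\mathcal{E}(f)\big)$ for $0<s\le r$ from $(K\!N^v)$, $(D_v)$ and \eqref{D2}, and then optimise over $s$. The only cosmetic difference is that the paper, instead of splitting into the two regimes $A\le E$ and $A>E$, adds the harmless term $(s/r)^2\|f\|_2^2$ to the right-hand side so as to obtain an inequality valid for all $s>0$, and then minimises directly; your case split achieves the same effect and is equally clean.
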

\begin{proof}
Write $(K\!N^v)$:
\begin{equation}\label{knvs}
\|f \|_{2}^2\leq
C\left(\frac{\|f\|_{1}^2}{\inf\limits_{z\in\text{supp}(f)}v_s(z)}+s^2\mathcal{E}(f)\right), \quad
\forall\, s>0, \,f\in \mathcal{F}.
\end{equation}
Now consider a ball  $B=B(x,r)$ and let $f \in \mathcal{F}_c(B)$.
Since $\text{supp}(f)\subset B$,  \eqref{D2} implies $$v_r(x)\le C\inf\limits_{z\in \text{supp}(f)}v_r(z),\ \forall\,r>0.$$
For $r\ge s$,   \eqref{dv}  implies $$\inf\limits_{z\in\text{supp}(f)}v_r(z)\le C\left(\frac{r}{s}\right)^{\kappa_v}\inf\limits_{z\in\text{supp}(f)}v_s(z).$$  Gathering these two estimates yields,
for $x\in M$, $r\ge s>0$, $f \in
\mathcal{F}_c(B(x,r)),$
$$\frac{1}{\inf\limits_{z\in\text{supp}(f)}v_s(z)}\le C' \frac{(r/s)^{\kappa_v}}{v_r(x)}.$$
Thus  \eqref{knvs}  implies
\begin{equation*}
\int_B|f|^{2}\,d\mu\le
C\left(\frac{(r/s)^{\kappa_v}}{v_r(x)}\left(\int_B |f|\,d\mu \right)^2
+s^2\mathcal{E}(f) \right)
\end{equation*}
if $r\ge s>0$. 
In order to obtain an inequality which is also valid for $s\ge r>0$, it enough to add a term $\frac{s^2}{r^2}\int_B | f|^2 \,d\mu$ in the RHS :
\begin{equation*}
\int_B|f|^{2}\,d\mu\le
C\left(\frac{(r/s)^{\kappa_v}}{v_r(x)}\left(\int_B |f|\,d\mu \right)^2
+s^2\left( \mathcal{E}(f)+r^{-2}\int_B | f|^2 \,d\mu \right)\right).
\end{equation*}
Now taking the infimum in $s>0$ yields
\begin{equation*}
\left(\int_B|f|^{2}\,d\mu\right)^{\frac{2}{\kappa_v}+1}\le
\frac{Cr^2}{v_r^{2/\kappa_v}(x)}\left(\int_B |f|\,d\mu \right)^{4/\kappa_v}
\left( \mathcal{E}(f)+r^{-2}\int_B | f|^2 \,d\mu \right),
\end{equation*}
 for all $x\in M$,
 $r>0$, $f \in
\mathcal{F}_c(B(x,r))$,
that is, $(LN^v_\alpha)$ with $\alpha=2/\kappa_v$.
\end{proof}

\bigskip

One may wonder why the implication from $(K\!G\!N_q^v)$ to $(LS_q^v)$ is direct, as we have seen already in Section \ref{FMR}, whereas the one from $(K\!N^v)$  to $(LN^v_\alpha)$ requires first the consideration of two different values $r$ and $s$ respectively for the radius of the ball and the parameter in the inequality, then  the use of $(D_v)$ and an optimisation. There are two answers to this question and both are interesting.

The first one is that we could perform a similar optimisation on $(K\!G\!N_q^v)$. With the notation of Section \ref{KLR}, assume $(RD_v)$ and write
\begin{equation*}
v_r^{1-\frac{2}{q}}(x)\left(w(r,s)\right)^{\kappa_v(\frac{2}{q}-1)}\left(\int_B|f|^{q}\,d\mu\right)^{\frac{2}{q}}\le
C\left(\int_B | f|^2 \,d\mu +s^2 \mathcal{E}(f)\right),
\end{equation*}
 for all $x\in M$,
 $r,s>0$, $f \in
\mathcal{F}_c(B)$, $B=B(x,r)$. Then choose 
$s$ such that $$s^2 \mathcal{E}(f)=\int_B | f|^2 \,d\mu.$$
One obtains a  formally stronger form of $(LS_q)$ which should be in fact equivalent to $(LS_q)$ by the methods of \cite{BCLS}. We leave the details to the reader.

The second answer is that one could also perform a similar optimisation already  at the level of
$(N^v)$ by writing
$$\|fv_{s}^{-1/2}\|_{1}^2+s^2\mathcal{E}(f)\le C\left(\left(\frac{r}{s}\right)^{\kappa_v}\|fv_{r}^{-1/2}\|_{1}^2+s^2\left(\mathcal{E}(f)+r^{-2}\|f\|_2^2\right)\right),$$ and improve this inequality into
\begin{equation*}\tag{$\bar{N}_\alpha^v$}\label{na}
\|f \|_{2}^{2\left(1+\alpha\right)}\le C\|fv_{r}^{-1/2} \|_{1}^{2\alpha}\left(\|f\|^2_2+r^2\mathcal{E}(f)\right), \quad
\forall\, r>0, \,f\in \mathcal{F}
\end{equation*}
for some $\alpha>0$ depending on the constant in $(V\!D)$.
The implication from the latter inequality  to $(LN^v_\alpha)$ is then obvious by restricting oneself to functions supported in balls and using \eqref{D2}.

\subsection{From local to global }\label{LG}

In this section we show that, in the setting of  a doubling measure space endowed with a strongly local and regular Dirichlet form and a proper distance, one can go back from the family of localised inequalities $(LN^v)$  (resp. $(LS_q^v)$)  to the global inequality $(N^v)$ (resp. $(GN_q^v)$).  We are grateful to Gilles Carron (\cite{Ca}) for this  observation.

Recall that in this framework, it is well-known that $(LN)$, or  $(LS_q)$,  implies $(DUE)$   (see \cite{ST}, or  \cite[Section 5.2]{SA}). Together with Proposition \ref{dg}, the results in the present section therefore give a short-cut  to Theorem \ref{mainDG} in  the case $v=V$. Remember however that one of our main goals is precisely to give an alternative and more general approach to the  above equivalences from \cite{ST} or \cite{SA}.

Indeed, later in Section \ref{DGE},  we will see, in a slightly more general setting, first that  the strongest global inequality $(GN_q^v)$ is equivalent to $(DU\!E^v)$, second that 
the weakest of the local inequalities, namely  $(LN^v)$, implies back $(GN_q^v)$.  In particular, the local and the global inequalities are all equivalent. The current section is nevertheless important for clarity, since we will see this equivalence directly without going through the machinery of Section \ref{DGE}.


We will use the setting introduced for instance in \cite[2.2]{GS} (for more information see also  \cite{ST1, ST, HR, AH}, and \cite[Section 3]{SF}). We shall only recall the basic notions and notations. 
Let $(M,\mu)$  be  a  locally compact separable  measure space endowed with a Borel measure $\mu$ which is finite on compact sets and strictly positive on non-empty open sets. Let $L$ be a non-negative self-adjoint operator
on $L^2(M,\mu)$ and $\mathcal{E}$ the associated quadratic form with domain $\mathcal{F}$. 
Assume that $\mathcal{E}$ is a strongly local and regular Dirichlet form (see \cite{FOT} for definitions).  In particular, $(e^{-tL})_{t>0}$ is a submarkovian semigroup,
that is
$0\le e^{-tL}f\le 1$ if  $0\le f\le 1$.
Let $d\Gamma$ be the energy measure associated with $\mathcal{E}$,
that is
$$\mathcal{E}(f,g)=\int_M d\Gamma(f,g)$$
for all $f,g\in\mathcal{F}$.


Then $d\Gamma$ satisfies a Leibniz rule (see \cite[Lemma 3.2.5]{FOT} or \cite{AB}, p.?), which yields the following inequality between measures
\begin{equation}\label{leib}
d\Gamma(fg,fg)\le 2\left(f^2{d\Gamma}(g,g)+g^2{d\Gamma}(f,f)\right)
\end{equation}
for all $f,g\in\mathcal{F}\cap L^\infty(M,\mu)$, or rather their quasi-continuous versions (see for instance \cite[Lemma 2.5]{GS}).

Define now the intrinsic quasi-metric:
 \begin{equation}\label{defd}
d(x,y)=\sup\{f(x)-f(y) ; f\in \mathcal{F}\cap \mathcal{C}_0(M) \mbox{ s.t. }d\Gamma(f,f)\le d\mu\}.
\end{equation}
Here $\mathcal{C}_0(M)$ denotes the space of continuous functions on $M$ which vanish at infinity. 
 Assume that $d$ is finite everywhere, separates points, and defines the original topology on $M$; assume also that the metric space $(M,d)$ is complete.
 According to \cite[Lemma 1']{ST1} (see also \cite[Theorem 2.11]{GS}), $\forall\,x\in M$, $d_x\in \mathcal{F}_{loc}$, where $d_x(y):=d(x,y)$, (see \cite[Definition 2.3]{GS} for a precise definition of  $\mathcal{F}_{loc}$) and
$$d\Gamma(d_x,d_x)\le d\mu.$$ In fact, definition \eqref{defd} is not essential, as long as one has the latter properties.
In other words, we could consider any distance $d$ on $(M,\mu)$ defining the original topology, such that $(M,d)$ is complete  and
\begin{equation}\label{dede}
d_x\in \mathcal{F}_{loc}, \ d\Gamma(d_x,d_x)\le d\mu,\ \forall\,x\in M.
\end{equation}
It follows that the balls in $M$ are relatively compact (see \cite[footnote 4, p. 1215]{GT}.

In the sequel, let us say that $(M,d,\mu,L)$ satisfies $(H)$ if $(M,d,\mu)$  is  a  locally compact separable and complete metric  measure space endowed with a Borel measure $\mu$ which is finite on compact sets and strictly positive on non-empty open sets, $L$ is a non-negative self-adjoint operator
on $L^2(M,\mu)$ and the associated quadratic form  $\mathcal{E}$ with domain $\mathcal{F}$ is Dirichlet, strongly local and regular, and if  $d$ satisfies
\eqref{dede}, where $d\Gamma$ is the energy measure associated with $\mathcal{E}$.

\begin{lemma}\label{su} Assume that $(M,d,\mu,L)$ satisfies $(H)$. For $x\in M$ and  $r>0$, define $$\rho(y)=\rho^{r,\epsilon}_x(y):=\left(1-\frac{d(y,B(x,r-2\epsilon))}{\epsilon}\right)_+.$$
Then for all $g\in\mathcal{F}$, $g\rho\in \mathcal{F}_c(B(x,r))$ and
\begin{equation*}
\mathcal{E}(g\rho)
\le \frac{2}{\epsilon^2}\int_{B(x,r)}g^2\,d\mu+2\int_{B(x,r)}{d\Gamma}(g,g).
\end{equation*}
\end{lemma}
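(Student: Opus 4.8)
The plan is to exhibit $\rho=\rho^{r,\epsilon}_x$ as a Lipschitz cutoff that is $1$ on $B(x,r-2\epsilon)$, supported in $B(x,r)$, and use the energy-measure calculus to control $\mathcal{E}(g\rho)$. First I would check that $\rho\in\mathcal{F}_{loc}\cap L^\infty$ with $0\le\rho\le 1$: writing $\rho(y)=\bigl(1-\epsilon^{-1}d(y,B(x,r-2\epsilon))\bigr)_+$, the function $y\mapsto d(y,B(x,r-2\epsilon))$ is $1$-Lipschitz (it is an infimum of the functions $d_z$, each satisfying \eqref{dede}), hence belongs to $\mathcal{F}_{loc}$ and satisfies $d\Gamma\bigl(d(\cdot,B(x,r-2\epsilon)),d(\cdot,B(x,r-2\epsilon))\bigr)\le d\mu$; composing with the $1/\epsilon$-Lipschitz map $t\mapsto(1-t/\epsilon)_+$ and invoking the chain rule for $d\Gamma$ gives $\rho\in\mathcal{F}_{loc}\cap L^\infty$ with $d\Gamma(\rho,\rho)\le \epsilon^{-2}\,\mathbf{1}_{B(x,r)}\,d\mu$, using that $\rho$ vanishes outside $\{d(\cdot,B(x,r-2\epsilon))<\epsilon\}\subset B(x,r)$.

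Next I would verify that $g\rho\in\mathcal{F}_c(B(x,r))$ for $g\in\mathcal{F}$. Since $\rho$ is supported in the relatively compact set $\overline{B(x,r)}$ (balls are relatively compact here), $g\rho$ is compactly supported in $B(x,r)$; and since $\rho$ is bounded Lipschitz with compact support while $g\in\mathcal{F}$, the product $g\rho$ lies in $\mathcal{F}$ — this is the standard fact that $\mathcal{F}\cap L^\infty$ is an algebra and that multiplication by a bounded function in $\mathcal{F}_{loc}$ with compact support preserves $\mathcal{F}$. One may need to pass to quasi-continuous versions to apply the Leibniz-type inequality \eqref{leib}.

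Then comes the energy estimate. Apply \eqref{leib} (or rather its form for $fg$ with $f=g$, $g=\rho$, bearing in mind the notational clash) to get, as measures on $M$,
\begin{equation*}
d\Gamma(g\rho,g\rho)\le 2\bigl(g^2\,d\Gamma(\rho,\rho)+\rho^2\,d\Gamma(g,g)\bigr).
\end{equation*}
Integrating and using $0\le\rho\le 1$, $d\Gamma(\rho,\rho)\le\epsilon^{-2}\mathbf{1}_{B(x,r)}d\mu$, and the fact that $d\Gamma(\rho,\rho)$ and $\rho^2$ are both supported in $B(x,r)$,
\begin{equation*}
\mathcal{E}(g\rho)=\int_M d\Gamma(g\rho,g\rho)\le \frac{2}{\epsilon^2}\int_{B(x,r)}g^2\,d\mu+2\int_{B(x,r)}d\Gamma(g,g),
\end{equation*}
which is the claimed inequality. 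The main obstacle I anticipate is purely technical rather than conceptual: justifying the membership $g\rho\in\mathcal{F}$ and the validity of the Leibniz inequality for the (possibly only locally-defined, possibly unbounded) function $g$ — this requires a truncation argument ($g_n=(g\wedge n)\vee(-n)$), the locality of $\mathcal{E}$, and care with quasi-continuous representatives, exactly the kind of bookkeeping done in \cite[Lemma 2.5]{GS}. Once that is in place, the estimate itself is immediate from \eqref{leib}.
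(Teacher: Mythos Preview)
Your proposal is correct and follows essentially the same approach as the paper: establish that $\rho$ is a compactly supported cutoff in $\mathcal{F}$ with $d\Gamma(\rho,\rho)\le \epsilon^{-2}\chi_{B(x,r)}\,d\mu$ (the paper cites \cite[Lemma 1.9]{ST} and \cite[Theorem 2.11]{GS} for this, while you spell out the chain rule argument), then apply the Leibniz inequality \eqref{leib} and integrate. You are in fact slightly more careful than the paper in flagging the truncation needed when $g\notin L^\infty$, and you correctly identify that the support of $\rho$ lies in $\overline{B(x,r-\epsilon)}\subset B(x,r)$, which is what makes $g\rho\in\mathcal{F}_c(B(x,r))$.
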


\begin{proof} Let us first observe that $\rho$ is supported in $\overline{B(x,r-\epsilon)}$ and that $\rho\equiv1$ on $B(x,r-2\epsilon)$.
According to \cite[Lemma 1.9]{ST} (see also \cite[Theorem 2.11]{GS}), $\rho\in \mathcal{F}$ and
$$d\Gamma(\rho,\rho)\le  \frac{1}{\epsilon^2}d\mu,$$
and in fact, due to the local character of $\mathcal{E}$ (see \cite[Corollary 3.2.1, p.115]{FOT}),
$$d\Gamma(\rho,\rho)\le  \frac{1}{\epsilon^2}\chi_{B(x,r)}d\mu.$$ 
 
 Using \eqref{leib}, $g\rho\in\mathcal{F}$ and
  \begin{eqnarray*}
\mathcal{E}(g\rho)=\int_M d\Gamma(g\rho,g\rho)&\le& 2\left(\int_Mg^2{d\Gamma}(\rho,\rho)+\int_M\rho^2{d\Gamma}(g,g)\right)\\
&\le& 2\left(\frac{1}{\epsilon^2}\int_{B(x,r)} g^2\,d\mu+\int_{B(x,r)}{d\Gamma}(g,g)\right).\end{eqnarray*}

\end{proof}

\begin{proposition} \label{Carron} Assume that $(M,d,\mu,L)$ satisfies $(H)$ and   $(V\!D)$ and that $v$ satisfies \eqref{D2}.
Then conditions  $(N^v)$,  $(K\!N^v)$, and $(LN^v)$ are equivalent.
\end{proposition}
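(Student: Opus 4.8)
\textbf{Proof plan for Proposition \ref{Carron}.}

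The plan is to prove the chain $(N^v)\Rightarrow(K\!N^v)\Rightarrow(LN^v)\Rightarrow(N^v)$, of which the first two implications are already available: $(N^v)\Rightarrow(K\!N^v)$ is trivial (since the infimum over the support of $v_r$ only makes the constant in $(N^v)$ smaller), and $(K\!N^v)\Rightarrow(LN^v_{2/\kappa_v})$ is exactly Proposition \ref{KL}, using $(D_v)$ (which follows from $(V\!D)$-type assumptions on $v$, namely $(D_v)$) and \eqref{D2}. So the only thing left is the implication $(LN^v)\Rightarrow(N^v)$, i.e.\ passing from a family of inequalities indexed by balls back to a single global inequality; this is the step attributed to Carron and is where all the work lies.

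To carry out $(LN^v)\Rightarrow(N^v)$, I would fix $f\in\mathcal{F}$ and $r>0$ and use the bounded covering principle $(BC\!P)$ to pick a family of points $x_i$ with $d(x_i,x_j)>r$, $\sup_x\inf_i d(x,x_i)\le r$, so that the balls $B_i:=B(x_i,2r)$ cover $M$ with bounded overlap $K_0$, and the balls $B(x_i,r/2)$ are disjoint. Next I would introduce the cutoffs $\rho_i:=\rho^{2r,\epsilon}_{x_i}$ from Lemma \ref{su} with, say, $\epsilon=r/2$ (or a partition-of-unity-like normalisation built from them), so that $f\rho_i\in\mathcal{F}_c(B(x_i,2r))$, and apply $(LN^v)$ to each $f\rho_i$ on the ball $B(x_i,2r)$:
\begin{equation*}
\|f\rho_i\|_2^{2(1+\alpha)}\le \frac{C}{v_{2r}^\alpha(x_i)}\|f\rho_i\|_1^{2\alpha}\left(\|f\rho_i\|_2^2+r^2\mathcal{E}(f\rho_i)\right).
\end{equation*}
Rewriting this as $\|f\rho_i\|_2^2\le C v_{2r}^{-1}(x_i)\|f\rho_i\|_1^2 + C r^2 v_{2r}^{-1/\alpha}(x_i)\|f\rho_i\|_1^{2}{}^{?}$ is awkward, so instead I would use the elementary consequence of $(LN^v_\alpha)$ in the form
\begin{equation*}
\|g\|_2^2\le \frac{C}{v_r(x)}\|g\|_1^2 + C'\,r^2\mathcal{E}(g)
\end{equation*}
valid for $g\in\mathcal{F}_c(B(x,r))$ (obtained from $(LN^v_\alpha)$ by Young's inequality $ab\le \delta a^p+C_\delta b^{p'}$ with $p=1+\alpha$, exactly as in the proof of Proposition \ref{gn}), applied to $g=f\rho_i$ on $B(x_i,2r)$. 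Then by Lemma \ref{su}, $\mathcal{E}(f\rho_i)\le \frac{C}{r^2}\int_{B(x_i,2r)}f^2\,d\mu + C\int_{B(x_i,2r)}d\Gamma(f,f)$; summing over $i$, using $\sum_i \chi_{B(x_i,2r)}\le K_0$ for the energy and the $f^2$ terms, using \eqref{D2} to replace $v_{2r}(x_i)$ by (a constant times) $v_{2r}(z)$ for $z$ in the support and hence to control $\sum_i v_{2r}^{-1}(x_i)\|f\rho_i\|_1^2$ by $C\big(\int_M |f| v_{2r}^{-1/2}\,d\mu\big)^2$ — here one has to be a little careful: $\|f\rho_i\|_1=\int_{B_i}|f|\rho_i\le \int_{B_i}|f|$, and $v_{2r}^{-1}(x_i)\int_{B_i}|f| \le C\int_{B_i}|f|v_{2r}^{-1/2}(\cdot)\,d\mu\cdot \sup_{B_i}v_{2r}^{-1/2}$, which again by \eqref{D2} and doubling is $\le C\big(\int_{B_i}|f|v_{2r}^{-1/2}\,d\mu\big)\cdot\big(\text{something}\big)$ — and then summing with bounded overlap. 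Finally $\sum_i\|f\rho_i\|_2^2\ge \sum_i\int_{B(x_i,r/2)}f^2 \ge c\|f\|_2^2$ since the $B(x_i,r/2)$ are disjoint and cover... actually they need not cover, but $\sum_i\rho_i^2\ge 1$ pointwise can be arranged, or one observes $\bigcup_i B(x_i,r)=M$ and uses $\rho_i\equiv 1$ on $B(x_i,r)$ so $\sum_i f^2\rho_i^2\ge f^2$; modulo replacing $r$ by a fixed multiple this gives $\|f\|_2^2\le C\big(\|fv_r^{-1/2}\|_1^2+r^2\mathcal{E}(f)\big)$, which is $(N^v)$ up to rescaling $r$ by a harmless constant factor (allowed since $v$ is doubling).

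\textbf{Main obstacle.} The genuinely delicate point is the bookkeeping of the multiplicative constants coming from $(D_v)$, \eqref{D2} and the doubling of $V$ when summing the local $L^1$ terms $v_{2r}^{-1}(x_i)\|f\rho_i\|_1^2$ back into the single global term $\|fv_r^{-1/2}\|_1^2$: one must pass from a pointwise-in-$x_i$ weight to an integrated weight and control the interaction between the $\ell^1$ (in $i$) structure of the $L^1$ norms and the $\ell^2$ structure one needs on the left, using that $\|f\rho_i\|_1^2\le \big(\int_{B_i}|f|\big)^2$ together with Cauchy--Schwarz against $v_{2r}^{1/2}$ on $B_i$, and the bounded overlap. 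Everything else — the cutoff estimate (Lemma \ref{su}), the covering (BCP), and the Young-inequality reduction of $(LN^v_\alpha)$ to the additive form — is routine and already in place.
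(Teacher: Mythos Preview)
Your overall strategy matches the paper's: the only nontrivial implication is $(LN^v)\Rightarrow(N^v)$, and one proves it by a partition-of-unity argument using $(BC\!P)$ and the cutoff estimate of Lemma~\ref{su}. But there is one genuine gap and one unnecessary complication.

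\textbf{The gap: you lose the absorption parameter.} Your ``additive form'' $\|g\|_2^2\le \frac{C}{v_r(x)}\|g\|_1^2 + C' r^2\mathcal{E}(g)$ with \emph{fixed} $C,C'$ does not suffice. When you sum $\sum_i C' r^2\mathcal{E}(f\rho_i)$ and invoke Lemma~\ref{su}, you produce a term $C' K_0'\|f\|_2^2$ on the right-hand side with a constant you do not control; if it is $\ge 1$ the argument collapses. The paper keeps the Young-inequality parameter free:
\[
\|g\|_2^2\le C\varepsilon^{-1/\alpha}\|gv_r^{-1/2}\|_1^2+\varepsilon\bigl(r^2\mathcal{E}(g)+\|g\|_2^2\bigr),
\]
sums over $i$, and only then chooses $\varepsilon$ small (of order $1/K_0$) to absorb the resulting $\|f\|_2^2$ term. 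This is a minor fix, but essential.

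\textbf{The $L^1$ summation is simpler than you think.} What you flag as the ``main obstacle'' is handled in two lines. First, use \eqref{D2} \emph{inside the local inequality} to replace $v_r^{-1}(x)\|g\|_1^2$ by $C\|g\,v_r^{-1/2}\|_1^2$ for $g\in\mathcal{F}_c(B(x,r))$, so the weight is already integrated before you sum. Then, with $a_i:=\|f\rho_i\,v_r^{-1/2}\|_1\ge 0$,
\[
\sum_i a_i^2 \le \Bigl(\sum_i a_i\Bigr)^2
= \Bigl(\int_M |f|\,v_r^{-1/2}\sum_i\rho_i\,d\mu\Bigr)^2
\le K_0^2\,\|f\,v_r^{-1/2}\|_1^2,
\]
using only bounded overlap $\sum_i\rho_i\le K_0$. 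No Cauchy--Schwarz against $v_r^{1/2}$ is needed; the trivial inequality $\sum a_i^2\le(\sum a_i)^2$ does all the work.
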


\begin{proof} 
Given the considerations in Section \ref{GNN}, it only remains to prove that $(LN^v)$ implies $(N^v)$.
Assume  $(LN^v)$, that is
$$
\|f \|_{2}^{2(1+\alpha)}\le \frac{C}{v_r^\alpha(x)}\|f \|_{1}^{2\alpha}\left( r^2\mathcal{E}(f)+\|f\|_2^2\right),
$$
for every ball $B=B(x,r)$, for every $f\in \mathcal{F}_c(B)$,  and for some $\alpha, C>0$.
Using \eqref{D2}, this can be rewritten as
$$
\|f \|_{2}^{2(1+\alpha)}\le C\left\|f v_r^{-1/2}\right\|_{1}^{2\alpha}\left(r^2\mathcal{E}(f)+\|f\|_2^2\right),
$$
hence, for all $\varepsilon>0$, 
\begin{equation}\label{fini}
\|f \|_{2}^{2}\le C\varepsilon^{-1/\alpha}\left\|f v_r^{-1/2}\right\|_{1}^{2}+\varepsilon \left(r^2\mathcal{E}(f)+\|f\|_2^2\right).
\end{equation}
Invoking $(BC\!P)$,  consider a  covering of $M$ by balls $B(x_i,r/2)$, $i\in I$,
such that the balls $B(x_i,r/4)$ are pairwise disjoint. Recall that $K(x)=\#\{i\in I,\,x\in B(x_i,r)\}\le K_0$, where $K_0$  only depends on the constant in $(V\!D)$.
Define  cut-off functions $\rho_i$ by
$$\rho_i(x):=\left(1-\frac{4d(x,B(x_i,r/2))}{r}\right)_+,$$ that is, in the notation of Lemma \ref{su},  $\rho_i=\rho^{r,r/4}_{x_i}$. Let $g\in \mathcal{F}$. Since  $\rho_i\equiv 1$ on $B(x_i,r/2)$, one can write
$$
\|g\|_{2}^{2}\le \sum_i \|g\rho_i\|_{2}^{2}.$$
Since $g\rho_i\in\mathcal{F}_c(B(x_i,r))$ by Lemma \ref{su}, one can apply \eqref{fini} to each $g\rho_i$ in $B(x_i,r)$. 
It follows 
\begin{equation}\label{fofo}
\|g\|_{2}^{2}\le C\varepsilon^{-1/\alpha}\sum_i\left\|g\rho_i v_r^{-1/2}\right\|_{1}^{2}+\varepsilon r^2\sum_i\mathcal{E}(g\rho_i)+\varepsilon\sum_i\|g\rho_i\|_2^2.
\end{equation}
Now 
\begin{equation}\label{nono}
\sum_i\left\|g\rho_i v_r^{-1/2}\right\|_{1}^{2}\le \left(\sum_i\left\|g\rho_i v_r^{-1/2}\right\|_{1}\right)^{2}\le K_0^2 \left\|gv_r^{-1/2}\right\|_{1}^2.
\end{equation}
 On the other hand, by Lemma \ref{su}, $g\rho_i\in\mathcal{F}$ and 
\begin{equation*}
\mathcal{E}(g\rho_i)
\le \frac{32}{r^2}\int_{B(x_i,r)}g^2\,d\mu+2\int_{B(x_i,r)}d\Gamma(g,g),
\end{equation*}
hence
\begin{equation}\label{eg}
\sum_i\mathcal{E}(g\rho_i)\le \frac{32K_0}{r^2} \|g\|_2^2+2K_0\mathcal{E}(g),
\end{equation}
and  finally
\begin{equation}\label{fc}
\sum_i\|g\rho_i\|_2^2\le  K_0\|g\|_2^2.
\end{equation}
Gathering  \eqref{fofo},  \eqref{nono}, \eqref{eg} and \eqref{fc}, one obtains
$$
\|g\|_{2}^{2}\le  C\varepsilon^{-1/\alpha}K_0^2 \left\|g v_r^{-1/2}\right\|_{1}^2+33K_0\varepsilon \|g\|_2^2+2K_0\varepsilon r^2\mathcal{E}(g).
$$
Choosing $\varepsilon=\frac{1}{66K_0}$ yields $(N^v)$.

\end{proof}

\begin{proposition} \label{Carronbis} Assume that $(M,d,\mu,L)$ satisfies $(H)$  and $(V\!D)$ and that $v$ satisfies \eqref{D2}.
For all $q>2$, conditions $(GN_q^v)$, $(K\!G\!N_q^v)$, and $(LS_q^v)$ are equivalent.
\end{proposition}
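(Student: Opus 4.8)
The plan is to observe that, by the results of Section \ref{GNN}, the implications $(GN_q^v)\Rightarrow(K\!G\!N_q^v)\Rightarrow(LS_q^v)$ are already available (the first is trivial, the second follows from \eqref{D2} by restricting to functions supported in $B(x,r)$), so that only $(LS_q^v)\Rightarrow(GN_q^v)$ remains. I would prove this by running the argument of Proposition \ref{Carron} with $L^q$ norms in place of $L^2$ norms: a localisation step using \eqref{D2}, followed by a globalisation step based on $(BC\!P)$ and the cut-off functions of Lemma \ref{su}. I will treat $q\in(2,+\infty)$; the case $q=+\infty$ is identical, with suprema replacing sums.

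For the localisation step, fix $r>0$, let $B=B(y_0,r)$ be any ball, and let $f\in\mathcal{F}_c(B)$. Raising $(LS_q^v)$ to the power $q/2$ and using \eqref{D2} — which, since $q/2-1>0$, gives $v_r^{q/2-1}(z)\le C\,v_r^{q/2-1}(y_0)$ for $z\in B$ — to move the centre weight inside the $L^q$ norm, one obtains the local inequality
\[
\|fv_r^{1/2-1/q}\|_q^q\le C\bigl(\|f\|_2^2+r^2\mathcal{E}(f)\bigr)^{q/2}\le C'\bigl(\|f\|_2^q+r^q\mathcal{E}(f)^{q/2}\bigr),\qquad f\in\mathcal{F}_c(B).
\]

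For the globalisation step, apply $(BC\!P)$ at scale $r/2$ to obtain points $x_i$ with $\{B(x_i,r/2)\}$ covering $M$, $\{B(x_i,r/4)\}$ pairwise disjoint, and $\#\{i:y\in B(x_i,r)\}\le K_0$, and take $\rho_i:=\rho^{r,r/4}_{x_i}$ as in Lemma \ref{su}, so $\rho_i\equiv1$ on $B(x_i,r/2)$, $\text{supp}\,\rho_i\subset B(x_i,r)$, and $g\rho_i\in\mathcal{F}_c(B(x_i,r))$ with $\mathcal{E}(g\rho_i)\le 32r^{-2}\int_{B(x_i,r)}g^2\,d\mu+2\int_{B(x_i,r)}d\Gamma(g,g)$. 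Since $\sum_i\rho_i^q\ge1$ pointwise (each $y$ lies in some $B(x_i,r/2)$), summing the local inequality applied to each $g\rho_i$ gives
\[
\|gv_r^{1/2-1/q}\|_q^q\le C'\sum_i\|g\rho_i\|_2^q+C'r^q\sum_i\mathcal{E}(g\rho_i)^{q/2}.
\]
I would then invoke the elementary inclusion $\ell^1\hookrightarrow\ell^{q/2}$ (valid because $q>2$), i.e. $\sum_i a_i^{q/2}\le(\sum_i a_i)^{q/2}$ for $a_i\ge0$, together with the bounded-overlap estimates $\sum_i\|g\rho_i\|_2^2\le K_0\|g\|_2^2$ and, from Lemma \ref{su}, $\sum_i\mathcal{E}(g\rho_i)\le 32K_0r^{-2}\|g\|_2^2+2K_0\mathcal{E}(g)$, to conclude $\|gv_r^{1/2-1/q}\|_q^q\le C(\|g\|_2^q+r^q\mathcal{E}(g)^{q/2})$ for all $g\in\mathcal{F}$. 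Raising to the power $2/q<1$ and using subadditivity of $t\mapsto t^{2/q}$ then produces exactly $(GN_q^v)$.

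I do not anticipate a real obstacle: the scheme is the one of Proposition \ref{Carron}, and, in contrast with the Nash case, no $\varepsilon$-absorption is needed. The only point deserving a little care is bookkeeping the exponents $q/2$ and $2/q$ through the covering argument — in particular using the $\ell^1\hookrightarrow\ell^{q/2}$ step in the correct direction and checking that both the $(a+b)^{q/2}$ and the $(a+b)^{2/q}$ manipulations go the right way, all of which hinge on $q>2$.
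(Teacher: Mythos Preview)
Your proposal is correct and follows essentially the same approach as the paper: reduce to $(LS_q^v)\Rightarrow(GN_q^v)$, use \eqref{D2} to rewrite the localised inequality with the weight inside the $L^q$ norm, then globalise via $(BC\!P)$, the cut-off functions $\rho_i$ of Lemma~\ref{su}, and the embedding $\ell^1\hookrightarrow\ell^{q/2}$ together with the bounded-overlap estimates \eqref{eg} and \eqref{fc}. The only cosmetic difference is that you split $(\|f\|_2^2+r^2\mathcal{E}(f))^{q/2}$ into $\|f\|_2^q+r^q\mathcal{E}(f)^{q/2}$ before summing, whereas the paper keeps the expression together; both routes are equivalent.
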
 

\begin{proof} 
It only remains to prove that $(LS_q^v)$ implies $(GN_q^v)$.
Assume  \eqref{Sq}, that is
$$
\|f \|_{q}^2\le \frac{C}{v_r^{1-\frac{2}{q}}(x)}\left(\|f\|^2_2+r^2\mathcal{E}(f)\right),
$$
for every ball $B=B(x,r)$, for every $f\in \mathcal{F}_c(B)$,  and for some $C>0$.
Using \eqref{D2}, this can be rewritten as
\begin{equation}\label{finil}
\|v_r^{\frac{1}{2}-\frac{1}{q}}f \|_{q}^{2}\le C\left(\|f\|^2_2+r^2\mathcal{E}(f)\right).
\end{equation}
Consider the $x_i$ and $\rho_i$ as before.
Let $g\in \mathcal{F}$.
Write
$$
\|v_r^{\frac{1}{2}-\frac{1}{q}}g\|_{q}^{q}\le \sum_i \|v_r^{\frac{1}{2}-\frac{1}{q}}g\rho_i\|_{q}^{q}$$
and apply \eqref{finil} to each $g\rho_i$ in $B(x_i,r)$.
It follows 
$$\|v_r^{\frac{1}{2}-\frac{1}{q}}g\|_{q}^{q}\le C\left(\sum_i\|g\rho_i\|^2_2+r^2\sum_i\mathcal{E}(g\rho_i)\right), 
$$
and one concludes by using \eqref{fc} and \eqref{eg}.

\end{proof}

A similar argument as in Propositions \ref{Carron} and \ref{Carronbis}  can also be applied if one substracts from an operator $L$ satisfying the above assumptions
a strongly  positive potential $\mathcal{V}$. 
Let $(M,\mu,L)$ be  as above, $\mathcal{E}$ the associated Dirichlet form, and let $\mathcal{V}$ be a positive function on $M$.  Following \cite{CZ}, we shall say that 
$L-\mathcal{V}$ is strongly positive (or strongly subcritical in the terminology of \cite{DS}) if there exists $0<\epsilon <1$
such that
\begin{equation}\label{av}
(1-\epsilon)\mathcal{E}(f) \ge  \|\mathcal{V}^{1/2}f\|_2^2.
\end{equation}
It follows that $L-\mathcal{V}$ is well-defined as an operator with dense domain on $L^2(M,\mu)$.
Indeed, according to \eqref{av}, the quadratic form 
$$\mathcal{E}_{\mathcal{V}}(f) :=\langle Lf-\mathcal{V}f,f   \rangle=\mathcal{E}(f) - \|\mathcal{V}^{1/2}f\|_2^2$$
satisfies
\begin{equation}\label{quad}
\epsilon\mathcal{E}\le \mathcal{E}_{\mathcal{V}}\le \mathcal{E}
\end{equation}
and is defined on the domain $\mathcal{F}$ of $\mathcal{E}$.

The  semigroup generated by $L-\mathcal{V}$  is not 
necessarily submarkovian 
and possibly does  not act on the whole range of $L^p$ spaces. As a matter of fact,  $\mathcal{E}_{\mathcal{V}}$ is no more a Dirichlet form in general and even when it is one, it is not strongly local but only local. In any case,   we cannot apply directly Propositions   \ref{Carron} and \ref{Carronbis}.
However, one can consider again the family $\{\rho_i\}_{i=1}^\infty$ 
introduced in the proof of Proposition \ref{Carron}.
Then
\begin{eqnarray*}
\sum_i\mathcal{E}_{\mathcal{V}}(g\rho_i)&=& \sum_i \mathcal{E}(g\rho_i) - \sum_i \|\mathcal{V}^{1/2}g\rho_i\|_2^2\\&\le&
\sum_i \mathcal{E}(g\rho_i) \\&\le& 
\frac{32K_0}{r^2} \|g\|_2^2+2K_0\mathcal{E}(g)\\
&\le& \frac{32K_0}{r^2} \|g\|_2^2+\frac{2}{\epsilon}K_0\mathcal{E}_\mathcal{V}(g).
\end{eqnarray*}
The one before last inequality is \eqref{eg} and the last one follows from the first inequality in \eqref{quad}.
Again the same argument as above can then be used to show that  conditions $(GN_q^v)$, $(K\!G\!N_q^v)$, and $(LS_q^v)$ associated with $\mathcal{E}_{\mathcal{V}}$ are equivalent.

\medskip 

A  cousin of inequality $(LS_q)$ was introduced in \cite[Proposition 2.1 (ii)]{AO} in the setting of a metric measure space $(M,d,\mu)$ endowed with a non-negative self-adjoint operator $L$, namely
\begin{equation}\label{OA}
\|\chi_{B(x,r)}f \|_{q}^2\le \frac{C}{V_r^{1-\frac{2}{q}}(x)}\left(\|f\|^2_2+r^2\mathcal{E}(f)\right),
\end{equation}
for all $x\in M$, $r>0$, $f\in \mathcal{F}$. It is shown there that \eqref{OA} implies a localised version of $(V\!E_{2,q})$.
Note that  restricting \eqref{OA} to $\mathcal{F}_c(B(x,r))$ yields $(LS_q)$. Now Proposition \ref{Carronbis} says that if $\mathcal{E}$ is strongly local and regular and $(M,d,\mu)$ satisfies $(V\!D)$, $(LS_q)$ implies $(GN_q)$ which implies the full $(V\!E_{2,q})$ by Proposition \ref{equivalence}. This yields an improvement of \cite[Proposition 2.1]{AO} in that situation, as well as an extension to the case where $v\not=V$.

\bigskip

\subsection{Nash and Faber-Krahn}\label{R}

In the setting of Riemannian manifolds, $(DU\!E)$ has been characterised by Grigor'yan in \cite{G2} in terms of relative Faber-Krahn inequalities. These methods also work in the setting of strongly local and  regular Dirichlet forms as in Section \ref{LG}.
Our aim in this section is to extend this characterisation to $(DU\!E^v)$ for $v\neq V$. More precisely, we are going to make the connection between
suitable versions of relative Faber-Krahn inequalities and localised $v$-Nash inequalities.  Together with  Proposition \ref{newmainDG} below, this will establish the connection between $(DU\!E^v)$ and these relative Faber-Krahn inequalities.  For any open subset $\Omega$ of a topological measure space $M$ endowed with a closed non-negative quadratic form $\mathcal{E}$, set
\begin{equation}\label{Barlow}
\lambda_1(\Omega):= \inf\left\{\frac{\mathcal{E}(f)}{\|f
\|_{2}^{2}},\,f\in \mathcal{F}_c(\Omega)\,\text{and}\,f\neq 0
\right\}.
\end{equation}
Of course one could replace $\mathcal{F}_c(\Omega)$ with its closure in $\mathcal{F}$ for the norm
$\mathcal{E}(f)+\|f
\|_{2}^{2}$ without changing anything. This definition is the one used for instance in \cite{AB}.
More interestingly, if $(M,d,\mu,L)$ satisfies $(H)$, the above definition is also equivalent to the one in \cite{GH}, namely
\begin{equation}\label{GrigHu}
\lambda_1(\Omega):= \inf\left\{\frac{\mathcal{E}(f)}{\|f
\|_{2}^{2}},\,f\in\mathcal{F}\cap C_c(\Omega)\,\text{and}\,f\neq 0
\right\},
\end{equation}
where  $C_c(\Omega)$ is the space of continuous functions that are compactly supported in $\Omega$ (see Lemma \ref{cnrs} below).
As above, one can replace $\mathcal{F}\cap C_c(\Omega)$ with its closure in $\mathcal{F}$.

The classical Faber-Krahn theorem says that,
for any open set $\Omega$ of $\mathbb{R}^{n}$, we have
\begin{equation*}\label{classical}
\lambda_1(\Omega)\geq c_{n}\mu(\Omega)^{-2/n}.
\end{equation*}
We shall say that $M$ admits the
relative $v$-Faber-Krahn inequality if, for any ball $B(x,r)\subset M$
and any relatively compact open set $\Omega\subset B(x,r)$:
\begin{equation*}\label{fk}
\tag{$FK^v_\alpha$} \lambda_1(\Omega)\geq
\frac{c}{r^{2}}\left(\frac{v_r(x)}{\mu(\Omega)}\right)^{\alpha},
\end{equation*}
where $c$ and $\alpha$  are some positive constants. As usual, we abbreviate $(FK^V_\alpha)$ into $(FK_\alpha)$, $(FK^v)$ means $(FK^v_\alpha)$ for some $\alpha>0$, and  $(FK)$ means $(FK_\alpha)$, that is $(FK^V_\alpha)$, for some $\alpha>0$.
Note that in general, contrary to the case $v=V$, the inclusion $\Omega\subset B(x,r)$ does not  guarantee any more that $\frac{v_r(x)}{\mu(\Omega)}\ge 1$.

 It is known  (see \cite{G2})  that Nash and Faber-Krahn inequalities are equivalent in the setting of Riemannian manifolds.
 Let us make this more precise in the present generality.
 Consider
  the following stronger version of \eqref{LNa},  homogeneous in the sense that it does not display
 the local term $\|f\|_2^2$ in the RHS:
  there exist $\alpha,C>0$  such that for every ball $B=B(x,r)$, for every $f\in \mathcal{F}_c(B)$,
\begin{equation*}\label{LNalo}
\tag{$HLN^v_\alpha$}
\|f \|_{2}^{2(1+\alpha)}\le \frac{Cr^2}{v_r^\alpha(x)}\|f \|_{1}^{2\alpha}\mathcal{E}(f).
\end{equation*}
Conversely, let us introduce a weaker, inhomogeneous, form of the relative $v$-Faber-Krahn inequality,
namely
\begin{equation*}\label{hfk}
\tag{$\widetilde{FK}^v_\alpha$} r^2\lambda_1(\Omega)+1\geq
c\left(\frac{v_r(x)}{\mu(\Omega)}\right)^{\alpha}.
\end{equation*}

The following lemma is probably well-known. We will use it in the proof of Proposition   \ref{LFlo} below.
\begin{lemma}\label{cnrs} Assume that $(M,d,\mu,L)$ satisfies $(H)$. Let $\Omega$ be an open subset of $M$. Then $\mathcal{F}_c(\Omega)\subset\overline{\mathcal{F}\cap C_c(\Omega)}^\mathcal{F}$.
\end{lemma}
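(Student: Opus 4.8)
The plan is to combine a cut-off argument with the regularity of the Dirichlet form $\mathcal E$. Let $f\in\mathcal F_c(\Omega)$ and let $K\subset\Omega$ be its (essential) support, a compact set. I would first reduce to the case where $f$ is bounded: since normal contractions operate on $\mathcal E$, the truncations $f^{(N)}:=((-N)\vee f)\wedge N$ lie in $\mathcal F$, are still supported in $K$, and converge to $f$ with respect to $\mathcal E_1(\cdot):=\mathcal E(\cdot)+\|\cdot\|_2^2$ (a standard fact for Dirichlet forms). As $\overline{\mathcal F\cap C_c(\Omega)}^{\mathcal F}$ is closed, it is then enough to approximate each \emph{bounded} $f\in\mathcal F_c(\Omega)$.

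Next, exactly as in the proof of Lemma \ref{su} (relying on \cite[Lemma 1.9]{ST} and on the fact that under $(H)$ balls are relatively compact), I would construct a Lipschitz function $\rho\in\mathcal F\cap C_c(\Omega)$ with $0\le\rho\le1$, $\rho\equiv1$ on a neighbourhood of $K$, $\operatorname{supp}\rho$ a compact subset of $\Omega$, and $d\Gamma(\rho,\rho)\le c\,d\mu$ with $c$ finite (depending only on $d(K,M\setminus\Omega)$); concretely one may take $\rho=(1-\varepsilon^{-1}d(\cdot,K))_+$ for $\varepsilon$ small enough, or a maximum of finitely many ball cut-offs covering $K$. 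By regularity of $\mathcal E$, choose $g_n\in\mathcal F\cap C_c(M)$ with $g_n\to f$ with respect to $\mathcal E_1$; being continuous with compact support, the $g_n$ are automatically bounded. Then $\rho g_n$ is continuous, supported in the compact set $\operatorname{supp}\rho\subset\Omega$, and lies in $\mathcal F$ because $\mathcal F\cap L^\infty$ is an algebra; hence $\rho g_n\in\mathcal F\cap C_c(\Omega)$.

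It remains to check $\rho g_n\to f$ with respect to $\mathcal E_1$. Since $\rho\equiv1$ on $K\supset\operatorname{supp}f$ we have $\rho f=f$ $\mu$-a.e., so $\rho g_n-f=\rho(g_n-f)$, and the point is that multiplication by $\rho$ is bounded on $(\mathcal F,\mathcal E_1)$. Indeed, for $h\in\mathcal F\cap L^\infty$ the Leibniz-type inequality \eqref{leib} together with $d\Gamma(\rho,\rho)\le c\,d\mu$ gives
\begin{equation*}
\mathcal E(\rho h)\le 2\int_M\rho^2\,d\Gamma(h,h)+2\int_M h^2\,d\Gamma(\rho,\rho)\le 2\mathcal E(h)+2c\|h\|_2^2,
\end{equation*}
and combined with $\|\rho h\|_2\le\|h\|_2$ this yields $\mathcal E_1(\rho h)\le(2c+1)\mathcal E_1(h)$; the restriction $h\in L^\infty$ is removed by one further truncation and the lower semicontinuity of $\mathcal E$. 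Applying this with $h=g_n-f\in\mathcal F$ gives $\mathcal E_1(\rho g_n-f)\le(2c+1)\,\mathcal E_1(g_n-f)\to0$, which completes the proof. The only genuinely delicate step is the construction of $\rho$ as an element of $\mathcal F$ whose energy measure is dominated by a multiple of $\mu$ — this is precisely where the structural hypotheses in $(H)$ (the intrinsic metric, relative compactness of balls) are used — the remaining steps being routine Dirichlet-form bookkeeping.
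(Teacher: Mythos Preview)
Your proof is correct and rests on the same core idea as the paper's: use regularity of $\mathcal{E}$ to get continuous approximants $g_n\to f$ in $\mathcal{F}$, then multiply by a Lipschitz cut-off to force the supports into $\Omega$, controlling the $\mathcal{E}_1$-error via the Leibniz-type estimate (which is exactly Lemma~\ref{su}).

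The execution differs in one respect. You build a \emph{single} cut-off $\rho\in\mathcal{F}\cap C_c(\Omega)$ that is identically $1$ on a neighbourhood of $K=\operatorname{supp} f$, and then show directly that $\rho g_n\to\rho f=f$. The paper instead first decomposes $f=\sum_{i=1}^k h_i$ into pieces supported in small balls $B(x_i,2\epsilon)\subset\Omega$ (via a telescoping product of ball cut-offs), and only then runs the regularity-plus-cut-off argument ball by ball. The paper's extra decomposition step is not logically necessary; its only purpose is to reduce to the specific ball cut-offs already constructed in Lemma~\ref{su}, avoiding any separate discussion of $d(\cdot,K)$ for a general compact $K$. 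Your route is more direct but requires you to justify that $(1-\varepsilon^{-1}d(\cdot,K))_+\in\mathcal{F}$ with $d\Gamma\le\varepsilon^{-2}d\mu$ for an arbitrary compact $K$; this does follow from the same source \cite[Lemma~1.9]{ST} (any $1$-Lipschitz function for the intrinsic metric works), or alternatively from your suggested finite maximum of ball cut-offs, so there is no gap. Your preliminary truncation to bounded $f$ is a clean way to make the use of \eqref{leib} rigorous; the paper absorbs this issue into the statement of Lemma~\ref{su}.
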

\begin{proof}
Let $f\in\mathcal{F}_c(\Omega)$. Choose $\epsilon>0$ so that $f\in\mathcal{F}_c(\Omega_{4\epsilon})$, where
 $$\Omega_{\epsilon}:=\{x\in \Omega,\, d(x,\Omega^c)>\epsilon\}.$$
Select  a finite family of points $x_1, \ldots, x_k$ in $M$ such 
that the balls $B(x_i, \epsilon)$ cover the support of $f$. Consider next the functions 
$\eta_i=\rho_{x_i}^{2\epsilon, \epsilon/2}$ defined in Lemma \ref{su}. Note that   $\eta_i\equiv 1$ on $B(x_i,\epsilon)$ and $\eta_i$ is compactly supported in $B(x_i,2\epsilon)$.
Set 
$$g_0=f \quad \mbox{and} \quad  g_i=\Pi_{j=1}^i (1-\eta_j)f.$$
 Note that $g_k=0$ so if we put $h_i=g_{i-1}-g_i$ then 
$ f=\sum_{i=1}^k h_i$. Note also that $h_i = \eta_ig_{i-1} $ so, by applying several times Lemma \ref{su},
one sees that $h_i  \in\mathcal{F}_c(B(x_i, 2\epsilon))$. Hence to prove Lemma \ref{cnrs} it is enough 
to show that if $h  \in\mathcal{F}_c(B(x, 2\epsilon))$  there exists  
a sequence of continuous functions $\overline{h_n} \in\mathcal{F}_c(B(x, 3\epsilon))$ which 
converges to $h$ in $\mathcal{F}$.

Indeed, since $\mathcal{E}$ is regular, we may approximate $h  \in\mathcal{F}_c(B(x, 2\epsilon))$  by a sequence of continuous functions $\tilde{h}_n\in\mathcal{F}$ such that
$\mathcal{E}(h-\tilde{h}_n)\to 0$ and $\|h-\tilde{h}_n\|_2\to 0$.
Then, again  by Lemma \ref{su},
$$\mathcal{E}(h-\tilde{h}_n\rho_{x}^{3\epsilon, \epsilon/2})=\mathcal{E}((h-\tilde{h}_n)\rho_{x}^{3\epsilon, \epsilon/2})\le  \frac{8}{\epsilon^2}\|h-\tilde{h}_n\|_2^2+2\mathcal{E}(h-\tilde{h}_n).$$
It follows that $\overline{h_n}=\tilde{h}_n\rho_{x}^{3\epsilon, \epsilon/2}$ converges to $h$ in $\mathcal{F}$ as $n\to+\infty$. Moreover, the functions $\overline{h_n}$ are continuous and compactly supported in $B(x, 3\epsilon)$.
\end{proof}

Note that the following statement does not require any doubling assumption on function $v$.
 
 \begin{proposition} \label{LFlo} Assume that $(M,d,\mu,L)$  satisfies $(H)$ and  let $v:M\times \R_+\to \R_+$ satisfy $(A)$. Then 
 $(FK^v_\alpha)$  is equivalent to $(HLN^v_\alpha)$ and $(\widetilde{FK}^v_\alpha)$ is equivalent to $(LN^v_\alpha)$ for all $\alpha>0$.
\end{proposition}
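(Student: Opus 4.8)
The plan is to prove the two equivalences $(FK^v_\alpha)\Leftrightarrow(HLN^v_\alpha)$ and $(\widetilde{FK}^v_\alpha)\Leftrightarrow(LN^v_\alpha)$ in parallel, since the arguments are essentially the same up to keeping track of the extra additive terms. The key point is that for a fixed ball $B=B(x,r)$, both the Nash-type inequalities and the Faber-Krahn inequalities are statements about functions supported (in the appropriate sense) in $B$, so the ball $B(x,r)$ and the quantity $v_r(x)$ play the role of fixed parameters and only the interplay between $\|f\|_1$, $\|f\|_2$, $\mathcal{E}(f)$ and $\mu(\Omega)$, $\lambda_1(\Omega)$ is at stake. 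The standard bridge, going back to Grigor'yan, is the following: if $\mathcal E(f)\le \lambda\|f\|_2^2$ for some $f\ne 0$, Chebyshev and a dyadic level-set decomposition produce a set of controlled measure on which $f$ is comparable to $\|f\|_2$; conversely $\lambda_1(\Omega)$ is realised (or nearly realised) by functions supported in $\Omega$, for which $\|f\|_1\le\mu(\Omega)^{1/2}\|f\|_2$ by Cauchy-Schwarz.

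First I would prove $(HLN^v_\alpha)\Rightarrow(FK^v_\alpha)$. Fix a ball $B=B(x,r)$ and a relatively compact open $\Omega\subset B$. By Lemma \ref{cnrs} and the definition \eqref{Barlow}, $\lambda_1(\Omega)$ is computed over $f\in\mathcal F_c(\Omega)\subset\mathcal F_c(B)$, and by density it suffices to test on such $f$. For any such $f$, Cauchy-Schwarz on $\Omega$ gives $\|f\|_1\le\mu(\Omega)^{1/2}\|f\|_2$, so $(HLN^v_\alpha)$ yields
\begin{equation*}
\|f\|_2^{2(1+\alpha)}\le \frac{Cr^2}{v_r^\alpha(x)}\,\mu(\Omega)^\alpha\,\|f\|_2^{2\alpha}\,\mathcal E(f),
\end{equation*}
hence $\mathcal E(f)/\|f\|_2^2\ge c r^{-2}(v_r(x)/\mu(\Omega))^\alpha$; taking the infimum over $f$ gives $(FK^v_\alpha)$. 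The same computation with $\|f\|_2^2$ added on the right of $(LN^v_\alpha)$ gives $r^2\lambda_1(\Omega)+1\ge c(v_r(x)/\mu(\Omega))^\alpha$, i.e. $(\widetilde{FK}^v_\alpha)$.

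The converse direction, $(FK^v_\alpha)\Rightarrow(HLN^v_\alpha)$, is the main obstacle. Here I would fix $B=B(x,r)$ and $f\in\mathcal F_c(B)$ with $f\ne 0$, and run the classical Grigor'yan/Carron argument. Normalise so that $\|f\|_2=1$, set $\lambda:=\mathcal E(f)$, and for $t>0$ let $\Omega_t:=\{|f|>t\}$, a relatively compact open subset of $B$ (up to replacing $f$ by its quasi-continuous version and using that balls are relatively compact under $(H)$). The truncated functions $f_t:=(|f|-t)_+\operatorname{sgn} f$ lie in $\mathcal F_c(\Omega_t)$ with $\mathcal E(f_t)\le\mathcal E(f)$ (by the Markovian/contraction property of the Dirichlet form; this is where strong locality and the Dirichlet property genuinely enter). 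Applying $(FK^v_\alpha)$ to $\Omega_t$ gives $\mu(\Omega_t)\le v_r(x)\big(cr^{-2}/\lambda_1(\Omega_t)\big)^{1/\alpha}\le v_r(x)\big(Cr^{-2}\|f_t\|_2^2/\mathcal E(f_t)\big)^{1/\alpha}$, hence a bound $\mu(\Omega_t)\le C v_r(x)(r^2\lambda)^{-1/\alpha}\|f_t\|_2^{2/\alpha}$; one then estimates $\|f\|_1=\int_0^\infty\mu(\Omega_t)\,dt$ and $1=\|f\|_2^2=2\int_0^\infty t\,\mu(\Omega_t)\,dt$, optimising over the splitting (the standard trick: bound $\mu(\Omega_t)$ for small $t$ by $\mu(\Omega_0)$ estimated via Faber-Krahn applied to $f$ itself, and for large $t$ by Chebyshev $\mu(\Omega_t)\le t^{-2}$), to arrive at $1\le C r^2 v_r^{-\alpha}(x)\|f\|_1^{2\alpha}\mathcal E(f)$ after undoing the normalisation, which is $(HLN^v_\alpha)$. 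For the inhomogeneous pair $(\widetilde{FK}^v_\alpha)\Rightarrow(LN^v_\alpha)$ one repeats the same truncation argument, now with $r^2\lambda_1(\Omega_t)+1$ in place of $r^2\lambda_1(\Omega_t)$; the extra ``$+1$'' produces, after the same optimisation, precisely the extra additive $\|f\|_2^2$ term in $(LN^v_\alpha)$ (equivalently, one can derive $(LN^v_\alpha)$ from $(HLN^v_\alpha)$ for the operator $L+r^{-2}$ and check the parameters match). I expect the bookkeeping in this level-set optimisation — in particular verifying the truncation property $\mathcal E(f_t)\le\mathcal E(f)$ and that $f_t\in\mathcal F_c(\Omega_t)$ in the present Dirichlet-form generality, and handling the borderline between the ``small $t$'' and ``large $t$'' regimes so that the constant $\alpha$ comes out unchanged — to be the only genuinely delicate part; everything else is soft.
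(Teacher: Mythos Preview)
Your forward direction $(HLN^v_\alpha)\Rightarrow(FK^v_\alpha)$ (and the inhomogeneous analogue) is correct and coincides with the paper's argument.

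The converse direction, however, has a genuine gap. The chain of inequalities you write,
\[
\mu(\Omega_t)\le v_r(x)\Big(\frac{c}{r^{2}\lambda_1(\Omega_t)}\Big)^{1/\alpha}\le v_r(x)\Big(\frac{C\,\|f_t\|_2^2}{r^{2}\,\mathcal E(f_t)}\Big)^{1/\alpha},
\]
goes the wrong way: $(FK^v_\alpha)$ gives $\lambda_1(\Omega_t)\ge c\,r^{-2}(v_r(x)/\mu(\Omega_t))^\alpha$, which after inversion yields a \emph{lower} bound on $\mu(\Omega_t)$, not an upper one; and testing with $f_t$ gives $\lambda_1(\Omega_t)\le \mathcal E(f_t)/\|f_t\|_2^2$, so your second inequality also fails. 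More structurally, the layer-cake plan of bounding $\|f\|_1=\int_0^\infty\mu(\Omega_t)\,dt$ from above is aiming at the wrong quantity: to obtain $(HLN^v_\alpha)$ you need an \emph{upper} bound on $\|f\|_2^2$ in terms of $\|f\|_1$ and $\mathcal E(f)$, equivalently a \emph{lower} bound on $\|f\|_1$, so upper bounds on $\mu(\Omega_t)$ are useless here. Also, $f_t=(|f|-t)_+$ is supported in $\overline{\Omega_t}$, not in $\Omega_t$; the standard fix is to use $(f-\lambda)_+$ as a test function in the strictly larger set $\Omega_{\lambda/2}$.

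The paper's (Grigor'yan's) argument avoids all of this by using a \emph{single} truncation level. For $f\ge 0$ one writes the elementary decomposition
\[
\|f\|_2^2\le 4\int_M (f-\lambda)_+^2\,d\mu + 2\lambda\|f\|_1,
\]
applies $(FK^v_\alpha)$ once, to $\Omega_{\lambda/2}$ with test function $(f-\lambda)_+\in\mathcal F\cap C_c(\Omega_{\lambda/2})$, bounds $\mu(\Omega_{\lambda/2})\le 2\|f\|_1/\lambda$ by the $L^1$ Chebyshev inequality, uses $\mathcal E((f-\lambda)_+)\le 4\mathcal E(f)$, and finally chooses $\lambda=\|f\|_2^2/(4\|f\|_1)$ so that the second term absorbs half of $\|f\|_2^2$. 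This gives $(HLN^v_\alpha)$ directly, with the same exponent $\alpha$; the inhomogeneous version follows identically by carrying the extra $\|g\|_2^2$ through. No integration over levels and no ``small $t$ / large $t$'' split is needed.
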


\begin{proof}
Inequality \eqref{LNalo} can be rewritten as
\begin{equation*}\left(\frac{\|f
\|_{2}^2}{\|f
\|_{1}^2}\right)^{\alpha} \le \frac{Cr^2}{v^{\alpha}_r(x)}\frac{\mathcal{E}(f)}{\|f
\|_{2}^2},\quad \forall x\in M,\, r>0,\quad \forall f\in
 \mathcal{F}_c(B(x,r))\setminus\{0\}.
\end{equation*}
Let  $\Omega$ be an open subset of $B(x,r)$. We can restrict the above inequality to  $f \in\mathcal{F}_c(\Omega)\subset  \mathcal{F}_c(B(x,r))$. By H\"older's inequality,  the LHS is larger than
 $\frac{1}{\mu^\alpha(\Omega)}$.   Then taking the infimum in $f \in\mathcal{F}_c(\Omega)$ in the RHS and using definition \eqref{Barlow} of $\lambda_1(\Omega)$   yields
\begin{equation*}\frac{1}{\mu^\alpha(\Omega)} \le \frac{Cr^2}{v^{\alpha}_r(x)} \lambda_1(\Omega),\quad \forall x\in M,\, r>0,\quad \forall\,\Omega\subset B(x,r),
\end{equation*}
that is,  $(FK^v_\alpha)$.  Similarly, $(LN^v_\alpha)$ can be rewritten as
\begin{equation*}\left(\frac{\|f
\|_{2}^2}{\|f
\|_{1}^2}\right)^{\alpha} \le \frac{C}{v^{\alpha}_r(x)}\left(\frac{r^2\mathcal{E}(f)}{\|f
\|_{2}^2}+1\right),\quad \forall x\in M,\, r>0,\quad \forall f\in
 \mathcal{F}_c(B(x,r))\setminus\{0\},
\end{equation*}
and the same argument yields $(\widetilde{FK}^v_\alpha)$.

For the converse, we use a trick introduced by Grigor'yan in \cite{G2}. 
First observe that  by definition \eqref{GrigHu} of $\lambda_1(\Omega)$, $(FK^v_\alpha)$ can be rewritten as
\begin{equation}
\label{FKO}\|g\|_2^2\le C\left(\frac{\mu(\Omega)}{v_r(x)}\right)^{\alpha}r^2\mathcal{E}(g),\end{equation}
$\forall x\in M$, $r>0$, $\Omega\subset B(x,r)$, $g\in
\mathcal{F}\cap C_c(\Omega)$.
 Now, for $f\in L^2(M,\mu)$, $f\ge 0$,
 and $\lambda>0$, write
\begin{eqnarray*}
\|f\|_2^2&=&\int_{f>2\lambda} f^2\,d\mu+\int_{f\le 2\lambda} f^2\,d\mu\\
&\le&4\int_{f>2\lambda} (f-\lambda)^2\,d\mu+2\lambda\int_{f\le 2\lambda} f\,d\mu,
\end{eqnarray*}
hence
\begin{equation}\label{tel}
\|f\|_2^2\le 4\int_M (f-\lambda)_+^2\,d\mu+2\lambda\|f\|_1.
\end{equation}

Let  $x\in M$, $r>0$,  $f\in\mathcal{F}\cap C_c(B(x,r))$. Assume in addition that $f\ge 0$.
Obviously  $\Omega_\lambda=\{f>\lambda\}$ is an open set. Since the semigroup $(e^{-tL})_{t>0}$ is submarkovian,  $(f-\lambda)_+=f-\min{(f,\lambda)}\in \mathcal{F}$.
Now $$\sqrt{\mathcal{E}\left((f-\lambda)_+\right)}\le \sqrt{\mathcal{E}(f)}+\sqrt{\mathcal{E}\left(\min{(f,\lambda)}\right)}$$ and since by the submarkovian property
$\mathcal{E}\left(\min{(f,\lambda)}\right)\le \mathcal{E}(f)$,
 \begin{equation}\label{carlos}
\mathcal{E}\left( (f-\lambda)_+\right)\le 4\mathcal{E} (f).
\end{equation}

Apply now \eqref{FKO} to $\Omega=\Omega_{\lambda/2}$ and $g=(f-\lambda)_+\in \mathcal{F}\cap C_c(\Omega_{\lambda/2})$.
This yields, using Bienaym\'e-Tchebycheff and \eqref{carlos},
\begin{eqnarray*}
\|(f-\lambda)_+\|_2^2&\le& C\left(\frac{\mu(\{f>\lambda/2\})}{v_r(x)}\right)^\alpha r^2\mathcal{E}\left( (f-\lambda)_+\right)\\
&\le&  C'\left(\frac{\|f\|_1}{\lambda v_r(x)} \right)^\alpha r^2\mathcal{E} (f),
\end{eqnarray*}
 therefore, together with \eqref{tel},
 \begin{equation}\label{friday}
\|f\|_2^2\le 4 C'\left(\frac{\|f\|_1}{\lambda v_r(x)} \right)^\alpha r^2\mathcal{E} (f)+2\lambda\|f\|_1.
\end{equation}
The same inequality holds, with a different constant $C''$, for all $f\in\mathcal{F}\cap C_c(B(x,r))$  by applying \eqref{friday} to $f_+$ and $f_-$,
using the fact that $f_+,f_-\in\mathcal{F}\cap C_c(B(x,r))$ because $(e^{-tL})_{t>0}$ is submarkovian,  $$\mathcal{E} (f)=\mathcal{E} (f_+)+\mathcal{E} (f_-)$$ since $\mathcal{E}$ is local, $$\|f\|_2^2=\|f_+\|_2^2+\|f_-\|_2^2$$ and $$\|f_+\|_1,\|f_-\|_1\le  \|f\|_1.$$
Taking $\lambda=\frac{\|f\|_2^2}{4\|f\|_1}$ then yields
  \begin{equation}\label{satu}
\|f\|_2^2\le 2 C''\left(\frac{4\|f\|_1^2}{v_r(x)\|f\|_2^2} \right)^\alpha r^2\mathcal{E} (f),
\end{equation}
for all $x\in M$, $r>0$,  $f\in\mathcal{F}\cap C_c(B(x,r))$. According to Lemma \ref{cnrs}, this is nothing but \eqref{LNalo}.

 If one assumes $(\widetilde{FK}^v_\alpha)$, one starts with
 \begin{equation*}
\|g\|_2^2\le C\left(\frac{\mu(\Omega)}{v_r(x)}\right)^{\alpha}\left(r^2\mathcal{E}(g)+\|g\|_2^2\right),\end{equation*}
and the argument is similar.

\end{proof}
 
 Putting together  Proposition \ref{newmainDG}, which will be proved in Section \ref{NG}, and Proposition \ref{LFlo}, we can establish the link between $(DU\!E^v)$ and $(\widetilde{FK}^v)$. We will see in Section \ref{KLR} under which conditions one can replace 
 $(\widetilde{FK}^v_\alpha)$ with the more classical $(FK^v_\alpha)$. Note that the following statement does apply to doubling compact spaces, in particular to compact Riemannian manifolds, in the case $v=V$. In other terms, considering   $(\widetilde{FK})$ instead of $(FK)$ solves the difficulty raised in
 \cite[comment 5, p.9]{GH}.
 
 \begin{theorem}\label{Nanano} Assume that $(M,d,\mu,L)$  satisfies $(H)$ and that $(M,d,\mu)$ satisfies $(V\!D)$. Let $v:M\times \R_+\to \R_+$  satisfy $(A)$, $(D_v)$, and \eqref{D2}.  Then the upper bound $(DU\!E^v)$ is equivalent to  $(\widetilde{FK}^v)$.
\end{theorem}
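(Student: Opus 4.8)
The plan is to chain together the equivalences already available in the paper. By Proposition \ref{LFlo}, which holds under $(H)$ with no doubling assumption on $v$, the inequality $(\widetilde{FK}^v_\alpha)$ is equivalent to $(LN^v_\alpha)$ for every $\alpha>0$; hence $(\widetilde{FK}^v)$ (meaning $(\widetilde{FK}^v_\alpha)$ for some $\alpha>0$) is equivalent to $(LN^v)$. Under $(H)$, $(V\!D)$, and $(D'_v)$ — which are exactly our hypotheses — Proposition \ref{Carron} tells us that $(LN^v)$, $(K\!N^v)$, and $(N^v)$ are all equivalent. So it remains only to connect $(N^v)$ (equivalently $(DU\!E^v)$ via $(GN_q^v)$) with $(DU\!E^v)$ itself.

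For the direction $(DU\!E^v)\Rightarrow(\widetilde{FK}^v)$: by Proposition \ref{Nash22}, $(DU\!E^v)$ implies $(N^v)$ (this needs only $(A)$, not even $(D_v)$), and then the equivalences above give $(LN^v)$ and hence $(\widetilde{FK}^v)$. This direction is essentially immediate. For the converse $(\widetilde{FK}^v)\Rightarrow(DU\!E^v)$: from $(\widetilde{FK}^v)$ we obtain $(N^v)$ as just explained. Now I need to invoke the forward reference: Proposition \ref{newmainDG} (to be proved in Section \ref{NG}) states precisely that, in the doubling metric measure space setting with the Davies-Gaffney estimate — which holds automatically here because under $(H)$ the form $\mathcal{E}$ is strongly local regular and $d$ is the intrinsic metric, so $(DG)$ holds by \cite{ST} — together with an $L^1$-boundedness assumption, $(N^v)$ implies $(GN_q^v)$ and hence $(DU\!E^v)$. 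The key point to check is that the submarkovian property, which holds under $(H)$ since $\mathcal{E}$ is a Dirichlet form, gives $\|e^{-tL}\|_{1\to 1}\le 1$ for all $t>0$, so the uniform $L^1$-boundedness hypothesis needed for Proposition \ref{newmainDG} (and for the positivity-preserving part) is satisfied. Thus $(N^v)\Rightarrow(DU\!E^v)$ in this setting, closing the loop.

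Assembling: $(DU\!E^v) \Leftrightarrow (N^v) \Leftrightarrow (K\!N^v) \Leftrightarrow (LN^v) \Leftrightarrow (\widetilde{FK}^v)$, where the first equivalence uses Proposition \ref{Nash22} in one direction and Proposition \ref{newmainDG} (with $(DG)$ from strong locality and $L^1$-contractivity from submarkovianity) in the other, the middle two use Proposition \ref{Carron} under $(V\!D)$ and $(D'_v)$, and the last uses Proposition \ref{LFlo} under $(H)$.

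The main obstacle, or rather the only non-bookkeeping issue, is the dependence on Proposition \ref{newmainDG}, which is proved later; all the real analytic work for the hard implication $(\widetilde{FK}^v)\Rightarrow(DU\!E^v)$ is deferred to Section \ref{NG} (the extrapolation argument turning $(N^v)$, via $(GN_q^v)$, into the pointwise bound). One should also verify carefully that $(H)$ indeed supplies both ingredients Proposition \ref{newmainDG} requires — namely that the intrinsic distance here makes $(M,d,\mu,L)$ satisfy the Davies-Gaffney estimate, and that submarkovianity yields uniform (indeed contractive) $L^1$-boundedness of $(e^{-tL})_{t>0}$, and likewise positivity preservation — but these are standard facts recalled in Section \ref{LG}, so no genuine difficulty arises. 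The proof is therefore a short citation-driven argument once Proposition \ref{newmainDG} is in hand.
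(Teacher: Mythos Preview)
Your proposal is correct and follows essentially the same approach as the paper: combine Proposition \ref{LFlo} (equivalence of $(\widetilde{FK}^v)$ and $(LN^v)$) with the forward reference to Proposition \ref{newmainDG}, noting that $(H)$ supplies submarkovianity (hence positivity preservation and $L^1$-contractivity) and the Davies-Gaffney estimate. One small inaccuracy: Proposition \ref{newmainDG} as stated takes $(LN^v)$, not $(N^v)$, as its hypothesis, so your detour through Proposition \ref{Carron} to reach $(N^v)$ is unnecessary---you can pass directly from $(\widetilde{FK}^v)$ to $(LN^v)$ via Proposition \ref{LFlo} and then invoke Proposition \ref{newmainDG}; but since you have already established $(N^v)\Leftrightarrow(LN^v)$ the argument is valid either way.
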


Of course, if $(M,d,\mu,L)$  satisfies $(H)$, $\mathcal{E}$ is a Dirichlet form and $(e^{-tL})_{t>0}$ is submarkovian, hence in particular positivity preserving and uniformly bounded on $L^1$. The Davies-Gaffney estimate is known as well for a strongly local and regular Dirichlet form (see \cite[Corollary 1.11]{ST}). This is why we can use Proposition \ref{newmainDG} towards the proof of  Theorem \ref{Nanano}. Moreover, under the assumptions of Theorem \ref{Nanano}, $(\widetilde{FK}^v)$ can be added to the string of equivalences of Theorem \ref{mainDG}.

\subsection{Killing the local term with reverse doubling}\label{KLR}

We will introduce the notion of reverse doubling for a general function $v$. Let us first consider the case   $v=V$. In this case  the notion originates for Riemannian manifolds in \cite[Theorem 1.1]{G1}.  Let $(M,d,\mu)$ be a metric measure space satisfying $(V\!D)$. It is known (see \cite[Proposition 5.2]{GH}),  that, if in addition $M$ is unbounded and connected, one has a  so-called reverse doubling volume property, namely there
exist
 $0<\kappa'<\kappa$ and $c>0$ such that,
for all $r\geq s>0$ and $x,y\in M$ such that $d(x,y)<r+s$,
\begin{equation*}\label{rd}\tag{$RD$}
c\left(\frac{r}{s}\right)^{\kappa'}\leq \frac{V_{r}(y)}{V_{s}(x)}.
\end{equation*}
Together with $(D_\kappa)$, this yields
\begin{equation*}\label{reverse doubling}
\tag{$D_{\kappa,\kappa'}$}c\left(\frac{r}{s}\right)^{\kappa'}\leq \frac{V_{r}(y)}{V_{s}(x)}
\leq C\left(\frac{r}{s}\right)^{\kappa},\quad \forall r\geq s>0,\ d(x,y)<r+s,
\end{equation*}
which can also be written:
\begin{equation}\label{doubling with max}
\frac{V_{r}(y)}{V_{s}(x)} \leq C'w(r,s),\ \forall\,r,s>0,\ x,y\in M\mbox{ such that }d(x,y)< r+s
\end{equation}
where $w(r,s):=\max\{
\left(\frac{r}{s}\right)^{\kappa},\left(\frac{r}{s}\right)^{\kappa'}\}.$

Consider now a measure space $(M,\mu)$ endowed with a function $v:M\times \R_+\to \R_+$  satisfying $(A)$.
We shall say that $(M,\mu,v)$ satisfies weak $(RD_v)$ if there exist
$\kappa',c>0$ such that, for any $x\in M$ and any  $r\geq s>0$,
\begin{equation*}
c\left(\frac{r}{s}\right)^{\kappa'_v}\leq \frac{v_{r}(x)}{v_{s}(x)},
\end{equation*}
and if in addition $(M,\mu)$ is endowed with a metric $d$, that $(M,d,\mu,v)$ satisfies strong $(RD_v)$ or simply  $(RD_v)$ if there exist
$\kappa'_v,c>0$ such that, for any  $r\geq s>0$ and $x,y\in M$ such that $d(x,y)< r+s$,
\begin{equation*}
c\left(\frac{r}{s}\right)^{\kappa'_v}\leq \frac{v_{r}(y)}{v_{s}(x)}.
\end{equation*}
One checks easily that, under $(D_v)$,
 $$(RD_v)\Leftrightarrow \mbox{weak }(RD_v)+(D'_v).$$
As above, the conjunction of  $(D_v)$ and $(RD_v)$  can be rewritten as
\begin{equation}\label{vdoubling with max}
\frac{v_{r}(y)}{v_{s}(x)} \leq C'w_v(r,s),\ \forall\,r,s>0,\ x,y\in M\mbox{ such that }d(x,y)< r+s,
\end{equation}
where $w_v(r,s):=\max\{
\left(\frac{r}{s}\right)^{\kappa_v},\left(\frac{r}{s}\right)^{\kappa'_v}\}.$

\begin{proposition}\label{LF} Assume that $(M,d,\mu,v)$ satisfies  weak $(RD_v)$ and that $v\ge \epsilon V$ for some $\epsilon>0$. Let $L$ be a nonnegative self-adjoint operator on $L^2(M,\mu)$.Then
$(LN_\alpha^v)$  is equivalent to $(HLN_\alpha^v)$ and $(\widetilde{FK}^v_\alpha)$ is equivalent to $(FK^v_\alpha)$.
\end{proposition}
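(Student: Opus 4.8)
The plan is to show the two equivalences by reducing each to the elimination of the local term, which is exactly what weak $(RD_v)$ together with $v\ge\epsilon V$ provides. Since $(HLN_\alpha^v)\Rightarrow(LN_\alpha^v)$ and $(FK^v_\alpha)\Rightarrow(\widetilde{FK}^v_\alpha)$ are trivial (dropping a nonnegative term, resp. weakening the inequality), only the reverse implications need proof. Moreover, by Proposition \ref{LFlo} (which holds under $(H)$ and needs no doubling on $v$), $(LN^v_\alpha)\Leftrightarrow(\widetilde{FK}^v_\alpha)$ and $(HLN^v_\alpha)\Leftrightarrow(FK^v_\alpha)$, so it suffices to prove a single implication, say $(LN^v_\alpha)\Rightarrow(HLN^v_\alpha)$; the Faber-Krahn statement then follows automatically. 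Wait — Proposition \ref{LFlo} is stated under hypothesis $(H)$, which is not assumed here; so I would instead argue the two equivalences in parallel, or more cleanly observe that the argument below for the Nash pair transcribes verbatim to the Faber-Krahn pair with $\lambda_1(\Omega)$ in place of $\mathcal{E}(f)/\|f\|_2^2$ and $\mu(\Omega)$ in place of $\|f\|_2^2/\|f\|_1^2$.

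The heart of the matter is the following. Fix a ball $B=B(x,r)$ and $f\in\mathcal{F}_c(B)\setminus\{0\}$. Starting from $(LN^v_\alpha)$,
$$
\|f\|_2^{2(1+\alpha)}\le\frac{C}{v_r^\alpha(x)}\|f\|_1^{2\alpha}\bigl(\|f\|_2^2+r^2\mathcal{E}(f)\bigr),
$$
I want to absorb the $\|f\|_2^2$ term. The idea is to apply the inequality on a ball of a cleverly chosen radius $R\ge r$: since $\operatorname{supp} f\subset B(x,r)\subset B(x,R)$, we also have $f\in\mathcal{F}_c(B(x,R))$, and $(LN^v_\alpha)$ on $B(x,R)$ reads
$$
\|f\|_2^{2(1+\alpha)}\le\frac{C}{v_R^\alpha(x)}\|f\|_1^{2\alpha}\bigl(\|f\|_2^2+R^2\mathcal{E}(f)\bigr).
$$
By weak $(RD_v)$, $v_R^\alpha(x)\ge c\,(R/r)^{\alpha\kappa'_v}v_r^\alpha(x)$, so the prefactor decays like $(r/R)^{\alpha\kappa'_v}$ while the correction term grows only like $R^2$; hence choosing $R$ comparable to $r\cdot\bigl(\frac{r^2\mathcal{E}(f)+\|f\|_2^2}{\|f\|_2^2}\bigr)^{1/2}$ (or, more carefully, choosing $R^2\mathcal{E}(f)\simeq\|f\|_2^2$ when $\mathcal{E}(f)>0$, and letting $R\to\infty$ when $\mathcal{E}(f)=0$, in which case $f=0$ so there is nothing to prove) balances the two contributions. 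Plugging this $R$ back in and using $R\ge r$ to control $v_R(x)\le$ (by $(D_v)$) $C(R/r)^{\kappa_v}v_r(x)$ in the denominator, the $\|f\|_2^2$ and $r^2\mathcal{E}(f)$ pieces merge, and after collecting powers one lands on $(HLN^v_\alpha)$, possibly with a smaller exponent $\alpha$ — which is harmless since, as already noted in Section \ref{FMR}, $(HLN^v_\alpha)$ for one $\alpha$ self-improves downward in $\alpha$ in the relevant situations; alternatively one keeps the same $\alpha$ by checking the exponents close exactly.

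The main obstacle I expect is the bookkeeping in the optimisation over $R$: one must verify that the chosen $R$ indeed satisfies $R\ge r$ (which is why the correction term $\|f\|_2^2$, not $r^2\mathcal{E}(f)$, is the one to be absorbed — it guarantees $R\ge r$), and that after substitution the exponent of $\|f\|_1$ and of $v_r(x)$ come out with the right sign, which is precisely where the gap $\kappa'_v>0$ between weak reverse doubling and the trivial bound is used; without $\kappa'_v>0$ the term $(r/R)^{\alpha\kappa'_v}$ would not decay and the scheme collapses. The role of the hypothesis $v\ge\epsilon V$ is more subtle and enters when one wants the balls $B(x,R)$ for large $R$ to be genuine competitors — in a space of finite diameter one cannot take $R$ arbitrarily large, so $v\ge\epsilon V$ together with $(V\!D)$ is what rules out (or rather handles) the compact case, essentially because it forces $v_r(x)$ to be bounded below by $\epsilon\mu(M)>0$ once $r$ exceeds the diameter, making the $R\to\infty$ step legitimate. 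I would treat the compact and non-compact cases separately if needed, the non-compact one by the optimisation above and the compact one by a direct estimate using $\mu(M)<\infty$ and the Poincaré-type lower bound on $\lambda_1$.
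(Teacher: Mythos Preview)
Your optimisation scheme does not close. Suppose $\|f\|_2^2\ge r^2\mathcal{E}(f)$ (the interesting case) and set $R^2=\|f\|_2^2/\mathcal{E}(f)\ge r^2$. Applying $(LN^v_\alpha)$ on $B(x,R)$ and using weak $(RD_v)$ gives
\[
\|f\|_2^{2\alpha}\Bigl(\frac{R}{r}\Bigr)^{\alpha\kappa'_v}\le \frac{C}{v_r^\alpha(x)}\|f\|_1^{2\alpha},
\]
i.e.\ $\|f\|_2^{2\alpha+\alpha\kappa'_v}\le C v_r^{-\alpha}(x)\|f\|_1^{2\alpha}(r^2\mathcal{E}(f))^{\alpha\kappa'_v/2}$. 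This matches $(HLN_\alpha^v)$ only when $\alpha\kappa'_v=2$; otherwise you get the wrong exponent, and the claimed self-improvement in $\alpha$ does not rescue this (the proposition asserts the \emph{same} $\alpha$). Your alternative choice of $R$ fares no better. You also invoke $(D_v)$, which is not among the hypotheses.

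The real role of $v\ge\epsilon V$ is not to handle compactness but to feed the absorption: by Cauchy--Schwarz, $\|f\|_1^2\le V(x,r)\|f\|_2^2\le \epsilon^{-1}v_r(x)\|f\|_2^2$ for $f\in\mathcal{F}_c(B(x,r))$. This is the missing idea. With it, one uses a \emph{fixed} dilation $A>1$ (independent of $f$): apply $(LN^v_\alpha)$ on $B(x,Ar)$ and bound the local piece by
\[
\frac{C}{v_{Ar}^\alpha(x)}\|f\|_1^{2\alpha}\|f\|_2^2\le C'\Bigl(\frac{v_r(x)}{v_{Ar}(x)}\Bigr)^{\alpha}\|f\|_2^{2(1+\alpha)}\le \frac{C'}{(cA^{\kappa'_v})^\alpha}\|f\|_2^{2(1+\alpha)},
\]
which for $A$ large enough is $\le\tfrac12\|f\|_2^{2(1+\alpha)}$ and can be absorbed into the left side, leaving $(HLN_\alpha^v)$ with the same $\alpha$. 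The Faber--Krahn version is identical: write $(\widetilde{FK}^v_\alpha)$ on $B(x,Ar)$, use $\mu(\Omega)\le V(x,r)\le\epsilon^{-1}v_r(x)$, and absorb.
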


\begin{proof}

It is obvious that $(HLN_\alpha^v)$   implies $(LN_\alpha^v)$ and that $(FK^v_\alpha)$  implies $(\widetilde{FK}^v_\alpha)$. Now for the converses.
Assume $(LN_\alpha^v)$, that is
 \begin{equation}\label{NN}
\|f \|_{2}^{2(\alpha+1)}\le  \frac{C}{v^{\alpha}_r(x)}\|f
\|_{1}^{2\alpha}(r^2\mathcal{E}(f)+\|f \|_{2}^2),
\end{equation}
$\forall\,B=B(x,r),\,
f\in  \mathcal{F}_c(B)$.

Now use a trick from \cite[Proposition 2.3]{CG3}.  Let $A>1$ to be chosen later. Applying \eqref{NN} in the ball $B(x,Ar)$ to $f\in  \mathcal{F}_c(B)\subset \mathcal{F}_c(B(x,Ar))$,  one obtains
\begin{equation}\label{ANN}\|f
\|_{2}^{2(\alpha+1)}\le \frac{C}{v^{\alpha}(x,Ar)}\|f
\|_{1}^{2\alpha}(A^2r^2\mathcal{E}(f)+\|f \|_{2}^2),\quad
\forall\,B=B(x,r),\, f\in  \mathcal{F}_c(B).
\end{equation}
Since $$\|f \|_{1}\le V^{1/2}(x,r)\|f \|_{2}\le C'v^{1/2}(x,r)\|f \|_{2},$$ (the first inequality uses Cauchy-Schwarz inequality and the  second one the assumption that $v\ge \epsilon V$), \eqref{ANN} yields
\begin{equation}\label{NN with Ar}\|f
\|_{2}^{2(\alpha+1)} \le \frac{CA^2r^2}{v^{\alpha}(x,Ar)} \|f
\|_{1}^{2\alpha}\mathcal{E}(f)+
C''\left(\frac{v_r(x)}{v_{Ar}(x)}\right)^{\alpha}\|f
\|_{2}^{2(\alpha+1)}.
\end{equation}
Now, by  weak $(RD_v)$, one has
\begin{equation}\label{syd}
\frac{v_r(x)}{v_{Ar}(x)}\le \frac{1}{cA^{\kappa'_v}}.
\end{equation}
 One can therefore choose $A$
so large  that $C''\left(\frac{v(x,r)}{v(x,Ar)}\right)^\alpha\le 1/2.$ Then \eqref{NN
with Ar} implies
\begin{equation*}\|f
\|_{2}^{2(\alpha+1)} \le \frac{2CA^2r^2}{v_{Ar}^{\alpha}(x)} \|f
\|_{1}^{2\alpha}\mathcal{E}(f),\quad \forall B=B(x,r),\quad \forall f\in
 \mathcal{F}_c(B),
\end{equation*}
that is, using \eqref{syd} once again, $(HLN_\alpha)$.

The statement about the implication from  $(\widetilde{FK}^v_\alpha)$  to $(FK^v_\alpha)$ follows from the one we have just proved through Proposition  \ref{LFlo}  if  $(M,d,\mu,L)$  satisfies $(H)$.

Alternatively, one can rewrite $(\widetilde{FK}^v_\alpha)$ as 
$$c\left(\frac{v_r(x)}{\mu(\Omega)}\right)^{\alpha}\leq
r^2\lambda_1(\Omega)+1,$$
apply it in $B(x,Ar)$, $A\ge 1$, use $(RD_v)$,
obtain
$$cA^{\alpha \kappa'_v}\left(\frac{v_r(x)}{\mu(\Omega)}\right)^{\alpha}\leq c\left(\frac{v_{Ar}(x)}{\mu(\Omega)}\right)^{\alpha}
\leq A^2r^2\lambda_1(\Omega)+1,$$
and choose $A$ so large that $cA^{\alpha \kappa'_v}\ge \frac{2}{\epsilon^\alpha}$. Since $\frac{v_r(x)}{\mu(\Omega)}\ge\epsilon \frac{V_r(x)}{\mu(\Omega)} \ge \epsilon$,
$(FK^v_\alpha)$ follows.
\end{proof}

Note that the role of the local term $\|f\|_2^2$ is different in the case of  $(LS_q^v)$: here it seems one cannot get rid of it except when $v_r(x)\simeq r^n$ and one considers the limit case $q=n$.

 This time, putting together  Theorem \ref{Nanano} and Proposition \ref{LF}, we can establish the link between $(DU\!E^v)$ and $(FK^v)$ under doubling and reverse doubling for $v$. In the case $v=V$ and  $M$ a  Riemannian manifold the following statement gives back Proposition 5.2 from \cite{G2}, see also Theorem 15.21 in \cite{G4}; note the role played by reverse doubling in both instances.

 \begin{theorem}\label{Nanana} Assume that $(M,d,\mu,L)$  satisfies $(H)$ and that $(M,d,\mu)$ satisfies $(V\!D)$. Let $v:M\times \R_+\to \R_+$  satisfy $(A)$, $(D_v)$,    $(RD_v)$, and $v\ge \epsilon V$ for some $\epsilon>0$.  Then the upper bound $(DU\!E^v)$ is equivalent to  $(FK^v)$.
\end{theorem}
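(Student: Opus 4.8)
The strategy is to deduce this statement directly from Theorem \ref{Nanano}, which identifies $(DU\!E^v)$ with the inhomogeneous condition $(\widetilde{FK}^v)$, together with Proposition \ref{LF}, which upgrades $(\widetilde{FK}^v)$ to the classical relative Faber--Krahn inequality $(FK^v)$ once a reverse doubling hypothesis is available. The only genuine work is to check that the hypotheses of these two results are indeed satisfied here.

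First I would verify the hypotheses of Theorem \ref{Nanano}: it requires $(H)$, $(V\!D)$, and that $v$ satisfy $(A)$, $(D_v)$, and $\eqref{D2}$. All of these are among the present assumptions except $\eqref{D2}$; but as recorded just above, under $(D_v)$ one has $(RD_v)\Leftrightarrow \mbox{weak }(RD_v)+(D'_v)$, so in particular $(D_v)$ together with $(RD_v)$ forces $(D'_v)$, that is $\eqref{D2}$ (and also weak $(RD_v)$, which will be used in the next step). Hence Theorem \ref{Nanano} applies and yields
$$(DU\!E^v)\Longleftrightarrow(\widetilde{FK}^v).$$
Next I would invoke Proposition \ref{LF}: its hypotheses are weak $(RD_v)$ — just extracted from $(D_v)+(RD_v)$ — and $v\ge\epsilon V$, which is assumed, while $L$ is a non-negative self-adjoint operator on $L^2(M,\mu)$ as part of $(H)$. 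Proposition \ref{LF} then gives $(\widetilde{FK}^v_\alpha)\Leftrightarrow(FK^v_\alpha)$ for every $\alpha>0$, and taking the union over $\alpha>0$ yields $(\widetilde{FK}^v)\Leftrightarrow(FK^v)$. Chaining the two equivalences gives $(DU\!E^v)\Leftrightarrow(FK^v)$, which is the claim. (Alternatively, one could route the second step through Proposition \ref{LFlo}, converting both Faber--Krahn conditions into the localised Nash inequalities $(HLN^v_\alpha)$ and $(LN^v_\alpha)$ and then using the equivalence $(LN^v_\alpha)\Leftrightarrow(HLN^v_\alpha)$ supplied by Proposition \ref{LF}; this is just a reformulation of the same argument.)

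There is no real obstacle at this level: the substantive content has already been absorbed into Theorem \ref{Nanano} — which itself rests on Proposition \ref{newmainDG} and hence on the Davies--Gaffney / finite propagation speed machinery of Section \ref{DGE}, the relevant point being that under $(H)$ the semigroup is submarkovian, hence positivity preserving and uniformly bounded on $L^1(M,\mu)$, and the Davies--Gaffney estimate holds for strongly local regular Dirichlet forms — and into Proposition \ref{LF}. The one thing to be careful about is the bookkeeping of hypotheses, in particular the observation that $\eqref{D2}$ does not need to be postulated separately in Theorem \ref{Nanana}, since it is already a consequence of $(D_v)$ and $(RD_v)$; this also explains why the case $v=V$ on a Riemannian manifold recovers Proposition 5.2 of \cite{G2}, the reverse doubling property there playing exactly the role of $(RD_v)$ here.
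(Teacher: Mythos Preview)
Your proposal is correct and follows exactly the route indicated in the paper, which states just before the theorem that the result is obtained by ``putting together Theorem \ref{Nanano} and Proposition \ref{LF}''. Your verification that \eqref{D2} and weak $(RD_v)$ are consequences of $(D_v)+(RD_v)$ is precisely the bookkeeping needed to make the two cited results apply.
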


In fact, once one has $(RD_v)$,  instead of killing  the local term or non-homogeneous term in $(LN_\alpha^v)$  or $(FK^v_\alpha)$, one may as well avoid to introduce it in the first place.
Let us show how this works by proving directly the following version of Proposition  \ref{KL} (which can also be derived by using Proposition \ref{LFlo}). One could also obtain directly
$(FK^v)$ instead of $(HLN^v)$.

\begin{proposition} \label{KLrd} Let $(M,d,\mu)$  be a metric measure space, $L$ a  non-negative self-adjoint operator on $L^2(M,\mu)$ and  let $v:M\times \R_+\to \R_+$ satisfy $(A)$, $(D_v)$,
   $(RD_v)$, and  $v\ge \epsilon V$ for some $\epsilon>0$. Then
$(K\!N^v)$  implies $(HLN^v_{2/\kappa_v})$,   where $\kappa_v$ is as in \eqref{dv}.
\end{proposition}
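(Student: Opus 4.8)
The plan is to rerun the argument of Proposition~\ref{KL}, but to use reverse doubling to control the quantity $\inf_{z\in\text{supp}(f)}v_s(z)$ appearing in $(K\!N^v)$ at \emph{every} scale $s>0$, not only for $s\le r$; this extra information at the large scales is exactly what allows one to reach the homogeneous inequality $(HLN^v_{2/\kappa_v})$ rather than the inhomogeneous $(LN^v_{2/\kappa_v})$. Throughout I may assume $\kappa'_v<\kappa_v$ and $\epsilon\le1$, since weak $(RD_v)$ with exponent $\kappa'_v$ implies it with every smaller positive exponent, and $v\ge\epsilon V$ with $\epsilon>1$ implies it with $\epsilon=1$. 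Fix a ball $B=B(x,r)$ and $0\neq f\in\mathcal{F}_c(B)$, so that $d(x,z)\le r$ for all $z\in\text{supp}(f)$; the case $\mathcal{E}(f)=0$ is trivial, so I assume $\mathcal{E}(f)>0$.

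First I would show that, for every $s>0$,
\[
\frac{1}{\inf_{z\in\text{supp}(f)}v_s(z)}\le\frac{C\,w_v(r,s)}{v_r(x)},
\qquad w_v(r,s):=\max\{(r/s)^{\kappa_v},(r/s)^{\kappa'_v}\}.
\]
For $0<s\le r$ this is exactly as in Proposition~\ref{KL}: $v_s(z)\ge C^{-1}(s/r)^{\kappa_v}v_r(z)$ by \eqref{dv} and $v_r(z)\ge C^{-1}v_r(x)$ by \eqref{D2}, so $1/\inf_z v_s(z)\le C^2(r/s)^{\kappa_v}/v_r(x)$. For $s\ge r$ one argues the other way round: \eqref{D2} at scale $s$ is legitimate since $d(x,z)\le r\le s$ and gives $v_s(z)\ge C^{-1}v_s(x)$, while weak $(RD_v)$ gives $v_s(x)\ge c(s/r)^{\kappa'_v}v_r(x)$, whence $1/\inf_z v_s(z)\le C'(r/s)^{\kappa'_v}/v_r(x)$. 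Feeding this into $(K\!N^v)$ applied with parameter $s$ yields, for all $s>0$,
\[
\|f\|_2^2\le C\Big(\frac{w_v(r,s)}{v_r(x)}\,\|f\|_1^2+s^2\mathcal{E}(f)\Big).
\]

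Next I would optimise in $s$, distinguishing two cases according to the sign of $\|f\|_1^2-r^2v_r(x)\mathcal{E}(f)$. If $\|f\|_1^2\le r^2v_r(x)\mathcal{E}(f)$, then $s:=\big(r^{\kappa_v}\|f\|_1^2/(v_r(x)\mathcal{E}(f))\big)^{1/(\kappa_v+2)}\le r$, so $w_v(r,s)=(r/s)^{\kappa_v}$, the two terms on the right-hand side are \emph{equal}, and raising the resulting bound to the power $(\kappa_v+2)/\kappa_v$ gives precisely $(HLN^v_{2/\kappa_v})$ for $f$. If $\|f\|_1^2\ge r^2v_r(x)\mathcal{E}(f)$, then $s:=\big(r^{\kappa'_v}\|f\|_1^2/(v_r(x)\mathcal{E}(f))\big)^{1/(\kappa'_v+2)}\ge r$, so $w_v(r,s)=(r/s)^{\kappa'_v}$ and the identical computation with $\kappa'_v$ in place of $\kappa_v$ gives $(HLN^v_{2/\kappa'_v})$ for $f$. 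To conclude I would use $v\ge\epsilon V$: since $f\in\mathcal{F}_c(B)$, Cauchy--Schwarz gives $\|f\|_1^2\le V(x,r)\|f\|_2^2\le\epsilon^{-1}v_r(x)\|f\|_2^2$, so $u:=v_r(x)\|f\|_2^2/\|f\|_1^2\ge\epsilon$; writing $(HLN^v_\alpha)$ for $f$ in the equivalent form $u^\alpha\|f\|_2^2\le Cr^2\mathcal{E}(f)$ and using $u^{2/\kappa_v}=u^{2/\kappa'_v}\,u^{2/\kappa_v-2/\kappa'_v}\le\epsilon^{\,2/\kappa_v-2/\kappa'_v}\,u^{2/\kappa'_v}$ (valid since $2/\kappa_v<2/\kappa'_v$, $u\ge\epsilon$, $\epsilon\le1$), one sees that $(HLN^v_{2/\kappa'_v})$ for $f$ implies $(HLN^v_{2/\kappa_v})$ for $f$. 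Hence $(HLN^v_{2/\kappa_v})$ holds in all cases.

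The main obstacle, compared with Proposition~\ref{KL}, is the regime $s\ge r$: this is the one genuinely new step and the only place where reverse doubling is used, and it is the reason one first obtains the exponent $\kappa'_v$ rather than $\kappa_v$; reconciling the two exponents is the sole use of the hypothesis $v\ge\epsilon V$. Apart from that, the only points needing care are the elementary optimisation and its degenerate cases ($\mathcal{E}(f)=0$, or $\|f\|_1^2$ exactly equal to $r^2v_r(x)\mathcal{E}(f)$, where either case applies), all of which are routine.
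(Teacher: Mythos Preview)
Your proof is correct and follows essentially the same route as the paper's: both feed the two-sided bound $v_r(x)\le Cw_v(r,s)\inf_{z\in\mathrm{supp}(f)}v_s(z)$ into $(K\!N^v)$, equalise the two terms, split according to whether the optimal scale lies below or above $r$, and use $v\ge\epsilon V$ with Cauchy--Schwarz to pass from the exponent $\kappa'_v$ to $\kappa_v$ in the large-scale case. The only differences are cosmetic: the paper invokes \eqref{vdoubling with max} directly and locates $s_0$ by the intermediate value theorem, whereas you rederive the weight comparison by hand and write down the optimiser explicitly, phrasing the case split as $\|f\|_1^2\lessgtr r^2v_r(x)\mathcal{E}(f)$ rather than $s_0\lessgtr r$ (these are equivalent).
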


\begin{proof} Let $f \in \mathcal{F}_c(B)$, $B=B(x,r)$. By \eqref{vdoubling with
max}, one has
$$v_r(x)\le Cw(r,s)v_s(z),\qquad \forall s>0,\, \forall z\in \text{supp}(f ).$$
Thus $(K\!N^v)$ yields
\begin{equation}\label{LLN}
\int_B|f|^{2}\,d\mu\le
C\left(\frac{w(r,s)}{v_r(x)}\left(\int_B |f|\,d\mu \right)^2
+s^2\mathcal{E}(f)\right),\quad \forall f \in
\mathcal{F}_c(B),
\end{equation}
for all $s,r>0$ and $x\in M$.
Choose  $s_0>0$ such that $$\frac{w(r,s_0)}{v_r(x)}\left(\int_B |f|\,d\mu \right)^2=s_0^2\mathcal{E}(f),$$
which is possible since the function $s\to \frac{w(r,s)}{s^2}$ is continuous, $\lim_{s\to 0+} \frac{w(r,s)}{s^2}=+\infty$ and $\lim_{s\to +\infty} \frac{w(r,s)}{s^2}=0$.

Then \eqref{LLN} yields
$$(i) \quad \int_B|f|^{2}\,d\mu\le 2Cs_0^2\mathcal{E}(f)\quad \text{and} \quad (ii) \quad \int_B|f|^{2}\,d\mu\le \frac{2Cw(r,s_0)}{v_r(x)}\left(\int_B |f|\,d\mu \right)^2.$$
If $r\ge s_0,$ $(ii)$ reads $\int_B|f|^{2}\,d\mu\le \frac{2C r^\kappa}{v_r(x)s_0^\kappa}\left(\int_B |f|\,d\mu \right)^2$,
that is 
$$\frac{c'}{r^2}\left(\frac{v_r(x)\int_B|f|^{2}\,d\mu}{\left(\int_B |f|\,d\mu \right)^2}\right)^{2/\kappa}\le \frac{1}{s_0^2},$$
hence, together with  $(i)$,
\begin{equation}\label{FKK}
\frac{c''}{r^2}\left(\frac{v_r(x)\int_B|f|^{2}\,d\mu}{\left(\int_B |f|\,d\mu \right)^2}\right)^{2/\kappa}\le \frac{\mathcal{E}(f)}{\int_B |f|^2\,d\mu}.
\end{equation}
If $r\le s_0$,  $(ii)$ yields this time
$$\frac{c'}{r^2}\left(\frac{v_r(x)\int_B|f|^{2}\,d\mu}{\left(\int_B |f|\,d\mu \right)^2}\right)^{2/{\kappa}'}\le \frac{1}{s_0^2}.$$ 
Since $v\ge \epsilon V$, H\"older's inequality yields
$$\frac{v_r(x)\int_B|f|^{2}\,d\mu}{\left(\int_B |f|\,d\mu\right)^2}\ge \epsilon.$$
 Since $\kappa'\le \kappa$, $$\frac{c''\epsilon^{2/{\kappa'}-2/\kappa}}{r^2}\left(\frac{v_r(x)\int_B|f|^{2}\,d\mu}{\left(\int_B |f|\,d\mu \right)^2}\right)^{2/{\kappa}}\le \frac{c''}{r^2}\left(\frac{v_r(x)\int_B|f|^{2}\,d\mu}{\left(\int_B |f|\,d\mu \right)^2}\right)^{2/{\kappa}'},$$ hence again \eqref{FKK}, and finally $(HLN^v_{2/\kappa_v})$.
\end{proof}


\bigskip

\section{Converses under Davies-Gaffney estimate}\label{DGE}

Let $(M,d,\mu)$   be a  metric measure space and $L$  a non-negative self-adjoint operator on $L^2(M,\mu)$ with dense domain. We have already introduced the Davies-Gaffney estimate \eqref{DG2}.
In fact, contrary to what we did in  \cite{CS}, we will not use directly \eqref{DG2}  in the present work (except for a technical argument in the end of the proof of Proposition \ref{extrapolation}), but rather an equivalent form, namely the finite propagation speed  for the wave equation.
By the way, it is known that one can also attack the problems treated in \cite{CS}  (the implication from on-diagonal to off-diagonal bounds) by using the latter property instead of the former (see for instance \cite{Sik1}). It would be interesting to know
whether conversely one could use exclusively \eqref{DG2} to prove the results in the present section. This would probably help to get $(DU\!E^v)$ and $(U\!E^v)$ in one go from $(N^v)$ or $(GN_q^v)$, instead of going through two steps: first the present article, then \cite{CS}.

Following  \cite{CS},  we say that $(M,d,\mu,L)$, or in short $L$, satisfies the finite propagation  speed property for
solutions of the corresponding wave equation if
\begin{equation}  \label{fs11}
\langle \cos(r\sqrt L) f_1, f_2 \rangle = 0
\end{equation}
for all $f_i\in L^2(B_i,\mu)$, $i=1,2$, where $B_i $ are  open balls in $M$ such that  $d(B_1,B_2)>r>0$.
Here and in the sequel, if $\Omega$ is a measurable subset of $M$,  $L^2(\Omega,\mu)$ will mean $L^2(\Omega,\restr{\mu}{\Omega})$.

We shall use the following notational convention.  For $r>0$, set
\begin{equation*}
D_r=\{ (x,\, y)\in M\times M: {d}(x,\, y) \le r \}.
\end{equation*}
Denote by $L^p_c(M,\mu)$, $p\ge 1$, the vector space of functions in $L^p(M,\mu)$ with support in a ball.
Given a linear operator $T$ from $L^p_c(M,\mu)$  to $L^q_{loc}(M,\mu)$, for some $1\le p,q\le +\infty$,   write
\begin{equation}\label{e2.1}
\text{supp}\,T  \subseteq D_r
\end{equation}
 if $\langle T f_1, f_2 \rangle = 0$ whenever  $f_1\in L^p(B_1,\mu)$, $f_2\in L^{q'}(B_2,\mu)$, and $B_1,B_2$ are balls such that
 $d(B_1,B_2)> r$.   Note that  if $T$ is an integral operator with   kernel
$K_T$, then (\ref{e2.1}) coincides with the  standard meaning of
$\text{supp} \, K_{T}  \subseteq D_r$,
 that is $K_T(x, \, y)=0$ for all $(x, \, y) \notin D_r$. Now we can state the finite  propagation speed
 property \eqref{fs11} in the following way
 $$
 \text{supp} \, \cos(r\sqrt L) \subseteq D_r,\quad\forall\,r>0.
 $$

The  proof of the above-mentioned equivalence can be found in \cite{Si1} or \cite{CS}.

\begin{theorem}
\label{fspro} Let $(M,d,\mu)$ be a metric measure space and $L$ be a non-negative self-adjoint operator acting on $%
L^2(M,\mu)$. Then the finite  propagation speed property  \eqref{fs11}   and
the Davies-Gaffney estimate \eqref{DG2}   are equivalent properties for $(M,d,\mu,L)$. 
\end{theorem}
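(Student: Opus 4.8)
The plan is to prove the equivalence of the Davies--Gaffney estimate $(DG)$ and the finite propagation speed property \eqref{fs11} by passing through the spectral calculus for $L$, exploiting the Fourier-analytic relationship between $\cos(r\sqrt L)$ and $e^{-tL}$ in both directions. Throughout, fix balls $B_1,B_2$ with $r=d(B_1,B_2)>0$ and $f_i\in L^2(B_i,\mu)$, and set $F(z):=\langle e^{-zL}f_1,f_2\rangle$ for $\mathrm{Re}\,z>0$; this is a bounded holomorphic function on the right half-plane.

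First I would show \eqref{fs11} $\Rightarrow$ $(DG)$. The idea is the subordination-type formula expressing the heat semigroup as a superposition of wave propagators:
\begin{equation*}
e^{-tL}=\frac{1}{\sqrt{\pi t}}\int_0^\infty e^{-s^2/(4t)}\cos(s\sqrt L)\,\mathrm{d}s,
\end{equation*}
valid in the strong sense on $L^2(M,\mu)$ by the spectral theorem (it is just the Gaussian Fourier transform identity $e^{-t\lambda^2}=\frac{1}{\sqrt{\pi t}}\int_0^\infty e^{-s^2/(4t)}\cos(s\lambda)\,\mathrm{d}s$ applied to $\lambda=\sqrt L$). Pairing with $f_1,f_2$ and using \eqref{fs11}, the contributions of $s\le r$ vanish, so
\begin{equation*}
|\langle e^{-tL}f_1,f_2\rangle|\le \frac{1}{\sqrt{\pi t}}\int_r^\infty e^{-s^2/(4t)}\|\cos(s\sqrt L)f_1\|_2\|f_2\|_2\,\mathrm{d}s\le \frac{\|f_1\|_2\|f_2\|_2}{\sqrt{\pi t}}\int_r^\infty e^{-s^2/(4t)}\,\mathrm{d}s,
\end{equation*}
since $\cos(s\sqrt L)$ is a contraction on $L^2$. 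A standard Gaussian tail estimate $\frac{1}{\sqrt{\pi t}}\int_r^\infty e^{-s^2/(4t)}\,\mathrm{d}s\le e^{-r^2/(4t)}$ then yields $(DG)$. I would need to justify interchanging the pairing with the integral, which follows from the strong convergence of the Bochner integral and boundedness of $\|\cos(s\sqrt L)f_1\|_2$.

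Next I would prove $(DG)$ $\Rightarrow$ \eqref{fs11}. Here the natural route is analytic continuation: $(DG)$ gives the pointwise bound $|F(t)|\le e^{-r^2/(4t)}\|f_1\|_2\|f_2\|_2$ on the positive real axis, and one wants to promote this to a decay estimate for $F$ on the whole right half-plane and then read off information about $\langle e^{-it L}f_1,f_2\rangle$, i.e.\ about the Schr\"odinger group, from which the support statement for $\cos(r\sqrt L)=\tfrac12(e^{ir\sqrt L}+e^{-ir\sqrt L})$ follows. Concretely, one shows by a Phragm\'en--Lindel\"of argument in the sector $|\arg z|<\pi/2$ — using that $F$ is bounded on the half-plane (contractivity of $e^{-zL}$ there) and satisfies the Gaussian bound on the real axis — that $|F(z)|\le e^{-r^2\,\mathrm{Re}(1/z)/4}\|f_1\|_2\|f_2\|_2$, hence for $z=t+i\tau$ with $\tau$ fixed and $t\downarrow 0$ one controls the boundary values $\langle e^{-i\tau L}f_1,f_2\rangle$. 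An alternative, and probably cleaner, route is to use the finite-speed/support characterization via the resolvent or via $\langle e^{-tL}f_1,f_2\rangle$ expanded in a power series and Paley--Wiener: the function $\tau\mapsto\langle\cos(\tau\sqrt L)f_1,f_2\rangle$ is even, and its cosine transform is (up to constants) $\int_0^\infty e^{-t\lambda^2}\,\mathrm{d}\mu_{f_1,f_2}(\lambda)$ where $\mu_{f_1,f_2}$ is the spectral measure; $(DG)$ forces this to extend holomorphically with the right exponential type, and the Paley--Wiener theorem gives that $\langle\cos(\tau\sqrt L)f_1,f_2\rangle=0$ for $|\tau|<r$.

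The main obstacle is the reverse implication $(DG)\Rightarrow$ \eqref{fs11}: turning an $L^2$ decay bound for the (smoothing) heat semigroup into an \emph{exact} vanishing statement for the (non-smoothing, oscillatory) wave propagator requires a genuine complex-analytic / Paley--Wiener input rather than a soft estimate, and one must be careful about which class of test functions $f_i$ the identities hold for and about the precise constants in the Phragm\'en--Lindel\"of step. Since the excerpt explicitly attributes the proof to \cite{Si1} and \cite{CS}, I would in practice cite those references for the delicate direction and present the heat-to-wave subordination argument above as the easy half; but the sketch indicates the full strategy one would carry out if proving it from scratch.
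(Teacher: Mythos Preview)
The paper does not actually prove this theorem; it simply states that the proof can be found in \cite{Si1} or \cite{CS}. Your sketch is correct and matches the arguments in those references: the implication \eqref{fs11} $\Rightarrow$ \eqref{DG2} via the subordination formula and the Gaussian tail bound is exactly the easy direction, and the converse via a Phragm\'en--Lindel\"of argument is precisely the method of \cite{CS} (indeed that paper's title). Your identification of the hard direction and the reason it is delicate is accurate; the paper expects the reader to consult the cited sources for those details, just as you propose.
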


\subsection{From   Gagliardo-Nirenberg to  heat kernel upper bounds}\label{DG}

Our starting point will be the fact that, on a doubling metric space, support properties like  \eqref{e2.1} enable one to commute 
the operator with multiplication by doubling weights in $L^p-L^q$ estimates.

\begin{proposition}\label{fala1} Let $(M,d,\mu)$ be a doubling metric measure space and let $v:M\times \R_+\to \R_+$  satisfy $(A)$, $(D_v)$, and \eqref{D2}.   For all  $\gamma \in \R$, there exists $C_\gamma>0$ only depending on the constants in $(V\!D)$,  $(D_v)$, and \eqref{D2} such that,   for all  $p,q$, $1\le p\le q\le \infty$, and for every family of operators  $T_r$, $r>0$, mapping continuously
$\lp$ to  $\lqq$ and  satisfying \begin{equation}\label{dgr}
\text{\em supp} \,T_r  \subseteq D_r,
\end{equation}
one has
$$
\| {v_{{r}}^\gamma}  T_r {v_{{r}}^{-\gamma}} \|_{p \to q}\le
C_\gamma \|   T_r  \|_{p \to q},$$
uniformly in $r>0$.
\end{proposition}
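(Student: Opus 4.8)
The plan is to reduce the estimate to a local, dyadic decomposition argument, exploiting that $T_r$ has propagation speed at most $r$ so that only \emph{nearby} points interact. Fix $\gamma\in\R$ and $p,q$ with $1\le p\le q\le\infty$. Invoke the bounded covering principle $(BC\!P)$ at scale $r$: choose a maximal $r$-separated family $(x_i)_{i\in I}$, so that the balls $B_i:=B(x_i,r)$ cover $M$ and, for a fixed $\theta$ (say $\theta=3$), each point of $M$ lies in at most $K_0$ of the dilated balls $B(x_i,\theta r)$, with $K_0$ depending only on the constant in $(V\!D)$. Write $f=\sum_i f_i$ with $f_i:=f\chi_{B_i'}$ for a suitable partition $\{B_i'\}$ of $M$ subordinate to the cover (e.g. $B_i'=B_i\setminus\bigcup_{j<i}B_j$), so $\|f\|_p^p=\sum_i\|f_i\|_p^p$ (or the obvious sup-version when $p=\infty$), and similarly decompose the target. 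The support condition \eqref{dgr} forces $T_r f_i$ to be supported in the $r$-neighbourhood of $B_i'$, hence in $B(x_i,2r)$; consequently, for each output ball $B(x_j,r)$ the only contributions come from the $f_i$ with $d(x_i,x_j)\lesssim r$, and by $(BC\!P)$ there are at most $K_0'$ such indices $i$ for each $j$ (and conversely).

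The key pointwise fact is that the weight $v_r$ is essentially constant at scale $r$: if $d(x,y)\le 3r$ then by $(D_v)$ and \eqref{D2} one has $v_r(x)\le C\,v_r(y)$ with $C$ depending only on the constants in $(D_v)$ and \eqref{D2}. Therefore, on the relevant pairs of balls the multiplication operators $v_r^{\gamma}$ and $v_r^{-\gamma}$ differ from scalars by a bounded factor $C^{|\gamma|}$. Concretely, for indices $i,j$ with $d(x_i,x_j)\le 3r$, write
$$
\chi_{B(x_j,r)}\,v_r^{\gamma}\,T_r\,v_r^{-\gamma}\,f_i
= \chi_{B(x_j,r)}\,\big(v_r^{\gamma} v_r(x_i)^{-\gamma}\big)\,T_r\,\big(v_r(x_i)^{\gamma} v_r^{-\gamma}\big)\,f_i,
$$
and the two parenthesised multipliers have $L^\infty$-norm (on the supports in play) at most $C^{|\gamma|}$, uniformly in $i,j,r$. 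Hence $\|\chi_{B(x_j,r)}\,v_r^{\gamma} T_r v_r^{-\gamma} f_i\|_q\le C^{2|\gamma|}\|\chi_{B(x_j,r)} T_r f_i\|_q$, and summing in $i$ over the $O(K_0')$ relevant indices (using $\|\cdot\|_q$-quasi-triangle inequality, or genuine triangle inequality, paying at most a factor $K_0'$) gives $\|\chi_{B(x_j,r)}\,v_r^{\gamma} T_r v_r^{-\gamma} f\|_q\le C^{2|\gamma|}K_0'\,\|\chi_{B(x_j,r)} T_r f\|_q$ where on the right we have replaced $f$ by $\sum_{i\in \text{near}(j)}f_i$, a function supported in a ball of radius $\lesssim r$ around $x_j$; controlling it by $\|T_r\|_{p\to q}$ times $\|f\|_p$ restricted to that region is then immediate from the boundedness hypothesis on $T_r$.

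Finally I would reassemble the global estimate from the local ones. Since $q\ge p$, I use that $\ell^p\hookrightarrow\ell^q$ to pass from $\big(\sum_j\|\chi_{B(x_j,r)}\,v_r^{\gamma} T_r v_r^{-\gamma} f\|_q^q\big)^{1/q}$ (a lower bound for $\|v_r^{\gamma} T_r v_r^{-\gamma} f\|_q$ up to the bounded overlap $K_0$) to $\big(\sum_j(\cdots)^p\big)^{1/p}$, then bound each summand as above and use the bounded overlap of the near-index families together with $\|T_r\|_{p\to q}$ and the $\ell^p$-decomposition $\sum_i\|f_i\|_p^p=\|f\|_p^p$ to conclude $\|v_r^{\gamma} T_r v_r^{-\gamma} f\|_q\le C_\gamma\|T_r\|_{p\to q}\|f\|_p$, with $C_\gamma$ depending only on $|\gamma|$ and the doubling constants. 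The endpoint cases $p=1$, $q=\infty$ are handled by the same bookkeeping with sums replaced by suprema where appropriate. I expect the main technical nuisance to be a clean handling of the overlaps and the $\ell^p$–$\ell^q$ passage uniformly across the full range $1\le p\le q\le\infty$ (in particular making the constant genuinely independent of $p,q$), together with verifying that the support condition \eqref{dgr} interacts correctly with the chosen partition so that only $O(1)$ index pairs contribute; the weight-comparison step itself is routine given $(D_v)$ and \eqref{D2}.
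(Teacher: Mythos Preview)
Your approach is correct and is essentially the same as the paper's: cover $M$ by balls at scale $r$, use the support condition $\text{supp}\,T_r\subseteq D_r$ to reduce to $O(1)$ nearby index pairs, compare the weight $v_r$ on those balls via $(D_v)$ and \eqref{D2}, and reassemble using bounded overlap and $\ell^p\hookrightarrow\ell^q$. The paper packages this more cleanly by first isolating a weight-free localisation lemma $\|T_r\|_{p\to q}\le C\sup_x\|T_r\chi_{B(x,r)}\|_{p\to q}$ (this is exactly your covering argument) and then applying it to $S_r=v_r^\gamma T_r v_r^{-\gamma}$; this avoids the one slip in your write-up, namely the displayed inequality $\|\chi_{B(x_j,r)}v_r^\gamma T_r v_r^{-\gamma}f_i\|_q\le C^{2|\gamma|}\|\chi_{B(x_j,r)}T_r f_i\|_q$, which is not true as stated (the inner multiplier $v_r(x_i)^\gamma v_r^{-\gamma}$ changes the argument of $T_r$, and $T_r$ need not be positivity-preserving), though your very next step bounding directly by $\|T_r\|_{p\to q}\|f_i\|_p$ is the correct fix.
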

Proposition \ref{fala1} relies on
ideas that already appeared in  \cite{CS, GHS}. If  $\Omega$ is a subset of $M$, denote by $\chi_\Omega$ its characteristic function. We will deduce Proposition \ref{fala1} from the following statement which does not involve $v$.

\begin{lemma}\label{fala2} Let $(M,d,\mu)$ be a doubling metric measure space.   There exists $C>0$ only depending on the doubling constant  such that,   for all  $p,q$, $1\le p\le q\le \infty$, and for every family of operators  $T_r$, $r>0$ from $L^p_c(M,\mu)$  to $L^q(M,\mu)$  satisfying \eqref{dgr},
one has
$$
\|T_r\|_{p \to q} \le C \sup_{x\in M} \| T_r\chi_{B(x,  r) }\|_{p\to q},$$
uniformly in $r>0$.
\end{lemma}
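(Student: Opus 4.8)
The plan is to reduce the estimate to one application of the bounded covering principle $(BC\!P)$, which is available since $(M,d,\mu)$ is doubling. Fix $r>0$ and apply $(BC\!P)$ with radius $r/2$: this produces points $\{x_i\}\subseteq M$ with $d(x_i,x_j)>r/2$ for $i\neq j$ and $\sup_{x\in M}\inf_i d(x,x_i)\le r/2$, so that the balls $B(x_i,r)$ cover $M$, and the balls $B(x_i,3r)$ have overlap bounded by a constant $K_0$ (apply $(BC\!P)$ with $\theta=6$) depending only on the constant in $(V\!D)$. Set
\[
E_i:=B(x_i,r)\setminus\bigcup_{j<i}B(x_j,r),
\]
so that $\{E_i\}$ is a Borel partition of $M$ with $E_i\subseteq B(x_i,r)$, and write $S:=\sup_{x\in M}\|T_r\chi_{B(x,r)}\|_{p\to q}$. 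If $S=+\infty$ there is nothing to prove, so assume $S<+\infty$.

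For $f\in L^p_c(M,\mu)$ I would decompose $f=\sum_i f\chi_{E_i}$. Each $f\chi_{E_i}$ is supported in $B(x_i,r)$, so by the hypothesis \eqref{dgr} the function $T_r(f\chi_{E_i})$ vanishes outside $\{y:\,d(y,B(x_i,r))\le r\}\subseteq B(x_i,3r)$; moreover $T_r(f\chi_{E_i})=\big(T_r\chi_{B(x_i,r)}\big)(f\chi_{E_i})$ since $E_i\subseteq B(x_i,r)$, whence
\[
\|T_r(f\chi_{E_i})\|_q\le\|T_r\chi_{B(x_i,r)}\|_{p\to q}\,\|f\chi_{E_i}\|_p\le S\,\|f\chi_{E_i}\|_p.
\]
At $\mu$-a.e.\ point of $M$ at most $K_0$ of the balls $B(x_i,3r)$ — hence at most $K_0$ of the functions $T_r(f\chi_{E_i})$ — are nonzero, so the elementary bound $\big(\sum_{j=1}^{K_0}a_j\big)^q\le K_0^{q-1}\sum_{j=1}^{K_0}a_j^q$ (with the obvious modification when $q=\infty$), applied pointwise and integrated, gives
\[
\Big\|\sum_i T_r(f\chi_{E_i})\Big\|_q\le K_0^{1-1/q}\Big(\sum_i\|T_r(f\chi_{E_i})\|_q^q\Big)^{1/q}\le K_0^{1-1/q}\,S\,\Big(\sum_i\|f\chi_{E_i}\|_p^q\Big)^{1/q}.
\]
Finally, since $q\ge p$ we have $\ell^p\hookrightarrow\ell^q$, so $\big(\sum_i\|f\chi_{E_i}\|_p^q\big)^{1/q}\le\big(\sum_i\|f\chi_{E_i}\|_p^p\big)^{1/p}=\|f\|_p$, using that $\{E_i\}$ partitions $M$. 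Combining the three displays yields $\|T_rf\|_q\le K_0^{1-1/q}S\|f\|_p\le K_0S\|f\|_p$, i.e.\ the claim with $C=K_0$.

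The routine points I would still need to check are: that the sums $f=\sum_i f\chi_{E_i}$ and $T_rf=\sum_i T_r(f\chi_{E_i})$ cause no convergence trouble — this is handled by noting that, $f$ being supported in a fixed ball $B$, the partial sums $f_N:=f\chi_{\bigcup_{i\le N}E_i}$ lie in $L^p_c(M,\mu)$, are supported in $B$, and converge to $f$ in $L^p$, so one applies the estimate to each finite sum $f_N$ and passes to the limit via the continuity of $T_r$; and that the condition \eqref{dgr} indeed forces a function supported in $B(x_i,r)$ to be mapped into one supported in $B(x_i,3r)$, which is immediate by testing $T_r(f\chi_{E_i})$ against $L^{q'}$ functions supported in balls at distance $>r$ from $B(x_i,r)$. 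I do not expect a genuine obstacle here; the only thing demanding attention is keeping all constants depending solely on the doubling constant, which is precisely why $(BC\!P)$ is invoked once, at scale $r/2$, so as to control simultaneously the covering by the $B(x_i,r)$ and the overlap of the $B(x_i,3r)$.
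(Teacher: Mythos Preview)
Your proof is correct and follows essentially the same approach as the paper: both apply $(BC\!P)$ at scale $r/2$, form the same disjoint partition $E_i=\widetilde{B_i}=B(x_i,r)\setminus\bigcup_{j<i}B(x_j,r)$, and exploit the bounded overlap controlled by the doubling constant. The only cosmetic difference is that the paper decomposes on both sides as $T_rf=\sum_{i,j}\chi_{\widetilde{B_i}}T_r\chi_{\widetilde{B_j}}f$ and then discards pairs with $d(x_i,x_j)>3r$, whereas you decompose only the input and use the support propagation $T_r(f\chi_{E_i})\subset B(x_i,3r)$ directly to get the pointwise overlap bound; your organization is slightly more streamlined but the idea is identical.
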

Note that the reverse inequality is obvious.
Let us check that Proposition \ref{fala1}  follows from Lemma \ref{fala2}.
Indeed,  the operator $S_r:={v_{{r}}^\gamma}  T_r {v_{{r}}^{-\gamma}}$ clearly also satisfies  \eqref{dgr}. 
Lemma \ref{fala2} applied to $S_r$ thus yields
$$
\| {v_{{r}}^\gamma}  T_r {v_{{r}}^{-\gamma}} \|_{p \to q}\le
C \sup_{x\in M} \|  {v_{{r}}^\gamma}  T_r {v_{{r}}^{-\gamma}} \chi_{B(x,  r) }  \|_{p \to q},$$
uniformly in $r>0$.
Now, for $f\in L^p(M,\mu)$, 
\begin{eqnarray*}
\|  {v_{{r}}^\gamma}  T_r {v_{{r}}^{-\gamma}} \chi_{B(x,  r) } f \|_{q}&\le& \|  {v_{{r}}^\gamma}  T_r  \chi_{B(x,  r) }  \|_{p \to q}\|   {v_{{r}}^{-\gamma}} \chi_{B(x,  r) } f \|_p\\
&\le& \|  {v_{{r}}^\gamma}  T_r  \chi_{B(x,  r) }  \|_{p \to q}\|   {v_{{r}}^{-\gamma}} \chi_{B(x,  r) }\|_\infty \| f \|_p.
\end{eqnarray*}
Since $v$ satisfies \eqref{D2}, the function ${v_{{r}}^{-\gamma}} \chi_{B(x,  r)}$ is dominated by
$C^{|\gamma|}v_r^{-\gamma}(x)\chi_{B(x,  r)}$, where $C$ is the constant in \eqref{D2}, hence
$$\|  {v_{{r}}^\gamma}  T_r {v_{{r}}^{-\gamma}} \chi_{B(x,  r) } f \|_{q}\le C^{|\gamma|}v_r^{-\gamma}(x)\|  {v_{{r}}^\gamma}  T_r  \chi_{B(x,  r) }  \|_{p \to q} \| f \|_p$$
Now, since $T_r$ satisfies \eqref{dgr},
$$T_r\chi_{B(x,  r) } =\chi_{B(x,  2r) } T_r\chi_{B(x,  r) },$$
and
\begin{eqnarray*}
v_r^{-\gamma}(x)\|  {v_{{r}}^\gamma}  T_r  \chi_{B(x,  r) }  \|_{p \to q}&=&v_r^{-\gamma}(x)\|  {v_{{r}}^\gamma}  \chi_{B(x,  2r) }T_r  \chi_{B(x,  r) }  \|_{p \to q}\\
&\le&v_r^{-\gamma}(x)\|  {v_{{r}}^\gamma} \chi_{B(x,  2r) } \|_\infty\|T_r  \chi_{B(x,  r) }  \|_{p \to q}\\
&\le&C'\|T_r  \chi_{B(x,  r) }  \|_{p \to q},
\end{eqnarray*}
where $C'_\gamma$ only depends on the constants in $(D_v)$,  \eqref{D2} and on $\gamma$.
Finally
$$\|{v_{{r}}^\gamma}  T_r {v_{{r}}^{-\gamma}}\chi_{B(x, r) } \|_{p \to q}\le  C^{|\gamma|}C'_\gamma\|  T_r \chi_{B(x, r) } \|_{p \to q}$$
and$$
\| {v_{{r}}^\gamma}  T_r {v_{{r}}^{-\gamma}} \|_{p \to q}\le  CC'_\gamma C^{|\gamma|} \sup_{x\in M} \|  T_r \chi_{B(x, r) } \|_{p \to q}\le CC'_\gamma C^{|\gamma|}\|  T_r  \|_{p \to q},$$
which proves Proposition \ref{fala1}. 

\begin{proof}[{ Proof of Lemma $\ref{fala2}$}]
Fix $r>0$. Apply $(BC\!P)$:  there exists a sequence $x_i \in M$ such that
$d(x_i,x_j)> {r/2}$ for $i\neq j$ and $\sup_{x\in M}\inf_i d(x,x_i)
\le {r/2}$. 
Define $\widetilde{B_i}$ by the formula
\begin{equation}\label{e3.2}
\widetilde{B_i}=B\left(x_i,r\right)\setminus
\left(\bigcup_{j<i}B\left(x_j,r\right)\right),
\end{equation}
 so that $\left(\widetilde{B_i}\right)_{i}$ is a denumerable partition of $M$.
For $f\in\lp$,  write
$$
{ T_r }f =\sum_{i,j} \chi_{\widetilde B_i}{  T_r}\chi_{\widetilde
B_j}f.
$$
Now, if ${d}(x_i,x_j)>3r$, $d(\widetilde B_i, \widetilde B_j)> r$, hence, using \eqref{dgr},
$$\sum_{i,j:\, {d}(x_i,x_j)>3r} \chi_{\widetilde B_i}{  T_r \chi_{\widetilde
B_j}}f=0,
$$
thus
$$
{  T_r }f =\sum_{i,j:\, {d}(x_i,x_j)\le 3r} \chi_{\widetilde B_i} T_r\chi_{\widetilde
B_j}f.
$$
Assume $q<+\infty$. Obvious modifications yield the case $q=+\infty$ . Write
\begin{eqnarray*}
\| T_r  f\|_{q}^q&=&\|\sum_{i,j:\, {d}(x_i,x_j)\le 3r} \chi_{\widetilde B_i}T_r \chi_{\widetilde B_j}f\|_{q}^q \\
&=&\sum_i \|\sum_{j:\,{d}(x_i,x_j)\le
3r} \chi_{\widetilde B_i}   T_r  \chi_{\widetilde B_j}f\|_{q}^q,
\end{eqnarray*}
the last equality using the fact that the ${\widetilde B_i}$'s are disjoint. According to $(BC\!P)$, $$K_0=\sup_i\#\{j:\;d(x_i,x_j)\le  3r\}$$
 is a finite integer which only depends  on the doubling constant of $(M,d,\mu)$.
It follows that 
\begin{eqnarray*}  
&&\|\sum_{j:\,{d}(x_i,x_j)\le
3r} \chi_{\widetilde B_i}    T_r  \chi_{\widetilde B_j}f\|_{q}^q
\\
&&\left(\sum_{j:\,{d}(x_i,x_j)\le
3r}\| \chi_{\widetilde B_i}    T_r  \chi_{\widetilde B_j}f\|_{q}\right)^q
\\
&\le& K_0^{q-1}    \sum_{j:\,{d}(x_i,x_j)\le 3r}
\left\| \chi_{\widetilde
B_i}  T_r \chi_{\widetilde B_j}f\right\|_{q}^q.
\end{eqnarray*}  
The last line uses convexity together with the fact that there are at most  $K_0$ terms in the summation.

Gathering the above inequalities,
one obtains
\begin{eqnarray*}
\|  T_r  f\|_{q}^q&\le&K_0^{q-1}  \sum_i   \sum_{j:\,{d}(x_i,x_j)\le 3r} \left\| \chi_{\widetilde
B_i}T_r\chi_{\widetilde B_j}\right\|_{p\to q}^q\left\|\chi_{\widetilde B_j}f\right\|_p^q\\
&\le&K_0^{q-1}  \sum_i   \sum_{j:\,{d}(x_i,x_j)\le 3r} \left\| T_r\chi_{\widetilde B_j}\right\|_{p\to q}^q\left\|\chi_{\widetilde B_j}f\right\|_p^q\\
&=&K_0^{q-1}  \sum_j   \sum_{i:\,{d}(x_i,x_j)\le 3r} \left\| T_r\chi_{\widetilde B_j}\right\|_{p\to q}^q\left\|\chi_{\widetilde B_j}f\right\|_p^q\\
&\le&K_0^{q-1}  \sum_j   \#\{i:\,{d}(x_i,x_j)\le 3r\} \left\| T_r\chi_{\widetilde B_j}\right\|_{p\to q}^q\left\|\chi_{\widetilde B_j}f\right\|_p^q\\
&\le&K_0^q  \sum_j \left\| T_r\chi_{\widetilde B_j}\right\|_{p\to q}^q\left\|\chi_{\widetilde B_j}f\right\|_p^q
\\
&\le&K_0^q \sup_\ell\left\| T_r\chi_{\widetilde B_\ell}\right\|_{p\to q}^q \sum_j \left\|\chi_{\widetilde B_j}f\right\|_p^q
\\&\le&  K_0^{q}   \sup_\ell\left\| T_r\chi_{\widetilde B_\ell}\right\|_{p\to q}^q  \left(\sum_j\|\chi_{\widetilde
B_j}f\|_{p}^p\right)^{\frac{q}{p}}\\
&=&K_0^{q}   \sup_\ell\left\| T_r\chi_{\widetilde B_\ell}\right\|_{p\to q}^q  \|f\|_{p}^q.
\end{eqnarray*}
The one before last inequality uses the fact that $p\le q$. 
Finally,
$$\|  T_r  \|_{p\to q}\le K_0  \sup_\ell\left\| T_r\chi_{\widetilde B_\ell}\right\|_{p\to q},$$
hence the claim. 
\end{proof}

In the sequel we will use the following straightforward observation in order to build functions of $L$ that satisfy support properties of the type \eqref{e2.1}. Next our task will be to relate these operators to $(e^{-tL})_{t>0}$.

  \begin{lemma}\label{nos} Assume that 
 $(M,d,\mu,L)$ satisfies 
 \eqref{DG2}.   Let $\Phi\in L^1(\mathbb{R})$ be  even and such that 
  $$\mbox{\em supp}\,\widehat\Phi \subset [-1,1].$$ Then
  \begin{equation}
 \mbox{\em supp} \, {\Phi(r\sqrt L)} \subseteq D_r
\label{noseq}
\end{equation}
 for all $r>0$. 
  \end{lemma}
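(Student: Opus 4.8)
The plan is to use the Fourier inversion formula together with the finite propagation speed property \eqref{fs11}, equivalently the support statement $\mathrm{supp}\,\cos(s\sqrt L)\subseteq D_s$ for all $s>0$, which holds by Theorem \ref{fspro} under \eqref{DG2}. The key observation is that since $\Phi$ is even, its Fourier transform $\widehat\Phi$ is even as well, and one can write $\Phi$ in terms of cosines: by Fourier inversion,
\begin{equation*}
\Phi(x)=\frac{1}{2\pi}\int_{\mathbb R}\widehat\Phi(\xi)\,e^{ix\xi}\,d\xi=\frac{1}{2\pi}\int_{\mathbb R}\widehat\Phi(\xi)\cos(x\xi)\,d\xi,
\end{equation*}
the second equality using that $\widehat\Phi$ is even and the imaginary part integrates to zero. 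Applying this to the self-adjoint operator $r\sqrt L$ via the spectral theorem gives
\begin{equation*}
\Phi(r\sqrt L)=\frac{1}{2\pi}\int_{\mathbb R}\widehat\Phi(\xi)\cos(r\xi\sqrt L)\,d\xi,
\end{equation*}
understood as a Bochner integral of bounded operators on $L^2(M,\mu)$ (the integrand is uniformly bounded in operator norm by $1$ and $\widehat\Phi\in L^1$, since $\widehat\Phi$ is continuous with compact support).

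Next I would exploit the support condition $\mathrm{supp}\,\widehat\Phi\subset[-1,1]$ to truncate the integral: only values $|\xi|\le 1$ contribute, so
\begin{equation*}
\Phi(r\sqrt L)=\frac{1}{2\pi}\int_{-1}^{1}\widehat\Phi(\xi)\cos(r\xi\sqrt L)\,d\xi.
\end{equation*}
Now fix two balls $B_1,B_2$ with $d(B_1,B_2)>r$ and $f_i\in L^2(B_i,\mu)$. Pairing and using linearity of the scalar product under the Bochner integral,
\begin{equation*}
\langle \Phi(r\sqrt L)f_1,f_2\rangle=\frac{1}{2\pi}\int_{-1}^{1}\widehat\Phi(\xi)\,\langle\cos(r\xi\sqrt L)f_1,f_2\rangle\,d\xi.
\end{equation*}
For $|\xi|\le 1$ we have $|r\xi|\le r<d(B_1,B_2)$, so by the finite propagation speed property \eqref{fs11} (with $s=r|\xi|$, noting $\cos$ is even so the sign of $\xi$ is irrelevant) each inner product $\langle\cos(r\xi\sqrt L)f_1,f_2\rangle$ vanishes. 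Hence the whole integral is zero, which is exactly $\mathrm{supp}\,\Phi(r\sqrt L)\subseteq D_r$ in the sense of \eqref{e2.1}.

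The only mild technical point — and the part I would be most careful about — is justifying that the spectral-theorem formula $\Phi(r\sqrt L)=\frac{1}{2\pi}\int\widehat\Phi(\xi)\cos(r\xi\sqrt L)\,d\xi$ holds as an identity of bounded operators, and that scalar products commute with this integral. This is routine: for each spectral parameter $\lambda\ge 0$ the scalar identity $\Phi(r\lambda)=\frac{1}{2\pi}\int_{-1}^1\widehat\Phi(\xi)\cos(r\xi\lambda)\,d\xi$ holds by Fourier inversion (valid pointwise since $\widehat\Phi$ is continuous and compactly supported, hence integrable, and $\Phi$ is bounded — or one may first assume $\Phi$ smooth and pass to the limit), and then integrating against the spectral measure $dE_\lambda$ of $\sqrt L$ and using Fubini (the double integral being absolutely convergent because $\|\widehat\Phi\|_{L^1}<\infty$ and $|\cos|\le 1$) yields the operator identity applied to any pair of $L^2$ vectors. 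No doubling or growth assumption on $v$ is needed here; this is a purely spectral-theoretic consequence of \eqref{DG2}, so the lemma follows with no real obstacle.
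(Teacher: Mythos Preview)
Your proof is correct and follows exactly the same approach as the paper: write $\Phi(r\sqrt L)$ via Fourier inversion as an integral of $\cos(r\xi\sqrt L)$ over $\xi\in[-1,1]$, then invoke finite propagation speed (Theorem~\ref{fspro}) for each $\xi$. The paper's proof is a terse two-line version of yours, recording only the formula $\Phi(r\sqrt L)=\frac{1}{2\pi}\int\widehat\Phi(s)\cos(rs\sqrt L)\,ds$ and citing \eqref{fs11}; your additional care with the evenness of $\widehat\Phi$, the Bochner integrability, and the Fubini step is welcome but not new.
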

  \begin{proof}
The claim follows from Theorem \ref{fspro}, \eqref{fs11} and the formula
\begin{equation}\label{formu}
\Phi(r\sqrt L) =\frac{1}{2\pi}\int_{-\infty}^{+\infty}%
  \widehat{\Phi}(s) \cos(rs\sqrt L) \;ds.
\end{equation}
\end{proof}

\begin{lemma}\label{pocz3a} Let $(M,d,\mu,v)$ be as in Proposition  $\ref{fala1}$.  Assume that 
 $(M,d,\mu,L)$ satisfies 
 \eqref{DG2}.  
Let  $1 \le p\le 2 $, $\gamma, \delta \in \R$.   Suppose that the function $\Phi$ on $\R$ is even and satisfies $\mbox{ supp}\, \widehat\Phi \subset [-1,1] $ as well as
$$\sup_\lambda |(1+\lambda^2)^{N+1} \Phi(\lambda)| <  \infty,$$ for some $N\in\N$.
Then
$$
\|{v_{{\sqrt{t}}}^\gamma}\Phi(\sqrt {tL}) {v_{{\sqrt{t}}}^\delta} \|_{p \to 2} \le
C\| {v_{{\sqrt{t}}}^\gamma}  (I+t L)^{-N} {v_{{\sqrt{t}}}^\delta} \|_{p \to 2}
$$
uniformly in  $t>0$. 
\end{lemma}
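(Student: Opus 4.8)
The plan is to write $\Phi$ as a convolution-type combination of the resolvent-generating function, exploiting the Fourier support condition to keep track of the finite propagation speed, and then appeal to Proposition \ref{fala1} to move the weights $v_{\sqrt t}^{\gamma}$, $v_{\sqrt t}^{\delta}$ past the operator at the cost of a fixed constant. Concretely, I would set $\psi(\lambda) := (1+\lambda^2)^{N}\Phi(\lambda)$, so that $\psi$ is bounded (indeed $\sup_\lambda |(1+\lambda^2)\psi(\lambda)| < \infty$ by hypothesis) and $\Phi(\lambda) = (1+\lambda^2)^{-N}\psi(\lambda)$. The trouble is that multiplying by $(1+\lambda^2)^{-N}$ does not obviously preserve the Fourier support condition, so one cannot directly write $\Phi(\sqrt{tL}) = (I+tL)^{-N}\psi(\sqrt{tL})$ and hope $\psi(\sqrt{tL})$ still has support in $D_{\sqrt t}$. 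Instead I would go the other way: express $(I+tL)^{-N}$ via its kernel against $\Phi(\sqrt{tL})$-type pieces, or more cleanly, use the Fourier inversion formula to write
\begin{equation*}
\Phi(r\sqrt L) = \frac{1}{2\pi}\int_{-1}^{1}\widehat\Phi(s)\cos(rs\sqrt L)\,ds,
\end{equation*}
which by Lemma \ref{nos} already tells us $\operatorname{supp}\Phi(r\sqrt L)\subseteq D_r$.

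\textbf{Key steps.} First, with $r=\sqrt t$, Lemma \ref{nos} gives $\operatorname{supp}\Phi(\sqrt{tL})\subseteq D_{\sqrt t}$. Then Proposition \ref{fala1}, applied with $p$, $q=2$, and the operator $T_{\sqrt t} := \Phi(\sqrt{tL})$, yields
\begin{equation*}
\|v_{\sqrt t}^{\gamma}\,\Phi(\sqrt{tL})\,v_{\sqrt t}^{\delta}\|_{p\to 2}
\le \|v_{\sqrt t}^{\gamma}\,\Phi(\sqrt{tL})\,v_{\sqrt t}^{-\gamma}\|_{p\to 2}\cdot\|v_{\sqrt t}^{\gamma+\delta}\|_{\ldots},
\end{equation*}
but this is not quite the right bookkeeping since $\gamma+\delta$ need not be zero; rather I would first absorb one multiplication operator and handle the other by the same commuting device. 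The cleanest route: write $v_{\sqrt t}^{\gamma}\Phi(\sqrt{tL})v_{\sqrt t}^{\delta} = \big(v_{\sqrt t}^{\gamma}\Phi(\sqrt{tL})v_{\sqrt t}^{-\gamma}\big)\cdot v_{\sqrt t}^{\gamma+\delta}$ is wrong dimensionally; instead one keeps both weights and invokes the version of Proposition \ref{fala1} that commutes a \emph{single} power $v_{\sqrt t}^{\gamma}$ through an operator supported in $D_{\sqrt t}$. So: apply Proposition \ref{fala1} to the operator $\Phi(\sqrt{tL})v_{\sqrt t}^{\delta}$ (which still satisfies the support condition since multiplication operators are local) to pull $v_{\sqrt t}^{\gamma}$ to the right as $v_{\sqrt t}^{-\gamma}$, obtaining control by $\|\Phi(\sqrt{tL})v_{\sqrt t}^{\delta-\gamma}\cdot(\text{bounded})\|$ — no, the sign bookkeeping must be done carefully. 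The honest statement I would use is: \emph{for any operator $T$ with $\operatorname{supp}T\subseteq D_{\sqrt t}$ and any $\gamma'\in\R$, $\|v_{\sqrt t}^{\gamma'}T v_{\sqrt t}^{-\gamma'}\|_{p\to q}\le C_{\gamma'}\|T\|_{p\to q}$.} Apply this with $T = v_{\sqrt t}^{\gamma+\delta/2}\,\Phi(\sqrt{tL})\,v_{\sqrt t}^{-(\gamma+\delta/2)}$ isn't it either. The right move is in two applications: start from $v_{\sqrt t}^{\gamma}\Phi(\sqrt{tL})v_{\sqrt t}^{\delta}$, insert $v_{\sqrt t}^{-\gamma}v_{\sqrt t}^{\gamma}$ to rewrite as $\big(v_{\sqrt t}^{\gamma}\Phi(\sqrt{tL})v_{\sqrt t}^{-\gamma}\big)\big(v_{\sqrt t}^{\gamma+\delta}\big)$ — but $v_{\sqrt t}^{\gamma+\delta}$ as a multiplication operator is not bounded unless $\gamma+\delta$ has the right sign relative to $p,q$. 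This shows the comparison must be against $\|v_{\sqrt t}^{\gamma}(I+tL)^{-N}v_{\sqrt t}^{\delta}\|_{p\to2}$ with \emph{both} weights kept, exactly as stated.

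\textbf{The main obstacle and how to get around it.} The crux is to relate $\Phi(\sqrt{tL})$ to $(I+tL)^{-N}$ while respecting weights. I would write $\Phi(\lambda) = (1+\lambda^2)^{-N}\psi(\lambda)$ with $\psi$ bounded and satisfying a mild decay bound, hence by functional calculus $\Phi(\sqrt{tL}) = (I+tL)^{-N}\psi(\sqrt{tL})$, and then
\begin{equation*}
v_{\sqrt t}^{\gamma}\Phi(\sqrt{tL})v_{\sqrt t}^{\delta}
= v_{\sqrt t}^{\gamma}(I+tL)^{-N}v_{\sqrt t}^{\delta}\cdot\big(v_{\sqrt t}^{-\delta}\psi(\sqrt{tL})v_{\sqrt t}^{\delta}\big).
\end{equation*}
Now $\psi(\sqrt{tL})$ is bounded on $L^2$ uniformly in $t$ (spectral theorem, using $\sup_\lambda|\psi(\lambda)|<\infty$), but it does \emph{not} have finite-propagation support, so Proposition \ref{fala1} does not directly apply to the last factor. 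The fix, which is the real content, is to decompose $\psi$ dyadically in the Fourier variable: write $\psi = \sum_{k\ge0}\psi_k$ with $\widehat{\psi_k}$ supported in $\{|s|\le 2^k\}$ and $\|\psi_k\|$ decaying geometrically (possible because $\psi$ has some Sobolev regularity coming from the hypothesis on $\Phi$ and the smoothness of $(1+\lambda^2)^{-N}$). Then $\psi_k(\sqrt{tL})$ has support in $D_{2^k\sqrt t}$, and at scale $2^k\sqrt t$ the doubling of $v$ costs a factor $C^{k}$ (polynomially in $2^k$), which is beaten by the geometric decay of $\|\psi_k\|$ provided $N$ is large enough — and here one must check the hypothesis $\sup_\lambda|(1+\lambda^2)^{N+1}\Phi(\lambda)|<\infty$ gives exactly the room needed. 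Summing over $k$, using Proposition \ref{fala1} at each scale to commute the weights and the boundedness of $(e^{-tL})$ / spectral calculus for the $L^2$ bound, and invoking $(DG)$-derived finite speed at the end (the one genuine use of $(DG)$ mentioned in the remark after Theorem \ref{fspro}), one obtains the stated estimate. I expect assembling this dyadic sum with the correct exponent count — matching the doubling dimension $\kappa_v$ against the decay rate forced by $N$ — to be the delicate bookkeeping step.
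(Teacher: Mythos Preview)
You missed the one observation that makes this lemma a three-line proof: if $\widehat\Phi$ is supported in $[-1,1]$, then so is the Fourier transform of $\Psi(\lambda):=(1+\lambda^2)^N\Phi(\lambda)$. Multiplication by a polynomial on the physical side corresponds to applying a constant-coefficient differential operator on the Fourier side, and differential operators never enlarge support. So $\widehat\Psi=(1-\partial_s^2)^N\widehat\Phi$ (up to normalisation) is still supported in $[-1,1]$, and the hypothesis $\sup_\lambda|(1+\lambda^2)^{N+1}\Phi(\lambda)|<\infty$ says exactly that $\Psi$ is bounded and integrable. Hence Lemma~\ref{nos} applies directly to $\Psi$, giving $\operatorname{supp}\Psi(\sqrt{tL})\subseteq D_{\sqrt t}$.

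With this in hand, the factorisation you were groping for is clean. Write $\Phi(\sqrt{tL})=\Psi(\sqrt{tL})\,(I+tL)^{-N}$ and insert $v_{\sqrt t}^{-\gamma}v_{\sqrt t}^{\gamma}$ \emph{between} the two factors:
\[
v_{\sqrt t}^{\gamma}\,\Phi(\sqrt{tL})\,v_{\sqrt t}^{\delta}
=\bigl(v_{\sqrt t}^{\gamma}\,\Psi(\sqrt{tL})\,v_{\sqrt t}^{-\gamma}\bigr)\bigl(v_{\sqrt t}^{\gamma}\,(I+tL)^{-N}\,v_{\sqrt t}^{\delta}\bigr).
\]
The first factor is bounded on $L^2$ uniformly in $t$: Proposition~\ref{fala1} with $p=q=2$ applied to $T_{\sqrt t}=\Psi(\sqrt{tL})$ gives $\|v_{\sqrt t}^{\gamma}\Psi(\sqrt{tL})v_{\sqrt t}^{-\gamma}\|_{2\to 2}\le C_\gamma\|\Psi(\sqrt{tL})\|_{2\to 2}\le C_\gamma\|\Psi\|_\infty$. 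The second factor is precisely the right-hand side of the lemma, taken in the $p\to 2$ norm. No dyadic decomposition is needed, and $N$ is never adjusted.

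Your dyadic route, by contrast, tries to handle $\psi(\sqrt{tL})$ without knowing its finite propagation, and then must race doubling growth (polynomial in $2^k$, exponent $\kappa_v$) against the decay of the pieces $\psi_k$. You assert this works ``provided $N$ is large enough'', but $N$ is fixed by the hypothesis, not chosen by you; the stated decay $|(1+\lambda^2)\psi(\lambda)|\le C$ is far too weak to guarantee summability against an arbitrary doubling exponent. So the proposal as written has a genuine gap, and the gap is exactly the missed support-preservation fact above.
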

\begin{proof}
Set $\Psi(\lambda)=(1+\lambda^2)^N \Phi(\lambda).$
Function $\Psi$ is even, bounded, belongs to $L^1(\R)$, and satisfies $\mbox{ supp}\,\widehat\Psi \subset [-1,1].$
By Lemma   \ref{nos}, $$
 \mbox{ supp} \, {\Psi(\sqrt {tL})} \subseteq D_{\sqrt{t}}.
$$
Thus by Proposition \ref{fala1}
$$\|{v_{{\sqrt{t}}}^{\gamma}} \Psi(\sqrt{t L}) {v_{{\sqrt{t}}}^{-\gamma}} \|_{2 \to 2}\le C_\gamma\| \Psi(\sqrt{t L}) \|_{2 \to 2}
$$
and by spectral theory
$$\|\Psi(\sqrt{t L}) \|_{2 \to 2} \le C'_\gamma, \ \forall\,t>0.
$$
Hence
\begin{eqnarray*}
&&\|{v_{{\sqrt{t}}}^\gamma}\Phi(\sqrt {tL}) {v_{{\sqrt{t}}}^\delta} \|_{p \to 2}\\\
&\le&\|{v_{{\sqrt{t}}}^{\gamma}} \Psi(t\sqrt L) {v_{{\sqrt{t}}}^{-\gamma}} \|_{2 \to 2}
\| {v_{{\sqrt{t}}}^\gamma}  (I+t L)^{-N} {v_{{\sqrt{t}}}^\delta} \|_{p \to 2}\\
&\le& C_\gamma C'_\gamma \| {v_{{\sqrt{t}}}^\gamma}  (I+tL)^{-N} {v_{{\sqrt{t}}}^\delta} \|_{p \to 2}.
\end{eqnarray*}
\end{proof}

  \begin{lemma}\label{pocz2a} Let $(M,\mu,L)$ be a   measure space endowed with a non-negative self-adjoint operator and let $v:M\times \R_+\to \R_+$  satisfy $(A)$ and $(D_v)$. Let $1 \le p\le q\le \infty $,  $\gamma, \delta \in \R $, and  $N> \kappa_v(|\delta|+|\gamma|)/2$, where $\kappa_v$ is the exponent in \eqref{dv}.
  Then  there exists $C>0$ such that
  $$
 \sup_{t>0} \| {v_{{\sqrt{t}}}^\gamma}  (I+t L)^{-N} {v_{{\sqrt{t}}}^\delta} \|_{p \to q}
 \le C \sup_{t>0}
 \| {v_{{\sqrt{t}}}^\gamma} e^{-tL}\,{v_{{\sqrt{t}}}^\delta} \|_{p \to q}.
  $$
\end{lemma}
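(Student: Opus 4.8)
The plan is to write $(I+tL)^{-N}$ as a superposition of heat semigroup operators at various times and control the interchange of the weights $v_{\sqrt t}^\gamma$ and $v_{\sqrt t}^\delta$ with the scaling by $s$. Recall the subordination formula
\begin{equation*}
(I+tL)^{-N}=\frac{1}{\Gamma(N)}\int_0^{+\infty}e^{-s}s^{N-1}e^{-stL}\,ds,
\end{equation*}
so that
\begin{equation*}
v_{\sqrt t}^\gamma(I+tL)^{-N}v_{\sqrt t}^\delta=\frac{1}{\Gamma(N)}\int_0^{+\infty}e^{-s}s^{N-1}\,v_{\sqrt t}^\gamma e^{-stL}v_{\sqrt t}^\delta\,ds.
\end{equation*}
First I would insert the identity $v_{\sqrt t}^\gamma = (v_{\sqrt t}^\gamma v_{\sqrt{st}}^{-\gamma})\,v_{\sqrt{st}}^\gamma$ and similarly $v_{\sqrt t}^\delta = v_{\sqrt{st}}^\delta\,(v_{\sqrt{st}}^{-\delta}v_{\sqrt t}^\delta)$, so that the integrand becomes
\begin{equation*}
(v_{\sqrt t}/v_{\sqrt{st}})^\gamma\,\bigl(v_{\sqrt{st}}^\gamma e^{-stL}v_{\sqrt{st}}^\delta\bigr)\,(v_{\sqrt t}/v_{\sqrt{st}})^\delta,
\end{equation*}
where the outer factors are multiplication operators by the bounded functions $(v_{\sqrt t}/v_{\sqrt{st}})^{\pm\gamma}$, $(v_{\sqrt t}/v_{\sqrt{st}})^{\pm\delta}$. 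Taking $\lp\to\lqq$ norms and using that multiplication operators act by their $L^\infty$ norm,
\begin{equation*}
\|v_{\sqrt t}^\gamma(I+tL)^{-N}v_{\sqrt t}^\delta\|_{p\to q}\le\frac{1}{\Gamma(N)}\int_0^{+\infty}e^{-s}s^{N-1}\bigl\|(v_{\sqrt t}/v_{\sqrt{st}})^\gamma\bigr\|_\infty\bigl\|(v_{\sqrt t}/v_{\sqrt{st}})^\delta\bigr\|_\infty\,\|v_{\sqrt{st}}^\gamma e^{-stL}v_{\sqrt{st}}^\delta\|_{p\to q}\,ds.
\end{equation*}

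The next step is to estimate the weight ratios via the doubling consequence \eqref{dv}. For a.e.\ $x$, if $s\ge1$ then $v_{\sqrt{st}}(x)/v_{\sqrt t}(x)\le C s^{\kappa_v/2}$ (apply $(D_v^{\kappa_v})$ with $r=\sqrt{st}\ge\sqrt t=s'$), while if $s\le1$ then $v_{\sqrt t}(x)/v_{\sqrt{st}}(x)\le C s^{-\kappa_v/2}$. In both cases $\bigl\|(v_{\sqrt t}/v_{\sqrt{st}})^\gamma\bigr\|_\infty\le C^{|\gamma|}\max(1,s)^{\kappa_v|\gamma|/2}\max(1,s^{-1})^{\kappa_v|\gamma|/2}=C^{|\gamma|}\max(s,s^{-1})^{\kappa_v|\gamma|/2}$, and likewise for $\delta$. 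Therefore, bounding $\|v_{\sqrt{st}}^\gamma e^{-stL}v_{\sqrt{st}}^\delta\|_{p\to q}$ by $K:=\sup_{u>0}\|v_{\sqrt u}^\gamma e^{-uL}v_{\sqrt u}^\delta\|_{p\to q}$ (the quantity on the right-hand side of the claimed inequality, which we may assume finite), we get
\begin{equation*}
\|v_{\sqrt t}^\gamma(I+tL)^{-N}v_{\sqrt t}^\delta\|_{p\to q}\le\frac{C^{|\gamma|+|\delta|}K}{\Gamma(N)}\int_0^{+\infty}e^{-s}s^{N-1}\max(s,s^{-1})^{\kappa_v(|\gamma|+|\delta|)/2}\,ds.
\end{equation*}

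Finally I would check the integral is finite. Near $s=\infty$ the factor $e^{-s}s^{N-1+\kappa_v(|\gamma|+|\delta|)/2}$ is integrable for any $N$; near $s=0$ the integrand behaves like $s^{N-1-\kappa_v(|\gamma|+|\delta|)/2}$, which is integrable precisely when $N-1-\kappa_v(|\gamma|+|\delta|)/2>-1$, i.e.\ $N>\kappa_v(|\gamma|+|\delta|)/2$ — exactly the hypothesis. This yields the bound with $C=C^{|\gamma|+|\delta|}\Gamma(N)^{-1}\int_0^\infty e^{-s}s^{N-1}\max(s,s^{-1})^{\kappa_v(|\gamma|+|\delta|)/2}\,ds$, depending only on $N$, $\kappa_v$, $\gamma$, $\delta$ and the doubling constant. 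I do not anticipate a genuine obstacle here: the argument is a routine variant of the one used in the proof of Proposition \ref{resolvent2} (the implication $(vE_{p,q})\Rightarrow(vR_{p,q,\beta})$), the only mild subtlety being that with two nonzero weights one must split the scaling factor on both sides and keep track of $|\gamma|+|\delta|$ rather than a single exponent; the measure-zero exceptional set in \eqref{dv} is harmless since it can be taken uniform after intersecting over a countable dense set of scales or simply absorbed into the a.e.\ statement defining the multiplication operator norms.
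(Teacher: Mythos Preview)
Your proof is correct and follows essentially the same route as the paper: the same Laplace-transform representation of $(I+tL)^{-N}$, the same rescaling of the weights from $\sqrt{t}$ to $\sqrt{st}$ via $(D_v)$, and the same integrability check yielding the threshold $N>\kappa_v(|\gamma|+|\delta|)/2$. The only cosmetic difference is that the paper writes the scaling factor as $\bigl(\sqrt{s}+1/\sqrt{s}\bigr)^{\kappa_v(|\gamma|+|\delta|)}$ whereas you use the equivalent $\max(s,s^{-1})^{\kappa_v(|\gamma|+|\delta|)/2}$.
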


\begin{proof}
Recall the following  standard   integral representation
$$
{(I+tL)^{-N}}
= \frac{1}{\Gamma(N)}\int_0^\infty e^{-s} \, s^{N-1}\exp(-s(tL))
\,ds.
$$
It yields
\begin{equation*}
\| {v_{{\sqrt{t}}}^\gamma}  (I+t L)^{-N} {v_{{\sqrt{t}}}^\delta} \|_{p \to q} \le
 C\int_0^\infty e^{-s} \, s^{N-1}
\|  {v_{{\sqrt{t}}}^\gamma}\exp(-s(tL)){v_{{\sqrt{t}}}^\delta} \|_{p \to q}  \,ds,
\end{equation*}
hence,  using $(D_v)$,
\begin{eqnarray*}
&&\| {v_{{\sqrt{t}}}^\gamma}  (I+t L)^{-N} {v_{{\sqrt{t}}}^\delta} \|_{p \to q}\\
&\le&  C\int_0^\infty e^{-s} \, s^{N-1}
 \left(\sqrt{s}+\frac{1}{\sqrt{s}}\right)^{\kappa_v(|\delta|+|\gamma|)}\|  {v_{{\sqrt{st}}}^\gamma} \exp(-s(tL)){v_{\sqrt{st}}^\delta} \|_{p \to q}  \,ds
\\ &\le& C\left(\int_0^\infty e^{-s} \, s^{N-1}
 \left(\sqrt{s}+\frac{1}{\sqrt{s}}\right)^{\kappa_v(|\delta|+|\gamma|)}  \,ds\right)\sup_{t>0}\|  {v_{{\sqrt{t}}}^\gamma}\exp(-tL){v_{{\sqrt{t}}}^\delta} \|_{p \to q},
\end{eqnarray*}
which proves the claim.
\end{proof}

\begin{proposition}\label{cieplo1} Let $(M,d,\mu,v)$ be as in Proposition  $\ref{fala1}$.  Assume in addition that 
 $(M,d,\mu,L)$ satisfies 
 \eqref{DG2}.  
Let $1\le p \le 2$.
  Then for any $\gamma_1, \gamma_2,\delta_1,\delta_2\in \R $ such that   $\gamma_1+ \delta_1=\gamma_2+\delta_2=1/p-1/2$,  there exists a constant $C$ such that
  $$
\sup_t \|{v_{{\sqrt{t}}}^{\gamma_1}}   e^{-tL}\,  {v_{{\sqrt{t}}}^{\delta_1}} \|_{p \to 2}
 \le
C \sup_t \| {v_{{\sqrt{t}}}^{\gamma_2}}   e^{-tL}\,  {v_{{\sqrt{t}}}^{\delta_2}}\|_{p \to 2}.
$$
As a consequence,  for fixed $p$, $1\le p<2$, all the conditions  $(vEv_{p,2,\gamma})$  for  $\gamma \in \R$
are equivalent.
\end{proposition}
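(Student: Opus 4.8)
The plan is to show that all the estimates $(vEv_{p,2,\gamma})$ for a fixed $p \in [1,2)$ coincide, by reducing everything to the single ``balanced'' estimate in which the weight is split evenly, and then transferring weights across $e^{-tL}$ using the finite propagation speed machinery assembled in Lemmas \ref{fala1}--\ref{pocz2a}. First I would observe that it suffices to prove the claim with one of the two exponents, say $\gamma_2$, being the distinguished value $\gamma_2 = \frac{1}{2}(\frac1p - \frac12)$, i.e. $\gamma_2 = \delta_2$: once every $(vEv_{p,2,\gamma})$ is shown equivalent to this symmetric one, they are all equivalent to each other by transitivity. So the real content is: for arbitrary $\gamma_1,\delta_1$ with $\gamma_1+\delta_1 = \frac1p - \frac12$, the estimate $(vEv_{p,2,\gamma_1})$ is equivalent to the symmetric estimate.

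The key step is to bound $\|v_{\sqrt t}^{\gamma_1} e^{-tL} v_{\sqrt t}^{\delta_1}\|_{p\to 2}$ by $\|v_{\sqrt t}^{\gamma_2} e^{-tL} v_{\sqrt t}^{\delta_2}\|_{p\to 2}$ for any admissible pair $(\gamma_2,\delta_2)$. Here is the chain I would run. Pick $N \in \N$ large enough that $N > \kappa_v(|\gamma_1| + |\delta_1|)/2$ and also $N > \kappa_v(|\gamma_2| + |\delta_2|)/2$, and choose an even function $\Phi$ with $\widehat\Phi$ supported in $[-1,1]$ and $\sup_\lambda |(1+\lambda^2)^{N+1}\Phi(\lambda)| < \infty$, normalised so that $\Phi$ is, up to the compactly-supported-Fourier-transform constraint, a good substitute for the resolvent; concretely one wants $\Phi(\sqrt{tL})$ comparable to $(I+tL)^{-N}$ in the relevant sense. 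Then by Lemma \ref{pocz3a}, $\|v_{\sqrt t}^{\gamma_1}\Phi(\sqrt{tL}) v_{\sqrt t}^{\delta_1}\|_{p\to 2} \le C \|v_{\sqrt t}^{\gamma_1}(I+tL)^{-N} v_{\sqrt t}^{\delta_1}\|_{p\to 2}$, and by Lemma \ref{pocz2a} the right-hand side is controlled by $\sup_t \|v_{\sqrt t}^{\gamma_1} e^{-tL} v_{\sqrt t}^{\delta_1}\|_{p\to 2}$, so altogether the $\Phi$-operator and the heat semigroup carry the same information for any choice of weight split. The point of passing through $\Phi(\sqrt{tL})$ is that it satisfies the support condition $\mathrm{supp}\,\Phi(\sqrt{tL}) \subseteq D_{\sqrt t}$ (Lemma \ref{nos}), hence by Proposition \ref{fala1} one may freely replace the weight split $(\gamma_1,\delta_1)$ by $(\gamma_1',\delta_1')$ with the same sum at the cost of a constant: $\|v_{\sqrt t}^{\gamma_1}\Phi(\sqrt{tL}) v_{\sqrt t}^{\delta_1}\|_{p\to 2} = \|v_{\sqrt t}^{\gamma_1 - \gamma_1'}(v_{\sqrt t}^{\gamma_1'}\Phi(\sqrt{tL}) v_{\sqrt t}^{\delta_1'})v_{\sqrt t}^{\delta_1 - \delta_1'}\|_{p\to 2}$ and, since $v_{\sqrt t}^{\gamma_1'}\Phi(\sqrt{tL}) v_{\sqrt t}^{\delta_1'}$ still has support in $D_{\sqrt t}$, Proposition \ref{fala1} with exponent $\gamma = \gamma_1 - \gamma_1'$ (a real number, the sum being unchanged) absorbs the extra weights. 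Running this with $(\gamma_1',\delta_1') = (\gamma_2,\delta_2)$ and then going back down from $\Phi(\sqrt{tL})$ to $e^{-tL}$ (again via Lemmas \ref{pocz3a}, \ref{pocz2a}, now with the symmetric split, using that $e^{-tL}$ itself satisfies $\mathrm{supp}\,e^{-tL} \not\subseteq D_{\sqrt t}$ in general — which is exactly why one cannot commute weights directly with $e^{-tL}$ and must detour through the finitely-supported $\Phi$) closes the estimate in one direction; the reverse direction is symmetric. Finally, combining with duality ($(vEv_{p,q,\gamma}) \Leftrightarrow (vEv_{q',p',\delta})$, already noted after Corollary \ref{implication}) and the endpoint already available from Corollary \ref{implication}, one gets the stated equivalence of all $(vEv_{p,2,\gamma})$, $\gamma \in \R$.

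The main obstacle I anticipate is the careful bookkeeping in the step where one switches back from the resolvent-like operator $\Phi(\sqrt{tL})$ to $e^{-tL}$: Lemma \ref{pocz3a} is stated as an inequality going from $\Phi(\sqrt{tL})$ to $(I+tL)^{-N}$, so to run the argument in \emph{both} directions I need not just Lemma \ref{pocz3a} but also its converse, namely a bound on $\|v_{\sqrt t}^{\gamma} e^{-tL} v_{\sqrt t}^{\delta}\|_{p\to 2}$ by $\|v_{\sqrt t}^{\gamma}\Phi(\sqrt{tL}) v_{\sqrt t}^{\delta}\|_{p\to 2}$ (or at least by the symmetric $(vEv)$ quantity). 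This requires writing $e^{-tL} = e^{-tL}\Psi(\sqrt{tL})^{-1}\Psi(\sqrt{tL})$ for a suitable $\Psi$, or more cleanly choosing $\Phi$ from the outset so that $\Phi(\lambda)$ does not vanish and $e^{-tL} = m(tL)\,\Phi(\sqrt{tL})$ with $\|v_{\sqrt t}^{\gamma_2} m(tL) v_{\sqrt t}^{-\gamma_2}\|_{2\to 2}$ bounded uniformly — here one again invokes the support property of the bounded function $m(tL)$ built from compactly-supported data and Proposition \ref{fala1}. Handling the low-regularity / non-invertibility issues of $\Phi$ near zero (so that $m = e^{-t\cdot}/\Phi$ is a legitimate bounded multiplier of $L$ with finitely supported Fourier transform after a suitable cutoff) is the delicate point; everything else is a routine assembly of the lemmas. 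I would also keep in mind the technical Davies-Gaffney argument alluded to at the end of Section \ref{DGE} as a fallback for the final gluing.
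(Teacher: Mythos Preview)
Your overall architecture is correct: detour through an operator $\Phi(\sqrt{tL})$ with $\widehat\Phi$ compactly supported, swap weights there via Proposition \ref{fala1}, then return to $e^{-tL}$ via Lemmas \ref{pocz3a}--\ref{pocz2a}. You also correctly locate the gap: those lemmas give $\|v^{\gamma}\Phi(\sqrt{tL})v^{\delta}\|_{p\to 2} \lesssim \sup_t \|v^{\gamma}e^{-tL}v^{\delta}\|_{p\to 2}$, whereas the first link of your chain needs the \emph{reverse} inequality.

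However, your proposed fix does not work. Writing $e^{-tL} = m(tL)\Phi(\sqrt{tL})$ and invoking Proposition \ref{fala1} for $m(tL)$ would require the even function $\lambda\mapsto m(\lambda^2)$ to have compactly supported Fourier transform. But if $\widehat\Phi$ is supported in $[-1,1]$ then $\Phi$ is entire of exponential type, while $\lambda\mapsto e^{-\lambda^2}$ is not; hence $m(\lambda^2)=e^{-\lambda^2}/\Phi(\lambda)$ can never have compactly supported Fourier transform, irrespective of the zeros of $\Phi$. So Proposition \ref{fala1} cannot be applied to $m(tL)$, and the commutation of weights with the residual multiplier fails. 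The ``low-regularity / non-invertibility'' issue you flag is a red herring; the obstruction is structural.

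The missing idea, used in the paper, is a transmutation (subordination) formula: one writes
\[
e^{-tL}=\int_0^\infty F_a(\sqrt{stL})\,s^{a+\frac12}e^{-s/4}\,ds,
\]
where $F_a$ is (up to a constant) the Fourier transform of $t\mapsto(1-t^2)_+^a$, so that $\mathrm{supp}\,\widehat{F_a}\subset[-1,1]$ and each $F_a(\sqrt{stL})$ has support in $D_{\sqrt{st}}$. Using $(D_v)$ to trade $v_{\sqrt t}$ for $v_{\sqrt{st}}$ at the cost of powers of $s$ absorbed by the kernel $s^{a+1/2}e^{-s/4}$ (for $a$ large) gives directly
\[
\sup_t\|v_{\sqrt t}^{\gamma_1}e^{-tL}v_{\sqrt t}^{\delta_1}\|_{p\to 2}\lesssim \sup_t\|v_{\sqrt t}^{\gamma_1}F_a(\sqrt{tL})v_{\sqrt t}^{\delta_1}\|_{p\to 2}.
\]
From there your chain runs exactly: swap weights on $F_a(\sqrt{tL})$ via Proposition \ref{fala1}, then apply Lemma \ref{pocz3a} with $\Phi=F_a$ (the required decay holds once $2N+1\le a$) and Lemma \ref{pocz2a} to land on $\sup_t\|v_{\sqrt t}^{\gamma_2}e^{-tL}v_{\sqrt t}^{\delta_2}\|_{p\to 2}$. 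No reduction to a symmetric split, no duality, and no appeal to Corollary \ref{implication} is needed.
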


\begin{proof}
A direct calculation shows that for all $a>0$, $x\in \R$,
$$
\frac{1}{\Gamma(a)}\int_0^\infty(s-x^2)^a_+e^{-s}\,ds=e^{-x^2},
$$
where 
$$
(t)_+=t \quad \mbox{if} \quad t \ge 0 \quad \quad \mbox{and}\quad  (t)_+=0 \quad  \mbox{if} \quad 
t<0.
$$
Hence
$$
C_a\int_0^\infty\left(1-\frac{x^2}{s}\right)^a_+e^{-s/4}s^a \,ds=e^{-x^2/4},
$$
for some suitable $C_a>0$.
Taking the Fourier transform of both sides of the above inequality yields
$$
\int_0^\infty F_a(\sqrt s\lambda)  s^{a+\frac{1}{2}}e^{-s/4}ds=e^{-\lambda^2},
$$
where $F_a$ is the Fourier transform of the function $t \to (1-{t^2})^a_+$ multiplied
by the appropriate constant. Hence, by spectral theory,
$$
\int_0^\infty F_a(\sqrt{stL})  s^{a+\frac{1}{2}}e^{-s/4}ds=e^{-tL}
$$
(this is a version of the well-known transmutation formula).
Write now 
\begin{eqnarray*}
 &&\|{v_{{\sqrt{t}}}^{\gamma_1}}   e^{-tL}\,  {v_{{\sqrt{t}}}^{\delta_1}} \|_{p \to 2}
 \le \int_0^\infty \|{v_{{\sqrt{t}}}^{\gamma_1}}   F_a(\sqrt{tsL})    {v_{{\sqrt{t}}}^{\delta_1}} \|_{p \to 2} s^{a+\frac{1}{2}}e^{-s/4}ds\\
 &\le& C\int_0^\infty \|{v_{{\sqrt{st}}}^{\gamma_1}}   F_a(\sqrt{tsL})    {v_{{\sqrt{st}}}^{\delta_1}} \|_{p \to 2} \Big(\sqrt{s}+\frac{1}{\sqrt{s}}\Big)^{\kappa_v(|\delta_1|+|\gamma_1|)}s^{a+\frac{1}{2}}e^{-s/4}ds,
  \end{eqnarray*}
  hence, for $a$ large enough,
  $$
 \sup_{t>0} \|{v_{{\sqrt{t}}}^{\gamma_1}}   e^{-tL}\,  {v_{{\sqrt{t}}}^{\delta_1}} \|_{p \to 2}
 \le C'\sup_{t>0}\|{v_{{\sqrt{t}}}^{\gamma_1}}   F_a(\sqrt{tL})    {v_{{\sqrt{t}}}^{\delta_1}} \|_{p \to 2}.$$
On the other hand,  $\Phi=F_a$ satisfies the assumptions of Lemma \ref{nos}, thus
\begin{equation}
 \mbox{
  supp} \, {F_a(r\sqrt {L})} \subseteq D_r,\ \forall
 \,r>0.
 \end{equation}
 Setting $T_r={v_{r}^{\gamma_2}}   F_a(r\sqrt{L})    {v_{r}^{\delta_2}} $,
 it follows  that
 \begin{equation}
 \mbox{ supp} \, T_{r}\subseteq D_{r},\ \forall
 \,r>0.
 \end{equation}
 By Proposition \ref{fala1} with $\gamma=\gamma_1-\gamma_2=\delta_2-\delta_1$,
 $$
  \sup_{t>0}\|{v_{{\sqrt{t}}}^{\gamma_1}}   F_a(\sqrt{tL})    {v_{{\sqrt{t}}}^{\delta_1}} \|_{p \to 2}
 \le C\sup_{t>0} \|{v_{{\sqrt{t}}}^{\gamma_2}}   F_a(\sqrt{tL})    {v_{{\sqrt{t}}}^{\delta_2}} \|_{p \to 2}.
 $$
We can now apply Lemma \ref{pocz3a} with $\Phi=F_a$. One checks easily that the assumptions are  satisfied as soon as $2N+1\le a$, in which case
$$\sup_{t>0} \|{v_{{\sqrt{t}}}^{\gamma_2}}   F_a(\sqrt{tL})    {v_{{\sqrt{t}}}^{\delta_2}} \|_{p \to 2}\le
 C'\sup_{t>0} \|  {v_{{\sqrt{t}}}^{\gamma_2}}  (I+t L)^{-N}{v_{{\sqrt{t}}}^{\delta_2}} \|_{p \to 2}.
 $$
 By Lemma  \ref{pocz2a}, since $a$ hence $N$ can be chosen arbitrarily large,
 $$
      \sup_{t>0}\|  {v_{{\sqrt{t}}}^{\gamma_2}} (I+t L)^{-N}{v_{{\sqrt{t}}}^{\delta_2}} \|_{p \to 2}
 \le C'' \sup_{t>0}\|  {v_{{\sqrt{t}}}^{\gamma_2}} e^{-tL}\,{v_{{\sqrt{t}}}^{\delta_2}} \|_{p \to 2}.
$$
This finishes the proof of the proposition.
\end{proof}


\begin{rem} In  \cite[Proposition 2.1]{BK} and  \cite[Proposition 2.16,  p.40, see also Remark  2.17,  p.42]{K}, it is proved that a commutation phenomenon similar to the one in Proposition $\ref{cieplo1}$ holds in presence of generalised Gaussian estimates.  In \cite[Theorem 4.15 and Remarks $a)$ and $b)$]{CS}, it is shown that such estimates follow from $\eqref{DG2}$ and $(Ev_{p,2})$ estimates. This provides an alternative approach to Proposition $\ref{cieplo1}$, at least in the case $\delta_2=0$.
\end{rem} 

A first consequence of Proposition \ref{cieplo1} is the following result, which is in some sense dual to Corollary \ref{55}. 

\begin{proposition}\label{extrap} Let $(M,d,\mu,v)$ be as in Proposition $\ref{fala1}$.   Assume in addition
 that 
 $(M,d,\mu,L)$ satisfies 
 \eqref{DG2} and  that $(e^{-tL})_{t>0}$  is bounded analytic on $L^{1}(M,\mu)$. Then $(DU\!E^v)$ is equivalent to
\begin{equation*}\label{bost}
\tag{$GN^v_{1,2,\beta}$}\|f\sqrt{v_{r}} \|_{2}\leq
C_\beta(\|f\|_{1}+r^\beta\|L^{\beta/2}f\|_{1}), \quad \forall\, r>0,\ \forall\,f\in\mathcal{D}_1(L^{\beta/2}),
\end{equation*}
for all (or some) $\beta>\kappa_v/2$.
\end{proposition}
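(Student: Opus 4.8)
The strategy is to transfer between the two sides via the chain $(DU\!E^v)\Leftrightarrow(vEv)\Leftrightarrow(Ev_{1,2})\Leftrightarrow(vE_{2,\infty})$ supplied by Corollary \ref{implication}, and on the other side to recognize $(GN^v_{1,2,\beta})$ as a weighted resolvent estimate via Proposition \ref{resolvent2}. Concretely, Proposition \ref{resolvent2} applied with $p=1$, $q=2$, and the given $\beta>\kappa_v/2=(\frac1p-\frac1q)\kappa_v$ (using that $(e^{-tL})_{t>0}$ is bounded analytic on $L^1$) tells us that $(GN^v_{1,2,\beta})$ is equivalent to $(vE_{1,2})$, i.e. to
\begin{equation*}
\sup_{t>0}\|v_{\sqrt{t}}^{1/2}\,e^{-tL}\|_{1\to 2}<+\infty.
\end{equation*}
So the whole statement reduces to proving the equivalence of $(vE_{1,2})$ with $(DU\!E^v)$, equivalently with $(Ev_{1,2})$, i.e.
\begin{equation*}
\sup_{t>0}\|e^{-tL}\,v_{\sqrt{t}}^{1/2}\|_{1\to 2}<+\infty.
\end{equation*}
Note that $(Ev_{1,2})$ and $(vE_{1,2})$ are dual to $(vE_{2,\infty})$ and $(Ev_{2,\infty})$ respectively; both sit in the family $(vEv_{1,2,\gamma})$, the first being $\gamma=0$ and the second $\gamma=\frac12$.

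\textbf{The main step.} The crux is Proposition \ref{cieplo1} with $p=1$: under $(D_v)$, \eqref{D2}, $(V\!D)$ and the Davies-Gaffney estimate \eqref{DG2}, all the conditions $(vEv_{1,2,\gamma})$, $\gamma\in\R$, are equivalent. In particular $(vEv_{1,2,0})=(Ev_{1,2})$ is equivalent to $(vEv_{1,2,1/2})=(vE_{1,2})$. This is exactly the commutation-with-weights phenomenon that Proposition \ref{cieplo1} was designed to provide, and it is the only nontrivial ingredient. The analyticity hypothesis on $L^1$ is what makes Proposition \ref{resolvent2} applicable at $p=1$; Davies-Gaffney (equivalently finite propagation speed) is what makes Proposition \ref{cieplo1} applicable. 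Thus the proof is essentially a matter of assembling: $(DU\!E^v)\overset{\ref{implication}}{\Leftrightarrow}(Ev_{1,2})=(vEv_{1,2,0})\overset{\ref{cieplo1}}{\Leftrightarrow}(vEv_{1,2,1/2})=(vE_{1,2})\overset{\ref{resolvent2}}{\Leftrightarrow}(GN^v_{1,2,\beta})$.

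\textbf{Writing it out.} I would state the proof in two implications. For $(DU\!E^v)\Rightarrow(GN^v_{1,2,\beta})$: $(DU\!E^v)$ gives $(Ev_{1,2})$ by Corollary \ref{implication}; Proposition \ref{cieplo1} upgrades this to $(vE_{1,2})$; and since $(e^{-tL})_{t>0}$ is bounded analytic on $L^1$ and $\beta>\kappa_v/2$, Proposition \ref{resolvent2} (with $p=1$, $q=2$) turns $(vE_{1,2})$ into $(vR_{1,2,\beta})$, hence into $(GN^v_{1,2,\beta})$. For the converse: $(GN^v_{1,2,\beta})$ is $(GN^v_{1,2,\beta})$ in the notation of Proposition \ref{resolvent2}, equivalent to $(vE_{1,2})$ by that proposition; Proposition \ref{cieplo1} gives back $(Ev_{1,2})=(vEv_{1,2,0})$; and Corollary \ref{implication} identifies this with $(DU\!E^v)$. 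The parenthetical ``for all (or some) $\beta>\kappa_v/2$'' is automatic: the equivalence of $(GN^v_{1,2,\beta})$ with the $\beta$-independent condition $(vE_{1,2})$ (Proposition \ref{resolvent2}) shows all such $\beta$ give the same condition, so it suffices to have it for one value and it then holds for all. I do not anticipate a genuine obstacle here beyond being careful that the hypotheses of Proposition \ref{resolvent2} at the endpoint $p=1$ are exactly ``bounded analytic on $L^1$'' (which is assumed), and that $\beta>\kappa_v/2$ is precisely the condition $\beta>(\frac1p-\frac1q)\kappa_v$ there.
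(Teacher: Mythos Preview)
Your proof is correct and follows essentially the same route as the paper. The paper phrases the forward direction as a particular case of Theorem \ref{GN23} and, for the converse, obtains $(vE_{1,2})$ by the direct substitution $f\mapsto e^{-r^2L}f$ in $(GN^v_{1,2,\beta})$ rather than invoking Proposition \ref{resolvent2}; but the underlying ingredients (Corollary \ref{implication}, Proposition \ref{cieplo1}, and Proposition \ref{resolvent2} or its proof) are the same, and your assembly of them is valid.
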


\begin{proof} That  $(DU\!E^v)$ implies \eqref{bost}  is a particular case of Theorem \ref{GN23}.
Conversely, substituting  $e^{-r^2L}f$ to $f$  in \eqref{bost} yields $(vE_{1,2})$, hence $(Ev_{1,2})$ by Proposition \ref{cieplo1}, hence   $(DU\!E^v)$ by Corollary \ref{implication}.
\end{proof}

\medskip

Now we can extend  the  extrapolation lemma from \cite[Lemma 1]{Cou1} or \cite[Lemma 1.3] {CM} to our setting. 

\begin{proposition}\label{extrapolation} Let $(M,d,\mu,v)$ be as in Proposition  $\ref{fala1}$.   Assume in addition that 
 $(M,d,\mu,L)$ satisfies 
 \eqref{DG2} and  that $(e^{-tL})_{t>0}$ is  uniformly bounded on $L^{{p_0}}(M,\mu)$ for some $1\le p_0<2$.  Then
$(Ev_{p_1,2})$ for some $p_1$,  $p_0\le p_1<2$,
implies
  $(vEv_{{{p}}, 2, \gamma})$,
for  all $\gamma \in \R$, and in particular $(Ev_{{{p}}, 2})$, for all ${p}$, $p_0\le {p}\le 2$.  
\end{proposition}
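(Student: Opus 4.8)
The plan is to mimic the classical extrapolation argument of \cite{Cou1} (and \cite{CM}, Lemma 1.3), but to carry the doubling weights $v_{\sqrt t}$ through the iteration using the commutation principle of Proposition \ref{cieplo1}, and to upgrade the uniform $L^{p_0}$ boundedness to uniform $L^p$ boundedness for $p_0\le p\le 2$ by interpolation with $L^2$. First I would record that, since $(e^{-tL})_{t>0}$ is uniformly bounded on $L^{p_0}(M,\mu)$ and (by spectral theory) contractive on $L^2(M,\mu)$, it is uniformly bounded on $L^p(M,\mu)$ for all $p_0\le p\le 2$ by interpolation; equivalently $(vEv_{p,p,0})$ holds for such $p$. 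Next, by Proposition \ref{cieplo1}, the hypothesis $(Ev_{p_1,2})$, i.e. $(vEv_{p_1,2,\gamma_1})$ with $\gamma_1=\frac1{p_1}-\frac12$, $\delta_1=0$, is equivalent to $(vEv_{p_1,2,\gamma})$ for every $\gamma\in\R$; in particular it gives $(vE_{p_1,2})$, that is $(vEv_{p_1,2,\frac1{p_1}-\frac12})$ with the weight entirely on the left. So it suffices to prove that $(vE_{p_1,2})$ self-improves, i.e. implies $(vE_{p,2})$ for all $p\in[p_0,2]$, or even directly $(vEv)$ up to the usual duality/$T^*T$ reformulation.

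The heart of the argument is a single extrapolation step: assuming $(vE_{p_1,2})$ with $p_1<2$ and $(vEv_{p_0,p_0,0})$, show $(vE_{p_2,2})$ for some $p_2$ strictly between $p_0$ and $p_1$ — then iterate finitely many times to reach any $p\in[p_0,2]$ (the exponents decrease geometrically, as in \cite{Cou1}). To get the extrapolation step, I would write, for $t>0$,
\[
v_{\sqrt t}^{\frac1{p_2}-\frac12}\,e^{-tL}
= v_{\sqrt t}^{\frac1{p_2}-\frac12}\,e^{-(t/2)L}\,e^{-(t/2)L},
\]
and estimate the $L^{p_2}\to L^2$ norm by interpolating the two ``building block'' bounds: from $(vE_{p_1,2})$ (rescaled at time $t/2$, using $(D_v)$ to replace $v_{\sqrt{t/2}}$ by $v_{\sqrt t}$ up to a constant) one controls a piece acting $L^{p_1}\to L^2$, and from uniform $L^{p_0}$ boundedness one controls the other piece acting $L^{p_0}\to L^{p_0}$; composing and then using Riesz--Thorin on the family
\[
T_z = v_{\sqrt t}^{\,\theta(z)}\,e^{-(t/2)L}\,e^{-(t/2)L},\qquad 0\le \mathrm{Re}\,z\le 1,
\]
with the weight exponent $\theta$ interpolating between the two endpoint configurations, yields $(vE_{p_2,2})$ with $p_2$ obtained from $\frac1{p_2}=\frac{1-s}{p_0}+\frac{s}{p_1}$ for a suitable $s\in(0,1)$. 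Throughout, Proposition \ref{cieplo1} is what lets me freely move the weight $v_{\sqrt t}$ from one side of $e^{-tL}$ to the other (and onto intermediate factors), so that the interpolation exponents on the semigroup and on the weight can be chosen independently; the bounded analyticity on $L^{p_0}$ is used, as in the classical proof, to handle the time-derivative terms $\|L e^{-sL}\|$ that appear when one writes $e^{-tL}$ via $\int_0^t Le^{-sL}\,ds$ inside the iteration, exactly as in the proof of Proposition \ref{extrapol}.

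Once $(vE_{p,2})$ is established for all $p\in[p_0,2]$, the final conclusions follow: taking $p=1$ is excluded in general, but for $p>p_0$ we get $(vEv_{p,2,\gamma})$ for all $\gamma\in\R$ again by Proposition \ref{cieplo1}; in particular $(Ev_{p,2})$ (weight on the right) and $(vE_{p,2})$ (weight on the left) for all $p\in[p_0,2]$, which is the stated conclusion. The main obstacle I anticipate is purely bookkeeping: keeping track of the three interlocking interpolation parameters (the Lebesgue exponent, the weight exponent $\gamma$, and the splitting of $t$ into $t/2+t/2$ across factors) while making sure the doubling factors $\bigl(\sqrt s + 1/\sqrt s\bigr)^{\kappa_v(|\gamma|+|\delta|)}$ produced by $(D_v)$ stay integrable against the Gamma-type kernels — this is where one must invoke $(D_v)$ carefully, just as in Lemma \ref{pocz2a} and the proof of Proposition \ref{cieplo1} — and ensuring the finitely many iteration steps actually cover the whole interval $[p_0,2]$ rather than converging to an interior point. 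The Davies-Gaffney estimate enters only implicitly, through Proposition \ref{cieplo1}, though as the excerpt hints one may also need a direct appeal to \eqref{DG2} for a technical tail estimate at the very end of the iteration; I would handle that in the same spirit as Lemma \ref{nos} and the transmutation formula used in Proposition \ref{cieplo1}.
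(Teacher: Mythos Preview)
Your plan has a genuine gap at the heart of the iteration.  The step you describe --- interpolate $(vE_{p_1,2})$ against uniform $L^{p_0}$-boundedness to produce $(vE_{p_2,2})$ with $p_0<p_2<p_1$, then repeat --- does not reach $p_0$ in finitely many steps.  If you choose $\theta$ so that the interpolated target exponent equals $p_1$ (so that you can compose with the $L^{p_1}\to L^2$ bound and land back in $L^2$), one computes $1/p_{n+1}-1/p_0=\theta_n(1/p_n-1/p_0)$ with $\theta_n\in(0,1)$, so $p_n\searrow p_0$ but $p_n>p_0$ for every $n$; you yourself flag this as an obstacle, and it is fatal for the endpoint $p=p_0$.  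Two smaller points: the hypothesis is only \emph{uniform boundedness} on $L^{p_0}$, not bounded analyticity, and the decomposition $e^{-tL}=I-\int_0^t Le^{-sL}\,ds$ plays no role here (it belongs to the proof of Proposition~\ref{Nash1}, not to extrapolation).

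The paper's argument avoids the iteration entirely and goes to $p_0$ in one stroke.  After using Proposition~\ref{cieplo1} to obtain $(vEv_{p_1,2,\gamma})$ with $\gamma=\frac1{p_0}-\frac12$, one writes, for $f$ supported in a fixed ball,
\[
\|v_{\sqrt t}^{\,1/p_0-1/2}e^{-tL}f\|_2
\le C\,\|v_{\sqrt t}^{\,1/p_0-1/p_1}e^{-(t/2)L}f\|_{p_1},
\]
and then applies H\"older's inequality (not Riesz--Thorin) to the function $g=e^{-(t/2)L}f$ with $\frac1{p_1}=\frac{\theta}{p_0}+\frac{1-\theta}{2}$, obtaining the \emph{self-referential} estimate
\[
\|v_{\sqrt t}^{\,1/p_0-1/2}e^{-tL}f\|_2
\le C\,\|f\|_{p_0}^{\theta}\,\|v_{\sqrt{t/2}}^{\,1/p_0-1/2}e^{-(t/2)L}f\|_2^{\,1-\theta}.
\]
Setting $K(f,T)=\sup_{0<t\le T}\|v_{\sqrt t}^{\,1/p_0-1/2}e^{-tL}f\|_2$, this gives $K(f,T)\le C\|f\|_{p_0}^{\theta}K(f,T)^{1-\theta}$ and hence $K(f,T)\le C'\|f\|_{p_0}$, \emph{provided} $K(f,T)<\infty$.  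This a priori finiteness is exactly where \eqref{DG2} is invoked directly: for $f\in L^2(B(x,r),\mu)$ one controls each annular piece $\|v_{\sqrt t}^{\,1/p_0-1/2}e^{-tL}f\|_{L^2(B(x,(k+1)r)\setminus B(x,kr))}$ by $(D_v)$, \eqref{D2} and the Gaussian decay from \eqref{DG2}.  So the role of Davies--Gaffney here is not a ``tail estimate at the end of the iteration'' but the mechanism that makes the bootstrap inequality solvable at $p_0$ --- this is the idea your outline is missing.
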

\begin{proof} Observe first that by Proposition \ref{interpolation}, $(Ev_{p_1,2})$ implies  $(Ev_{p,2})$ for $p_1\le p<2$.  Then under our assumptions Proposition \ref{cieplo1} yields  $(vEv_{{{p}}, 2, \gamma})$,
for  all $\gamma \in \R$ and $p_1\le p<2$. It remains to treat the case where $p_0\le p<p_1$. 
Now by interpolation $(e^{-tL})_{t>0}$ is also  uniformly bounded on $L^{{{p}}}(M,\mu)$ if $p_0\le {p}<p_1<2$.  We can therefore assume without loss of generality that ${p}=p_0$ and assume $(Ev_{p_1,2})$.

Fix $x\in M$ and $r>0$, and let    $f \in L^2(B(x,r),\mu) \cap L^{{p_0}}(B(x,r),\mu)$.  Then
$$
\|{v_{\sqrt{t}}^{\frac{1}{{p_0}}-\frac{1}{{2}}}}e^{-tL} f\|_{2}
\le \|{v_{\sqrt{t}}^{\frac{1}{{p_0}}-\frac{1}{{2}}}}e^{-(t/2)L}\, {v_{\sqrt{t}}^{\frac{1}{p_1}-\frac{1}{{p_0}}}}\|_{p_1\to {2}}
 \|   {v_{\sqrt{t}}^{\frac{1}{{p_0}}-\frac{1}{p_1}}}  e^{-(t/2)L} f\|_{p_1}.
$$
   
Proposition \ref{cieplo1}  yields  in particular $(vEv_{p_1,2,\gamma})$ with $\gamma=\frac{1}{{p_0}}-\frac{1}{{2}}$, hence, by $(D_v)$,
$$\sup_{t>0} \|{v_{\sqrt{t}}^{\frac{1}{{p_0}}-\frac{1}{{2}}}}e^{-tL}\, {v_{\sqrt{t}}^{\frac{1}{p_1}-\frac{1}{{p_0}}}} \|_{p_1 \to {2}} \le C,$$
therefore
\begin{equation}\label{ineq}
\|{v_{\sqrt{t}}^{\frac{1}{{p_0}}-\frac{1}{{2}}}}e^{-tL} f\|_{2}
\le C
 \|   {v_{\sqrt{t}}^{\frac{1}{{p_0}}-\frac{1}{p_1}}}  e^{-(t/2)L} f\|_{p_1}.
\end{equation}
Next, we choose $\theta$ such that
$$
\frac{1}{p_1}= \frac{\theta}{{p_0}}+\frac{1-\theta}{{2}}.
$$
H\"older's inequality  yields
$$
\|   v_{\sqrt{t}}^{\frac{1}{{p_0}}-\frac{1}{p_1}}  g\|_{p_1} \leq  \|g\|_{{p_0}}
^\theta
 \|   v_{\sqrt{t}}^{\frac{1}{{p_0}}-\frac{1}{{2}}} g\|_{2}^{1-\theta}.
$$
Taking $g=e^{-(t/2)L}f$, we obtain
$$\|{v_{\sqrt{t}}^{\frac{1}{{p_0}}-\frac{1}{p_1}}}e^{-(t/2)L}f\|_{p_1}\leq  
\|e^{-(t/2)L}f\|_{{p_0}}^\theta
 \|   {v_{\sqrt{t}}^{\frac{1}{{p_0}}-\frac{1}{{2}}}}e^{-(t/2)L}f\|_{2}^{1-\theta},$$
 therefore, since $(e^{-tL})_{t>0}$  is  uniformly bounded on $L^{{p_0}}(M,\mu)$,
 \begin{equation}\label{toto}\|{v_{\sqrt{t}}^{\frac{1}{{p_0}}-\frac{1}{p_1}}}e^{-(t/2)L}f\|_{p_1}\leq  C'
\|f\|_{{p_0}}^\theta
 \|{v_{\sqrt{t}}^{\frac{1}{{p_0}}-\frac{1}{{2}}}}e^{-(t/2)L}f\|_{2}^{1-\theta}.
 \end{equation} 
It follows from $(D_v)$ that
$$ \|{v_{\sqrt{t}}^{\frac{1}{{p_0}}-\frac{1}{{2}}}}e^{-(t/2)L}f\|_{2}\le C  \|{v_{\sqrt{t/2}}^{\frac{1}{{p_0}}-\frac{1}{{2}}}}e^{-(t/2)L}f\|_{2}.$$
Thus \eqref{ineq} and \eqref{toto} yield
\begin{equation}\label{Ktheta}
\|{v_{\sqrt{t}}^{\frac{1}{{p_0}}-\frac{1}{{2}}}}e^{-tL} f\|_{2}
\le C
\|f\|_{{p_0}}^\theta
 \|{v_{\sqrt{t/2}}^{\frac{1}{{p_0}}-\frac{1}{{2}}}}e^{-(t/2)L}f\|_{2}^{1-\theta}.
\end{equation}
For $T>0$, define
$$K(f,T):=\sup_{0< t\le T}\|{v_{\sqrt{t}}^{\frac{1}{{p_0}}-\frac{1}{{2}}}}e^{-tL} f\|_{2},$$
which is a finite quantity.
Indeed, write
\begin{eqnarray*}
K^2(f,T)&=&\sup_{0\le t\le T}\|{v_{\sqrt{t}}^{\frac{1}{{p_0}}-\frac{1}{{2}}}}e^{-tL} f\|_{2}^2
 \\&=&\sup_{0\le t\le T} \sum_{k=0}^\infty\|{v_{\sqrt{t}}^{\frac{1}{{p_0}}-\frac{1}{{2}}}}e^{-tL} f\|^2_{L^2(B(x,(k+1)r) \setminus B(x,kr),\mu)}\\ &\le& 
 \sup_{0\le t\le T} \sum_{k=0}^\infty\|{v_{\sqrt{T}}^{\frac{1}{{p_0}}-\frac{1}{{2}}}}e^{-tL} f\|^2_{L^2(B(x,(k+1)r) \setminus B(x,kr),\mu)}.
 \end{eqnarray*}
 Now note that
 \begin{equation}\label{vv}
v(y,r) \le C\left(1+\frac{d(x,y)}{r}\right)^{\kappa_v} v(x,r), \ \forall\,r>0,\,\mbox{ for  }\mu-\mbox{a.e.  }x,y\in M.
\end{equation}
 Indeed, since $v$ is non-decreasing in $r$,
  $$v(y,r) \le  v(y,r+d(x,y)).$$
 By \eqref{D2}
$$v(y,r+d(x,y))\le C\,v(x,r+d(x,y))$$
  and by \eqref{dv}
$$v(x,r+d(x,y)) \le C\left(1+\frac{d(x,y)}{r}\right)^{\kappa_v} v(x,r).$$

Therefore, using $(DG)$ and the fact  that $f \in L^2(B(x,r),\mu)$,
\begin{eqnarray*}
&&K^2(f,T)\le C\left[v(x,\sqrt{T}) \right]^{\frac{2}{{p_0}}-1}\|f\|_{2}^2 \\&&\sup_{0\le t\le T}\left(\sum_{k=1}^\infty 
 \left(1+\frac{r(k+1)}{\sqrt{T}} \right)^{\kappa_v\left(\frac{1}{{p_0}}-\frac{1}{{2}}\right)}
 \exp\left(-\frac{(k-1)^2r^2}{4t}\right)+\left(1+\frac{r}{\sqrt{T}} \right)^{\kappa_v\left(\frac{1}{{p_0}}-\frac{1}{{2}}\right)}
 \right)\\&&\le C\left[v(x,\sqrt{T}) \right]^{\frac{2}{{p_0}}-1} \|f\|_{2}^2 \\
 &&\left(\sum_{k=1}^\infty 
 \left(1+\frac{r(k+1)}{\sqrt{T}} \right)^{\kappa_v\left(\frac{1}{{p_0}}-\frac{1}{{2}}\right)}
 \exp\left(-\frac{(k-1)^2r^2}{4T}\right)+\left(1+\frac{r}{\sqrt{T}} \right)^{\kappa_v\left(\frac{1}{{p_0}}-\frac{1}{{2}}\right)}\right)  < +\infty.
\end{eqnarray*}
 
Taking the supremum for $t\in [0,T]$ in \eqref{Ktheta} yields
$$K(f,T)
\le C
\|f\|_r ^\theta
 K(f,T)^{1-\theta},\ \forall\,T>0,$$
 hence, since $K(f,T)$ is finite,
 $$K(f,T)
\le C'
\|f\|_r,\ \forall\,T>0.$$
It follows that
$$\sup_{t>0}\|{v_{\sqrt{t}}^{\frac{1}{{p_0}}-\frac{1}{{2}}}}e^{-tL} f\|_{2}\le C'\|f\|_{{p_0}},$$
 for all $x\in M$, $r>0$, and $f \in L^2(B(x,r),\mu) \cap L^{{p_0}}(B(x,r),\mu)$,
but this estimate does not depend on $x$ and $r$. Therefore
$(vE_{{p_0},{2}})$ holds and   by Proposition \ref{cieplo1} we obtain all estimates
 $(vEv_{{p_0},2, \gamma})$, $\gamma\in\R$, and in particular $(Ev_{{p_0},2})$.

\end{proof}

Proposition~\ref{extrapolation} together with Corollary \ref{implication} yields: 

\begin{proposition}\label{converse} Let $(M,d,\mu,v)$ be as in Proposition  $\ref{fala1}$.  Assume in addition
 that 
 $(M,d,\mu,L)$ satisfies 
 \eqref{DG2},  
and    that $(e^{-tL})_{t>0}$  is uniformly bounded on $L^{1}(M,\mu)$. Let $q>2$. Then
$(vE_{2,q})$ implies  {\rm$(DU\!E^v)$}.
\end{proposition}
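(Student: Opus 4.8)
The plan is to combine the resolvent/duality tools already assembled in the paper with the commutation principle of Proposition~\ref{cieplo1}. First I would observe that $(vE_{2,q})$ for some $q>2$ gives, by duality, $(Ev_{q',2})$, i.e. a bound of the form $(vEv_{q',2,0})$ with $q'\in(1,2)$. The plan is then to feed this into the extrapolation machinery: Proposition~\ref{extrapolation} is stated precisely for the hypothesis ``$(Ev_{p_1,2})$ for some $p_1$ with $p_0\le p_1<2$'' under $(DG)$ and uniform boundedness of $(e^{-tL})_{t>0}$ on $L^{p_0}$. Here I would take $p_0=1$ (legitimate by the assumed uniform boundedness on $L^1$) and $p_1=q'$, so that Proposition~\ref{extrapolation} delivers $(vEv_{p,2,\gamma})$ for all $\gamma\in\R$ and all $p\in[1,2]$; in particular $(Ev_{1,2})$ holds.

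Next I would invoke Corollary~\ref{implication}: since $v$ satisfies $(D_v)$ (this is part of the standing hypotheses via ``$(M,d,\mu,v)$ as in Proposition~\ref{fala1}''), the conditions $(DU\!E^v)$, $(vEv)$, $(Ev_{1,2})$ and $(vE_{2,\infty})$ are all equivalent. Hence $(Ev_{1,2})$, which we have just obtained, yields $(DU\!E^v)$, which is exactly the claim. In writing this up I would be careful to spell out the (trivial but worth stating) duality step from $(vE_{2,q})$ to $(Ev_{q',2})$, already recorded in the paper right after the definition of $(vEv_{p,q,\gamma})$, and to note that $q'<2$ precisely because $q>2$, so that the range hypothesis $p_0\le p_1<2$ in Proposition~\ref{extrapolation} is met with $p_0=1$, $p_1=q'$.

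The one point that requires a little care, rather than a genuine obstacle, is checking that all the standing assumptions of Proposition~\ref{extrapolation} are in force: $(M,d,\mu,v)$ as in Proposition~\ref{fala1} means $(M,d,\mu)$ doubling and $v$ satisfying $(A)$, $(D_v)$, \eqref{D2}; $(DG)$ is assumed; and uniform boundedness on $L^{p_0}$ with $p_0=1$ is exactly the hypothesis of the present proposition. So there is nothing to prove beyond assembling the implications
$$
(vE_{2,q})\ \stackrel{\text{duality}}{\Longrightarrow}\ (Ev_{q',2})\ \stackrel{\text{Prop.~\ref{extrapolation}}}{\Longrightarrow}\ (Ev_{1,2})\ \stackrel{\text{Cor.~\ref{implication}}}{\Longrightarrow}\ (DU\!E^v),
$$
and I expect the proof to be only a few lines. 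The genuinely substantial content has already been packaged into Proposition~\ref{cieplo1} (the commutation of the semigroup past doubling weights, via finite propagation speed) and Proposition~\ref{extrapolation} (the extrapolation from $L^{p_1}\!-\!L^2$ up to $L^1\!-\!L^2$); this corollary merely harvests them together with the $T^*T$ equivalences of Corollary~\ref{implication}.
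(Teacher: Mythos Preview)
Your proposal is correct and follows exactly the same route as the paper's proof: duality from $(vE_{2,q})$ to $(Ev_{q',2})$, then Proposition~\ref{extrapolation} with $p_0=1$ and $p_1=q'$ to reach $(Ev_{1,2})$, and finally Corollary~\ref{implication} to conclude $(DU\!E^v)$. Your additional remarks verifying the hypotheses are accurate and the proof is indeed just a few lines.
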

\begin{proof}
By duality,  $(vE_{2,q})$ implies $(Ev_{q',2})$.
Proposition \ref{extrapolation} with ${p_0}=1$ and $p=q'$  yields
 $(Ev_{1,2})$, hence   {\rm$(DU\!E^v)$}
by Corollary \ref{implication}.
\end{proof}

As a consequence of  Propositions \ref{equivalence} and
\ref{converse}, we can at last state a converse to Proposition  \ref{dg}. By using Proposition \ref{resolvent2} instead of Proposition \ref{equivalence}, one could replace  in  the following $(GN^v_q)$ by any $(GN^v_{2,q,\beta})$ for $\beta>(\frac{1}{2}-\frac{1}{q})\kappa_v$. One can also combine with Proposition \ref{interpolation} to obtain more results.

\begin{proposition}\label{realconverse} Let $(M,d,\mu)$ be a doubling metric measure space and let $v:M\times \R_+\to \R_+$  satisfy $(A)$, $(D_v)$, and \eqref{D2}.  Assume that 
 $(M,d,\mu,L)$ satisfies 
 \eqref{DG2} and   that $(e^{-tL})_{t>0}$  is uniformly bounded on $L^{1}(M,\mu)$. Let $q>2$. Then
$(GN^v_q)$ implies  {\rm$(DU\!E^v)$}.
\end{proposition}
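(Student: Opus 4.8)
The plan is to chain together the results already established in the excerpt. First, by Proposition~\ref{equivalence}, the inequality $(GN^v_q)$ is equivalent to the on-diagonal estimate $(vE_{2,q})$ (that direction of Proposition~\ref{equivalence} does not even use doubling of $v$). So it suffices to prove that $(vE_{2,q})$ implies $(DU\!E^v)$ under the stated hypotheses. But this is exactly the content of Proposition~\ref{converse}: we are given that $(M,d,\mu)$ is doubling, that $v$ satisfies $(A)$, $(D_v)$ and \eqref{D2} (so $(M,d,\mu,v)$ is as in Proposition~\ref{fala1}), that $L$ satisfies the Davies--Gaffney estimate \eqref{DG2}, and that $(e^{-tL})_{t>0}$ is uniformly bounded on $L^1(M,\mu)$, which are precisely the hypotheses of Proposition~\ref{converse}. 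Hence $(vE_{2,q})$ implies $(DU\!E^v)$, and composing the two implications finishes the proof.

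\begin{proof}[Proof of Proposition~\ref{realconverse}]
By the second assertion of Proposition~\ref{equivalence}, $(GN^v_q)$ implies $(vE_{2,q})$ (this implication requires no doubling assumption on $v$). Under the hypotheses of the present statement, $(M,d,\mu,v)$ is as in Proposition~\ref{fala1}, $(M,d,\mu,L)$ satisfies \eqref{DG2}, and $(e^{-tL})_{t>0}$ is uniformly bounded on $L^1(M,\mu)$; these are exactly the assumptions of Proposition~\ref{converse}. Therefore $(vE_{2,q})$ implies $(DU\!E^v)$, and the conclusion follows by combining the two implications.
\end{proof}

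For completeness, let me recall what goes into Proposition~\ref{converse}, since that is where the substance lies. By duality, $(vE_{2,q})$ is equivalent to $(Ev_{q',2})$. Then one applies the extrapolation result Proposition~\ref{extrapolation} with $p_0=1$ and $p_1=q'$: starting from $(Ev_{q',2})$ one reaches $(Ev_{1,2})$, and by Corollary~\ref{implication} (which uses $(D_v)$) this is equivalent to $(DU\!E^v)$. The real engine is thus Proposition~\ref{extrapolation}, whose proof rests on the commutation phenomenon of Proposition~\ref{cieplo1} --- that is, on the finite propagation speed of the wave equation, encoded via Lemmas~\ref{nos}, \ref{pocz3a}, \ref{pocz2a} and the support-preserving bound Proposition~\ref{fala1} --- together with a self-improvement (Nash--Moser-type) iteration on the quantity $K(f,T)$. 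So while the present proposition is, at this point in the paper, a short deduction, the main obstacle was already overcome in establishing Proposition~\ref{cieplo1} and Proposition~\ref{extrapolation}; here there is nothing further to do beyond invoking Propositions~\ref{equivalence} and~\ref{converse}.
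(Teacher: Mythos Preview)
Your proof is correct and follows exactly the paper's approach: the paper introduces Proposition~\ref{realconverse} with the words ``As a consequence of Propositions~\ref{equivalence} and~\ref{converse}'', and gives no further proof. Your added explanation of the machinery behind Proposition~\ref{converse} is accurate and helpful, but the deduction itself matches the paper verbatim.
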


The first assertion of Theorem \ref{mainDG} follows from Propositions  \ref{dg} and  \ref{realconverse}. 

\bigskip

In the case where $v=V$, one can  use Proposition~\ref{extrapolation} together with \cite[Corollary 4.16]{CS} to obtain $L^p$ uniform boundedness results for semigroups which are not necessary uniformly 
bounded  on $L^{1}(M,\mu)$ or possibly do not even act on this space.  

\begin{proposition}\label{nonmarkov}  Let $(M,d,\mu)$ be a doubling metric measure space.  Assume that 
 $(M,d,\mu,L)$ satisfies 
 \eqref{DG2}.  
Assume further that $(M,\mu,L)$ satisfies   $(GN_{q})$ for some $q$ such that $2<q\le +\infty$ and $\frac{q-2}{q}\kappa<2$, where $\kappa$ is as in \eqref{d}.
	Then $(e^{-tL})_{t>0}$ satisfies  $(V\!E_{2,q})$ and   is uniformly bounded on $L^p(M,\mu)$ for $q'\le p\le q$. For $p$ outside this interval,  $(e^{-tL})_{t>0}$ is uniformly bounded on $L^p(M,\mu)$    if and only if it satisfies  $(V\!E_{2,\widetilde{q}})$, where $\widetilde{q}=\max(p,p')$. 

\end{proposition}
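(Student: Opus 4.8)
The plan is to route $(G\!N_q)$ through the circle of equivalences ``on-diagonal $L^2$--$L^q$ Davies--Gaffney estimate $\leftrightarrow$ generalized Gaussian estimates $\leftrightarrow$ $L^p$-boundedness in a range'', feeding it in via Proposition \ref{equivalence}, extracting $L^p$-boundedness via \cite[Corollary 4.16]{CS}, and coming back via Proposition \ref{extrapolation}.

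\emph{First assertion.} The unconditional part of Proposition \ref{equivalence} gives $(G\!N_q)\Rightarrow(V\!E_{2,q})$, which is already the first claim. By duality, $(V\!E_{2,q})$ coincides with $(EV_{q',2})$. Under \eqref{DG2} and $(V\!D)$, the commutation principle (Proposition \ref{cieplo1}, resp.\ \cite[Theorem 4.15]{CS}) self-improves $(V\!E_{2,q})$ to the corresponding generalized Gaussian $L^{q'}$--$L^q$ estimate with on-diagonal rate $V(x,\sqrt t)^{-(1/q'-1/q)}$, and \cite[Corollary 4.16]{CS} then yields uniform boundedness of $(e^{-tL})_{t>0}$ on $L^p(M,\mu)$ for every $p\in[q',q]$; the interior of the range can alternatively be reached from any single uniform $L^{p_0}$-bound with $q'\le p_0<2$ by Proposition \ref{interpolation} and duality.

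\emph{Second assertion.} Uniform boundedness on $L^p$ is equivalent, by duality, to uniform boundedness on $L^{p'}$, and $\widetilde q=\max(p,p')$; hence it suffices to treat $P:=p>q$. For the ``if'' part, the reasoning of the first assertion applies verbatim with $P$ replacing $q$ (it never used $\frac{q-2}{q}\kappa<2$): $(V\!E_{2,P})$, \eqref{DG2} and $(V\!D)$ force uniform boundedness on $L^r$ for $P'\le r\le P$, in particular on $L^P$. For the ``only if'' part, assume $(e^{-tL})_{t>0}$ uniformly bounded on $L^P$; by duality it is uniformly bounded on $L^{P'}$ with $P'<2$. Now apply Proposition \ref{extrapolation} with $p_0=P'$, taking as the seed estimate $(EV_{q',2})$ furnished by the first step --- this is legitimate since $P'\le q'<2$. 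We obtain $(V\!EV_{P',2,\gamma})$ for all $\gamma\in\R$, in particular $(EV_{P',2})$, that is $(V\!E_{2,P})$ by duality. This closes the equivalence.

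I expect the main obstacle to be bookkeeping rather than any single hard estimate: one must notice that $(G\!N_q)$ is exactly what supplies the seed $(EV_{q',2})$ at the fixed exponent $q'$ required as input by Proposition \ref{extrapolation} (we have no uniform $L^1$-bound to start from $p_0=1$, unlike in Proposition \ref{converse}), and one must verify that the weights $V_{\sqrt t}^{1/2-1/\widetilde q}$ genuinely commute through $(e^{-tL})_{t>0}$ under \eqref{DG2}, so that $(V\!E_{2,\widetilde q})$ is equivalent to the generalized Gaussian estimate to which \cite[Corollary 4.16]{CS} applies.
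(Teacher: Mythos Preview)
Your proof is correct and follows essentially the same route as the paper's: Proposition~\ref{equivalence} gives $(G\!N_q)\Rightarrow(V\!E_{2,q})$, duality yields $(EV_{q',2})$, \cite[Corollary~4.16]{CS} produces uniform $L^p$-boundedness on $[q',q]$, and for $p$ outside this range Proposition~\ref{extrapolation} (with $p_0=\min(p,p')$ and seed $(EV_{q',2})$) together with another appeal to \cite[Corollary~4.16]{CS} closes the equivalence. The paper phrases the second assertion for $p<q'$ and invokes duality for $p>q$, whereas you work directly with $P>q$; these are the same argument read from opposite ends.
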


\begin{proof}
According to Proposition \ref{equivalence},  $(GN_{q})$  implies $(V\!E_{2,q})$ and, by duality, $(V\!E_{2,q})$ implies $(EV_{q',2})$. Now  \cite[Corollary 4.16]{CS} yields in particular the uniform boundedness of 
$(e^{-tL})_{t>0}$ on $L^p(M,\mu)$ for $q'\le p\le q$.  Next, for $1\le p <q'$,   if   $(e^{-tL})_{t>0}$ is uniformly bounded on $L^p(M,\mu)$, then Proposition \ref{extrapolation}   yields
 $(EV_{p,2})$ hence $(V\!E_{2,p'})$  and again  \cite[Corollary 4.16]{CS} yields the converse. The case $q<p\le+\infty$ is treated by duality.
\end{proof}

As an application of the above, let us present a result on Schr\"odinger semigroups which applies in particular to the case of negative inverse square potentials (see for instance \cite[p.539]{CS}). 
Compare with \cite[Theorem 11]{DS}.

\begin{theorem}\label{main3}  Let $(M,d,\mu,L)$ be as in Proposition $\ref{nonmarkov}$.  Assume in addition that $L-\mathcal{V}$ is strongly positive in the sense of  $\eqref{av}$.
 Then the Schr\"odinger semigroup $(e^{-t(L-\mathcal{V})})_{t>0}$ also satisfies  estimates $(V\!E_{2,q})$ and is uniformly bounded on $L^p(M,\mu)$ for $q'\le p\le q$. For  $p$ outside this interval,
  $(e^{-t(L-\mathcal{V})})_{t>0}$  is uniformly bounded on $L^p(M,\mu)$ if and only if it satisfies  estimates $(V\!E_{2,\widetilde{q}})$, where $\widetilde{q}=\max(p,p')$.
\end{theorem}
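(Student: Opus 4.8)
The plan is to transfer the conclusion of Proposition \ref{nonmarkov} from $L$ to $L-\mathcal{V}$ by checking that $L-\mathcal{V}$ inherits all the hypotheses of that proposition; once this is done, Proposition \ref{nonmarkov} applied to $L-\mathcal{V}$ gives exactly the stated conclusion. There are three ingredients to verify: that the associated form $\mathcal{E}_{\mathcal{V}}$ is still strongly local and regular Dirichlet and compatible with the same distance $d$ (so that the Davies–Gaffney estimate $(DG)$ holds for $L-\mathcal{V}$ with the same metric), that the doubling metric measure space structure $(M,d,\mu)$ is unchanged, and that $(GN_q^v)$ with $v=V$, i.e. $(GN_q)$, persists when $L$ is replaced by $L-\mathcal{V}$. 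The doubling structure is of course untouched since we do not change $M$, $d$ or $\mu$.

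First I would establish $(GN_q)$ for $L-\mathcal{V}$. By the strong positivity assumption \eqref{av} we have the two-sided comparison of quadratic forms \eqref{quad}, namely $\epsilon\mathcal{E}\le \mathcal{E}_{\mathcal{V}}\le \mathcal{E}$ on the common domain $\mathcal{F}$. Since $(GN_q)$ for $L$ reads $\|fv_r^{\frac12-\frac1q}\|_q^2\le C(\|f\|_2^2+r^2\mathcal{E}(f))$ for all $r>0$, $f\in\mathcal{F}$, and $\mathcal{E}(f)\le \tfrac1\epsilon\mathcal{E}_{\mathcal{V}}(f)$, we immediately get $\|fv_r^{\frac12-\frac1q}\|_q^2\le C'(\|f\|_2^2+r^2\mathcal{E}_{\mathcal{V}}(f))$ with $C'=C\max(1,1/\epsilon)$, which is precisely $(GN_q)$ for the form $\mathcal{E}_{\mathcal{V}}$. (The same constraint $\frac{q-2}{q}\kappa<2$ is in force, and it only involves $V$, hence is unchanged.)

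Next I would invoke the Davies–Gaffney estimate for $L-\mathcal{V}$. This is exactly the content of the discussion following \eqref{quad} in Section \ref{LG}: the operator $L-\mathcal{V}$ is well-defined, self-adjoint, with dense domain, and the argument given there (using the locality of $\mathcal{E}_{\mathcal{V}}$ together with \eqref{quad}) applies verbatim; alternatively one notes that $(DG)$ for $L$ with respect to $d$ plus the comparison \eqref{quad} and the fact that $\mathcal{E}_{\mathcal{V}}$ still has the distance $d$ as (an upper bound for) its intrinsic metric gives $(DG)$ for $L-\mathcal{V}$ with the same $d$. With $(GN_q)$ and $(DG)$ in hand for $L-\mathcal{V}$ on the doubling space $(M,d,\mu)$, Proposition \ref{nonmarkov} applied to $L-\mathcal{V}$ yields that $(e^{-t(L-\mathcal{V})})_{t>0}$ satisfies $(V\!E_{2,q})$ and is uniformly bounded on $L^p(M,\mu)$ for $q'\le p\le q$, and that for $p$ outside this range it is uniformly bounded on $L^p(M,\mu)$ if and only if it satisfies $(V\!E_{2,\widetilde q})$, $\widetilde q=\max(p,p')$. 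This is the assertion of the theorem.

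The main obstacle is the second ingredient: verifying cleanly that $L-\mathcal{V}$ still satisfies the Davies–Gaffney estimate with respect to the \emph{same} distance $d$, since $\mathcal{E}_{\mathcal{V}}$ is no longer a Dirichlet form (it is only local, not strongly local, and not submarkovian), so the standard results quoted earlier for strongly local regular Dirichlet forms do not apply directly. The way around this is the comparison \eqref{quad}: one shows that the relevant commutation/cut-off computations behind $(DG)$ only use the locality of the energy measure and the bound $d\Gamma_{\mathcal{V}}\le d\Gamma\le d\mu$ along $d_x$, both of which survive; this is already implicit in the remark after \eqref{quad} concerning Propositions \ref{Carron} and \ref{Carronbis}. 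Modulo that point, the proof is a direct concatenation of \eqref{quad} and Proposition \ref{nonmarkov}.
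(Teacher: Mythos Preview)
Your approach is essentially the same as the paper's: transfer $(GN_q)$ from $\mathcal{E}$ to $\mathcal{E}_{\mathcal{V}}$ via the form comparison \eqref{quad}, then apply Proposition~\ref{nonmarkov} to $L-\mathcal{V}$. The one place where you diverge is the verification of \eqref{DG2} for $L-\mathcal{V}$: you try to derive it from locality of $\mathcal{E}_{\mathcal{V}}$ and the comparison $d\Gamma_{\mathcal{V}}\le d\Gamma$, and flag this as the main obstacle. The paper dispenses with this by simply citing \cite[Theorem~3.3]{CS}, which establishes Davies--Gaffney for Schr\"odinger operators of this type directly; note also that Proposition~\ref{nonmarkov} only requires \eqref{DG2} and $(GN_q)$, not any Dirichlet form structure, so your concern that $\mathcal{E}_{\mathcal{V}}$ is not strongly local or submarkovian is moot.
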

\begin{proof}
Since $(M,d,\mu,L)$ satisfies $(GN_q)$,   we can write, using the first inequality in \eqref{quad},
$$
\|fV_{r}^{\frac{1}{2}-\frac{1}{q}} \|_{q}^2\leq
C(\|f\|_{2}^2+r^2\mathcal{E}(f)) \le 
\frac{C}{\epsilon}(\|f\|_{2}^2+r^2\mathcal{E}_{\mathcal{V}}(f)).
$$
This shows  that  $(M,d,\mu,L-\mathcal{V})$ satisfies estimates $(GN_{q})$ too and  Theorem~\ref{main3}
follows from Proposition~\ref{nonmarkov}, since $(e^{-t(L-\mathcal{V})})_{t>0}$ satisfies \eqref{DG2}, see  \cite[Theorem 3.3]{CS}.
\end{proof}

Let us finish this section by an application of Theorem  \ref{main3} to the Hodge Laplacian.

\begin{theorem}\label{hodge}  Let $M$ be a complete non-compact Riemannian manifold satisfying the doubling volume property $(V\!D)$ and the upper estimate $(DU\!E)$ for the heat kernel on functions.  Let $\mathcal{V}(x)$ be the negative part of a lower bound on the Ricci curvature at $x\in M$. Assume that $\Delta-\mathcal{V}$ is strongly positive.
 Then the heat semigroup on $1$-forms $(e^{-t\vec{\Delta}})_{t>0}$
   is uniformly bounded on $L^p(M,\mu)$  for  $p\in [1,+\infty]$ if $\kappa<2$ and for $p\in (\frac{2\kappa}{\kappa+2},\frac{2\kappa}{\kappa-2})$ if $\kappa\ge 2$ , where $\kappa$ is as in \eqref{d}.
\end{theorem}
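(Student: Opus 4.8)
The plan is to reduce Theorem~\ref{hodge} to Theorem~\ref{main3} applied to the operator $L=\Delta$ acting on functions, with potential $\mathcal{V}$ the negative part of a lower Ricci bound. The starting point is the Bochner--Weitzenb\"ock formula $\vec{\Delta}=\nabla^*\nabla+\mathrm{Ric}$, which, via the domination argument of Hess--Schrader--Uhlenbrock and Bakry (or the Feynman--Kac--It\^o formula for the Hodge Laplacian), gives the pointwise domination of the heat semigroup on $1$-forms by a scalar Schr\"odinger semigroup: for every $1$-form $\omega$ and every $t>0$,
\begin{equation*}
|e^{-t\vec{\Delta}}\omega|(x)\le e^{-t(\Delta-\mathcal{V})}(|\omega|)(x),\quad x\in M.
\end{equation*}
Consequently $\|e^{-t\vec{\Delta}}\|_{L^p\to L^p}\le \|e^{-t(\Delta-\mathcal{V})}\|_{L^p\to L^p}$ for all $p\in[1,+\infty]$, so it suffices to prove that $(e^{-t(\Delta-\mathcal{V})})_{t>0}$ is uniformly bounded on $L^p$ for the range of $p$ in the statement.

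Next I would check that $(M,d,\mu,\Delta)$ satisfies the hypotheses of Proposition~\ref{nonmarkov}, hence of Theorem~\ref{main3}. Here $v=V$, $(M,d,\mu)$ is doubling by assumption, $\Delta$ is the Laplace--Beltrami operator on a complete manifold so it satisfies the Davies--Gaffney estimate $\eqref{DG2}$ (equivalently finite propagation speed for the wave equation, which is classical for $\Delta$). The remaining structural hypothesis of Proposition~\ref{nonmarkov} is $(GN_q)$ for some $q$ with $2<q\le+\infty$ and $\frac{q-2}{q}\kappa<2$; but by Proposition~\ref{dg} (with $v=V$, $\kappa_v=\kappa$) the assumed upper bound $(DU\!E)$ implies exactly $(GN_q)$ for every such $q$. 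The hypothesis that $\Delta-\mathcal{V}$ is strongly positive in the sense of $\eqref{av}$ is imposed directly in the statement. Thus all hypotheses of Theorem~\ref{main3} hold for $L=\Delta$, $\mathcal{V}=\mathcal{V}$.

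Applying Theorem~\ref{main3} then gives that $(e^{-t(\Delta-\mathcal{V})})_{t>0}$ is uniformly bounded on $L^p(M,\mu)$ for $q'\le p\le q$, for each admissible $q$. It remains only to optimise over $q$: if $\kappa<2$ one may take $q=+\infty$ (so $q'=1$), giving uniform boundedness on all of $L^p$, $p\in[1,+\infty]$; if $\kappa\ge 2$ the admissible range is $q\in(2,\frac{2\kappa}{\kappa-2})$, and letting $q\uparrow\frac{2\kappa}{\kappa-2}$ (correspondingly $q'\downarrow\frac{2\kappa}{\kappa+2}$) yields uniform boundedness on $L^p$ for $p\in(\frac{2\kappa}{\kappa+2},\frac{2\kappa}{\kappa-2})$, which is the open interval claimed. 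Combining with the domination inequality of the first paragraph gives the same range for $(e^{-t\vec{\Delta}})_{t>0}$, completing the proof.

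The only genuinely non-formal point is the domination inequality, i.e.\ invoking the Bochner formula together with a Kato-type / Feynman--Kac argument to pass from the Hodge Laplacian on $1$-forms to the scalar Schr\"odinger operator $\Delta-\mathcal{V}$; everything after that is a bookkeeping application of Proposition~\ref{dg}, Proposition~\ref{nonmarkov} and Theorem~\ref{main3}. I expect the main subtlety to be making sure the strong positivity hypothesis $\eqref{av}$ is exactly what is needed to make $\Delta-\mathcal{V}$ a well-defined self-adjoint operator with the $L^p$ theory of Theorem~\ref{main3} available, and that $\mathcal{V}$ (the negative part of a Ricci lower bound) is the correct potential appearing in the domination; both are standard but worth stating carefully.
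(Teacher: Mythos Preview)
Your proposal is correct and follows essentially the same approach as the paper's own proof: the paper also cites Proposition~\ref{dg}, Theorem~\ref{main3}, and the domination property of Hess--Schrader--Uhlenbrock (stated there at the kernel level, $|\vec{p}_t(x,y)|\le p^{\mathcal{V}}_t(x,y)$, rather than at the semigroup level, but this is equivalent). Your write-up is simply a more detailed unpacking of the same three ingredients, including the explicit optimisation over $q$ that the paper leaves implicit.
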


\begin{proof}
This is a straightforward consequence of Proposition \ref{dg}, Theorem  \ref{main3}, and of the well-known domination property
$$|\vec{p}_t(x,y)|\le p^\mathcal{V}_t(x,y), \forall x,y\in M,$$
where  $\vec{p}_t$ (resp. $p^\mathcal{V}_t$) is the kernel of $e^{-t\vec{\Delta}}$ (resp. $e^{-t(\Delta-\mathcal{V})}$)
(see for instance \cite{HSU}).
\end{proof}

\begin{rem} Compare Theorem $\ref{hodge}$ with the results in  \cite{Dev}, where, under stronger assumptions, one obtains a stronger conclusion, namely Gaussian estimates on $\vec{p}_t$,  therefore the boundedness of the Riesz transform on $L^p(M,\mu)$ for all $p\in(1,+\infty)$.
Note that, if $\kappa<2$, the above uniform  boundedness of $(e^{-t\vec{\Delta}})_{t>0}$ on $L^p(M,\mu)$ yields such Gaussian estimates.
\end{rem}

\bigskip

\subsection{From local Nash to Gagliardo-Nirenberg}\label{NG}

One can skip this section in a first reading. Indeed, the second assertion from Theorem \ref{mainDG} follows from the first one and Remark \ref{BS} in the case where $\mathcal{E}$ is a strongly local regular Dirichlet form.
And we have seen in Section \ref{LG} that in the same setting all the inequalities $(N^v)$,  $(K\!N^v)$, $(L\!N^v)$ and $(GN^v_q)$, $(K\!GN^v_q)$, $(LS^v_q)$ are equivalent. Our aim here is to prove all this under the  more general assumptions of
Theorem \ref{mainDG}, meaning that we replace some properties of Dirichlet forms by the finite  propagation speed  of the wave equation.

Let $(M,d,\mu)$   be a separable  locally compact  metric measure space and $L$  a  non-negative self-adjoint  operator on $L^2(M,\mu)$  with associated quadratic form $\mathcal{E}$.  Let $\Omega$ be an open subset of $M$. There is a classical notion of a restriction of $L$ to $\Omega$ with Dirichlet boundary conditions  in the case where $\mathcal{E}$ is a strongly local and regular Dirichlet form (see for instance  \cite[section 2.4.1]{GS}).  We are going to start this section by showing that the latter assumption can be replaced with $(M,d,\mu,L)$ satisfying  \eqref{DG2}.
 We initially define the Dirichlet operator $L_\Omega$ on the set $\mathcal{D}_{c}(\Omega)$ of
 all functions $f\in L^2(\Omega,\mu) \subset L^2(M, \mu) $ such that $\mbox{ supp} (f)\subset \Omega$ is compact
  and  $f$
 is in the domain $\mathcal{D}$ of the operator $L$.  Thanks to the following Lemma we will be able to  consider the Friedrichs extension of  $L_\Omega$. With some abuse of notation  we will still denote the resulting self-adjoint operator by $L_\Omega$ and name it the  Dirichlet restriction of  the operator $L$ to the open set $\Omega$. 

 \begin{lemma}\label{dom} Assume that $(M,d,\mu,L)$ satisfies  \eqref{DG2} and let $\Omega$ be an open subset of $M$.
The set $\mathcal{D}_{c}(\Omega)$ is dense in $L^2(\Omega,\mu)$. In addition, the quadratic form $\mathcal{E}$ restricted 
 to $\mathcal{D}_{c}(\Omega)$ is closable in $L^2(\Omega,\mu)$ and the domain of its closure contains the set $\mathcal{F}_c(\Omega)$.
 \end{lemma}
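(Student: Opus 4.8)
The plan is to prove Lemma \ref{dom} in three stages, using the Davies--Gaffney estimate \eqref{DG2} as a substitute for the regularity of a Dirichlet form. Recall that $\mathcal{D}_c(\Omega)$ consists of functions $f$ in the domain $\mathcal{D}$ of $L$ whose support is a compact subset of $\Omega$.

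\emph{Density of $\mathcal{D}_c(\Omega)$ in $L^2(\Omega,\mu)$.} First I would show that $\mathcal{D}_c(\Omega)$ contains ``enough'' functions. Given $g\in L^2(\Omega,\mu)$ with compact support $K\subset\Omega$ and $\epsilon>0$ such that the $3\epsilon$-neighbourhood of $K$ still lies in $\Omega$, consider $f_t:=\eta\, e^{-tL}(\eta g)$ where $\eta$ is a continuous cutoff, $\eta\equiv 1$ on the $\epsilon$-neighbourhood of $K$ and $\mathrm{supp}\,\eta$ contained in the $2\epsilon$-neighbourhood of $K$. The key point is that, by the finite propagation speed property (equivalently \eqref{DG2}, via Theorem \ref{fspro}), or more directly by the Davies--Gaffney off-diagonal decay, $e^{-tL}(\eta g)$ is, up to an $L^2$-error that is $O(e^{-c\epsilon^2/t})$, supported near $K$; hence for $t$ small $\mathrm{supp}(f_t)\subset\Omega$ is compact. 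Moreover $e^{-tL}(\eta g)\in\mathcal{D}$ by analyticity of the semigroup, and multiplication by $\eta$ must be checked to keep it in $\mathcal{D}$ --- this is exactly the delicate point and I discuss it below. Granting that, $f_t\in\mathcal{D}_c(\Omega)$ and $f_t\to g$ in $L^2$ as $t\to0$, while compactly supported functions are dense in $L^2(\Omega,\mu)$; so $\mathcal{D}_c(\Omega)$ is dense.

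\emph{Closability and the domain of the closure.} The restriction of $\mathcal{E}$ to $\mathcal{D}_c(\Omega)$ is closable in $L^2(\Omega,\mu)$ because it is the restriction of the closed nonnegative form $(\mathcal{E},\mathcal{F})$ on the larger space $L^2(M,\mu)$: if $f_n\in\mathcal{D}_c(\Omega)$, $f_n\to 0$ in $L^2(\Omega,\mu)\hookrightarrow L^2(M,\mu)$ and $(f_n)$ is $\mathcal{E}$-Cauchy, then by closedness of $\mathcal{E}$ on $\mathcal{F}$ one gets $\mathcal{E}(f_n)\to 0$. Finally, to see $\mathcal{F}_c(\Omega)$ lies in the domain of the closure, take $f\in\mathcal{F}_c(\Omega)$ with $\mathrm{supp}(f)=K$ compact in $\Omega$; approximate by $f_t:=\eta\,e^{-tL}f$ as above with $\eta$ a cutoff equal to $1$ near $K$. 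One checks $f_t\to f$ in $L^2$ and, using that $e^{-tL}f\to f$ in $\mathcal{F}$ (since $f\in\mathcal{F}=\mathcal{D}(L^{1/2})$ and the semigroup is strongly continuous on $\mathcal{F}$) together with a Leibniz/product estimate for $\eta$, that $\mathcal{E}(f_t-f)\to 0$; since $f_t\in\mathcal{D}_c(\Omega)$ for small $t$, $f$ belongs to the domain of the closure.

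\emph{Main obstacle.} The crux is justifying that multiplication by a nice cutoff $\eta$ maps $e^{-tL}f$ (or $e^{-tL}(\eta g)$) back into the domain $\mathcal{D}$ of $L$, and controlling the energy $\mathcal{E}(\eta u)$ in terms of $\mathcal{E}(u)$ and $\|u\|_2$ --- this is the analogue, in the bare self-adjoint setting, of the Leibniz rule \eqref{leib} for energy measures, which is not available here without Dirichlet form structure. The way around it is to avoid ever asserting $\eta u\in\mathcal{D}$: instead work entirely at the level of the form $\mathcal{F}=\mathcal{D}(L^{1/2})$, and note that one does not actually need $f_t\in\mathcal{D}_c(\Omega)$ with $f_t\in\mathcal{D}$ but only $f_t$ in the form domain with compact support in $\Omega$ --- then reapproximate such form-domain elements by genuine elements of $\mathcal{D}_c(\Omega)$ using $e^{-sL}$ once more and the finite propagation speed to preserve the support condition, letting $s\to 0$. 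Thus the argument is a double approximation (first cutoff, then smoothing), and the Davies--Gaffney estimate enters precisely to guarantee that the smoothing step does not destroy the compact-support-in-$\Omega$ property. I would carry out the support bookkeeping carefully and defer the routine norm estimates.
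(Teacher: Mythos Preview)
Your diagnosis of the obstacle is exactly right: multiplication by a cutoff need not preserve $\mathcal{D}$ in the bare self-adjoint setting, so the approximants $\eta\,e^{-tL}(\eta g)$ cannot be asserted to lie in $\mathcal{D}_c(\Omega)$. However, your proposed workaround has a genuine gap. You suggest reapproximating a compactly supported form-domain element by applying $e^{-sL}$ ``once more'' and invoking finite propagation speed to keep the support inside $\Omega$. But the heat semigroup $e^{-sL}$ does \emph{not} have finite propagation speed: Theorem~\ref{fspro} says that \eqref{DG2} is equivalent to finite propagation speed for the \emph{wave} propagator $\cos(r\sqrt{L})$, not for $e^{-sL}$. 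The Davies--Gaffney estimate gives only exponential off-diagonal decay $|\langle e^{-sL}f_1,f_2\rangle|\le e^{-r^2/4s}\|f_1\|_2\|f_2\|_2$, which is never zero for $s>0$. Hence $e^{-sL}$ applied to a compactly supported function is in general supported on all of $M$, and your second smoothing step throws the approximants out of $\mathcal{D}_c(\Omega)$ again. The double approximation, as written, does not close.

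The paper resolves this by replacing $e^{-tL}$ with a smoothing operator that \emph{does} have finite propagation speed and \emph{does} map $L^2$ into $\mathcal{D}$, namely
\[
\Phi_r(L)\;=\;\frac{\sin^2(r\sqrt{L})}{r^2 L}.
\]
The function $\lambda\mapsto \frac{\sin^2\lambda}{\lambda^2}$ has Fourier transform supported in $[-2,2]$, so by Lemma~\ref{nos} one has $\mathrm{supp}\,\Phi_r(L)\subset D_{2r}$; thus for $f\in L^2$ with $\mathrm{supp}(f)$ compact in $\Omega$ and $r$ small, $\Phi_r(L)f$ is still compactly supported in $\Omega$. Moreover $L\,\Phi_r(L)=r^{-2}\sin^2(r\sqrt{L})$ is bounded on $L^2$ by spectral calculus, so $\Phi_r(L)f\in\mathcal{D}$ automatically --- no cutoff is ever needed. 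Finally $\Phi_r(L)\to I$ strongly on $L^2$ and commutes with $L^{1/2}$, giving both the density in $L^2(\Omega,\mu)$ and the approximation in $\mathcal{E}$-norm for $f\in\mathcal{F}_c(\Omega)$ in one stroke. This is the missing idea: smooth with a compactly-band-limited function of $\sqrt{L}$ rather than with the heat semigroup.
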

 
 \begin{proof}
  The set of all  functions $f\in L^2(\Omega,\mu) $ with compact support is dense in  $L^2(\Omega,\mu)$,  so that it is enough to show that any such function 
  is in the closure of $\mathcal{D}_{c}(\Omega)$. 
 
 To do this consider the family 
 $$\frac{\sin^2 r\sqrt L}{r^2L}f$$ 
 for $r>0$.
 By spectral theory, the above expression converges to $f$ in $L^2(M,\mu)$ when $r$ goes to zero. 
 Next, note that the Fourier transform of the function $\lambda \to \frac{\sin^2 \lambda}{\lambda^2}$
 is supported in the interval $[-2,2]$. Hence, if $d(\mbox{supp}( f), \Omega^c)=\varepsilon >0$ and $2r \le \varepsilon$, it follows from  Lemma \ref{nos} that 
 $$\mbox{supp} \,\frac{\sin^2 r\sqrt L}{r^2L}f \subset (\mbox{supp} (f))_\varepsilon\subset\Omega.$$
 In particular,  $$\mbox{supp}\, \frac{\sin^2 r\sqrt L}{r^2L}f $$ is compact. Also, by spectral theory,
 $$L\frac{\sin^2 r\sqrt L}{r^2L}f $$ is well-defined as a function in  $L^2(M,\mu)$ , that is
 $\frac{\sin^2 r\sqrt L}{r^2L}\in\mathcal{D}$.
 Therefore $ \frac{\sin^2 r\sqrt L}{r^2L}f \in \mathcal{D}_{c}(\Omega)$. This proves that $\mathcal{D}_{c}(\Omega)$
 is dense in $L^2(\Omega)$. Now consider the operator $L_\Omega$ which is the restriction of  $L$ 
 to the set $\mathcal{D}_{c}(\Omega)$. Note that $L_\Omega$ is symmetric. 
 By Friedrichs's theorem, the quadratic form corresponding to the operator $L_\Omega$ is closable.
\medskip 

Let now $f\in\mathcal{F}_c(\Omega)$.  
Since
$$\mathcal{E}\left(\frac{\sin^2 r\sqrt L}{r^2L}f\right)= \left\langle
 L\frac{\sin^2 r\sqrt L}{r^2L}f ,\frac{\sin^2 r\sqrt L}{r^2L} f \right\rangle = \left\|\frac{\sin^2 r\sqrt L}{r^2L}L^{1/2}f\right\|_2^2,$$
one sees that
$$ \lim_{r\to 0^+} \mathcal{E}\left(\frac{\sin^2 r\sqrt L}{r^2L}f\right) =  \mathcal{E}(f)$$
and  the same argument 
which we used in the above paragraph to show that $\mathcal{D}_{c}(\Omega)$ is dense in $L^2(\Omega,\mu)$
can be used to prove  that the closure of $\mathcal{D}_{c}(\Omega)$ with respect to the norm corresponding to $\mathcal{E}$
contains the set $ \mathcal{F}_c(\Omega) $.
This shows that the closures of $\mathcal{E}$ restricted to $\mathcal{D}_{c}(\Omega)$  and $ \mathcal{F}_c(\Omega) $
coincide. 
\end{proof}

In the next lemma we discuss the relation between the wave propagators for the operator $L$ and
for its Dirichlet restriction $L_{B(x,r_0)}$ to an open ball $B(x,r_0)$.

\begin{lemma}\label{wediri} Assume that $(M,d,\mu,L)$ satisfies  \eqref{DG2}. Let $x\in M$, $r_0>0$, and $0<\epsilon<r_0$.
Then 
$$
\cos (r \sqrt{L}) \chi_{B(x,\epsilon)}= \cos (r \sqrt{L_{B(x,r_0)}} ) \chi_{B(x,\epsilon)}
$$
for all   $r$ such that $0<r < r_0-\epsilon$.
\end{lemma}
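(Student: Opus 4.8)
The statement is a finite-propagation-speed assertion for the wave equation, comparing the propagator $\cos(r\sqrt L)$ of the global operator with that of its Dirichlet restriction $L_{B(x,r_0)}$. The natural strategy is to use the fact that both operators satisfy Davies-Gaffney (hence, by Theorem \ref{fspro}, finite propagation speed), and to exploit the uniqueness of solutions of the abstract wave equation $\partial_t^2 u + Lu = 0$. Concretely, fix $f\in L^2(B(x,\epsilon),\mu)$, regarded as an element of $L^2(M,\mu)$ with support in $B(x,\epsilon)$, and set
$$
u(r) := \cos(r\sqrt L)f, \qquad w(r) := \cos(r\sqrt{L_{B(x,r_0)}})f.
$$
Both $u$ and $w$ solve the wave equation (with respect to $L$, resp. $L_{B(x,r_0)}$) with initial data $u(0)=w(0)=f$, $u'(0)=w'(0)=0$. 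By Theorem \ref{fspro} applied to $L$, $\operatorname{supp} u(r)\subseteq (\operatorname{supp} f)_r \subseteq B(x,\epsilon+r)$, so for $0<r<r_0-\epsilon$ we have $\operatorname{supp} u(r)\subseteq B(x,r_0)$, a relatively compact set whose closure lies in $B(x,r_0)$ (recall balls are relatively compact in this setting). Since $L$ and $L_{B(x,r_0)}$ agree on functions in $\mathcal D_c(B(x,r_0))$ — this is the content of how $L_\Omega$ was defined via Lemma \ref{dom} — one expects $u(r)$, which lives inside $B(x,r_0)$, to also solve the wave equation for $L_{B(x,r_0)}$ on the time interval $(0,r_0-\epsilon)$, and then uniqueness forces $u(r)=w(r)$.

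To make this rigorous I would first reduce to a dense class of data: by Lemma \ref{dom}, functions of the form $g = \frac{\sin^2(s\sqrt L)}{s^2 L}f$ with $s$ small belong to $\mathcal D_c(B(x,\epsilon'))$ for any $\epsilon'>\epsilon$ slightly larger, and they approximate $f$ in $L^2$; so it suffices to prove the identity for $f\in\mathcal D_c(B(x,\epsilon))\cap\mathcal D$, and then pass to the limit using the uniform boundedness of $\cos(r\sqrt L)$ and $\cos(r\sqrt{L_{B(x,r_0)}})$ on $L^2$ (both are contractions by spectral theory). For such smooth compactly supported $f$, the function $r\mapsto u(r)=\cos(r\sqrt L)f$ is $C^2$ in $L^2(M,\mu)$ with $u''(r) = -L\cos(r\sqrt L)f = -\cos(r\sqrt L)Lf$, and by finite propagation speed for $L$, both $u(r)$ and $Lf$ (hence $u''(r)$) are supported in $B(x,r_0)$ for $0<r<r_0-\epsilon$. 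Now I would argue that $u(r)$ stays in the domain of $L_{B(x,r_0)}$ and that $L_{B(x,r_0)}u(r) = Lu(r)$ there: this uses that $L$ restricted to $\mathcal D_c(B(x,r_0))$ is exactly the operator whose Friedrichs extension is $L_{B(x,r_0)}$, together with a spectral-theory approximation of $u(r)$ by functions of the form $\frac{\sin^2(s\sqrt L)}{s^2L}u(r)\in\mathcal D_c(B(x,r_0))$ as in Lemma \ref{dom}. Consequently $v(r):=u(r)$ solves $v''(r)+L_{B(x,r_0)}v(r)=0$ on $(0,r_0-\epsilon)$ with $v(0)=f$, $v'(0)=0$, i.e. $v(r)=\cos(r\sqrt{L_{B(x,r_0)}})f = w(r)$ by the standard uniqueness theorem for the abstract wave equation (multiply the difference $z=v-w$ by $z'$, integrate, and use conservation of the energy $\|z'(r)\|_2^2 + \|L_{B(x,r_0)}^{1/2}z(r)\|_2^2$).

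\textbf{Main obstacle.} The delicate point is the domain bookkeeping: verifying that $\cos(r\sqrt L)f$, once we know it is supported in $B(x,r_0)$, actually lies in the domain of the Friedrichs extension $L_{B(x,r_0)}$ and that the two operators coincide on it. One cannot simply say "it has compact support in $\Omega$ and is in $\mathcal D(L)$" because that only places it in the initial domain $\mathcal D_c(\Omega)$ if its support is \emph{compact} inside $\Omega$ and it is genuinely in $\mathcal D$; here $u(r)$ is in $\mathcal D$ (for smooth data) and its support is contained in the relatively compact ball $B(x,r_0)$ with $\overline{\operatorname{supp} u(r)}\subset B(x,r_0)$ provided $r<r_0-\epsilon$ strictly — so in fact $u(r)\in\mathcal D_c(B(x,r_0))$ directly, and $L_{B(x,r_0)}u(r)=L u(r)$ by the very definition of $L_{B(x,r_0)}$ on that set. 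The remaining care is just continuity in $r$ up to the endpoints and the limiting argument from $\mathcal D_c(B(x,\epsilon))$-data to general $L^2(B(x,\epsilon),\mu)$-data, which is routine given the contractivity of both wave propagators. An alternative, perhaps cleaner, route avoiding explicit wave-equation manipulations is to expand $\cos(r\sqrt L)$ via the Fourier inversion formula \eqref{formu} with an even test function whose Fourier transform is supported in $[-1,1]$, reduce to the statement for $\Phi(r\sqrt L)$ versus $\Phi(r\sqrt{L_{B(x,r_0)}})$, and use Lemma \ref{nos}; but the wave-equation uniqueness argument above is the most transparent.
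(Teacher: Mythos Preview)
Your proposal is correct and follows essentially the same approach as the paper: reduce to $f\in\mathcal D$ via the density statement of Lemma~\ref{dom}, use finite propagation speed to confine the support of $\cos(r\sqrt L)f$ inside $B(x,r_0)$, identify $L$ with $L_{B(x,r_0)}$ on such functions so that $\cos(r\sqrt L)f$ solves the wave equation for $L_{B(x,r_0)}$, and conclude by energy conservation/uniqueness. You are in fact more explicit than the paper about the domain bookkeeping (the point that $u(r)\in\mathcal D_c(B(x,r_0))$ because its support is contained in the relatively compact ball $B(x,r+\epsilon)\Subset B(x,r_0)$), which the paper compresses into a single line.
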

\begin{proof}  It is enough to prove that $$
\cos (r \sqrt{L}) f= \cos (r \sqrt{L_{B(x,r_0)}} ) f
$$ for $f\in L^2(B(x,\epsilon),\mu)$. According to Lemma \ref{dom}, one can assume in addition $f\in \mathcal{D}$.
Since  $\mbox{supp} (f )\subset B(x,\epsilon)$, the finite propagation speed for the wave equation associated with $L$ yields
$$\mbox{supp}\, \cos ( r \sqrt{L}) f \subset B(x,r+\epsilon)\subset B(x,r_0)$$
for all $r$ such that $0<r < r_0-\epsilon$.
 Hence, for $0<r < r_0-\epsilon$, 
$$
L\cos ( r \sqrt{L})f=L_{B(x,r_0)}\cos (r \sqrt{L})f=-\partial_r^2 \cos (r \sqrt{L})f,
$$
i.e. the function $F(\cdot,r)=\cos (r \sqrt{L})f $ is a solution of the wave equation 
$$L_{B(x,r_0)}F=-\partial_r^2F.$$
The standard argument shows that for  any solution of the above equation the energy function $E(r) = |F_r|^2+\langle L_{B(x,r_0)}F, F\rangle $ is 
conserved. The energy conservation implies the  uniqueness of the solutions of the wave equation
which in turn implies the claim.
\end{proof}

In the following result we show that under an $L^1$ uniform boundedness assumption on the semigroup $(e^{-tL})_{t>0}$,  $(LN^v_\alpha)$ implies 
$(GN^v_q)$ for $q>2$  small enough in terms of $\alpha$.  Together with the results of Sections \ref{GNN} and \ref{LG}, this yields
the full equivalence, under the following assumptions, of $(N^v)$,  $(K\!N^v)$, $(L\!N^v)$, and, for $q>2$  small enough, of $(GN^v_q)$, $(K\!GN^v_q)$, $(LS^v_q)$.

\begin{proposition}\label{ngn} Let $(M,d,\mu)$ be a doubling metric measure space, $v:M\times \R_+\to \R_+$ satisfy $(A)$, $(D_v)$, and \eqref{D2}, and $L$ a  non-negative self-adjoint operator on $L^2(M,\mu)$ such  that $(M,d,\mu,L)$ satisfies  \eqref{DG2}. Assume  that the semigroup $\exp(-t L_{B(x,r)})$ is bounded 
on $L^1(B(x,r),\mu)$ uniformly in $t>0$, $x\in M$, and $r>0$. Then $(LN^v_\alpha)$ implies $(GN^v_q)$ for all $q$ such that $2<q<+\infty$ and $\frac{q-2}{q} < \alpha$.
\end{proposition}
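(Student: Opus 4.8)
\emph{Strategy.} The plan is to turn $(LN^v_\alpha)$ into a uniform (over balls) pointwise bound for the Dirichlet heat semigroups $e^{-tL_{B(x_0,\rho)}}$, then to transfer this bound to $(e^{-tL})_{t>0}$ itself by means of the finite propagation speed, reaching $(DU\!E^v)$, from which $(GN^v_q)$ follows by Proposition \ref{dg}. Concretely: Step~1 below produces $\|e^{-tL_{B(x_0,\rho)}}\|_{1\to\infty}\le C\,v_\rho(x_0)^{-1}\max(1,\rho^2/t)^{1/\alpha}$, uniformly in $x_0,\rho$; Step~2 converts this, using Lemmas \ref{nos}, \ref{wediri}, \ref{pocz3a}, \ref{pocz2a}, Proposition \ref{fala1} and the bounded covering principle (Lemma \ref{fala2}), into the estimate $(vE_{1,2})$, hence into $(DU\!E^v)$ through Proposition \ref{cieplo1} and Corollary \ref{implication}. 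The restriction $\frac{q-2}{q}<\alpha$ is exactly the one carried by the exponent $1/\alpha$, and under the present hypotheses it lies inside the range $\frac{q-2}{q}\kappa_v<2$ of Proposition \ref{dg}.

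\emph{Step 1 (a local heat kernel bound).} Fix $x_0\in M$, $\rho>0$, write $L_0:=L_{B(x_0,\rho)}$, and recall from Lemma \ref{dom} that the form domain of $L_0$ is the closure in $\mathcal{F}$ of $\mathcal{F}_c(B(x_0,\rho))$. On that form domain, $(LN^v_\alpha)$ reads
\[
\|g\|_2^{2(1+\alpha)}\le \frac{C}{v_\rho^\alpha(x_0)}\,\|g\|_1^{2\alpha}\,\big(\|g\|_2^2+\rho^2\mathcal{E}(g)\big),
\]
which, with $\widetilde L:=L_0+\rho^{-2}I$ (whose semigroup $e^{-t\widetilde L}=e^{-t/\rho^2}e^{-tL_0}$ is still uniformly bounded on $L^1(B(x_0,\rho),\mu)$), takes the form $\|g\|_1^2\,\theta\big(\|g\|_2^2/\|g\|_1^2\big)\le\langle\widetilde Lg,g\rangle$ with $\theta(\tau)=v_\rho^\alpha(x_0)\,\tau^{1+\alpha}/(C\rho^2)$; here $\tau\mapsto\theta(\tau)/\tau$ is non-decreasing and $\int^{+\infty}\theta(\tau)^{-1}d\tau<+\infty$ since $\alpha>0$. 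I would then run the Nash argument of Lemma \ref{nacou}, applied with $L$ replaced by $\widetilde L$ and $\mathcal{F}$ by the form domain of $L_0$ (the required fact that $e^{-t\widetilde L}g$ belongs to that form domain, for $g$ in a dense class, being obtained by approximation using the uniform $L^1$-boundedness of $e^{-tL_0}$). This yields $\|e^{-t\widetilde L}\|_{1\to\infty}\le C' v_\rho(x_0)^{-1}(\rho^2/t)^{1/\alpha}$, and since $t\mapsto\|e^{-tL_0}\|_{1\to\infty}$ is non-increasing,
\[
\|e^{-tL_{B(x_0,\rho)}}\|_{1\to\infty}\le \frac{C''}{v_\rho(x_0)}\max\!\Big(1,\frac{\rho^2}{t}\Big)^{1/\alpha},\qquad t>0,
\]
with $C''$ independent of $x_0,\rho$; by the $T^*T$ identity the same estimate holds for $\|\cdot\|_{1\to 2}^2$.

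\emph{Step 2 (transfer and conclusion).} Fix $t>0$. As in the proof of Proposition \ref{cieplo1}, I would write $e^{-tL}=\int_0^\infty F_a(\sqrt{stL})\,s^{a+\frac12}e^{-s/4}\,ds$, where $F_a$ is a suitable multiple of the Fourier transform of $\tau\mapsto(1-\tau^2)_+^a$: it is even and bounded, $\operatorname{supp}\widehat{F_a}\subset[-1,1]$, and $|F_a(\lambda)|\le C(1+|\lambda|)^{-a}$, so that by Lemma \ref{nos} the operator $v_{\sqrt t}^{1/2}F_a(\sqrt{stL})$ — $v_{\sqrt t}^{1/2}$ being a strictly positive multiplication operator — is supported in $D_{\sqrt{st}}$. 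Setting $\rho_s:=3\sqrt{st}$, Lemma \ref{wediri} (whose proof shows that the finite propagation speed of $L$ is inherited by $L_{B(x_0,\rho_s)}$ on functions supported well inside $B(x_0,\rho_s)$), together with the transmutation of $F_a(\sqrt{stL})$ in terms of $\cos(r\sqrt L)$, gives, for every $x_0$,
\[
F_a(\sqrt{stL})\,\chi_{B(x_0,\sqrt{st})}=F_a\!\big(\sqrt{stL_{B(x_0,\rho_s)}}\big)\,\chi_{B(x_0,\sqrt{st})}.
\]
Applying Lemma \ref{pocz3a} (with $\Phi=F_a$, $p=1$, $N$ large, to the operator $L_{B(x_0,\rho_s)}$, licit by the local propagation property just recalled), then Lemma \ref{pocz2a}, then Step~1 together with $(D_v)$ and \eqref{D2} — which bound, on $B(x_0,\rho_s)$, the weight $v_{\sqrt t}$ by $C s^{\kappa_v/2}v_{\sqrt t}(x_0)$ and give $\|e^{-\sigma st L_{B(x_0,\rho_s)}}\|_{1\to 2}\le C\,v_{\rho_s}(x_0)^{-1/2}\max(1,\sigma^{-1})^{1/(2\alpha)}$ — one obtains
\[
\sup_{x_0\in M}\big\|v_{\sqrt t}^{1/2}F_a(\sqrt{stL})\,\chi_{B(x_0,\sqrt{st})}\big\|_{1\to 2}\le C\,g(s),
\]
with $g$ a function of $s$ alone (through powers of $\max(1,s)$ coming from $(D_v)$ and of $\max(1,\sigma^{\pm1})$ coming from the resolvent integral) such that $\int_0^\infty g(s)\,s^{a+\frac12}e^{-s/4}\,ds<+\infty$ once $N$, and then $a$, are taken large enough. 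Lemma \ref{fala2}, applicable because $v_{\sqrt t}^{1/2}F_a(\sqrt{stL})$ is supported in $D_{\sqrt{st}}$, upgrades this to $\|v_{\sqrt t}^{1/2}F_a(\sqrt{stL})\|_{1\to 2}\le C g(s)$, and integrating in $s$ gives $\sup_{t>0}\|v_{\sqrt t}^{1/2}e^{-tL}\|_{1\to 2}<+\infty$, i.e.\ $(vE_{1,2})$. By Proposition \ref{cieplo1} this is equivalent to $(Ev_{1,2})$, hence to $(DU\!E^v)$ by Corollary \ref{implication}; $(GN^v_q)$ for the stated range of $q$ then follows from Proposition \ref{dg}.

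\emph{Main obstacle.} The delicate part is Step~2, and within it the bookkeeping: one must make rigorous that the finite propagation speed descends to the Dirichlet restrictions $L_{B(x_0,\rho)}$ so that Lemmas \ref{nos} and \ref{pocz3a} apply to them when acting on functions supported well inside the ball; keep the weighted estimates uniform in $x_0$ and in the auxiliary parameters $s,\sigma$ via $(D_v)$ and \eqref{D2}, so that the $s$- and $\sigma$-integrals converge after enlarging $N$ and $a$; and, in Step~1, justify applying $(LN^v_\alpha)$ to $e^{-t\widetilde L}g$, that is, extend $(LN^v_\alpha)$ from $\mathcal{F}_c(B(x_0,\rho))$ to the form domain of $L_{B(x_0,\rho)}$ — the one place, besides the heat kernel bound itself, where the uniform $L^1$-boundedness of the Dirichlet semigroups enters. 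Everything downstream of $(DU\!E^v)$ is already available.
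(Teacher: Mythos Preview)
Your strategy is genuinely different from the paper's. The paper does not pass through $(DU\!E^v)$: from the local bound $\|e^{-s(L_{B(x,3r)}+r^{-2}I)}\|_{1\to\infty}\le C\,v_r(x)^{-1}(r^2/s)^{1/\alpha}$ it interpolates to a $2\to q$ bound, integrates in $s$ to obtain $\|(I+r^2L_{B(x,3r)})^{-1}\|_{2\to q}\le C\,v_r(x)^{-(\frac12-\frac1q)}$ (the condition $\frac{q-2}{q}<\alpha$ enters here, as the integrability condition), replaces $(1+\lambda^2)^{-1}$ by the spectrally equivalent $\Psi(\lambda)=(\lambda-\sin\lambda)/\lambda^3$ whose Fourier transform is supported in $[-1,1]$, transfers $\Psi(r\sqrt{L_{B(x,3r)}})\chi_{B(x,r)}$ to $\Psi(r\sqrt L)\chi_{B(x,r)}$ via Lemma~\ref{wediri}, inserts the weight using \eqref{D2} and the support property, and globalizes with Lemma~\ref{fala2} to reach the resolvent form of $(GN^v_q)$. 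One function $\Psi$, no transmutation formula, no Proposition~\ref{cieplo1}. Your detour through $(DU\!E^v)$ followed by Proposition~\ref{dg} gives instead the range $\frac{q-2}{q}\kappa_v<2$; your claim that this contains $\frac{q-2}{q}<\alpha$ is wrong in general (nothing forces $\alpha\le 2/\kappa_v$), though in the paper's main application $\alpha=2/\kappa_v$ and the two ranges agree.

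The substantive gap is your appeal to Lemmas~\ref{pocz3a} and~\ref{pocz2a} for the Dirichlet restriction $L_{B(x_0,\rho_s)}$. Both rest on Proposition~\ref{fala1}, which needs the \emph{ambient} space to be doubling; a ball in a doubling space is not itself doubling in general. Nor does Lemma~\ref{wediri} establish \eqref{DG2} for $L_{B_0}$ on all of $B_0$: it only identifies $\cos(r\sqrt L)$ with $\cos(r\sqrt{L_{B_0}})$ on functions supported away from $\partial B_0$, which says nothing about propagation near the boundary. You can bypass both lemmas: since the output of $F_a(\sqrt{stL_0})\chi_{B(x_0,\sqrt{st})}$ lives in $B(x_0,2\sqrt{st})$, pull the weight $v_{\sqrt t}^{1/2}$ out as $v_{\sqrt t}(x_0)^{1/2}$ times an $s$-dependent factor via $(D_v)$ and \eqref{D2}; then write $F_a(\sqrt{stL_0})=\Psi_a(\sqrt{stL_0})(I+stL_0)^{-N}$ with $\Psi_a$ bounded (possible for $a$ large), bound $\|\Psi_a(\sqrt{stL_0})\|_{2\to 2}$ by spectral theory, and control $\|(I+stL_0)^{-N}\|_{1\to 2}$ by the resolvent integral together with Step~1. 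With this fix your Step~2 does yield $(DU\!E^v)$---essentially Proposition~\ref{Nana}---so your route works up to the range issue above, but it is considerably heavier than the paper's direct $2\to q$ argument.
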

\begin{proof}
Condition  $(LN^v_\alpha)$
can be stated in the following way
\begin{equation}\label{nana}
\|f\|_{2}^{2(1+\alpha)}\leq   \frac{Cr^2}{v^{\alpha}_{r}(x) }\|f\|_{1}^{2\alpha}
\mathcal{E}_{L_{B(x,3r)}+r^{-2}I}(f),\ \forall\, \,x\in M,\,r>0,\,
 f\in  \mathcal{F}_c(B(x,3r)).
\end{equation}
Now since by assumption the semigroup $(\exp(-s L_{B(x,3r)}))_{s>0}$ is uniformly bounded on $L^1(B(x,3r),\mu)$, this also holds for $(\exp(-s( L_{B(x,3r)+r^{-2}I}))_{s>0}$.
Then by Nash's classical argument, \eqref{nana} implies
$$\|\exp(-s( L_{B(x,3r)+r^{-2}I}))\|_{1\to\infty}\le \frac{Cr^{2/\alpha}}{ v_{r}(x)} s^{-1/\alpha}, \ \forall\,x\in M,\,r,s>0,$$
hence
$$\|\exp(-s( L_{B(x,3r)}+r^{-2}I))\|_{2\to q}\le \left( \frac{C'r^{2/\alpha}}{ v_{r}(x)}\right)^{\frac{1}{2}-\frac{1}{q}} s^{-\frac{1}{\alpha}\left(\frac{1}{2}-\frac{1}{q}\right)}, \ \forall\,x\in M,\,r,s >0.$$
In the last two inequalities as well as in the sequel, the $L^p$ norms have to be understood on $L^p(B(x,3r),\mu)$.
Let $\lambda>0$.  By integrating in $s>0$ the function $s\to e^{-s}e^{-s\lambda H}$, with $$H= L_{B(x,3r)}+r^{-2}I,$$
we obtain, for $\frac{q-2}{q} < \alpha$,
$$\|\left( I+\lambda( L_{B(x,3r)}+r^{-2}I)\right)^{-1}\|_{2\to q}\le \left( \frac{C''r^{2/\alpha}}{ v_{r}(x)}\right)^{\frac{1}{2}-\frac{1}{q}}  \lambda^{-\frac{1}{\alpha}\left(\frac{1}{2}-\frac{1}{q}\right)} , \ \forall\,\lambda>0,$$
where $C''$ does not depend on $t>0$ or $x\in M$.

Taking $\lambda=2r^2$ yields
$$\|\left( I+ r^2L_{B(x,3r)}\right)^{-1}\|_{2\to q}\le  \frac{C}{ v^{\frac{1}{2}-\frac{1}{q}}_{r}(x)}, \ \forall\,r>0,\,x\in M.$$

Set $\Psi(\lambda)= \frac{\lambda -\sin \lambda}{\lambda^3}$.
Note that by the spectral theorem, the function $(1+\lambda^2)\Psi(\lambda)$ and its inverse being bounded,
 \begin{equation}\label{psi} \|(I+r^2L_{B(x,3r)})^{-1}\|_{2 \to q} \simeq
 \|\Psi\left(r\sqrt{L_{B(x,3r)}}\right)\|_{2 \to q},
  \end{equation}
  uniformly in $r>0$ and $x\in M$.
  Therefore
  $$\|\Psi\left(r\sqrt{L_{B(x,3r)}}\right)\|_{2 \to q}\le  \frac{C}{ v^{\frac{1}{2}-\frac{1}{q}}_{r}(x)}$$
  and also
  $$\|\Psi\left(r\sqrt{L_{B(x,3r)}}\right)\chi_{B(x,r)}\|_{2 \to q}\le \frac{C}{ v^{\frac{1}{2}-\frac{1}{q}}_{r}(x)},\ \forall\,x\in M,\,t>0.$$
 Now Lemma \ref{wediri} with $\epsilon=r$, $r_0=3r$ yields
  $$\cos(r\sqrt{L})\chi_{B(x,r)}=\cos\left(r\sqrt{L_{B(x,3r)}}\right)\chi_{B(x,r)}.$$
 Since the Fourier transform of $\Psi$ is supported in the interval $[-1,1]$, one can use  
 formula \eqref{formu} and conclude
 $$\Psi(r\sqrt{L})\chi_{B(x,r)}=\Psi\left(r\sqrt{L_{B(x,3r)}}\right)\chi_{B(x,r)}.$$
Hence
\begin{equation}\label{shahe}
\|\Psi(r\sqrt{L})\chi_{B(x,r)}\|_{2 \to q} \le  \frac{C}{ v^{\frac{1}{2}-\frac{1}{q}}_{r}(x)}, \ \forall\,x\in M,\,r>0.
\end{equation}
Since by Lemma \ref{nos},  
\begin{equation*}
 \mbox{supp}\, {\Phi(r\sqrt{L})} \subseteq D_{r},
\end{equation*}
one may write
$$\Psi(r\sqrt{L})\chi_{B(x,r)}=\chi_{B(x,2r)}\Psi(r\sqrt{L})\chi_{B(x,r)}$$
and, by  \eqref{D2},
\begin{eqnarray*}
\|v^{\frac{1}{q}-\frac{1}{2}}_{r}\Psi(r\sqrt{L})\chi_{B(x,r)}\|_{2 \to q}&=&\|v^{\frac{1}{q}-\frac{1}{2}}_{r}\chi_{B(x,2r)}\Psi(r\sqrt{L})\chi_{B(x,r)}\|_{2 \to q}\\
&\le& v^{\frac{1}{q}-\frac{1}{2}}_{r}(x) \|\chi_{B(x,2r)}\Psi(r\sqrt{L})\chi_{B(x,r)}\|_{2 \to q}\\
&=& v^{\frac{1}{q}-\frac{1}{2}}_{r}(x) \|\Psi(r\sqrt{L})\chi_{B(x,r)}\|_{2 \to q}.
\end{eqnarray*}
Thus it follows from \eqref{shahe} that
$$\|v^{\frac{1}{q}-\frac{1}{2}}_{r}\Psi(r\sqrt{L})\chi_{B(x,r)}\|_{2 \to q} \le  C, \ \forall\,x\in M,\,r>0.$$
Lemma~\ref{fala2} yields
$$\|{v_{r}^{\frac{1}{q}-\frac{1}{{2}}}}\Psi(r\sqrt{L})\|_{2 \to q} \le C.$$
Using  the $L^2$-boundedness  of
$\Psi^{-1}(r\sqrt{L})(I+r^2L)^{-1}$,
we obtain
$$\|{v_{r}^{\frac{1}{q}-\frac{1}{{2}}}}(I+r^2L)^{-1}\|_{2 \to q} \le C,$$
that is, (\ref{VE2q}).
Since $v$ satisfies $(D_v)$, we can use Proposition \ref{resolvent2} and  $(GN^v_q)$ follows.

\end{proof}

 Propositions \ref{KL}, \ref{ngn}, and \ref{realconverse}   yield the following. Together with  Proposition \ref{KL}, this gives finally as a by-product a converse to Proposition \ref{Nash1}.

\begin{proposition}\label{Nana} Let $(M,d,\mu)$ be a doubling metric measure space, $v:M\times \R_+\to \R_+$  satisfy $(A)$, $(D_v)$, and \eqref{D2}, and $L$ a  non-negative self-adjoint operator on $L^2(M,\mu)$ such that $(M,d,\mu,L)$ satisfies $(DG)$.
Assume that the semigroup $\exp(-t L_{B(x,r)})$ is bounded 
on $L^1(B(x,r),\mu)$ uniformly in $t>0$, $x\in M$, and $r>0$. Then $(LN^v)$ implies  $(DU\!E^v)$.
\end{proposition}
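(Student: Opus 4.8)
The plan is to chain together the pieces already assembled in the paper. The statement is exactly Proposition~\ref{Nana}, and its proof should be essentially a one-line deduction from Propositions~\ref{KL}, \ref{ngn}, and~\ref{realconverse}, together with Proposition~\ref{gn}. So the first thing I would do is record the logical skeleton: $(LN^v)$ means $(LN^v_\alpha)$ for some $\alpha>0$; by Proposition~\ref{ngn} (which applies precisely under the hypotheses of the present statement, namely $(M,d,\mu)$ doubling, $v$ satisfying $(A)$, $(D_v)$, \eqref{D2}, $L$ self-adjoint with \eqref{DG2}, and uniform $L^1$-boundedness of the Dirichlet semigroups $\exp(-tL_{B(x,r)})$), $(LN^v_\alpha)$ implies $(GN^v_q)$ for every $q$ with $2<q<+\infty$ and $\frac{q-2}{q}<\alpha$. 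Such a $q$ exists since $\alpha>0$. Then I would feed this $(GN^v_q)$ into Proposition~\ref{realconverse}.

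The only subtlety to address is the hypothesis mismatch between Proposition~\ref{ngn}/\ref{realconverse} and the statement at hand: Proposition~\ref{realconverse} requires uniform boundedness of $(e^{-tL})_{t>0}$ on $L^1(M,\mu)$, whereas here we only assume uniform boundedness of the \emph{Dirichlet restrictions} $\exp(-tL_{B(x,r)})$. I would handle this by noting that we do not in fact need global $L^1$-boundedness as a separate input: once $(GN^v_q)$ is established for some $q>2$, one may apply it directly. Actually the cleanest route is to observe that the hypotheses already force what is needed. The point is that Proposition~\ref{ngn} produces $(GN^v_q)$, and $(GN^v_q)$ implies $(N^v)$ by Proposition~\ref{gn}; and the heat semigroup $(e^{-tL})_{t>0}$ need not be assumed uniformly bounded on $L^1$ if instead one runs the extrapolation machinery of Section~\ref{DG}. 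Let me reconsider: Proposition~\ref{realconverse} as stated does require $L^1$ uniform boundedness of $(e^{-tL})_{t>0}$, so to invoke it verbatim I must supply that. I would therefore argue that the assumed uniform $L^1$-boundedness of the Dirichlet restrictions $\exp(-tL_{B(x,r)})$, combined with the bounded covering principle $(BC\!P)$ and the finite propagation speed (as in the proof of Proposition~\ref{ngn}, where one passes between $e^{-tL}$ and $e^{-tL_{B(x,3r)}}$ via Lemma~\ref{wediri}), yields the uniform $L^1$-boundedness of $(e^{-tL})_{t>0}$ itself; alternatively, and more honestly, I suspect the intended reading is that under the stated hypotheses $(GN^v_q)$ already gives, via Proposition~\ref{nonmarkov} or a direct argument, the needed global boundedness, so that Proposition~\ref{realconverse} applies.

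Concretely, here is the proof I would write.

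\begin{proof}
By assumption, $(LN^v)$ holds, i.e. $(LN^v_\alpha)$ holds for some $\alpha>0$. Fix $q$ with $2<q<+\infty$ and $\frac{q-2}{q}<\alpha$; such a $q$ exists because $\alpha>0$. Since $(M,d,\mu)$ is a doubling metric measure space, $v$ satisfies $(A)$, $(D_v)$, and \eqref{D2}, $L$ is a non-negative self-adjoint operator with $(M,d,\mu,L)$ satisfying \eqref{DG2}, and the semigroups $\exp(-tL_{B(x,r)})$ are bounded on $L^1(B(x,r),\mu)$ uniformly in $t>0$, $x\in M$, $r>0$, Proposition~\ref{ngn} applies and yields $(GN^v_q)$.

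It remains to deduce $(DU\!E^v)$ from $(GN^v_q)$. For this we invoke Proposition~\ref{realconverse}, whose hypotheses are that $(M,d,\mu)$ be a doubling metric measure space, $v$ satisfy $(A)$, $(D_v)$, \eqref{D2}, $(M,d,\mu,L)$ satisfy \eqref{DG2}, and $(e^{-tL})_{t>0}$ be uniformly bounded on $L^1(M,\mu)$. All of these are available: the first four are part of our hypotheses, and the uniform boundedness of $(e^{-tL})_{t>0}$ on $L^1(M,\mu)$ follows from the uniform boundedness of the Dirichlet restrictions $\exp(-tL_{B(x,r)})$ on $L^1(B(x,r),\mu)$ by decomposing $M$ into the sets $\widetilde B_i$ of \eqref{e3.2} associated with a maximal $r$-separated family $(x_i)$ given by $(BC\!P)$, writing $e^{-tL}$ as a sum of pieces $\chi_{\widetilde B_i}e^{-tL}\chi_{\widetilde B_j}$ that vanish unless $d(x_i,x_j)\le 3\sqrt t$ by finite propagation speed, and replacing $e^{-tL}\chi_{\widetilde B_j}$ by $e^{-tL_{B(x_j,3\sqrt t)}}\chi_{\widetilde B_j}$ via Lemma~\ref{wediri} (the bounded overlap $K_0$ from $(BC\!P)$ controls the resulting sum). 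Hence Proposition~\ref{realconverse} gives $(DU\!E^v)$, as claimed.
\end{proof}

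The main obstacle I anticipate is precisely the last point: making rigorous the passage from uniform $L^1$-control of the Dirichlet restrictions to uniform $L^1$-control of $(e^{-tL})_{t>0}$. One must be careful that the bound on each $\|e^{-tL_{B(x_j,3\sqrt t)}}\chi_{\widetilde B_j}\|_{1\to 1}$ is uniform, and that the finite-propagation cutoff argument at scale $\sqrt t$ (rather than $t$) is used consistently; this is the same device as in the proof of Proposition~\ref{ngn} and in Lemma~\ref{fala2}, so it should go through, but it is the only step not already stated verbatim in the excerpt.
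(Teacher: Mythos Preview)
Your skeleton is exactly the paper's: the paper proves this statement simply by citing Propositions~\ref{ngn} and~\ref{realconverse} (the reference to Proposition~\ref{KL} in the sentence preceding the statement pertains to the follow-up remark about a converse to Proposition~\ref{Nash1}, not to the proof itself). Your application of Proposition~\ref{ngn} is correct, and you rightly notice that Proposition~\ref{realconverse} asks for uniform boundedness of $(e^{-tL})_{t>0}$ on $L^1(M,\mu)$, whereas the hypothesis here is only on the Dirichlet restrictions.

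However, the argument you give to bridge this gap is wrong. You write that the pieces $\chi_{\widetilde B_i}e^{-tL}\chi_{\widetilde B_j}$ ``vanish unless $d(x_i,x_j)\le 3\sqrt t$ by finite propagation speed'' and that one can ``replace $e^{-tL}\chi_{\widetilde B_j}$ by $e^{-tL_{B(x_j,3\sqrt t)}}\chi_{\widetilde B_j}$ via Lemma~\ref{wediri}''. Neither claim holds: finite propagation speed \eqref{fs11} and Lemma~\ref{wediri} concern the wave propagator $\cos(r\sqrt L)$, not the heat semigroup. The operator $e^{-tL}$ does \emph{not} satisfy a support condition of the type \eqref{e2.1}; under \eqref{DG2} one only has the off-diagonal decay $|\langle e^{-tL}f_1,f_2\rangle|\le e^{-r^2/4t}\|f_1\|_2\|f_2\|_2$, which is far from vanishing. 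Likewise there is no identity $e^{-tL}\chi_{B}=e^{-tL_{B'}}\chi_{B}$ for any balls $B\subset B'$: the transmutation formula expresses $e^{-tL}$ as an integral of $\cos(s\sqrt L)$ over \emph{all} $s>0$, so one cannot stay inside a fixed ball. Thus this step does not deliver the required global $L^1$ bound. The paper itself is silent on this point; if you want to fill it honestly, a more promising route is to fix $x_0$, let $r\to\infty$, use that the increasing sequence of closed forms associated with $L_{B(x_0,r)}$ (whose domains contain $\mathcal F_c(B(x_0,r))$ by Lemma~\ref{dom}) yields strong resolvent convergence of $L_{B(x_0,r)}$ to $L$, hence $e^{-tL_{B(x_0,r)}}f\to e^{-tL}f$ in $L^2$, and then apply Fatou's lemma to pass the uniform $L^1$ bound to the limit on compactly supported $f$.
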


Verifying that the semigroup $\exp(-t L_{B(x,r)})$ is bounded 
on $L^1(B(x,r),\mu)$ uniformly in $t>0$, $x\in M$, and $r>0$ is possibly not always an easy task.
The final result of this section shows that the situation simplifies if
the semigroup $\exp(-t L)$ is positivity preserving. 

\begin{proposition}\label{posi} Suppose that   $(M,d,\mu,L)$ satisfies $(DG)$, that the semigroup $\exp(-t L)$ is positivity preserving and 
uniformly bounded on  $L^p(M,\mu)$ 
 for some $1 \le p \le \infty$.  
Then  the semigroups $\exp(-t L_{B(x,r)})$ are bounded 
on $L^p(B(x,r),\mu)$ uniformly in $t>0$, $x\in M$, and $r>0$. 
\end{proposition}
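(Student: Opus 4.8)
The idea is to control the Dirichlet semigroup on a ball by the global semigroup through a domination argument, using positivity. First I would recall the standard fact (see for instance \cite{Ou} or \cite{GS}) that if $\exp(-tL)$ is positivity preserving, then for any open set $\Omega\subset M$ the Dirichlet restriction $L_\Omega$ satisfies the domination
$$
0\le \exp(-tL_\Omega)f\le \exp(-tL)f,\quad 0\le t,\ 0\le f\in L^2(\Omega,\mu)\subset L^2(M,\mu),
$$
where on the right-hand side $f$ is extended by $0$ outside $\Omega$. This is precisely where the positivity preserving assumption enters, and it holds in our setting because, by Lemma \ref{dom}, the form $\mathcal{E}$ restricted to $\mathcal{F}_c(\Omega)$ is closable and its closure is a form domination (an ideal) below $\mathcal{E}$; the abstract criterion of Ouhabaz then gives the pointwise domination of the semigroups. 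The only subtlety is that here $\mathcal{E}$ need not be a Dirichlet form, but the domination of semigroups generated by ordered quadratic forms with the ideal property does not use the submarkovian character; it only uses positivity preservation of the larger semigroup and the fact that $\mathcal{F}_c(\Omega)$ is an order ideal in $\mathcal{F}$ stable under $f\mapsto |f|$, together with Lemma \ref{dom}.

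Next, from the domination I would deduce the uniform $L^p$ bound. For $1\le p\le\infty$, given $0\le f\in L^p(B(x,r),\mu)$, the domination yields
$$
0\le \exp(-tL_{B(x,r)})f\le \exp(-tL)f\quad\mu\text{-a.e.},
$$
hence, by monotonicity of the $L^p$ norm on nonnegative functions,
$$
\|\exp(-tL_{B(x,r)})f\|_{L^p(B(x,r),\mu)}\le \|\exp(-tL)f\|_{L^p(M,\mu)}\le A_p\|f\|_{L^p(M,\mu)}=A_p\|f\|_{L^p(B(x,r),\mu)},
$$
where $A_p:=\sup_{t>0}\|\exp(-tL)\|_{p\to p}<+\infty$ by hypothesis. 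For general $f\in L^p(B(x,r),\mu)$, write $f=f_+-f_-$ and use $|\exp(-tL_{B(x,r)})f|\le \exp(-tL_{B(x,r)})|f|$, which again follows from the domination and positivity of $\exp(-tL_{B(x,r)})$; one gets $\|\exp(-tL_{B(x,r)})f\|_p\le A_p\|f\|_p$. Crucially, $A_p$ does not depend on $x$ or $r$, which is exactly the uniformity asserted.

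\textbf{Main obstacle.} The delicate point is justifying the semigroup domination $\exp(-tL_\Omega)\le \exp(-tL)$ in the present generality, since we are not assuming $\mathcal{E}$ is a Dirichlet form — only \eqref{DG2}. I would handle this by invoking Lemma \ref{dom}, which identifies $L_{B(x,r)}$ as the Friedrichs extension associated with the closure of $\mathcal{E}$ on $\mathcal{F}_c(B(x,r))$, and then applying the general comparison theorem for positivity preserving semigroups associated with ordered closed forms sharing the ideal/order-ideal structure (Ouhabaz's domination criterion, \cite[Theorem 2.24 and Corollary 2.22]{Ou}). The hypotheses of that criterion are: $\exp(-tL)$ positivity preserving (assumed), the form domain $\mathcal{F}_c(B(x,r))^{\overline{\phantom{x}}}$ is an ideal of $\mathcal{F}$ stable under the modulus, and the forms agree on the smaller domain — all of which we have. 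Everything else in the argument is routine. One should also remark that the statement and proof go through verbatim with $p$ replaced by $p'$, and that combining $p$ and $p'$ with interpolation extends uniform boundedness of $\exp(-tL_{B(x,r)})$ to the whole interval $[\min(p,p'),\max(p,p')]$, which is the form in which this is used in Proposition \ref{Nana} (with $p=1$, giving the $L^1$ uniform bound needed to apply Proposition \ref{ngn}).
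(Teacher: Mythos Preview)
Your approach is essentially the same as the paper's: both rely on the semigroup domination $0\le \exp(-tL_{B(x,r)})f\le \exp(-tL)f$ for $f\ge 0$ (which the paper attributes to \cite{Are}, \cite{ER}, and \cite[Proposition~4.23]{O}, noting the argument ``can be adapted to our setting''), and then deduce the uniform $L^p$ bound directly from this pointwise comparison. Your write-up is more explicit about how the $L^p$ estimate follows from the domination, and you correctly single out the verification of the domination criterion in the non-Dirichlet-form setting as the only nontrivial step; one small remark is that the inequality $|\exp(-tL_{B(x,r)})f|\le \exp(-tL_{B(x,r)})|f|$ follows from positivity preservation of $\exp(-tL_{B(x,r)})$ alone, not from the domination.
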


\begin{proof} We observe that the semigroup $\exp(-t L_{B(x,r)})$ is also positivity preserving and  dominated by the original semigroup 
 $\exp(-t L)$. That is if $f\in L^2(\Omega,\mu)$ and $f\ge 0 $ a.e. then 
 $$
 0\le \exp(-t L_{B(x,r)})f\le \exp(-t L)f, \ \mu-\rm{a.e.}.
 $$
 Indeed it is not difficult to check that the proof of the similar property described in 
 \cite[(4.6), Theorem 4.2.1] {Are} or \cite[Proposition 2.1]{ER}  
 can be adapted to our setting (see also \cite[Proposition 4.23]{O}). Now the uniform boundedness of 
 the semigroup $\exp(-t L_{B(x,r)})$ is a straightforward consequence of the domination property described
 above, positivity and uniform boundedness on $L^p(M,\mu)$ of the initial semigroup $\exp(-t L)$.
\end{proof}

We can finally state:

\begin{proposition}\label{newmainDG} Let $(M,d,\mu)$ be a doubling metric measure space, $v:M\times \R_+\to \R_+$  satisfy $(A)$, $(D_v)$, and \eqref{D2},  and $L$ a  non-negative self-adjoint operator on $L^2(M,\mu)$.  Assume that 
 $(M,d,\mu,L)$ satisfies 
the Davies-Gaffney
estimate \eqref{DG2}  
and   that the semigroup $(e^{-tL})_{t>0}$ is  uniformly bounded on $L^{1}(M,\mu)$ and positivity preserving. Then $(LN^v)$ implies the upper bound
$(DU\!E^v)$.
\end{proposition}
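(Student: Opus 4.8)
The plan is to chain together the tools already built in the paper. Given the hypotheses — $(M,d,\mu)$ doubling, $v$ satisfying $(A)$, $(D_v)$, \eqref{D2}, $L$ non-negative self-adjoint with $(DG)$, $(e^{-tL})_{t>0}$ uniformly bounded on $L^1$ and positivity preserving — I would proceed in four moves. \textbf{Step 1 (local $L^1$-bounds).} First invoke Proposition \ref{posi}: since $(e^{-tL})_{t>0}$ is positivity preserving and uniformly bounded on $L^1(M,\mu)$, the Dirichlet-restricted semigroups $\exp(-tL_{B(x,r)})$ are bounded on $L^1(B(x,r),\mu)$ uniformly in $t>0$, $x\in M$, $r>0$. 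This is exactly the hypothesis needed to feed into the local-to-global machinery. \textbf{Step 2 (from $(LN^v)$ to $(GN^v_q)$).} By assumption $(LN^v)$ holds, i.e.\ $(LN^v_\alpha)$ for some $\alpha>0$. Apply Proposition \ref{ngn} with this $\alpha$ and the uniform $L^1$-boundedness of $\exp(-tL_{B(x,r)})$ just obtained: this yields $(GN^v_q)$ for all $q$ with $2<q<+\infty$ and $\frac{q-2}{q}<\alpha$. In particular one can fix one such $q>2$ (any $q$ close enough to $2$ works, and the constraint $\frac{q-2}{q}\kappa_v<2$ is then automatically satisfied for $q$ near $2$, so the range is non-empty).

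\textbf{Step 3 (from $(GN^v_q)$ to $(DU\!E^v)$).} Now apply Proposition \ref{realconverse} (whose hypotheses — $(M,d,\mu)$ doubling, $v$ satisfying $(A)$, $(D_v)$, \eqref{D2}, $(DG)$, and $(e^{-tL})_{t>0}$ uniformly bounded on $L^1$ — are all in place) with the value of $q>2$ from Step 2. This gives $(DU\!E^v)$, completing the argument. So the proof is essentially a one-line composition:
$$
\text{Prop.\ }\ref{posi}\ \Longrightarrow\ \text{hypothesis of Prop.\ }\ref{ngn}\ \Longrightarrow\ (GN^v_q)\ \overset{\text{Prop.\ }\ref{realconverse}}{\Longrightarrow}\ (DU\!E^v).
$$

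The only genuine point to check is that the chain is logically airtight — specifically that the $q$ produced in Step 2 is admissible in Step 3, which it is since $q>2$ is all that Proposition \ref{realconverse} demands, and that $(LN^v)$ unpacking to $(LN^v_\alpha)$ for \emph{some} $\alpha$ is the form consumed by Proposition \ref{ngn}. I do not expect any real obstacle here: this is a stitching proposition, and essentially all the difficulty has been front-loaded into Propositions \ref{posi}, \ref{ngn}, and \ref{realconverse} (the latter in turn resting on the extrapolation result Proposition \ref{extrapolation} and the finite-propagation-speed commutation Proposition \ref{cieplo1}). If anything deserves a sentence of care, it is confirming that Proposition \ref{posi} applies with $p=1$ — which it does, by hypothesis — so that the ``uniformly bounded on $L^1(B(x,r),\mu)$'' conclusion is exactly the phrasing required by Proposition \ref{ngn}. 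One may add a closing remark that, combined with the implications $(N^v)\Rightarrow(K\!N^v)\Rightarrow(LN^v)$ from Section \ref{GNN} and Proposition \ref{KL}, this closes the loop and gives the second assertion of Theorem \ref{mainDG}.
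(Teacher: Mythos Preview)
Your proposal is correct and follows exactly the paper's approach: the paper packages your Steps 2--3 as Proposition \ref{Nana} (whose proof is precisely ``Propositions \ref{ngn} and \ref{realconverse}''), then obtains Proposition \ref{newmainDG} by applying Proposition \ref{posi} with $p=1$ to supply the missing hypothesis of Proposition \ref{Nana}. Your closing remark about completing the second assertion of Theorem \ref{mainDG} also matches the paper's concluding sentence.
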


\bigskip

 The second assertion of Theorem \ref{mainDG} follows from Propositions \ref{Nash22} and \ref{newmainDG}.
 
 \bigskip

Let us conclude by a remark on on-diagonal sub-Gaussian heat kernel estimates. 
They are typically of the type 
$$p_t(x,x)\le \frac{C}{V(x,t^{1/m})},$$
for $m>2$, and they can be characterised by Faber-Krahn  type inequalities together with exit time estimates (see
for instance \cite{kig},  \cite{GH}, and the references therein). One could certainly replace  these Faber-Krahn inequalities by inequalities similar to the ones we have considered in this paper,
simply by taking $v(x,r)=V(x,r^{2/m})$. But if we try to get rid of the exit time estimates and make our theory fully work as in the Gaussian case, we  encounter two obstacles: we lose  
 \eqref{D2}, and more importantly our main tool,  namely the finite speed propagation of the wave equation.
The only hope would be to exploit the existence of  specific cut-off functions related to the exponent $m$ as in \cite{AB}.

\bigskip

{\bf Acknowledgements: } The authors thank Alexander Bendikov, Gilles Carron, Alexander Grigor'yan, El Maati Ouhabaz and Laurent Saloff-Coste for useful remarks or suggestions on this paper. Thanks are also due to Li Huaiqian for reading carefully the manuscript and noticing some typos.


\end{document}